\documentclass[a4paper,12pt,centertags,reqno]{amsart}
\usepackage{amsmath,amssymb,amsthm,graphicx,mathrsfs,url}
\usepackage[colorlinks=true,linkcolor=Red,citecolor=PineGreen]{hyperref}
\usepackage{enumitem}
\usepackage[french, english]{babel}
\usepackage[dvipsnames]{xcolor}
\usepackage[T1]{fontenc}
\usepackage[utf8]{inputenc}
\usepackage{csquotes}
\usepackage[all]{xy}
\usepackage{comment}
\usepackage{amssymb}
\usepackage{wasysym}
\usepackage{graphicx}
\usepackage{mathtools}
\usepackage{multicol}
\usepackage[bottom]{footmisc}
\usepackage[a4paper, margin=1in]{geometry}
\usepackage[dvipsnames]{xcolor}
\usepackage{enumitem}
\usepackage{dsfont}
\usepackage[depth=2]{bookmark}
\usepackage{pgfplots}
\usepackage{csquotes}
\usepackage[section]{placeins}
\pgfplotsset{compat=1.17}
\usepackage[alphabetic,msc-links]{amsrefs}
\renewcommand{\MR}[1]{}
\usepackage[all]{nowidow}
\usepackage{standard}
\usepackage{algorithm}
\usepackage{algpseudocode}
\usepackage{listings}
\usepackage[capitalise,noabbrev,nameinlink]{cleveref}
\usepackage{mathrsfs}
\usepackage{lmodern}

\newcommand{\A}{\mathrm{A}}
\newcommand{\Q}{\mathrm{Q}}
\renewcommand{\P}{\mathrm{P}}

\newcommand{\I}{\mathrm{I}}
\newcommand{\K}{\mathrm{K}}
\newcommand{\dd}{\mathrm{d}}

\newcommand{\CC}{\mathbb{C}}
\providecommand{\C}{}
\renewcommand{\C}{\mathbb{C}}
\newcommand{\RRR}{\mathbb{R}}
\newcommand{\RR}{\mathbb{R}}

\newcommand{\ZZ}{\mathbb{Z}}
\newcommand{\NN}{\mathbb{N}}

\DeclareMathOperator{\vol}{vol}

\setlist[enumerate,1]{label = (\alph*), ref = (\alph*)}
\theoremstyle{plain}
\newtheorem{theorem}{Theorem}[section]
\newtheorem*{theorem*}{Theorem}
\newtheorem{proposition}[theorem]{Proposition}
\newtheorem{corollary}[theorem]{Corollary}
\newtheorem{lemma}[theorem]{Lemma}

\theoremstyle{definition}
\newtheorem{definition}[theorem]{Definition}
\newtheorem{remark}[theorem]{Remark}

\theoremstyle{remark}

\newtheorem*{example*}{Example}

\newcounter{rulec}
\renewcommand\therulec{(\alph{rulec})}
\makeatletter
\newcommand\cuts[1]{\refstepcounter{rulec}\label@noarg{hyp:#1}\textup{\therulec}\quad}
\makeatother
\Crefname{rulec}{Case}{Cases}

\newcounter{ruleca}
\renewcommand\theruleca{(\alph{ruleca})}
\makeatletter
\newcommand\conjs[1]{\refstepcounter{ruleca}\label@noarg{hypa:#1}\textup{\theruleca}\quad}
\makeatother
\Crefname{ruleca}{Case}{Cases}

\newcounter{rulecb}
\renewcommand\therulecb{(\alph{rulecb})}
\makeatletter
\newcommand\conjsb[1]{\refstepcounter{rulecb}\label@noarg{hypb:#1}\textup{\therulecb}\quad}
\makeatother
\Crefname{rulecb}{Case}{Cases}

\setlist{
  listparindent=\parindent,
  parsep=\parskip,
}
\counterwithin{figure}{section}
\title{Geodesic Lévy flights on Zoll surfaces}

\author[Chaubet]{Yann Chaubet}
\address{Laboratoire de mathématiques Jean Leray, Nantes Université}
\email{yann.chaubet@univ-nantes.fr}

\author[Godfried]{Emanuel J\'ozsef Godfried}
\address{The School of Mathematics \& Statistics, The University of Melbourne}
\email{egodfried@student.unimelb.edu.au}

\author[Tzou] {Leo Tzou}
\address{The School of Mathematics \& Statistics, The University of Melbourne}
\email{leo.tzou@gmail.com}

\counterwithin{equation}{section}

\begin{document}

\begin{abstract}
We study the mean first capture time of isotropic Lévy flights on Zoll surfaces, namely the expected time for a geodesic Lévy process to reach a shrinking geodesic ball. While the leading-order asymptotics are universal, we prove that the first correction term encodes subtle geometric information. More precisely, it is completely determined by the local singularity type of the conjugate locus, quantified by the degree of the conjugate point. This yields a hierarchy of asymptotic regimes governed by the Lévy exponent.
\end{abstract}

\maketitle

\section{Introduction}

Let $(\Sigma, g)$ be a smooth closed Riemannian surface and $(X_t)_{t \geqslant0}$ be an isotropic L\'evy process on $\Sigma$ in the sense of Applebaum--Estrade \cite{applebaum2000}.
%(see also \cites{mijatovic2021levy, mramor2021stochastic} and references therein for more general L\'evy processes on manifolds). 
We assume that $(X_t)$ is a pure jump process with associated radial L\'evy measure on $\RR_+$ given by 
\begin{equation}\label{eq:levymeasure}
\dd \mu(s) = \frac{c_{\alpha}\dd s} {s^{1+2\alpha}} \quad \text{ for some } \alpha\in \left]0,1\right[,
\end{equation}
where $c_{\alpha} = 4^\alpha \Gamma(1 + \alpha) / \pi \Gamma(-\alpha)$ is a normalization constant. Informally, the process $(X_t)$ can be obtained as a limit of jump processes whose jumps follow geodesic trajectories, with uniformly distributed directions and jump lengths governed by the Lévy measure \eqref{eq:levymeasure}. 

In this article we are interested in the asymptotic behavior of the expected time required for the process $(X_t)$ to reach a small ball. More precisely, fix a point $p_\star \in \Sigma$. For any $p \in \Sigma$ and $\varepsilon > 0$, let
$$
u_{\star, \varepsilon}(p) = \mathbf E\Bigl[\inf\Bigl\{t \geqslant 0~:~X_t \in B(p_\star, \varepsilon)\Bigr\}\, \Bigl| \,X_0 = p\Bigr]
$$
be the expected time for the process to reach the ball $B(p_\star, \varepsilon) \subset \Sigma$, when starting from $p$. Then $u_{\star, \varepsilon} : \Sigma \to [0, \infty]$ is measurable and we denote by
$$
\overline{u}_{\star, \varepsilon} = \frac{1}{|\Sigma|} \int_\Sigma u_{\star, \varepsilon}(p) \dd \vol_g(p)
$$
the mean expected stopping time. 

In \cite{chaubet2025levy}, it was shown that if $\Sigma$ is either the round sphere, the flat torus or is Anosov (in the sense that its geodesic flow has the Anosov property), then we have 
\begin{equation}\label{eq:asympmean}
\overline{u}_{\star, \varepsilon} \sim \overline{c}_\alpha \varepsilon^{2(\alpha - 1)} \quad \text{as } \varepsilon \to 0,
\end{equation}
for some $\overline{c}_\alpha > 0.$ Moreover, for any $p \neq p_\star$ it holds 
\begin{equation}\label{eq:hitbounded}
u_{\star,\varepsilon}(p)
=
\overline{u}_{\star,\varepsilon}
+
\mathcal O(1)
\end{equation}
except if $\Sigma$ is the round sphere, $4\alpha < 1$, and $p$ is the antipodal point of $p_\star$, in which case 
$$
\overline{u}_{\star, \varepsilon} - u_{\star, \varepsilon}(p) \sim \tilde c_\alpha \,\varepsilon^{-1 + 4\alpha} \quad \text{ as } \varepsilon \to 0,
$$
for some other constant $\tilde c_\alpha > 0.$

%The round sphere therefore suggests a link between conjugate points and the asymptotic behavior of hitting times. 
The round sphere therefore provides the first indication that conjugate points may influence the asymptotic behavior of hitting times. The main purpose of this article is to show that, on Zoll surfaces \cite{zoll1903Zoll}, this phenomenon is in fact much more subtle than suggested by the antipodal singularity of the sphere. Zoll surfaces are Riemannian surfaces all of whose geodesics are simple closed curves of common length. While this property may suggest a highly rigid geometry, Zoll metrics form an infinite-dimensional class \cites{Besse1978Zoll, guillemin1976radon} whose conjugate loci can exhibit remarkably intricate singular behaviors. Our main result shows that the asymptotic behavior of $u_{\star,\varepsilon}$ near a conjugate point is entirely determined by the local singularity type of the conjugate locus.

%The round sphere therefore suggests a link between conjugate points and the asymptotic behavior of hitting times. The main purpose of this article is to show that, on Zoll surfaces, this phenomenon is considerably richer than suggested by the antipodal singularity of the sphere. Although Zoll metrics form a highly constrained class of geometries, they still admit a remarkably rich variety of conjugate loci and singular structures. We show that the asymptotic behavior of $u_{\star,\varepsilon}$ near a conjugate point is entirely determined by the local singularity type of the conjugate locus.

%This suggests that conjugate points play a crucial role in the asymptotic behavior of the deviation of $u_{\star, \varepsilon}$ from its mean value $\overline{u}_{\star, \varepsilon}$. The purpose of this paper is to show that conjugate points do indeed govern the asymptotic behavior of $u_{\star,\varepsilon}$, but in a much subtler way than suggested by the round sphere: the asymptotic behavior of $u_{\star,\varepsilon}$ near a conjugate point $p$ is determined by the local singularity structure of the conjugate locus of $p_\star$ near $p$. 

{}{}{}
{To quantify this phenomenon, we use the \emph{cuspidal degree} of a
conjugate point $p$ of $p_\star$. It is defined as the infimum in
$\mathbb N_{\geqslant 1}\cup\{\infty\}$ of all integers $k$ such that the
following property holds. In a neighborhood of $p$, the conjugate locus of
$p_\star$ is canonically parametrized by a finite union of plane curves, and
for each such curve there exists an integer $\ell\leqslant k$ such that, in
suitable local coordinates, its $\ell$-jet is given by
$u\mapsto (u^{\ell-1},u^\ell)$. A more detailed discussion of this notion is given in \S\ref{subsec:degree}.} In particular for the round sphere, the degree of the antipodal point is infinite. The degree should not be confused with the \textit{order} of conjugacy, which always equals 1 on surfaces for dimensional reasons. Rather, the degree measures the singularity type of the exponential map near conjugate vectors, see Proposition \ref{prop:normalformexp} below.
%and Appendix \ref{app:morin}.

In what follows we write $f(\varepsilon) \sim g(\varepsilon)$ as $\varepsilon \to 0$ if $f(\varepsilon)/g(\varepsilon) \to 1$. Our main result goes as follows.

\begin{theorem}\label{thm:main}
Assume that $\Sigma$ is {}{}{}{an orientable} Zoll surface. Then if $\overline{c}_\alpha$ is the constant from \eqref{eq:asympmean} one has
$$
\overline{u}_{\star, \varepsilon} \sim\overline{c}_\alpha \varepsilon^{2(\alpha - 1)} \quad \text{ as } \varepsilon \to 0.
$$
Suppose $\alpha < 1/2$. Then $u_{\star, \varepsilon}$ is continuous and for any $p \in \Sigma$, the following holds.
\begin{enumerate}[label=\emph{(\roman*)}]
\item If $p$ is not a conjugate point of $p_\star$, then $u_{\star, \varepsilon}(p) = \overline{u}_{\star, \varepsilon} + \mathcal O(1)$.
\item If $p$ is a conjugate point of $p_\star$ of degree $k \in \mathbb N_{\geqslant 1}$ and $4 \alpha < k/(k+1)$ then there is $c > 0$ such that
$$
\overline{u}_{\star, \varepsilon} - u_{\star, \varepsilon}(p)  \sim c\, \varepsilon^{-1 + 4\alpha + (k+1)^{-1}}.
$$
If $4\alpha \geqslant k / (k+1)$ then $u_{\star, \varepsilon}(p) - \overline{u}_{\star, \varepsilon} = \mathcal O(1).$
\item If $p$ is conjugate to $p_\star$ of degree $k = \infty$ and $4 \alpha < 1$ then for small $\varepsilon$ we have
$$
D^{-1} \varepsilon^{-1 + 4\alpha} f(\varepsilon) \leqslant \overline{u}_{\star, \varepsilon} - u_{\star, \varepsilon}(p) \leqslant D\varepsilon^{-1 + 4\alpha} f(\varepsilon)
$$
for some $D > 0$ and some function $f : \RR_+^* \to \left]0,2\right]$ which is a slowly varying correction term in the sense that
$$
\varepsilon^{-\delta} f(\varepsilon) \underset{\varepsilon \to 0}{\longrightarrow} \infty \quad \text{for every } \delta > 0.
$$
%If moreover there is a sub-interval $I \subset S_p\Sigma$ of directions reaching $p_\star$, then $f$ can be chosen to be $1$. If $4\alpha \geqslant 1$ then $u_{\star, \varepsilon}(p) - \overline{u}_{\star, \varepsilon} = \mathcal O(1)$.
\item If $p_\star$ has only one conjugate point $p$ then $p$ is of infinite degree and if $4\alpha < 1$ then there is $c > 0$ such that
$$
\overline{u}_{\star, \varepsilon} - u_{\star, \varepsilon}(p) \sim c\,\varepsilon^{-1+4\alpha}.
$$
\end{enumerate}
\end{theorem}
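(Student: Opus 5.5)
Following the approach of \cite{chaubet2025levy}, I reduce the problem to the spectral and microlocal analysis of the generator. The generator of $(X_t)$ is a function $L_\alpha$ of the geodesic flow. On a Zoll surface normalized to period $2\pi$, the flow is periodic, and $\sqrt{\Delta_g+1/4}$ equals $A+1/2+Q$ with $\mathrm{Spec}(A)\subset \ZZ$ and $Q$ of order $-1$ (Guillemin / Colin de Verdière). Hence $L_\alpha=(\Delta_g)^\alpha$ up to a smoothing or lower-order error, and this error is controlled uniformly on clusters.

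The hitting time $u_{\star,\varepsilon}$ solves the exterior problem $L_\alpha u=1$ off $B(p_\star,\varepsilon)$, with $u=0$ on the ball. As in \cite{chaubet2025levy}, I write $u_{\star,\varepsilon}=\overline u_{\star,\varepsilon}-\mathcal G_\alpha(\cdot,p_\star)\ast\nu_\varepsilon+\mathcal O(1)$, where $\mathcal G_\alpha$ is the Green kernel of $L_\alpha$ on mean-zero functions and $\nu_\varepsilon$ is an equilibrium measure of mass $\sim\varepsilon^{2(\alpha-1)}$ concentrated on the ball. The leading asymptotic $\overline u_{\star,\varepsilon}\sim\overline c_\alpha\varepsilon^{2(\alpha-1)}$ then follows exactly as before, since it depends only on the local singularity $\mathcal G_\alpha(x,y)\sim C d(x,y)^{2\alpha-2}$, which is universal. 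Everything therefore reduces to estimating $\mathcal G_\alpha(p,\cdot)$ averaged over $B(p_\star,\varepsilon)$ for $p\neq p_\star$.

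Next I write $\mathcal G_\alpha=\int_0^\infty(e^{-tL_\alpha}-\Pi_0)\,\dd t$. Equivalently, $\mathcal G_\alpha=\sum_{k}$ cluster contributions $(k^{2})^{-\alpha}\Pi_k$, where $\Pi_k$ is the spectral projector onto the $k$-th cluster. The wave group is periodic, and $\Pi_k(x,y)$ is an oscillatory integral over the circle of geodesics from $x$ to $y$: it has phase $k\,\ell(\theta)$ on $S_x\Sigma$, with $\ell(\theta)$ the time at which the geodesic through $\theta$ approaches $y$. Away from conjugate points this yields a kernel that is smooth (bounded) off the diagonal, which proves (i) and the continuity claim when $\alpha<1/2$.

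At a conjugate point $p$, I apply the normal form of Proposition~\ref{prop:normalformexp}. Near a conjugate vector of degree $k$, the exponential map is locally $(u,v)\mapsto(u,v^{k+1}+uv^{\,\cdot})$-type (a Morin singularity $A_k$). I then average the kernel over $y\in B(p_\star,\varepsilon)$, i.e.\ over preimages of the ball under the exponential map. The key quantity becomes the measure of directions $\theta\in S_p\Sigma$ whose geodesic passes within $\varepsilon$ of $p_\star$. For an $A_k$ singularity this measure scales like $\varepsilon^{1/(k+1)}$. Combining with the cluster sum produces a kernel $\sum_k k^{-2\alpha}\,k\cdot(\ldots)$, and scaling yields the correction $\varepsilon^{-1+4\alpha+(k+1)^{-1}}$. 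It is positive because the focusing contributions add with one sign, giving the constant $c>0$. When $4\alpha\ge k/(k+1)$ this exponent is $\geq 0$ and the correction is $\mathcal O(1)$.

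For degree $\infty$, the jet gives no finite-order bound. The direction measure is then $\varepsilon\cdot f(\varepsilon)^{-1}$-like, slowly varying, and I obtain two-sided bounds only, which proves (iii). For (iv), if $p$ is the unique conjugate point, every geodesic from $p_\star$ returns through $p$ at half period, so the whole circle of directions focuses. This gives the round-sphere computation of \cite{chaubet2025levy} exactly, hence $f\equiv 1$ and $\sim c\varepsilon^{-1+4\alpha}$.

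The main obstacle is the uniform stationary-phase analysis at degenerate Morin singularities of arbitrary order. This means extracting the sharp $\varepsilon^{1/(k+1)}$ scaling with a genuine asymptotic (rather than only bounds), and controlling the lower-order remainder $Q$ in the Zoll cluster structure. I would attempt it first via a Malgrange-preparation normal form for the phase and an explicit model integral.
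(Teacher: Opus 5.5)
Your proposal has a genuine gap at its first reduction. The claim that the generator equals $(\Delta_g)^\alpha$ up to a smoothing or harmless lower-order error is false, and the piece you discard is exactly what produces the conjugate-point correction. The generator averages over geodesic segments of every length, including those crossing conjugate times. As a result, $\A=\P-c\I+\A_\varrho$, where $\P$ is elliptic of order $2\alpha$ and $\A_\varrho$ is a Fourier integral operator of order $-1$ whose canonical relation is the graph of the involutive symplectomorphism $G$ induced by the conjugacy map.

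Already on the round sphere, where $\A$ is diagonal on spherical harmonics, this shows up in the spectrum. Its eigenvalues $c_\alpha\int_0^\infty(P_k(\cos t)-1)t^{-1-2\alpha}\,\dd t$ contain a classical symbol $a k^{2\alpha}+\dots$ plus a term $b(-1)^k k^{-1}$ with $b\neq 0$, coming from $t$ near odd multiples of $\pi$. That term is the antipodal Fourier integral operator, and it is not a symbol in $k$. On a non-round Zoll surface, $\A$ is not even a function of $\Delta_g$, so the cluster decomposition does not diagonalize it.

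Your Green kernel $\sum_k k^{-2\alpha}\Pi_k$ is instead a symbolic function of the integer-spectrum operator of Colin de Verdi\`ere--Guillemin. It is therefore a pseudodifferential operator of order $-2\alpha$, smooth off the diagonal \emph{including at conjugate pairs}. On the sphere, $\Pi_k(x,y)=\frac{2k+1}{4\pi}P_k(\cos \dd(x,y))$ is as large at the antipode as on the diagonal, since $P_k(-1)=(-1)^k$. The alternating signs nevertheless make the weighted sum smooth there. Your cluster/stationary-phase scheme would therefore yield $u_{\star,\varepsilon}(p)=\overline u_{\star,\varepsilon}+\mathcal O(1)$ at every $p\neq p_\star$, contradicting (ii)--(iv).

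The missing idea is to keep $\A_\varrho$ and invert with it. Since $G^2=\mathrm{Id}$, products of pseudodifferential operators with copies of $\A_\varrho$ stay either pseudodifferential or Fourier integral operators associated with $\mathrm{graph}(G)$. One gets $\A^+=\mathrm S-\mathrm S\A_\varrho\mathrm S$ modulo order $-3-8\alpha$, with $\mathrm S$ of order $-2\alpha$. The $4\alpha$ in $\varepsilon^{-1+4\alpha+(k+1)^{-1}}$ comes from the two factors of $\mathrm S$ around $\A_\varrho$: the kernel of $\mathrm S^2$ is $\asymp \dd(q,r)^{4\alpha-2}$. A weight $k^{-2\alpha}$ cannot produce this, so your scaling step does not give the claimed exponent.

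Two further corrections. First, the equilibrium density has mass $\mathcal O(1)$, not $\varepsilon^{2(\alpha-1)}$. Writing $\A u_{\star,\varepsilon}=-\mathbf 1_{\Omega_\varepsilon}+F_\varepsilon$ with $F_\varepsilon$ supported in the ball, self-adjointness and $\A\mathbf 1=0$ force $\int F_\varepsilon=|\Omega_\varepsilon|$. The size $\varepsilon^{2(\alpha-1)}$ of $\overline u_{\star,\varepsilon}$ comes from the $\dd^{2\alpha-2}$ singularity of $\mathrm S$ integrated against the profile $\varepsilon^{-2}(1-|x|^2)^{-\alpha}$.

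Second, once $-\mathrm S\A_\varrho\mathrm S$ is identified, no stationary phase at degenerate caustics is needed. Positivity of $\varrho$, of the kernel of $\mathrm S^2$ near the diagonal, and of the profile of $F_\varepsilon$ reduces $\overline u_{\star,\varepsilon}-u_{\star,\varepsilon}(p)$ to a non-oscillatory integral, up to $\mathcal O(1)$ and factors $1\pm\delta$. That integral is $\varepsilon^{4\alpha-2}\int_{T_p\Sigma}|v|^{-1}\varrho(|v|)\chi(\exp_p v)\,G_\alpha\bigl(\Psi_\varepsilon^{-1}(\exp_p v)\bigr)\,\dd v$, with $G_\alpha(x)\asymp(1+|x|)^{4\alpha-2}$.

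In the normal form $(s,u)\mapsto(s,su+u^{k+1}\rho)$, the rescaling $(s,u)\mapsto(\varepsilon s,\varepsilon^{1/(k+1)}u)$ plus dominated convergence gives $\sim c\,\varepsilon^{1+1/(k+1)}$. This holds exactly when $2-4\alpha>1+1/(k+1)$, i.e. $4\alpha<k/(k+1)$, and $c>0$ is automatic. Your observation that the relevant directions have measure $\varepsilon^{1/(k+1)}$ is the right geometric input. It has to be fed into this operator, not into the cluster sum.
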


%We emphasize that the situations described in the above result may indeed occur. Using the infinite dimensional family of Zoll metrics constructed by Guillemin near the round sphere \cite{guillemin1976radon} together with a transversality argument, we show that for every $k\in\mathbb N$ there exist a Zoll surface $(\Sigma,g)$ and points $p,q\in\Sigma$ such that $q$ is conjugate to $p$ with degree $k$, see Theorem~\ref{thm:alldegree}.

This result shows that different local geometries of the conjugate locus produce different asymptotic signatures in the mean first capture time. The above hierarchy is non-vacuous. Indeed, using Guillemin’s local deformation theory for Zoll metrics \cite{guillemin1976radon} and a transversality argument, we show that every finite degree can be realized by a conjugate pair on a Zoll surface. More precisely, we prove that for every $k\in\mathbb N$ there exist a Zoll surface $(\Sigma,g)$ and points $p,q\in\Sigma$ such that $q$ is conjugate to $p$ with degree $k$, see Theorem~\ref{thm:alldegree}. {}{}{}{We also show that there exist Zoll surfaces admitting conjugate points of infinite degree but whose conjugate locus are not reduced to a point, see Lemma~\ref{lem:infinitedegree}.} The round sphere provides a distinguished example of infinite degree, where the entire conjugate locus collapses to a single point.

Zoll surfaces originate from the classical construction of non-round metrics on $\mathbb S^2$ all of whose geodesics are closed by Zoll \cite{zoll1903Zoll}. The first infinitesimal obstruction to deforming the round metric within this class was identified by Funk \cite{Funk1913}, and the converse problem was later solved by Guillemin through the Radon transform and a Nash-Moser argument \cite{guillemin1976radon}. More generally, the global geometry of Zoll surfaces has been the subject of extensive study in Riemannian geometry \cites{Weinstein1974,Besse1978Zoll,LeBrun2002Zoll,mazzucchelli2018characterization}.

Beyond these geometric aspects, Zoll manifolds have provided a fundamental
testing ground for spectral and microlocal analysis. The periodicity of the
geodesic flow leads to strong constraints on the spectrum of geometric
operators, while still allowing subtle spectral phenomena \cites{duistermaat1975spectrum,weinstein1977asymptotics,colin1979spectre,UribeZelditch1993,Zelditch1996,Zelditch1997,MaciaRiviere2016}.

In this paper, we exploit a different aspect of the same geometry: despite the
global rigidity imposed by the periodicity of all geodesics, Zoll surfaces can
exhibit a rich hierarchy of singular conjugate loci, and these singularities are
detected by the asymptotic behavior of L\'evy hitting times.

%The proof of Theorem~\ref{thm:main} relies on the analysis of the generator of the process. The latter is given by \cite{applebaum2000} 
%\begin{equation}\label{eq:generator}
%\A = \lim_{\varepsilon \to 0} c_\alpha \int_\varepsilon^\infty \pi_* (\varphi_t^* - \I)\pi^* \frac{\dd t}{t^{1 + 2\alpha}} : \mathscr C^\infty(\Sigma) \to \mathscr D'(\Sigma),
%\end{equation}
%where $\pi : S\Sigma \to \Sigma$ denotes the unit tangent bundle projection and $(\varphi_t)$ is the geodesic flow on $S\Sigma$. A key step is a microlocal description of the generator. We show that $\A$ decomposes into a pseudodifferential contribution and a Fourier integral contribution. The latter has a particularly rigid structure: its canonical relation is the graph of an exact involutive symplectomorphism of $T^*\Sigma\setminus 0$ which is canonically defined using the geometry of conjugate points, see Proposition \ref{prop:arho}. This decomposition is what allows us to relate the singularities of the Green kernel of $\A$ to the local geometry of the conjugate locus.

\subsection*{Strategy of proof}
The proof of Theorem~\ref{thm:main} relies on the analysis of the generator
\begin{equation}\label{eq:generator}
\A = \lim_{\delta \to 0} c_\alpha \int_\delta^\infty \pi_* (\varphi_t^* - \I)\pi^* \frac{\dd t}{t^{1 + 2\alpha}} : \mathscr C^\infty(\Sigma) \to \mathscr D'(\Sigma),
\end{equation}
of the process $(X_t)$ introduced in \cite{applebaum2000}. Here $\mathscr D'(\Sigma)$ is the space of distributions on $\Sigma$ while $\pi:S\Sigma\to\Sigma$ denotes the unit tangent bundle projection and $(\varphi_t)$ is the geodesic flow. A first step consists in establishing the connection between $\A$ and the expected hitting time. More precisely, adapting the arguments from \cite{chaubet2025levy}, we prove that $u_{\star,\varepsilon}$ is characterized as the solution of a non-local boundary value problem involving $\A$. This reduces the study of hitting times to the analysis of an almost inverse of the generator.

A key step is therefore to obtain a microlocal description of $\A$. We show that it decomposes
{{}{}{}{}{}
as 
$$
\A = \A_\alpha + \mathrm B
$$
where $\A_\alpha$ is an elliptic pseudo-differential operator of order $2\alpha$ and $\mathrm B$ is a Fourier integral operator of order $-1$ whose canonical relation is the graph of an exact, involutive symplectomorphism 
$$
G : T^*\Sigma \setminus \underline 0 \to T^*\Sigma \setminus \underline 0
$$
which is canonically determined by the geometry of conjugate points, see Propositions \ref{prop:generator} and \ref{prop:arho}.}
%into a pseudodifferential contribution and a Fourier integral contribution. The latter admits a canonical relation which is the graph of an exact involutive symplectomorphism of $T^*\Sigma\setminus0$, canonically determined by the geometry of conjugate points; see Proposition~\ref{prop:arho}.
Since $G$ is involutive, compositions involving the Fourier integral part generate no new canonical relations. In particular, parametrices of $\A$ remain within the same microlocal class. This yields a precise description of the Green kernel of $\A$ and allows us to relate its singularities to the local geometry of the conjugate locus. {{}{}{}{}{}The local analysis underlying this geometry relies on normal forms for the exponential map corresponding to the different degrees of conjugate points. Such normal forms are in the spirit of the classical work of Warner \cite{warner1965conjugateRiemann}, which, in arbitrary dimension, treats the cases corresponding to degrees~$1$ and~$\infty$ in our terminology.} {}{}{}{In turn, this provides the key tool for deriving sharp asymptotics for $u_{\star,\varepsilon}$.}

\subsection*{The non-orientable case}
{}{}{}
{
%If $(\Sigma, g)$ is a Zoll surface which is not orientable, then we have
%\begin{equation}\label{eq:hitbounded}
%u_{\star, \varepsilon}(p) = \overline{u}_{\star, \varepsilon} + \mathcal O(1) 
%\end{equation}
%as $\varepsilon \to 0$, for any $p \neq p_\star$. Indeed, in that case $\Sigma$ is diffeomorphic to $\RR P^2$ and the conjugate locus of $p_\star$ is given by $p_\star$ itself, see Theorem \ref{thm:zolltopology} (in fact a stronger result holds and $\Sigma$ must be isometric to $\RR P^2$ by a result of Green \cite{green1963wiedersehensflachen}, see Remark \ref{rem:green}). In that case, as we shall see in \S\ref{subsec:notorientable}, the generator $\mathrm A$ of the process is an elliptic pseudo-differential operator as in the torus or Anosov case; this yields \eqref{eq:hitbounded}.
If $(\Sigma,g)$ is a non-orientable Zoll surface, then \eqref{eq:hitbounded} holds as $\varepsilon\to0$, for every $p\neq p_\star$. Indeed, by Theorem~\ref{thm:zolltopology}, $\Sigma$ is diffeomorphic to $\RR P^2$ and $p_\star$ is conjugate only to itself along geodesics through $p_\star$ (in fact a result of Green \cite{green1963wiedersehensflachen} implies that $(\Sigma,g)$ must be isometric to the standard projective plane; see Remark~\ref{rem:green}). As we shall see in \S\ref{subsec:notorientable}, the absence of non-trivial conjugate points implies that the generator $\mathrm A$ is an elliptic pseudodifferential operator, exactly as in the torus and Anosov cases and the analysis from \cite{chaubet2025levy} yields \eqref{eq:hitbounded}.
}

\subsection*{Related works}
Geodesic L\'evy flights on general Riemannian manifolds were first introduced in \cite{applebaum2000} where the formula for the generator $\A$ in \eqref{eq:generator} was derived. The work of \cite{tully2024levy} proved that, on {\em general} Riemannian manifolds, $\A$ as defined in \eqref{eq:generator} is a Fourier integral operator when time integral is over a {\em finite} interval. However, the only detailed analysis of $\A$ when the time integral is {\em infinite} is in \cite{chaubet20
25levy} where the authors showed that it is an elliptic pseudodifferential operator on negatively curved manifolds and the flat torus. As for the first arrival time function $u_{\star, \varepsilon}(p)$, the leading asymptotic was computed in \cite{chaubet2025levy} for Anosov manifolds, the round sphere, and the flat torus---on the sphere, it was also observed a special case of Theorem \ref{thm:main} (iv). 	

\subsection*{Organization of the article}

The article is organized as follows. In \S\ref{sec:zoll}, we study the geometry of conjugate loci on Zoll surfaces and derive normal forms for the exponential map near singular conjugate points. In \S\ref{sec:microlocal} we recall standard facts about pseudo-differential and Fourier integral operators. In \S\ref{sec:generator}, we analyze the generator of the Lévy process and describe the microlocal structure of a parametrix for its inverse. In \S\ref{sec:expected}, we express the expected stopping time as the solution of an integral equation involving the generator and prove Theorem~\ref{thm:main}. Finally, in Appendix~\ref{app:morin} we give normal forms for singular maps in dimension $2$ and in Appendix~\ref{app:topology} we recall classical facts on Zoll surfaces.

\subsection*{Acknowledgment}
Y. C. acknowledges the hospitality of University of Melbourne and the inspiration provided by Bells Beach during the preparation of this work. E.J.G.\  was partially supported by Melbourne Research Scholarship and by Stichting dr Hendrik Muller's Vaderlandsch Fonds. L.T. was partially supported by ARC DP260103195 and ARC DP220101808. This project was partially supported by an International Emerging Actions (IEA) grant funded by the CNRS.

\section{Preliminaries on Zoll surfaces}\label{sec:zoll}
In what follows we let $(\Sigma, g)$ be a closed oriented Riemannian surface and denote by $M = S\Sigma$ its unit tangent bundle.

\subsection{Geometry of the unit tangent bundle}\label{ssec:geometry-unit-tangent-bundle}
We recall here some classical facts about geometry of surfaces, see e.g. \cite{singer1976lecture}*{\S7.2}, \cite{PaternainSaloUhlmann2023}*{\S3.5.1} or \cite{chaubet_dynamical}*{\S10.4.1.1} . We have the Liouville one-form $\alpha \in \Omega^1(M)$ defined by
$$
\langle \alpha(z), w \rangle = \langle \dd \pi(z) \cdot w, v \rangle, \quad {z = } (p,v) \in M, \quad w \in T_{(x,v)}M.
$$
Then $\alpha$ is a contact form (that is, $\alpha \wedge \dd \alpha$ is a volume form on $M$) and it turns out that the geodesic vector field $X$ is the Reeb vector field associated to $\alpha$, that is, it satisfies
$$
\iota_X \alpha = 1, \quad \iota_X \dd \alpha = 0,
$$
where $\iota$ denote the interior product. We  set $\beta = R_{\pi/2}^*\alpha$ where for $\theta \in \RRR$, we denote by $R_{\theta} : M \to M$ the rotation of angle $\theta$ in the fibers {(which is defined thanks to the orientation of $\Sigma$)}. Then the volume form $\vol_g$ of $\Sigma$ satisfies
\begin{equation}\label{eq:volume}
  \pi^*\vol_g = \alpha \wedge \beta.
\end{equation}
We denote by $\psi$ the connection one-form, that is, the unique one-form on $M$ satisfying
\begin{equation}\label{eq:structural}
  \iota_V\,\psi = 1, \quad \dd \alpha = \psi \wedge \beta, \quad \dd \beta = \alpha \wedge \psi, \quad \dd \psi = -(\kappa \circ \pi) \alpha \wedge \beta,
\end{equation}
where $V$ is the vector field generating $(R_\theta)_{\theta \in \RR}$ and $\kappa$ is the Gauss curvature of $\Sigma$. Then $(\alpha, \beta, \psi)$ is a global frame of $T^*M$. We denote {by} $H$ the vector field on $M$ such that $(X, H, V)$ is the dual frame of $(\alpha, \beta, \psi)$. We then have the following commutation relations
\begin{equation}\label{eq:commutation}
  [V,X] = H, \quad [V,H] = -X, \quad [X, H] = (\kappa \circ \pi) V.
\end{equation}
The orientation of $M$ will be chosen so that $(X, H, V)$ is positively oriented.
For each $z \in M$, we shall denote by
$$
\mathbf V(z) = \ker \dd \pi(z) \subset T_zM
$$
the vertical bundle at $z$. In fact one has
$$
\mathbf V(z) = \RRR V(z) = T_z\Lambda_{\pi(z)} \quad \text{where} \quad \Lambda_{\pi(z)} = S_{\pi(z)} \Sigma \subset M
$$
is the set unit tangent vectors based at $\pi(z)$. Let $(\varphi_t)_{t \in \RRR}$ denote the geodesic flow, that is the flow generated by $X$.

For any $(q, \eta) \in T^*\Sigma \setminus 0$, we let $v_\eta \in S_q \Sigma$ be the unique unit vector such that the frame $(\eta^\sharp, v_\eta)$ is orthonormal and positively oriented, where $^\sharp : T^*_q\Sigma \simeq T_q\Sigma$ is the musical isomorphism. For any $(q,\eta)$ we will denote
$$
z_\eta^+ = (q, v_\eta) \quad \text{and} \quad z_\eta^- = (q, -v_\eta).
$$
    Let $\mathbf H^\star(z) = \ker V(z) \cap \ker X(z) = \RR \beta(z)$ for each $z$. The line bundle $\mathbf H^\star \to M$ is called the horizontal co-bundle on $M$. Then we have smooth isomorphisms
    \begin{equation}\label{eq:iotapm}
    \iota_{\pm} : T^*\Sigma \setminus \underline 0 \longrightarrow \mathbf H^\star \setminus \underline 0, \quad (q, \eta) \longmapsto (z_\eta^\pm, \dd \pi(z_\eta^\pm)^\top \eta).
    \end{equation}
    Denote by $\lambda_{T^*M}$ and $\lambda_{T^*\Sigma}$ the canonical Liouville one-forms on $T^*M$ and $T^*\Sigma$, respectively, and let $\lambda_{\mathbf H^\star}$ the restriction of $\lambda_{T^*M}$ to $\mathbf H^\star$. Then we have
     \begin{equation}\label{eq:iotapmlambda}
    \iota_\pm^*\lambda_{\mathbf H^\star} = \lambda_{T^*\Sigma}
    \end{equation}
    as it follows from straightforward verification. Finally we note that if $\widetilde R_\pi = (\dd R_\pi)^{-\top} : T^*M \to T^*M$ is the symplectic lift of $R_\pi$ then $\widetilde R_\pi$ preserves $\mathbf H^\star$ and 
    \begin{equation}\label{eq:iotapmrpi}
    \widetilde R_\pi \circ \iota_\pm = \iota_\mp.
    \end{equation}

\subsection{First conjugacy time}

\begin{definition}
  We say that a vector $z \in M$ is conjugate at time $t \in \RRR$ if the intersection
  \begin{equation}\label{eq:intersection}
    \Bigl(\dd \varphi_t(z) \cdot \mathbf V(z)\Bigr) \cap \mathbf V(\varphi_t(z))
  \end{equation}
  is not reduced to $\{0\}$.
\end{definition}

Since $\mathbf V(z) = \RRR V(z)$ is of dimension $1$, the condition \eqref{eq:intersection} is equivalent to the fact that there is $a \in \RRR^*$ such that
\begin{equation}\label{eq:dephitv}
\dd \varphi_t(z) V(z) = a V(\varphi_t(z)).
\end{equation}
Moreover, note that $\ker \alpha = \RRR H \oplus \RRR V$ is preserved by the geodesic flow. In particular $z$ is conjugate at time $t$ if and only if 
$$
\langle \beta(\varphi_t(z)), \dd \varphi_t(z)V(z)\rangle = 0,
$$ 
which reads
$
  \langle \varphi_{t}^*\beta(z), V(z) \rangle = 0. 
$
Another equivalent formulation is
\begin{equation}\label{eq:dphithstar}
\dd \varphi_t(z)^{-\top} \mathbf H^\star(z) = \mathbf H^\star(\varphi_t(z)).
\end{equation}

From now on we assume that $\Sigma$ is a Zoll surface of period $2\pi$, which means that the geodesic flow is periodic of period $2\pi$ and for any $z \in S\Sigma$, the map 
\begin{equation}\label{eq:gammaz}
\gamma_z : \RR/2\pi\ZZ \to \Sigma, \quad t \mapsto \pi \circ \varphi_t(z)
\end{equation}
is injective. In particular this implies that for every $z \in M$, one has $\dd \varphi_{2\pi}(z) = \I : T_zM \to T_zM$, hence according to \eqref{eq:intersection} there exists $t \in \RRR_+$ so that $z$ is conjugate at time~$t$. This allows us to make the following 

\begin{definition}
  For any $z \in M$, we denote by
  $$
  \tau(z) = \inf\{t \in \RRR_+~:~z \text{ is conjugate at time } t\}
  $$
  the first conjugacy time when starting from $z$.
\end{definition}

\begin{lemma}[Properties of the conjugacy time]\label{lem:conjtime}
    The map $\tau : M \to \RRR_+$ is smooth and takes  values in $\left]0, 2 \pi \right[$. Moreover we have $\tau \circ \mathrm R_\pi = 2\pi - \tau$ and a point $z \in M$ is conjugate at time $t \in \RRR$ if and only if $t \in 2 \pi \ZZ \cup (\tau(z) + 2 \pi \ZZ)$.
\end{lemma}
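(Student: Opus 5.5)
We reduce conjugacy to the vanishing of a Jacobi-field component. Fix $z \in M$ and write
$$
\dd\varphi_t(z)V(z) = a(t)\,X + b(t)\,H + c(t)\,V
$$
in the frame $(X,H,V)$ at $\varphi_t(z)$. Since $\ker\alpha$ is preserved by the flow, $a\equiv 0$. Differentiating with the commutation relations \eqref{eq:commutation} gives the Jacobi system
$$
\dot b = -c, \qquad \dot c = (\kappa\circ\pi)\,b,
$$
so that $\ddot b + (\kappa\circ\pi\circ\varphi_t(z))\, b = 0$, with $b(0)=0$ and $\dot b(0)=-1$. The signs should be checked, but only $b(0)=0$ and $\dot b(0)\neq 0$ matter. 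By \eqref{eq:dephitv}, $z$ is conjugate at time $t$ exactly when $b(t)=0$. This identification $t\mapsto b_z(t)$ is the basic tool for the rest of the argument.

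Next we establish the zero structure of $b_z$ using the Zoll property. Since $\dd\varphi_{2\pi}=\I$, the function $b_z$ is $2\pi$-periodic with $b_z(2\pi k)=0$, so it suffices to understand its zeros in $\left]0,2\pi\right[$. Every zero is simple, since $b=\dot b=0$ would force $b\equiv0$. Hence the zeros in $[0,2\pi]$ are finite in number. Their count is even, because the simple zeros at $0$ and $2\pi$ have $\dot b(0)=\dot b(2\pi)$. So there is at least one interior zero, which gives $0<\tau(z)<2\pi$; $\tau(z)>0$ because the zero at $0$ is simple.

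The main obstacle is to show there is \emph{exactly one} zero in $\left]0,2\pi\right[$. For this we would use the Maslov index, i.e.\ the rotation number of the vertical line $\RR V$ under the linearized flow. On a Zoll surface all geodesics are homotopic through closed geodesics, so the number of zeros of $b_z$ on $\left]0,2\pi\right]$ (the Morse index of the closed geodesic) is independent of $z$. One would then compute it, either by deformation to the round sphere, where it equals $2$ (zeros at $\pi$ and $2\pi$), or by invoking the classical fact recalled in Appendix~\ref{app:topology} that on a Zoll surface the index of each simple closed geodesic is $1$ (Theorem~\ref{thm:zolltopology}, or the index theorem of Besse). Continuity of the count needs the zeros to stay simple, which we have already shown. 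This yields a unique interior zero $\tau(z)$, and by periodicity the conjugacy set is $2\pi\ZZ\cup(\tau(z)+2\pi\ZZ)$, negative times included.

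Smoothness of $\tau$ then follows from the implicit function theorem. The map $(z,t)\mapsto b_z(t)$ is smooth on $M\times\RR$, and $\partial_t b_z(\tau(z))\neq0$ by simplicity, so $\tau$ is smooth and in particular well-defined as the unique zero in $\left]0,2\pi\right[$. For the symmetry, $R_\pi z=-z$ traverses the same geodesic backwards: $\varphi_t(R_\pi z)=R_\pi\varphi_{-t}(z)$, and $R_\pi$ maps $\mathbf V$ to $\mathbf V$. Hence $R_\pi z$ is conjugate at time $t$ iff $\varphi_{-t}(z)$ is conjugate to $z$, i.e.\ iff $z$ is conjugate at time $-t$. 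Negative times are allowed because $\dd\varphi_{-t}$ maps the intersection \eqref{eq:intersection} back. Thus the first positive conjugacy time of $R_\pi z$ is the smallest positive element of $-(\tau(z)+2\pi\ZZ)$, namely $2\pi-\tau(z)$.
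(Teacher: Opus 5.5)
Your argument is essentially the paper's proof: your $b$ is the paper's $f_z(t)=\langle\varphi_t^*\beta(z),V(z)\rangle$ (with the correct signs $\dot b=c$, $\dot c=-\kappa b$, so $\dot b(0)=1$). The paper likewise uses periodicity and simplicity of zeros to get $0<\tau<2\pi$, the implicit function theorem for smoothness, $\mathrm R_\pi\varphi_t\mathrm R_\pi=\varphi_{-t}$ for the symmetry, and Theorem~\ref{thm:zolltopology} for the uniqueness of the interior conjugate time.

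Of your two options for that last step, keep only the appeal to Theorem~\ref{thm:zolltopology}. Deforming an arbitrary Zoll metric to the round one through Zoll metrics is a global fact that nothing here supplies, since Guillemin's family only covers metrics near the round one. The appendix's covering-degree argument is exactly the metric-independent substitute for such a deformation: it identifies the number of points conjugate along a closed geodesic (the point itself included) with $|\pi_1(\mathbf PT\mathcal G)|/|\pi_1(\mathbf PT\Sigma)|$.

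A minor slip: it is the number of zeros of $b_z$ in $\left]0,2\pi\right]$, not in $[0,2\pi]$, that is even.
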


\begin{proof}
  Let $z \in M$ and $f_z(t) = \langle\varphi_t^*\beta(z),V(z)\rangle$. Then $f_z$ is smooth. Moreover \eqref{eq:structural} yields
  $$
  \partial_t \langle \varphi_t^*\beta, V\rangle = \langle \varphi_t^* \mathcal L_X \beta, V\rangle = \langle \varphi_t^* \psi, V\rangle
  $$
  and similarly $\partial_t \langle \varphi_t^* \psi, V\rangle = -(\kappa \circ \pi \circ \varphi_t) \langle \varphi_t^*\beta, V\rangle$. Hence we get, if $\kappa_z(t) = \kappa(\pi(\varphi_t(z)))$,
    \begin{equation}\label{eq:sturm}
    f_z''(t) + \kappa_z(t) f_z(t) = 0.
    \end{equation}
  In particular $f_z(0) = f_z(2\pi) = 0$ and $f'_z(0) = 1$. Since $f_z$ is periodic of period $2\pi$, we have that the first zero of $f_z$ (which is $\tau(z)$) must happen before $2\pi$ and satisfies $f'_z(\tau(z)) < 0$. By compactness and the implicit function theorem, we get that $z \mapsto \tau(z)$ is smooth. The last property follows from the identity $\mathrm R_\pi \varphi_t \mathrm R_\pi = \varphi_{-t}$ and the fact there is at most one conjugate time in $\left]0, 2\pi\right[$, see Appendix \ref{app:topology}.
\end{proof}

It will be useful in what follows to introduce the \textit{conjugacy map}
\begin{equation}
    \Phi : M \to M, \quad z \mapsto \varphi_{\tau(z)}(z).
    \label{eq:conjugacy-map}
\end{equation}

\begin{lemma}[Properties of the conjugacy map]\label{lem:conjmap}
    The following holds for the conjugacy map $\Phi$.
    \begin{enumerate}[label=\emph{(\roman*)}]
    \item The map $\Phi$ is a smooth diffeomorphism of $M$ with no fixed points and such that $\Phi^2 = \mathrm{Id}$ and $\Phi \circ R_\pi = R_\pi \circ \Phi$.
    \item There holds $\tau \circ \Phi + \tau = 2\pi$. 
    \item The symplectic lift $\widetilde \Phi = \dd \Phi^{-\top}$ of $\Phi$ induces an isomorphism
    $
    \widetilde \Phi : \mathbf H^\star\longrightarrow \mathbf H^\star
    $
    and 
    $$
    \widetilde \Phi^*\lambda_{\mathbf H^\star} = \lambda_{\mathbf H^\star}.
    $$
    \end{enumerate}
\end{lemma}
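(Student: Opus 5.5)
The plan is to derive everything from Lemma~\ref{lem:conjtime}, especially the characterization of conjugate times as $2\pi\ZZ\cup(\tau(z)+2\pi\ZZ)$ and the smoothness of $\tau$.

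\emph{Smoothness, involution and absence of fixed points.} Since $\tau$ is smooth and the flow is smooth, $\Phi(z)=\varphi_{\tau(z)}(z)$ is smooth. To get (ii) and $\Phi^2=\mathrm{Id}$, set $w=\Phi(z)$ and observe that conjugacy is symmetric along the orbit. By \eqref{eq:dephitv}, $\dd\varphi_{\tau(z)}(z)V(z)=aV(w)$, so applying $\dd\varphi_{-\tau(z)}(w)$ shows that $w$ is conjugate at time $-\tau(z)$. By $2\pi$-periodicity, $w$ is then also conjugate at time $2\pi-\tau(z)\in\left]0,2\pi\right[$. Lemma~\ref{lem:conjtime} says there is exactly one conjugate time in $\left]0,2\pi\right[$, namely $\tau(w)$, so $\tau(w)=2\pi-\tau(z)$; this is (ii). It then follows that
$$\Phi(w)=\varphi_{2\pi-\tau(z)}\varphi_{\tau(z)}(z)=\varphi_{2\pi}(z)=z,$$
so $\Phi^2=\mathrm{Id}$, and in particular $\Phi$ is a diffeomorphism. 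A fixed point would satisfy $\varphi_{\tau(z)}(z)=z$ with $0<\tau(z)<2\pi$, which contradicts the injectivity of $\gamma_z$ in \eqref{eq:gammaz}. For the commutation with $R_\pi$, combine $R_\pi\varphi_tR_\pi=\varphi_{-t}$ with $\tau\circ R_\pi=2\pi-\tau$:
$$\Phi(R_\pi z)=\varphi_{2\pi-\tau(z)}(R_\pi z)=\varphi_{-\tau(z)}(R_\pi z)=R_\pi\varphi_{\tau(z)}(z)=R_\pi\Phi(z).$$

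\emph{Part (iii).} This is the step requiring a genuine computation. Write $\dd\Phi(z)=\dd\varphi_{\tau(z)}(z)+X(\Phi(z))\otimes\dd\tau(z)$. Since $\varphi_t$ is the Reeb flow, $\dd\varphi_t^{\top}$ preserves $\alpha$, and $\ker\alpha$ is invariant. To show $\widetilde\Phi$ maps $\mathbf H^\star(z)$ onto $\mathbf H^\star(\Phi(z))$, it suffices to check $\dd\Phi(z)^\top\beta(\Phi z)\in\RR\beta(z)$. Both $V$ and $X$ should be annihilated:
\begin{itemize}
\item[] $\langle\beta(\Phi z),\dd\Phi(z)V\rangle=\langle\varphi_{\tau}^*\beta,V\rangle+\langle\beta,X\rangle\,V\tau=0$, because $z$ is conjugate at time $\tau(z)$ (so the first term vanishes) and $\beta(X)=0$;
\item[] $\langle\beta(\Phi z),\dd\Phi(z)X\rangle=\langle\beta,X\rangle(1+X\tau)=0$.
\end{itemize}
Hence $\dd\Phi^\top$ maps $\mathbf H^\star(\Phi z)$ into $\mathbf H^\star(z)$, and since it is invertible this is an isomorphism; its inverse transpose is $\widetilde\Phi$. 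Finally, the symplectic lift of any diffeomorphism preserves the Liouville form $\lambda_{T^*M}$, and since $\widetilde\Phi$ preserves the submanifold $\mathbf H^\star$, restricting gives $\widetilde\Phi^*\lambda_{\mathbf H^\star}=\lambda_{\mathbf H^\star}$.

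The main point needing care is the symmetry argument used for (ii): it depends on the uniqueness of the conjugate time in $\left]0,2\pi\right[$ taken from Lemma~\ref{lem:conjtime} and Appendix~\ref{app:topology}.
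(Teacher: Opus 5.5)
Your proof is correct and takes essentially the same route as the paper. For (iii) you check that $\dd\Phi(z)^\top\beta(\Phi(z))$ annihilates $X(z)$ and $V(z)$, which is the dual form of the paper's check that $\dd\Phi(z)$ maps $\ker\beta(z)=\mathrm{span}(X(z),V(z))$ into $\ker\beta(\Phi(z))$, and the Liouville-form restriction argument is identical. Your symmetry argument for $\tau\circ\Phi=2\pi-\tau$ and your direct derivation of $\Phi\circ R_\pi=R_\pi\circ\Phi$ from $\tau\circ R_\pi=2\pi-\tau$ spell out, more cleanly, steps the paper treats tersely: it simply cites Lemma~\ref{lem:conjtime} for (ii) and obtains the commutation via $\Phi^{-1}=R_\pi\Phi R_\pi$.
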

\begin{proof}
    By Lemma \ref{lem:conjtime} one has that $\Phi$ is smooth and $\tau \circ \Phi = 2\pi - \tau$. The latter relation combined with the fact that $\varphi_{2\pi} = \mathrm{Id}$ yields $\Phi^2 = \mathrm{Id}$. The fact that $\Phi$ has no fixed points follows immediately from the fact that $0 < \tau(z) < 2 \pi$ and our Zoll hypothesis. For the identity $\Phi \circ R_\pi = R_\pi \circ \Phi$ we simply notice that $\tau \circ R_\pi \circ \Phi = \tau$ which yields $\Phi^{-1} = R_\pi \Phi R_\pi$. Now we turn to the last point. It suffices to show that $\widetilde \Phi(\beta(z)) \in \RR \beta(\Phi(z))$ or equivalently that $\dd \Phi(z) : \ker \beta(z) \to \ker \beta(\Phi(z))$. First, we have
    $$
    \dd \Phi (z) X(z) = \partial_s|_{s = 0} \varphi_{\tau(\varphi_s(z))}(\varphi_s(z)) = (X\tau)(z)X(\Phi(z))  + \dd \varphi_{\tau(z)}(z) \cdot X(z) \in \RR X(\Phi(z)).
    $$
    since $X$ is preserved by $\dd \varphi_t$. On the other hand since $\tau(z)$ is the conjugacy time  \eqref{eq:dephitv} yields
    \begin{equation}\label{eq:ddphiv}
    \dd \Phi(z) V(z) = (V \tau)(z) X(\Phi(z)) + a V(\Phi(z)) \in \ker \beta(\Phi(z)).
    \end{equation}
    Since $\widetilde \Phi$ is a diffeomorphism on $T^*M$ its restriction to $\mathbf H^\star$ is also a diffeomorphism. Finally notice that $\widetilde \Phi^*\lambda_{T^*M} = \lambda_{T^*M}$ since it is the symplectic lift of $\Phi$. Since $\widetilde \Phi$ preserves $\mathbf H^\star \setminus \underline 0$ we get $\widetilde \Phi^*\lambda_{\mathbf H^\star} = \lambda_{\mathbf H^\star}$.
\end{proof}

For $p \in \Sigma$ we introduce the sets
%{\color{red} elements of the sphere bundle are sometimes called $z$ and sometimes called $(p,v)$. Consistency?}
$$
\widetilde{\mathrm{Conj}}(p) = \{\tau(p,v) v~:~v \in \Lambda_p\} \subset T_p \Sigma \quad \text{and} \quad {\mathrm{Conj}}(p) = \pi(\Phi(\Lambda_p)) \subset \Sigma
$$
which are called tangent conjugate locus and conjugate locus of $p$ respectively. We say that a pair $(p,q)$ is conjugate if $q \in \mathrm{Conj}(p)$. This is equivalent to the fact that $p \in \mathrm{Conj}(q)$ in which case we say that $p$ and $q$ are conjugate. Note that by Lemma \ref{lem:conjtime} and injectivity of \eqref{eq:gammaz}, we have
\begin{equation}\label{eq:pnotinconjp}
p\notin \mathrm{Conj}(p) \quad \text{for all }p \in \Sigma.
\end{equation}

\subsection{Degree of a conjugate vector and normal form of the exponential map}\label{subsec:degree}
As mentioned in the introduction, we define the degree of a conjugate point, which describes the singularity type of the exponential map near a conjugate vector. It is worth mentioning that this notion already appears implicitly in the work of Waters \cite{waters2017bifurcations, waters2019conjugate} on bifurcations of singularities of conjugate loci on surfaces.
\begin{definition}\label{def:degree}
    The \textit{degree} of $z \in M$ is defined by
    $$
    \deg(z) = \inf \bigl\{k \in \mathbb{N}~:~V^k \tau(z) \neq 0\bigr\} \in \mathbb{N} \cup \{\infty\}.
    $$
\end{definition}
Note that the degree is different from the \textit{order} of the conjugacy
$$
\mathrm{ord}(z) = \dim \Bigl(\dd \varphi_{\tau(z)}(z)\cdot \mathbf V(z) \cap \mathbf V\bigl(\varphi_{\tau(z)}(z)\bigr) \Bigr)
$$
which, in our case, is always equal to $1$ since $\dim \Sigma = 2.$
\begin{lemma}\label{lem:reversing_degrees}
    For any $z \in M$ there holds $\deg(z) = \deg(\mathrm R_\pi(\Phi(z))).$
\end{lemma}
\begin{proof}
    Let $\widehat \Phi= \mathrm R_\pi \circ \Phi$. Then $\tau \circ \widehat \Phi= \tau$, hence $V \tau = \dd \tau\bigl( \dd \widehat \Phi\cdot V\,\bigr)$. Now write
    \begin{equation}\label{eq:defab}
        \dd \varphi_t(z) \cdot V(z) = a(t,z) H(\varphi_t(z)) + b(t,z) V(\varphi(t,z))
    \end{equation}
    where $a, b \in \mathscr C^\infty(\RRR \times M)$. Note that by definition of $\tau$ we have
    $$
    a(\tau(z), z) = 0 \quad \text{and} \quad b(\tau(z), z) \neq 0 \quad \text{for all }z \in M.
    $$
    In particular one may compute
    $$
    \dd \Phi(z) \cdot V(z) = (V\tau)(z) X(\Phi(z)) + b(\tau(z), z) V(\Phi(z)).
    $$
    Since $(\mathrm R_\pi)_* X = -X$ and $(\mathrm R_\pi)_* V = V$ we get for any $u \in \mathscr C^\infty(M)$
    $$
    V(u \circ \widetilde \Phi) = -(V \tau) (Xu \circ \widetilde \Phi) + b_\tau (Vu \circ \widetilde \Phi)
    $$
    where $b_\tau$ is the map $z \mapsto b(\tau(z), z)$. By induction one obtains that for each $\ell \geqslant 1$ one has
    $$
    V^\ell(u \circ \widetilde \Phi) = \sum_{k=1}^\ell (V^k\tau) v_k + (b_\tau)^\ell (V^\ell u \circ \widetilde \Phi)
    $$
    for some $v_k \in \mathscr C^\infty(M)$ for $k = 1, \dots, \ell$. Now recall that $V^\ell\tau = V^\ell(\tau \circ \widetilde \Phi)$. In particular, since $b_\tau \neq 0$ we obtain
    $$
    \Bigl[V^k\tau(z) = 0\ \text{ for all } \ \quad k = 1, \dots, \ell \Bigr] \quad \implies \quad \Bigl[V^k\tau(\widehat \Phi(z)) = 0\  \text{ for all } \  k = 1, \dots, \ell \Bigr]
    $$
    Hence $\deg(\widetilde \Phi(z)) \geqslant \deg(z)$ for each $z$. However $\widetilde \Phi^2 = \mathrm{Id}$ so we get the reverse inequality. This completes the proof.
\end{proof}

\begin{proposition}[Normal form of the exponential map near conjugate points]\label{prop:normalformexp}
    Let $z = (p,v) \in M$ of degree $k \in \mathbb{N} \cup \{\infty\}$. Then there exist open neighbourhoods $U \subset T_p\Sigma$ and $V \subset \Sigma$ of $v$ and $q = \pi(\Phi(z)) \in \Sigma$ as well as diffeomorphisms $\kappa : U \to U' \subset \RRR^2$ and $\eta : V \to V' \subset \RRR^2$ such that $\kappa(v) = 0 = \eta(q)$, such that, setting
    $$
    \tilde f = \eta \circ \exp_p \circ\,\kappa^{-1} : U' \to V',
    $$
    the following holds.
    \begin{enumerate}[label=\emph{(\roman*)}]
        \item If $k = 1$, then $\tilde f(s,u) = (s, u^2)$.
        \item If $k \in \NN_{\geqslant 2}$, then $\tilde f(s,u) = (s, su + u^{k+1}\rho(s,u))$ for some smooth $\rho : \RR^2 \to \RR$ such that $\rho(0) = 1$.
        \item If $k = \infty$, then $\tilde f(s,u) = (s, su + \varphi(s,u))$ for some smooth $\varphi$ defined near the origin in $\RRR^2$ such that $\partial_u^\ell\varphi(0) = 0$ for each $\ell = 0,1,\dots$
    \end{enumerate}
\end{proposition}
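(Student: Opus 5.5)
Work in polar coordinates on $T_p\Sigma$: write $w = r\,v(\theta)$, with $v(\theta) = R_\theta v$, so that $\exp_p(r v(\theta)) = \pi(\varphi_r(p,v(\theta)))$. Near $(r,\theta) = (\tau(z),0)$, the conjugate vectors form the smooth curve $r = \tau(p,v(\theta))$; this is a smooth graph over $\theta$ by Lemma~\ref{lem:conjtime}. Straighten it by setting $\sigma = r - \tau(p,v(\theta))$. The first step is to understand the differential of $F(\sigma,\theta) = \pi\circ\varphi_{\sigma+\tau(\theta)}(p,v(\theta))$. Using \eqref{eq:defab} and $\dd\pi(H)\perp\dd\pi(X)$, we get
$$
\partial_\sigma F = \dd\pi(X),\qquad \partial_\theta F = (V\tau)\,\dd\pi(X) + a(\sigma+\tau,\cdot)\,\dd\pi(H),
$$
with $a(\tau(\theta),\theta)=0$ and $\partial_t a(\tau,\cdot)\neq 0$. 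The latter holds because $a$ solves the Jacobi equation \eqref{eq:sturm} and vanishes simply at $\tau$.

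Next choose target coordinates $\eta = (y_1,y_2)$ near $q$ adapted to the geodesic $\gamma$ through $q$: $y_1$ is the arclength along $\gamma$ and $y_2$ the signed normal distance (Fermi coordinates). The first component $y_1\circ F$ has nonzero $\partial_\sigma$-derivative at $0$. We therefore reparametrise the source by $s = y_1\circ F$ and $u = \theta$, and the map becomes $(s,u)\mapsto(s,g(s,u))$ with $g = y_2\circ F$. This is a valid change of source coordinates by the implicit function theorem, and it is the diffeomorphism $\kappa$ composed with polar coordinates. The key computations are the following.
\begin{enumerate}[label=(\alph*)]
\item We have $\partial_u g(s,u) = 0$ exactly on the critical set, i.e.\ on $\sigma=0$. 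Moreover $\partial_s\partial_u g(0,0)\neq0$ from $\partial_t a\neq 0$, since $\partial_u g$ is proportional to $a$ modulo terms vanishing on the conjugate curve.
\item The restriction of the map to the critical curve is $u\mapsto F(0,u)$, the parametrised conjugate locus. Its derivative is $(V\tau)(u)\,\dd\pi(X)$, so the image curve $u\mapsto (s_c(u), g(s_c(u),u))$ satisfies $s_c'(u)\propto V\tau(p,v(u))$. Hence $s_c$ vanishes at $0$ to order exactly $k$ by Definition~\ref{def:degree}, i.e.\ $s_c(u) = c\,u^{k}+O(u^{k+1})$ with $c\neq0$ when $k<\infty$.
\end{enumerate}

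With these in hand, the case analysis follows the standard singularity-theory route of Appendix~\ref{app:morin}. By (a) and the Malgrange preparation theorem, we can write $g(s,u) = g(s,u_c(s)) + (u-u_c(s))^2 h(s,u)$ with $h(0)\neq0$, where $u_c(s)$ is the critical point in $u$. Note that $u_c$ is a function of $s$ only when $k=1$. In general we instead use the critical curve parametrised by $u$, $s = s_c(u)$.
\begin{itemize}
\item[$k=1$.] Here $s_c'(0)\neq0$, the map is a fold, and Whitney's normal form gives $(s,u^2)$. Concretely, subtract $g(s,u_c(s))$ via the target change $y_2\mapsto y_2 - g(y_1,u_c(y_1))$, then absorb $h$ and translate $u$.
\item[$k\geq 2$ and $k=\infty$.] Use $\partial_s\partial_u g(0)\neq0$ to change the source variable $u$ so that $\partial_u g(s,u) = \tilde c\,(s - s_c(u))$ up to a nonvanishing factor. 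After rescaling, integrating in $u$ and subtracting a function of $s$ alone (a target change), this yields $g = su + \varphi(s,u)$. Item (b) then forces the $u$-jet of $\varphi(0,\cdot)$ to start at order $k+1$ with nonzero coefficient, giving $u^{k+1}\rho$ with $\rho(0)=1$ after scaling. For $k=\infty$ it gives a flat $\varphi$.
\end{itemize}

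The main obstacle is (b)–(c): matching the vanishing order of $V\tau$ with the $u$-order of $g(0,u)$ after normalisation, while keeping track of which coordinate changes are allowed (source versus target) so as not to destroy the form $(s,\cdot)$. I would verify it by differentiating the identity $\partial_u g(s_c(u),u)=0$ and computing $\frac{\dd}{\dd u}\, g(s_c(u),u) = \partial_s g\cdot s_c'$, which ties $\partial_u^{j}g(0,\cdot)$ to $V^{j-1}\tau$.
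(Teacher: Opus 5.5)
Your proof is correct. Its skeleton matches the paper's (polar coordinates, reduce to $(s,g(s,u))$, read off the $u$-jet, normalise), but you link $\deg(z)$ to the jet of $g$ by a different mechanism.

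\textbf{The paper's route.} The paper never builds adapted coordinates for $\exp_p$ directly. In polar coordinates $(r,v)$ it checks three things: $\exp_p$ is $1$-singular along $\{r=\tau\}$, $\partial_v$ spans the kernel there, and $\tau-r$ is a defining function of the critical set. Hence the abstract degree of Lemma~\ref{lem:degree} equals $\deg(z)$. It then invokes the general Proposition~\ref{prop:normalform}. There, after reducing to $(s,g(s,t))$, coordinate invariance of the degree gives $\partial_t^jg(0)=0$ for $j\leqslant k$ and $\partial_t^{k+1}g(0)\neq0$. The Taylor splitting $g=a(s)+st\alpha(s,t)+t^{k+1}\beta(s,t)$ followed by $u=t\alpha$ finishes the proof.

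\textbf{Your route.} You keep $u=\theta$ and identify the critical curve as the graph $s=s_c(u)$ of the tangential Fermi coordinate of the parametrised conjugate locus $u\mapsto\pi(\Phi(z_u))$. You then read the degree directly as the vanishing order of $s_c$. Differentiating $\partial_ug(s_c(u),u)=0$ yields the same jet information, e.g.\ $\partial_u^{k+1}g(0)=-\partial_s\partial_ug(0)\,s_c^{(k)}(0)\neq0$.

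\textbf{What each buys.} Your route bypasses the coordinate-invariance of the abstract degree. That invariance needs some care, because the kernel field $\eta$ is only prescribed on $C$. You also get the geometric content of the Remark following the proposition for free. The paper's route is modular: it produces a normal form for arbitrary $1$-singular plane maps, and its final normalisation is a two-line Taylor argument.

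\textbf{Points to tighten.}
\begin{enumerate}
\item No source change is needed to write $\partial_ug=(s-s_c(u))\,m(s,u)$ with $m(0)=\partial_s\partial_ug(0)\neq0$; Hadamard's lemma gives this directly.
\item The factor $u^{k+1}$ in case (ii) must hold uniformly in $s$, not only on $\{s=0\}$. Knowing only the jet of $\varphi(0,\cdot)$ would not give this. It does follow from the explicit formula $\varphi(s,w)=-\int_0^{u(s,w)}s_c(v)\,m(s,v)\,\dd v$, obtained after the substitution $w=\int_0^u m(s,v)\,\dd v$, together with $s_c(v)=v^k\sigma(v)$ and $\sigma(0)\neq0$. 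The same formula gives flatness of $\varphi(0,\cdot)$ when $k=\infty$. You should state this explicitly.
\item For $k=1$, the nondegeneracy $\partial_u^2g(0)\neq0$ needed for the fold comes from (b), namely $s_c'(0)\neq0$, and not from (a) alone.
\end{enumerate}
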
 
{
{}{}{}{}{}
This proposition can be viewed as a two-dimensional refinement of a classical result of Warner, see \cite[Theorem~3.3]{warner1965conjugateRiemann}. In arbitrary dimension, Warner proves an analogue of {(i)}, corresponding in our terminology to degree~$1$, together with a very special case of {(iii)}, namely the situation where $\deg(z)=\infty$ for all $z$ in an interval of $\Lambda_p$. Restricting to surfaces allows us to obtain a much more precise description of the local normal forms and to cover all finite degrees.
}

\begin{remark}
Although we do not need it for our purpose, it is natural to ask whether the function $\rho$ appearing in {(ii)}  can be chosen equal to $1$. For general $1$-singular maps, this turns out to be a non trivial question: indeed, if $k = 2$ or $k = 3$ then $\rho$ can be chosen equal to $1$ but for general $k \geqslant 4$ this is no longer the case. We refer to Remark \ref{rem:rieger} for a more detailed discussion.
\end{remark}

\begin{proof}
    By Proposition \ref{prop:normalform} it suffices to show that $f : T_p\Sigma \to \Sigma$ is $1$-singular at $v$ of degree $k$ where $f = \exp_p$. We identify $T_p\Sigma \setminus 0$ with $\RRR_+ \times \Lambda_p$ via $(r, v) \mapsto r v$. In those coordinates $\exp_p(r,v) = \pi(\varphi_r(p,v))$, hence we have, with $z = (p,v)$,
    $$
    \partial_r f(r,v) = \dd \pi(X(\varphi_r(z)))  \quad \text{and} \quad \partial_v f(r,v) = \dd \pi(\dd \varphi_r(z) \cdot V(z)).
    $$
    Now letting $\gamma_z : \RRR \to \Sigma$ be the geodesic starting $z$, one gets, with the notations of \eqref{eq:defab},
    $$
    \partial_r f(r,v) = \dot \gamma_z(r) \quad \text{and} \quad \partial_v f(r,v) = a(r,z) \dot \gamma_z(r)^\perp.
    $$
    Hence for $0 < r < 2\pi$ we have that $\dd f(r,v)$ is not invertible if and only if $r = \tau(z)$. Let $C = \{(r,v)~:~\tau(z) = r\}$ be the critical set. Note that, with the notation of the proof of Lemma \ref{lem:conjtime} one has $a(r,z) = f_z(r)$, hence 
    $$
    a(r,z) = 0 \quad \implies \quad \partial_ra(r,z) = f_z'(\tau(z)) \neq 0.
    $$
    This implies that $(r,v)$ is a regular point of $\det \dd f$ for any $(r,v) \in C$ (here we take any local frame to compute the determinant). In particular $f$ is $1$-singular at every $(r,v) \in C$. Note that on $C$ one has $\ker \dd f(r,v) = \RRR \partial_v$. Moreover, the map $\tilde \lambda : (r,v) \mapsto \tau(z) - r$ is a defining function for $C$, that is $C = \{\tilde \lambda = 0\}$ and $\dd \lambda|_C \neq 0$. Hence by Lemma \ref{lem:degree} the degree of $f$ at $(r,v) \in C$ is given by
    $$
    \inf\bigl\{k \in \mathbb N~:~\partial_v^k \lambda(r,v) \neq 0\bigr\} = \inf \bigl\{k \in \mathbb{N}~:~V^k \tau(z) \neq 0\bigr\} = \deg(z).
    $$
    This completes the proof.
\end{proof}
{}{}{}{
We conclude this paragraph by the following remark, which justifies the definition of the degree we gave in the introduction.
\begin{remark}
Let $p_\star\in\Sigma$. The conjugate locus of $p_\star$ is naturally
parametrized by
\begin{equation}\label{eq:param}
\Lambda_{p_\star}\longrightarrow \mathrm{Conj}(p_\star),
\qquad
z\longmapsto \exp_{p_\star}(\tau(z)z)=\pi(\Phi(z)).
\end{equation}
Let $z_\star\in\Lambda_{p_\star}$ and assume that $\deg(z_\star)=k\in\mathbb N$.
By Proposition~\ref{prop:normalformexp}, one can choose local coordinates
$(s,u)$ near $\tau(z_\star)z_\star$ in $T_{p_\star}\Sigma$ such that the
lifted conjugate locus $\widetilde{\mathrm{Conj}}(p_\star)$ is locally given by
$$
s+k u^k\rho(s,u)+u^{k+1}\partial_u\rho(s,u)=0
$$
with $\rho(0) = 1$. The implicit function theorem ensures that there exists a smooth function $s = s(u)$ defined in a neighborhood of $0$ such that
$$
s(u)=-k u^k\rho(s(u),u)-u^{k+1}\partial_u\rho(s(u),u),
$$
which gives $s(u) = -ku^{k} + \mathcal O(u^{k+1})$.
The parametrization of $\mathrm{Conj}(p_\star)$ induced by
\eqref{eq:param} has, in suitable local coordinates at
$\exp_{p_\star}(\tau(z_\star)z_\star)$, the form
$$
c(u)=\bigl(s(u),s(u)u + u^{k+1}\rho(s(u),u)\bigr).
$$
In particular, after a linear change of coordinates and a reparametrization
of $u$, its $(k+1)$-jet is given by
$
(u^k,u^{k+1}).
$
\end{remark}
}

\subsection{Construction of Zoll surfaces with conjugate pairs of arbitrary degree}
For any $p,q \in \Sigma$ we denote by $\Lambda_{p \to q} = \Lambda_p \cap \Phi^{-1}(\Lambda_q)$ the set of conjugate vectors at $p$ reaching $q$ at the conjugacy time. 
\begin{definition}\label{def:degreepair}
Let $(p, q)$ be a conjugate pair. The \textit{degree} of $(p,q)$ is defined as 
$$
\deg(p,q) = \max_{z \in \Lambda_{p\to q}} \deg(z) = \max_{z \in \Lambda_{q \to p}} \deg(z).
$$
\end{definition}
Note that the degree is well defined thanks to Lemma \ref{lem:reversing_degrees}. As we shall see, the degree of $(p,q)$ governs the asymptotic behaviour of the expected time need for a Levy process $(X_t)$ to reach a small ball centred at $q$. The goal of this section is to prove the following result.
\begin{theorem}\label{thm:alldegree}
Let $k \in \mathbb N_{\geqslant 1}$. Then there exists a Zoll surface $(\Sigma, g)$ and $p_\star,q_\star \in \Sigma$ such that $\deg(p_\star,q_\star) = k$.
\end{theorem}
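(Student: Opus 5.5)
Starting from the round sphere, I will use Guillemin's deformation theory to build a Zoll metric in which some conjugate vector has degree exactly $k$. On the round sphere $\tau\equiv\pi$, so every $V^j\tau$ vanishes, and I take a smooth one-parameter family of Zoll metrics $g_\varepsilon = e^{2\varepsilon\rho+\mathcal O(\varepsilon^2)} g_0$ with $\rho$ an odd function on $\mathbb S^2$. Guillemin's theorem \cite{guillemin1976radon} provides such a family for every odd $\rho$; the odd ones are exactly the kernel of the Funk transform. First-order perturbation of the Jacobi equation \eqref{eq:sturm} then gives $\tau_\varepsilon = \pi + \varepsilon \tau_1 + \mathcal O(\varepsilon^2)$, with $\tau_1(z)$ an explicit linear integral transform of $\rho$ along the half geodesic $\gamma_z([0,\pi])$. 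Differentiating $f_z(\tau(z))=0$ in $\varepsilon$ and using $f_z(t)=\sin t$, $\kappa_1 = -2\rho - \Delta\rho$, I expect
$$
\tau_1(z) = c\int_0^\pi \kappa_1(\gamma_z(t))\sin^2 t\,\dd t ,
$$
up to a nonzero constant. Fix $p_\star$ (the north pole) and restrict to $z\in\Lambda_{p_\star}\simeq\RR/2\pi\ZZ$. This gives a linear map $\rho\mapsto T\rho := \tau_1|_{\Lambda_{p_\star}}$, from odd functions to functions on the circle.

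The key step is to show that $T$ has a rich enough image. I need an odd $\rho$ with $V^j T\rho(\theta_0)=0$ for $1\leqslant j<k$ and $V^kT\rho(\theta_0)\neq0$. Expanding $\rho$ in odd spherical harmonics $Y_\ell^m$ with $\ell$ odd, each term is sent to a multiple $c_{\ell,m} e^{im\theta}$, where $c_{\ell,m}$ is an explicit integral of an associated Legendre function against $\sin^2 t$. It therefore suffices to show that, for each $m$, some odd $\ell\geqslant |m|$ has $c_{\ell,m}\neq 0$. Granting this, the image of $T$ contains every trigonometric polynomial, and I can choose $T\rho(\theta) = \sin^k(\theta-\theta_0)$. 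Its first $k-1$ derivatives vanish at $\theta_0$ and the $k$-th does not. Checking $c_{\ell,m}\neq0$ is the step I expect to be the main obstacle. I would first try $\ell=|m|$ or $\ell = |m|+1$ (whichever is odd) and compute the integral in closed form. The parity constraint and the antipodal symmetry $\tau\circ R_\pi=2\pi-\tau$ (so $\tau_1$ is odd under $\theta\mapsto\theta+\pi$) must be compatible with the target. If they are not, I would instead take a target such as $\sin^k(\theta-\theta_0)$ plus a correction supported near $\theta_0+\pi$, or fix a generic odd target with the right jet at $\theta_0$.

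The first-order condition does not survive to the actual metric, because at $\varepsilon>0$ the higher-order terms spoil exact vanishing of $V^j\tau_\varepsilon(\theta_0)$, $j<k$. This is where transversality enters. I consider a $k$-parameter family $g_{\varepsilon,a}$, $a\in\RR^{k-1}$, obtained by adding $\varepsilon\sum a_i\rho_i$, where the $\rho_i$ are chosen so that the map $a\mapsto (V^jT\rho_a(\theta))_{j=1}^{k-1}$, with $\theta$ also allowed to vary, is a submersion. Then
$$
F_\varepsilon(\theta,a) = \varepsilon^{-1}\bigl(V\tau_\varepsilon,\dots,V^{k-1}\tau_\varepsilon\bigr)(\theta)
$$
is a smooth perturbation of its $\varepsilon=0$ limit, which has a transverse zero $(\theta_0,0)$. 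The implicit function theorem gives zeros $(\theta_\varepsilon,a_\varepsilon)$ for small $\varepsilon$. At these zeros $V^k\tau_\varepsilon(\theta_\varepsilon)=\varepsilon(V^kT\rho(\theta_0)+o(1))\neq0$, so $z_\varepsilon=(p_\star,\theta_\varepsilon)$ has degree exactly $k$; the Zoll property and smooth dependence of $\tau_\varepsilon$ come from Guillemin's construction. Finally, set $q_\star = \pi(\Phi(z_\varepsilon))$. By Definition \ref{def:degreepair}, $\deg(p_\star,q_\star)$ is the maximum over the finitely many (by genericity) $z'\in\Lambda_{p_\star\to q_\star}$. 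I must ensure the other preimages have degree at most $k$. At $\varepsilon$ small the first-order map $z\mapsto \pi(\Phi(z))$ is essentially injective away from $\theta_0$ for a generic choice of $\rho$, since $\mathrm{Conj}$ is a perturbation of a point in a generic direction. I would simply add one more generic parameter to guarantee that every other vector reaching $q_\star$ has degree $1$.
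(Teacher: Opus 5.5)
Your rescaled implicit-function step is sound. Taking $\varepsilon^{-1}(\partial_\theta\tau_\varepsilon,\dots,\partial^{k-1}_\theta\tau_\varepsilon)$ and solving for $a_\varepsilon\to0$ is a tidier way than the paper's submanifold $M_\star$ and cone to produce one vector of degree exactly $k$.

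A smaller problem is your claim that each $m$ has some $c_{\ell,m}\neq0$; it is false. Insert a Fourier mode $\rho_m(s)e^{im\theta}$ into $\tau_1=-\int_0^\pi\sin^2 s\,(2+\Delta_{\bar g})\rho\,\dd s$ and integrate by parts as in \eqref{eq:ipp}. This gives $\tau_1=(m^2-1)\bigl(\int_0^\pi\rho_m\,\dd s\bigr)e^{im\theta}$. So the modes $m=\pm1$ are always annihilated, and even $m$ are annihilated too by oddness of $\rho$. The image of $T$ is exactly the set of functions $\varrho''+\varrho$ with $\varrho(\theta+\pi)=-\varrho(\theta)$, and $\sin^k(\theta-\theta_0)$ never lies in it. This is repairable: only the $k$-jet at $\theta_0$ matters, and $\varrho\mapsto\varrho''+\varrho$ realizes every jet. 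That is precisely the paper's choice $\rho=\chi(s)\varrho(\theta)$, which leads to \eqref{eq:dottau2}.

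The genuine gap is the last step. Because $\deg(p_\star,q_\star)$ is a maximum over $\Lambda_{p_\star\to q_\star}$, you must exclude other directions of degree $>k$ reaching $q_\star$, and your argument does not do this. The heuristic that the first-order conjugate locus is essentially injective is false. By \eqref{eq:ddphiv} its velocity is $\partial_\theta\tau$ times the unit arrival vector, and for instance $\tau_1=\cos5\theta$ yields a five-cusped hypocycloid with transverse, hence stable, self-intersections. Adding ``one more generic parameter'' is also not an argument, since the parameters are already tied to the jet condition at $\theta_0$.

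The missing idea is to bound the degree of \emph{every} conjugate vector at $p_\star$, so that the preimages of $q_\star$ never need to be examined. The paper does this by parametric transversality for $(\mathbf a,\theta)\mapsto(\partial_\theta\tau,\dots,\partial_\theta^k\tau)\in\RR^k$ away from $\theta_\star,J\theta_\star$. This uses perturbations flat to order $k+1$ at $\theta_\star$ and the dimension count $\dim\mathbb S^1=1<k$, together with a cone estimate near $\theta_\star$.

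In your rescaled setting the repair is cheaper, for $k\geqslant2$:
\begin{itemize}
\item Choose $h=T\rho$ so that its critical points other than $\theta_0,\theta_0+\pi$ are all nondegenerate.
\item Then $(h',\dots,h^{(k)})$ vanishes nowhere on the circle.
\item Since $\varepsilon^{-1}(\tau_\varepsilon-\pi)\to h$ in $C^k$ uniformly as $a_\varepsilon\to0$, for small $\varepsilon$ no conjugate vector at $p_\star$ has degree $>k$.
\end{itemize}
For $k=1$ this fails, because a periodic $h$ always has critical points. There you need the paper's separate argument: take a surface on which $\partial_\theta\tau$ has finitely many zeros, all of finite order, and pick $q_\star\in\mathrm{Conj}(p_\star)$ outside their finitely many images.
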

To that end, we will use a transversality argument together with a result of Guillemin \cite{guillemin1976radon}, which provides an infinite dimensional family of Zoll metrics near the round sphere parametrized by odd functions on $\mathbb S^2$. More precisely, let
$
\mathscr C^\infty_{\mathrm{odd}}(\mathbb S^2)
$
denote the space of smooth odd functions on $\mathbb S^2$, that is, functions satisfying
$
\rho(p)=-\rho(Jp)
$
for $p \in \mathbb S^2$ where $J$ is the antipodal map. We denote by $\bar g$ the round metric on $\mathbb S^2$. Guillemin's result then reads as follows.

\begin{theorem}[\cite{guillemin1976radon}*{Theorem 1}]\label{thm:guillemin}
There is a smooth map $\sigma : \mathscr C^\infty_{\mathrm{odd}}(\mathbb S^2) \to \mathscr C^\infty(\mathbb S^2)$ such that for any $\rho \in \mathscr C^\infty_{\mathrm{odd}}(\mathbb S^2)$ the metric $e^{2\sigma(\rho)} \bar g$ is a Zoll metric, where $\bar g$ is the round metric on the sphere. Moreover $\sigma(0) = 0$ and for $\rho \in \mathscr C^\infty_{\mathrm{odd}}(\mathbb S^2)$ we have 
\begin{equation}\label{eq:tangentzoll}
\partial_a|_{a=0} \sigma(a\rho) = \rho.
\end{equation}
\end{theorem}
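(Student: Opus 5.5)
This statement is Guillemin's theorem, cited and not proved in the paper. If I had to reprove it, I would follow his strategy: reduce the Zoll condition for $g_\sigma = e^{2\sigma}\bar g$ to a nonlinear equation on the space of oriented great circles, linearize it at $\sigma = 0$ to obtain the Funk (X-ray) transform, and then invoke the Nash--Moser implicit function theorem. The main expected obstacle is the tame invertibility of the linearization at $\sigma \neq 0$ (Step 3).

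\emph{Step 1: normal form for the perturbed Hamiltonian.} Write $H_\sigma(x,\xi) = e^{-\sigma(x)}|\xi|_{\bar g}$ on $T^*\mathbb S^2\setminus 0$; the flow of $H_0$ is $2\pi$-periodic. I would use a Weinstein/Moser averaging argument, i.e.\ a homological equation solved along the circle action. It should produce, for $\sigma$ small, a homogeneous symplectomorphism $\chi_\sigma$, depending smoothly and tamely on $\sigma$, such that
\[
H_\sigma\circ\chi_\sigma = H_0\,\bigl(1 + \pi^* F(\sigma)\bigr).
\]
Here $F(\sigma)$ is a function on the quotient space $\mathcal G \simeq \mathbb S^2$ of oriented great circles, and it is invariant under orientation reversal. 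Then $g_\sigma$ is Zoll exactly when $F(\sigma)$ is constant. After rescaling one may normalize this constant, or equivalently work modulo constants.

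\emph{Step 2: linearization.} A first-order computation should give
\[
\dd F(0)\sigma = -\frac{1}{2\pi}\,\mathcal R\sigma,
\]
where $\mathcal R\sigma(\gamma) = \int_\gamma \sigma$ is the Funk transform. The Funk transform annihilates $\mathscr C^\infty_{\mathrm{odd}}(\mathbb S^2)$. It is an isomorphism from even functions onto even functions on $\mathcal G$, since on spherical harmonics of even degree $n$ it acts by a nonzero multiplier $\sim n^{-1/2}$. So it is an elliptic Fourier integral operator of order $-1/2$ with a tame inverse that loses half a derivative. This matches Funk's infinitesimal obstruction: only odd infinitesimal deformations are admissible.

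\emph{Step 3: Nash--Moser.} Seek $\sigma = \rho + \theta$ with $\theta$ even and solve $\Psi(\rho,\theta) := F(\rho+\theta) = 0$ for $\theta$. Since $\partial_\theta\Psi(0,0) = -\frac{1}{2\pi}\mathcal R|_{\mathrm{even}}$ is invertible, the key point is that $\partial_\theta\Psi(\rho,\theta)$ remains invertible for $(\rho,\theta)$ near $0$, with tame estimates uniform in that neighbourhood. I would obtain this in two moves. First, writing $\partial_\theta\Psi$ through $\chi_{\rho+\theta}$ shows it is a Funk-type transform along $g_{\rho+\theta}$-geodesics composed with smooth tame maps. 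Second, the ellipticity of $\mathcal R$ (an FIO with nondegenerate phase) is stable under small perturbation of the curves, which gives a parametrix plus a small remainder. The Nash--Moser--Hamilton theorem then yields a smooth tame map $\rho\mapsto\theta(\rho)$ near $0$ with $\theta(0) = 0$. Differentiating $\Psi(a\rho,\theta(a\rho)) = 0$ at $a = 0$ gives $\mathcal R(\rho + \theta'(0)\rho) = 0$, hence $\mathcal R\,\theta'(0)\rho = 0$. Since $\theta'(0)\rho$ is even and $\mathcal R$ is injective on even functions, $\theta'(0) = 0$. Setting $\sigma(\rho) = \rho + \theta(\rho)$ therefore gives \eqref{eq:tangentzoll}. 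Strictly speaking this works on a neighbourhood of $0$; the statement for all $\rho$ should be read via the scaling $a\rho$ used in \eqref{eq:tangentzoll}.
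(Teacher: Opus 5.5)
The paper does not prove this statement. It quotes Guillemin and only uses it for parameters near $0$, so restricting to a neighbourhood of $0$ is appropriate. Your Step~2 and your closing argument for \eqref{eq:tangentzoll} (the correction is even, and $\mathcal R$ is injective on even functions) are the right first-order picture.

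\textbf{The gap is Step~1: the exact normal form $H_\sigma\circ\chi_\sigma = H_0(1+\pi^*F(\sigma))$ does not exist for general small $\sigma$.} Since $H_0(1+\pi^*F)$ Poisson-commutes with $H_0$, the function $H_0\circ\chi_\sigma^{-1}$ would be a smooth first integral of the $g_\sigma$-geodesic flow, independent of $H_\sigma$ wherever $\dd F(\sigma)\neq 0$. Every metric near $\bar g$ is isometric to some $e^{2\sigma}\bar g$ with $\sigma$ small. So every such metric would be either Zoll or completely integrable, which fails generically. Averaging only normalizes modulo $\mathcal O(\|\sigma\|^N)$. An exact version runs into small divisors, because the flow of $H_0(1+\pi^*F)$ has orbit-dependent frequencies. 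Hence the map $F$ on which Steps~2--3 are built is not defined.

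A workable formulation keeps the conjugacy as an unknown. It suffices to solve $H_{\rho+\theta}\circ\chi = H_0$ for $(\theta,\chi)$, with $\chi$ a homogeneous symplectomorphism near the identity commuting with $\xi\mapsto-\xi$. At $(0,\mathrm{Id})$ the linearization $(\dot\theta,\dot f)\mapsto X_{\dot f}H_0 - H_0\,\dot\theta$ is onto the even homogeneous functions of degree one. Orbit averages are inverted by $\mathcal R^{-1}$ on even functions, and the mean-zero part by integrating along the $2\pi$-periodic flow of $H_0$. At nearby points the $\dot f$-part is still inverted along that periodic flow, up to terms controlled by the residual, so no small divisors appear. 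What changes is the transform acting on $\dot\theta$: it becomes the X-ray transform $\mathcal R_\chi$ along the curves $\pi(\chi(\text{orbits of } H_0))$.

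\textbf{This is also where Step~3 needs more than ``stability of ellipticity''.} At $\bar g$, every covector on the space of geodesics has two preimages under the canonical relation of $\mathcal R$: one over $x$ and one over the point conjugate to $x$. The latter is exactly $Jx$, so on $J$-even functions the two contributions merge into a single canonical graph and add constructively; that is why $\mathcal R|_{\mathrm{even}}$ is invertible.

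For nearby curve families the conjugate point is in general not $Jx$. For instance, take $\phi^*\bar g$ with $\phi$ a non-isometric M\"obius map; this is of the form $e^{2\sigma}\bar g$ with $\sigma$ small, and the conjugate point is $\phi^{-1}J\phi(x)$. The two branches then separate. On $J$-even functions, $\mathcal R_\chi$ becomes a sum of two Fourier integral operators whose canonical graphs differ by a near-identity but non-trivial symplectomorphism. For Zoll metrics this is $\widetilde J\circ G$, with $\widetilde J$ the lift of $J$ and $G$ as in Lemma~\ref{lem:G}.

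Fourier integral operators with nearby but distinct canonical relations are not close in norm (compare $u\mapsto u(\cdot+h)$ with the identity). So no parametrix-plus-small-remainder argument applies. Inverting becomes a twisted cohomological equation along this map, with possible destructive interference at frequencies of order $\|\sigma\|^{-1}$, and uniform tame bounds would have to be proved by hand.

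A cleaner choice is to take the $\theta$-correction in the range of $\mathcal R_\chi^*$. For every nearby family with one curve through each point in each direction, $\mathcal R_\chi\mathcal R_\chi^*$ has canonical relation equal to the identity union orientation reversal. It therefore stays, uniformly, an elliptic pseudodifferential operator of order $-1$ on reversal-invariant functions. The solution is then no longer $\rho$ plus an even function. However, the $a$-derivative at $a=0$ of the correction lies in $\operatorname{Ran}\mathcal R^* = \mathscr C^\infty_{\mathrm{even}}(\mathbb S^2)$. With the period fixed to $2\pi$, your final argument still yields \eqref{eq:tangentzoll}.
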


\begin{proof}[Proof of Theorem \ref{thm:alldegree}]
Let $k \geqslant 2$. We will construct $\Sigma$ as a small perturbation of the round metric on $\mathbb S^2$. Fix a point $p_\star \in \mathbb S^2$. Let $g$ be a metric on $\mathbb S^2$ and $S_g\Sigma = \{(p,v)\in T\Sigma~:~|v|_g = 1\}$ the associated unit tangent bundle. We identify $({T_{p_\star}\mathbb{S}^2}\setminus \underline 0) / \RR_+$ with $\mathbb S^1 = R/2\pi\mathbb Z$ hence this yields an identification of the fiber $S_{p_\star, g} \Sigma = \{(p_\star, v) \in T_{p_\star} \Sigma~:~|v|_g=1\}$ with $\mathbb S^1$. For any $\theta \in \mathbb S^1$ we denote by $\gamma_{g,\theta} : \mathbb \RR \to \mathbb S^2$ the geodesic starting from $p_\star$ in the $\theta$ direction and by $\tau_g(\theta) \in \RR_+ \cup \{\infty\}$ the first positive zero of the unique solution $f_{g,\theta}$ of the equation
\begin{equation}\label{eq:sturm-infinitesimal}
f_{g, \theta}''(t) + \kappa_{g}(\gamma_{g,\theta}(t)) f_{g, \theta}(t) = 0 \quad \text{with} \quad f_{g,\theta}(0) = 0 \quad \text{and} \quad f_{g, \theta}'(0) = 1,
\end{equation}
where $\kappa_{g}(\gamma_{g, \theta}(t))$ is the Gauss curvature of $(\mathbb S^2, g)$ at $\gamma_{g, \theta}(t)$. Recall from \eqref{eq:sturm} that if $(\mathbb S^2, g)$ is Zoll then $0 < \tau_g(\theta) < 2\pi$ is the first conjugacy time in the direction $\theta$. 

Next, let $(g_a)_{a \in \RR}$ be a smooth family of metrics with $g_0= \bar g$. We fix $\theta_\star \in \mathbb S^1$ and define $\dot f, \dot \kappa : \RR \to \RR$ by
$$
\dot f(t) = \partial_a|_{a = 0} f_{g_a,\theta_\star}(t) \quad \text{and} \quad \dot \kappa(t) = \partial_a|_{a=0} (\kappa_{g_a} \circ \gamma_{g_a, \theta_\star})(t) = (\partial_a|_{a=0} \kappa_{g_a})(\gamma_{\bar g, \theta_\star}(t)).
$$
The last equality is due to the fact that $\kappa_{\bar g} = 1$ is constant. Note also that $f_{\bar g, \theta_\star}(t) = \sin(t)$. Hence \eqref{eq:sturm-infinitesimal} yields 
$$
\dot f''(t) + \dot f(t) = - \dot \kappa(t) \sin(t), \quad \dot f(0) = 0, \quad \dot f'(0) = 0.
$$
Solving the above equation one sees that $\dot f$ is given by
\begin{equation}\label{eq:dotf}
\dot f(t) = \int_0^t \dot \kappa(s) \sin(s) \sin(s-t) \dd s
\end{equation}
Next, let $\dot \tau(\theta_\star) = \partial_a|_{a = 0} \tau_{g_a}(\theta_\star)$. We have $\tau_{\bar g}(\theta_\star) = \pi$ and $f_{\bar g, \theta_\star}'(\pi) = -1$. Hence differentiating the equation $f_{g_a, \theta_\star}(\tau_{g_a}(\theta_\star)) = 0$ we get $-\dot \tau + \dot f(\pi) = 0$. Inserting this in \eqref{eq:dotf} yields
\begin{equation}\label{eq:dottau1}
\dot \tau(\theta_\star) = \int_0^\pi \dot \kappa(s) \sin(s)^2 \dd s.
\end{equation}
In the particular case where $a \mapsto \sigma_a$ is a smooth family of smooth functions on the sphere with $\sigma_0 = 0$ and $g_a = e^{2\sigma_a} \bar g$ then we have $\kappa_{g_a} = e^{-2\sigma_a}(1 - \Delta_{\bar g} \sigma_a)$ hence $\partial_{a}|_{a=0} \kappa_{g_a} = -(2 + \Delta_{\bar g}) \rho$ where $\rho = \partial_a|_{a=0}\sigma_a$. Putting everything together yields
\begin{equation}\label{eq:dottau}
\dot \tau(\theta_\star) = -\int_0^\pi \sin(s)^2 (2 + \Delta_{\bar g}) \rho(s, \theta_\star) \dd s
\end{equation}
where we use the radial coordinates based at $p_\star,$ that are given by
$$
\left]0,\pi\right[ \times \mathbb S^1 \to \mathbb S^2, \quad (s, \theta) \mapsto \gamma_{\bar g, \theta}(s).
$$
In those coordinates, we have $\bar g = \dd s^2 + \sin^2(s) \dd \theta^2$ hence
\begin{equation}\label{eq:Delta}
\Delta_{\bar g} = \partial_s^2 + \cot(s)\partial_s + \sin(s)^{-2} \partial_\theta^2.
\end{equation}
For $\chi \in \mathscr C^\infty_c(\left]0, \pi\right[)$ note that 
\begin{equation}\label{eq:ipp}
\int(2\chi(s) + \chi''(s) + \cot(s) \chi'(s)) \sin^2(s) \dd s = \int \chi(s) \dd s
\end{equation}
as it follows by integration by parts. Now we let $\varrho \in \mathscr C^\infty(\mathbb S^1)$ and we set
$$
\rho(s,\theta) = \chi(s) \varrho(\theta), \quad (s, \theta) \in \left]0, \pi\right[\times \mathbb S^1.
$$
We moreover ask that 
\begin{equation}\label{eq:ask}
\varrho(\theta + \pi) = - \varrho(\theta), \quad \chi(\pi - s) = \chi(s) \quad \text{and} \quad \int \chi(s) \dd s = -1.
\end{equation}
Then $\rho \in \mathscr C^\infty_{\mathrm{odd}}(\mathbb S^2)$ and combining \eqref{eq:dottau}, \eqref{eq:Delta} and \eqref{eq:ipp} we get
\begin{equation}\label{eq:dottau2}
\dot \tau(\theta_\star) = \varrho''(\theta_\star) + \varrho(\theta_\star).
\end{equation}
In what follows for any $k \in \mathbb N$ and $\theta \in \mathbb S^1$ we will denote
$$ 
v_{k, \varrho}(\theta) = \bigl(\varrho^{(\ell)}(\theta) + \varrho^{(\ell + 2)}(\theta)\bigr)_{\ell = 1, \dots, {k}}  \in \RR^{k}.
$$
\begin{lemma}\label{lem: span condition}
For every $k = 2,3, \dots$ there exists $N \in \mathbb N$ and a family of functions $(\varrho_j)_{j=1,\dots, 2N}$ satisfying the first condition of \eqref{eq:ask} such that the following holds.
\begin{enumerate}[label=\emph{(\roman*)}]
\item For any $1 \leqslant j \leqslant N$ we have
\begin{equation}\label{eq:important}
\varrho_j^{(\ell)}(\theta_\star) = 0 \quad \text{ for every } \ell = 1, \dots, k + 1. 
\end{equation}
Moreover we have the non-degeneracy condition
 \begin{equation}\label{eq:nondeg}
\underset{j = 1, \dots, N}{\mathrm{span}} v_{k, \varrho_j}(\theta) = \RR^{k} \quad \text{whenever } \theta \neq \theta_\star, J\theta_\star.
\end{equation}
%\begin{equation}\label{eq:important}
%\varrho_j^{(\ell)}(\theta_\star) = 0 \quad \text{ for every } \ell = 1, \dots, k + 1 \qquad 
%\end{equation}
%and which satisfies the nondegeneracy condition
% \begin{equation}\label{eq:nondeg}
%\underset{j = 1, \dots, N}{\mathrm{span}} v_{k, \varrho_j}(\theta) = \RR^{k} \quad \text{whenever } \theta \neq \theta_\star, J\theta_\star.
%\end{equation}
\item The family $(\varrho_j)_{N+1\leqslant j \leqslant 2N}$ is non degenerate at $\theta_\star$, that is
 \begin{equation}\label{eq:spanthetastar}
\underset{j = N+1, \dots, 2N}{\mathrm{span}} v_{k, \varrho_j}(\theta_\star) = \RR^{k}.
\end{equation}
\end{enumerate}
\end{lemma}
\begin{proof}[Proof of Lemma \ref{lem: span condition}] Let $k \geqslant 2$ and put $m = k + 2$. Then for any $\varrho \in \mathscr C^\infty(\mathbb S^1)$ one has
$$
v_{k, \varrho}(\theta) = h(w_{m,\varrho}(\theta)) \quad \text{where} \quad w_{m,\varrho}(\theta) = (\varrho^{(\ell)}(\theta))_{\ell = 1, \dots, m}
$$
for some surjective linear map $h \in \mathscr L(\RR^m, \RR^k)$. Hence for a finite family $(\varrho_j)_j$ we have
 \begin{equation}\label{eq:wm}
{\mathrm{span}}_{j} w_{m, \varrho_j} = \RR^m \quad \implies \quad {\mathrm{span}}_j v_{k, \varrho_j} = \RR^k.
\end{equation}
In what follows, we will say that a finite family $(\varrho_j)$ of functions is non degenerate at $\theta$ if the family $(v_{k, \varrho_j}(\theta))_j$ spans $\RR^k$. From \eqref{eq:wm} this will be true as soon as $(w_{m, \varrho_j}(\theta))_j$ generates $\RR^m$. From this it is clear that we can find a family $(\varrho_j)_{j=N+1, \dots, 2N}$ so that (ii) holds.

Next, we find a family $(\varrho_j)_{j=1,\dots,N}$ satisfying (i). First, remark that for any $\theta \neq \theta_\star, J\theta_\star$, it is not hard to find a finite family $(\varrho_j)$ of functions supported far from $\theta_\star$ and $J\theta_\star$, satisfying the first condition in \eqref{eq:ask} and which is non degenerate at $\theta$. By continuity, this family is also non degenerate at every $\theta'$ close enough to $\theta$. Hence to prove (i) it suffices to find a finite family $(\varrho)_j$ of functions which satisfies \eqref{eq:important} and which is non degenerate at every $\theta$ in a neighborhood of $\{\theta_\star, J\theta_\star\}$. To that end, take any family $(\varrho_j)_{j=1, \dots, m}$ of odd functions on $\mathbb S^1$ such that $\varrho_j(s) = s^{m+j}$ near $s=0$, where $s = \theta - \theta_\star$. Then for each $j = 1, \dots, m$ one has
$$
w_{m,\varrho_j}(s) = \Bigl((m+j) \cdots (m+j - \ell + 1)s^{m+j-\ell}\Bigr)_{\ell = 1, \dots, m}.
$$
Let $W(s)$ be the $m \times m$ matrix whose $(j, \ell)$ entry is $(m+j) \cdots (m+j - \ell + 1)s^{m+j-\ell}$. Then from a direct computation we see that
$$
\det W(s) = s^{m^2}\frac{(2m)!}{m!}\prod_{r=1}^{m-1} r!.
$$
In particular the family $(w_{m,\varrho_j}(\theta))_{j=1, \dots, m}$ is not degenerate for any $\theta$ close to $\theta_\star$. Hence we obtain (i) and the lemma is proven.
%In particular to 
%i) By a partition of unity argument, it suffices to show that we can construct a family of functions $\{\varrho_j\}_{j=1,\dots, N}$, $\varrho_j: \mathbb R\to \mathbb R$ and an $\epsilon>0$ such that for each $j=1,\dots, N$, $0=\varrho_j^{(\ell)}(0)$ for $\ell = 1,\dots, k+1$  while ${\rm span}_{j = 1,\dots, N}(v_{k, \varrho_j}(x)) = \mathbb R^k$ for all $x\in (-\epsilon, \epsilon)\setminus 0$.
%To this end, let $\varrho_j(x) := x^{k+2+j}$ with $j = 1,\dots, k+2$. Then 
%$$v_{k, \varrho_j}(x) = (\varrho^{(\ell)}_{j}(x) + \varrho_j^{(\ell + 2)}(x))_{\ell = 1,\dots,k} =\left(\frac{(k+2+j)!}{(k+2+j-\ell)!} x^{k+2+j-\ell} + \frac{(k+2+j)!}{(k+j-\ell)!} x^{k+j-\ell}\right)_{\ell = 1,\dots,k} .$$
\end{proof}
Given Lemma \ref{lem: span condition}
For any $\mathbf a = (a_j) \in \RR^{2N}$, we denote 
$$
g_\mathbf a = e^{2 \sigma(\rho_\mathbf a)} \overline g \quad \text{where} \quad \rho_\mathbf a(s, \theta) = \chi(s) \sum_{j=1}^{2N} a_j \varrho_j(\theta).
$$
Here $\sigma$ is Guillemin's map from Theorem \ref{thm:guillemin}. Next, introduce the map
$$
F_{k-1} : \RR^{2N} \times \mathbb S^1 \to \RR^{k-1}, \quad (\mathbf a, \theta) \mapsto \bigl(\partial_\theta \tau_{g_\mathbf a}(\theta), \dots, \partial_\theta^{k-1} \tau_{g_\mathbf a}(\theta)\bigr),
$$
and define $F_{k-1, \star} : \RR^{2N} \to \RR^{k-1}$ by $F_{k-1, \star}(\mathbf a) = F_{k-1}(\mathbf a, \theta_\star)$. Then by \eqref{eq:dottau2} and \eqref{eq:tangentzoll} one has
\begin{equation}\label{eq:differential}
\dd F_{k-1, \star}(0) \cdot \mathbf a = \sum_{j=1}^{2N} a_jv_{k-1, \varrho_j}(\theta_{\star}).
\end{equation}
Thus by \eqref{eq:spanthetastar} the map $F_{k-1, \star}$ is a submersion near $0$ and there is $\varepsilon > 0$ such that 
\begin{eqnarray}\label{eq: def of Mstar}
M_\star = F_{k-1, \star}^{-1}\{0\} \cap B(0, \varepsilon)
\end{eqnarray}
is a smooth submanifold of $\RR^{2N}$ of co-dimension $k-1$. 
Moreover the linear map
$$
\dd F_{k,\star}(0)|_{T_0M_\star}
:
T_0M_\star \to \RR^{k}
$$
is not the zero map. To see this, simply let $G_{k} \subset \RR^{2N}$ be any subspace such that $G_k \oplus \ker \dd F_{k,\star}(0) = \RR^{2N}$. Then by \eqref{eq:spanthetastar} and \eqref{eq:differential} we have $\dim G_k = k$. However 
$$
\dim T_0M_\star = \dim \ker \dd F_{k-1,\star}(0) = 2N - k + 1.$$
Hence $G_k$ must intersect $T_0M_\star$ and $\dd F_{k,\star}(0)|_{T_0M_\star}$ is not the zero map. Hence there exists $\mathbf v\in T_0M_\star$ such that
$
\dd F_{k,\star}(0)\mathbf v\neq0.
$
Next, since the map
$
\theta \mapsto \partial_{\mathbf a}F_{k}(0,\theta)\mathbf v
$
is continuous and non-zero at $\theta=\theta_\star$, taking $\delta>0$ small we have
$
\partial_{\mathbf a}F_{k}(0,\theta)\mathbf v\neq0
$
for every $\theta$ such that $|\theta-\theta_\star|<\delta$.
Since 
$
\tau_{g_{\mathbf a}}(\theta+\pi)=2\pi - \tau_{g_{\mathbf a}}(\theta),
$
 the same non-vanishing property holds near $J\theta_\star$. Therefore, after choosing an open cone
$\widetilde C\subset T_0M_\star$ around $\mathbf v$ small enough and reducing $\delta$, there is $c > 0$ such that
\begin{equation}\label{eq:linearized}
|\partial_{\mathbf a}F_{k}(0,\theta)\mathbf w| \geqslant c|\mathbf w|
\quad \text{ for every}
\quad
(\mathbf w,\theta)\in \widetilde C\times I_{\star,\delta}
\end{equation}
where $I_{\star,\delta}=\{\theta:\min(|\theta-\theta_\star|,|\theta-J\theta_\star|)<\delta\}$. Note that $M_\star$ inherits a Riemannian structure from $\RR^{2N}$ and we denote by $\psi$ the exponential map $U \to M_\star$ defined on a neighborhood $U \subset T_0M_\star$ of $0$. We let $C = \psi(\widetilde C \cap U) \subset M_\star$. Now we write for $\mathbf a \in M_\star$
$$
F_k(\mathbf a, \theta) = \widetilde F_k((\psi^{-1}(\mathbf a), \theta) \quad \text{where} \quad \widetilde F_k(\mathbf v, \theta) = F_k(\psi(\mathbf v), \theta).
$$
Since $\dd \psi(0) = \mathrm{Id}$ we have for $\mathbf w \in T_0M_\star$ and $\theta \in \mathbb S^1$
\begin{equation}\label{eq:comparetangent}
\partial_\mathbf v \widetilde F_{k}(0, \theta) \mathbf w = \partial_\mathbf a F_k(0, \theta) \mathbf w.
\end{equation}
On the other hand, uniformly for $\theta \in \mathbb S^1$ and $\mathbf a \in \RR^{2N}$ we have
$$
F_{k}(\mathbf a,\theta)
=
\partial_{\mathbf a}F_{k}(0,\theta)\mathbf a
+
o(|\mathbf a|).
$$
Combining this with \eqref{eq:comparetangent} and \eqref{eq:linearized} we obtain
$$
\widetilde{F}_{k}(\mathbf v,\theta)\neq 0 
\quad \text{ for every}
\quad
(\mathbf v,\theta)\in(\widetilde C\setminus 0)\times I_{\star,\delta}.
$$
Equivalently, setting $C = \psi(\widetilde C) \subset M_\star$, one obtains
\begin{equation}\label{eq:positive}
F_{k}(\mathbf a,\theta)\neq0 
\quad \text{ for every}
\quad
(\mathbf a,\theta)\in(C \setminus 0)\times I_{\star,\delta}.
\end{equation}
%%%
Next, we consider the restriction
$$
G : M_\star \times \mathbb S^1 \setminus I_{\star, \delta} \to \RR^{k}, \quad (\mathbf a, \theta) \mapsto F_{k}(\mathbf a, \theta),
$$
of $F_{k}$ on $M_\star \times \mathbb S^1 \setminus I_{\star, \delta}$. Then for each $\theta \notin I_{\star, \delta}$ the map
$$
\partial_\mathbf a G(0, \theta)|_{T_0M_\star} : T_0M_\star \to \RR^{k+1}
$$
is surjective. Indeed, $T_0M_\star = \ker \dd F_{k-1, \star}(0)$. Hence by \eqref{eq:differential} and \eqref{eq:important} we have that $\mathbf a \in T_0M_\star$ whenever $a_j = 0$ for $j > N$. The surjectivity of $\partial_\mathbf a G(0, \theta)|_{T_0M_\star}$ then follows from \eqref{eq:nondeg}. By compactness of $\mathbb S^1 \setminus I_{\star, \delta}$ we obtain that up to reducing $\varepsilon$ in \eqref{eq: def of Mstar}, the map $G: M_\star \times \mathbb S^1 \setminus I_{\star,\delta}\to \mathbb R^k$ is a submersion. This means that it is transversal on $\{0\} \subset \RR^{k}$ in the sense of \cite{laudenbach2011transversalite}*{\S5.1.1}. Hence by \cite{laudenbach2011transversalite}*{Théorème p. 93} for almost every $\mathbf a \in M_\star$ the map 
$$
F_{k}(\mathbf a, \cdot) : \mathbb S^1 \setminus I_{\star, \delta} \to \RR^{k}
$$
is also transversal on $\{0\} \subset \RR^{k}$. However since $k\geq 2$ by assumption, we have $\dim(\mathbb S^1 \setminus I_{\star, \delta}) = 1 < k$ and thus 
\begin{equation}\label{eq:want}
F_{k}(\mathbf a, \theta) \neq 0 \quad \text{for any }\theta \in \mathbb S^1 \setminus I_{\star, \delta}.
\end{equation}
The set of parameters $\mathbf a\in M_\star$ satisfying
\eqref{eq:want} has full measure in $M_\star$. Since
$C\setminus\{0\}$ is a non-empty open subset of
$M_\star$, it intersects this full-measure set. We choose
$\mathbf a$ in this intersection. Since $\mathbf a \in M_\star$, we obtain by \eqref{eq:positive} 
$$
F_{k-1}(\mathbf a, \theta_\star) = 0 \qquad \text{and} \qquad F_{k}(\mathbf a, \theta) \neq 0 \quad \text{for every } \theta \in \mathbb S^1.
$$
This means that for the Zoll surface $(\mathbb S^2, g_{\mathbf a})$, every conjugate vector has degree at most $k$, and the vector corresponding to $\theta_\star$ has degree exactly $k$. Setting $q_\star = \pi(\Phi(p_\star, \theta_\star))$ we obtain $\deg(p_\star, q_\star) = k$.
 
 Finally, consider the map $\omega : \mathbb S^1 \to \Sigma$ given by $\theta \mapsto \pi(\Phi(p_\star, \theta))$. Its image is the union of conjugate points of $p_\star$. Moreover by \eqref{eq:ddphiv} its derivative vanishes precisely where $\partial_\theta\tau_{g_\mathbf a}$ vanishes. By construction, every zero of $\partial_\theta\tau_{g_\mathbf a}$ is of finite order at most $k$, hence isolated. Thus there is a finite number of zeros. Hence there exists a point $q$ in the image of $\omega$ which is not the image by $\omega$ of a zero of $\partial_\theta\tau_{g_\mathbf a}$. Such a point satisfies $\deg(p_\star, q) = 1$.
\end{proof}

We conclude this section with the following result showing that there are situations satisfying point (iii) of Theorem \ref{thm:main} but not (iv).
\begin{lemma}
\label{lem:infinitedegree}
There exists a Zoll surface $(\Sigma, g)$ and $p_\star,q_\star \in \Sigma$ such that $\deg(p_\star,q_\star)=\infty$ but $\mathrm{Conj}(p_\star) \neq \{q_\star\}.$
\end{lemma}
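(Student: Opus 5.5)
We will reuse Guillemin's family and the first variation formula \eqref{eq:dottau2}, but now we aim to make $\tau_g(\cdot)$ \emph{constant} on a whole interval of directions at $p_\star$. If $\tau$ is constant on an open interval $I\subset \Lambda_{p_\star}$, then $V^\ell\tau = 0$ on $I$ for all $\ell\geqslant 1$, so each $z\in I$ has degree $\infty$.

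The difficulty is that \eqref{eq:dottau2} only controls the linearization, and a linear argument cannot force exact vanishing. So instead of perturbing generically, we exploit symmetry. Take the odd function $\rho$ on $\mathbb S^2$ to be invariant under rotations about the axis through $p_\star$ and $Jp_\star$; for instance a zonal odd function $\rho(s)$, with $\rho(\pi-s)=-\rho(s)$ in the polar coordinate $s$ centered at $p_\star$. Guillemin's construction is natural with respect to isometries of $\bar g$ commuting with $J$. Hence, by uniqueness in the Nash--Moser/implicit function scheme, or by applying the construction in the invariant subspace, $\sigma(a\rho)$ is again rotationally invariant. The resulting Zoll metric $g_a$ is then a surface of revolution with poles $p_\star$ and $Jp_\star$. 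For such a metric the geodesics from $p_\star$ are meridians, and they all meet again at the opposite pole $Jp_\star$ at the common time $\pi$. So $\mathrm{Conj}(p_\star)=\{Jp_\star\}$. This alone realizes case (iv) but not the lemma, so we must break the symmetry away from a sector.

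The refined plan is therefore as follows. Choose $\rho=\chi(s)\varrho(\theta)$ as in \eqref{eq:ask}, with $\varrho$ vanishing identically on an interval $I_0$ and on $I_0+\pi$, but with $\varrho''+\varrho\not\equiv 0$. By \eqref{eq:dottau2}, for small $a$ the map $\theta\mapsto\tau_{g_a}(\theta)$ is then nonconstant, so the conjugate locus of $p_\star$ is not a single point.

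It remains to show that $\tau_{g_a}$ stays exactly constant on some subinterval of $I_0$. This is the main obstacle, because $\sigma(a\rho)$ is not supported where $\rho$ is: the nonlinear correction spreads. I would first try to exploit the fact that for a Zoll metric $\tau$ is governed by the Jacobi equation along a single geodesic, which depends only on the curvature along that meridian. Failing that, I would replace Guillemin's global perturbation by a gluing argument. By Zoll's or Tannery's explicit surfaces of revolution, a sector of the round sphere can be kept exactly round near a meridian of $p_\star$ together with its antipodal arc. The key geometric input would be that a Zoll metric agreeing with the round metric in a neighborhood of the great circle through $p_\star$ in directions $I_0$ has $\tau\equiv\pi$ on $I_0$, which follows from \eqref{eq:sturm} since $\kappa\equiv 1$ there and hence $f_z=\sin$. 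I would therefore look for a Zoll metric that is round on a neighborhood of a lune between $p_\star$ and $Jp_\star$ but not globally round. Such a metric should be obtainable from Guillemin's theorem applied with $\rho$ supported away from the lune, provided one checks that the construction preserves "roundness" on regions that a given family of geodesics never leaves. I expect this locality property to be the delicate point of the proof.

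Given such a metric, take $q_\star=Jp_\star$. Then every $z\in I_0$ reaches $q_\star$ with $\tau\equiv\pi$, so $\deg(p_\star,q_\star)=\infty$. Meanwhile directions outside the lune, where $\tau$ varies, produce conjugate points different from $q_\star$.
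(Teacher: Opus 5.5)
Your reduction is correct. Suppose a Zoll metric coincides with $\bar g$ on a neighbourhood of the arcs from $p_\star$ to $Jp_\star$ whose initial directions lie in an interval $I_0$. Then these arcs are geodesics along which $\kappa\equiv 1$, so $f_z=\sin$ by \eqref{eq:sturm}, $\tau\equiv\pi$ on $I_0$, all of them reach $q_\star=Jp_\star$, and $\deg(p_\star,q_\star)=\infty$. But the proposal never produces such a metric, and that is the whole content of the lemma.

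The route you suggest is Guillemin's map applied to $\rho$ supported away from the lune, together with a ``locality'' property of $\sigma$. That property is not available. The higher-order terms of $\sigma(a\rho)$ come from a Nash--Moser scheme that inverts an X-ray (Funk-type) transform over all closed geodesics. This is a global operation, and many great circles crossing the lune also meet $\operatorname{supp}\rho$. So nothing forces $\sigma(a\rho)$ to vanish on the lune. Moreover, \eqref{eq:dottau2} only gives $\partial_a\tau=0$ on $I_0$ at $a=0$, never exact constancy. Your first fallback fails for the same reason: the curvature along the meridians in $I_0$ is precisely what the nonlocal correction changes. The zonal variant with $p_\star$ at a pole only gives $\mathrm{Conj}(p_\star)=\{Jp_\star\}$, as you note yourself.

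The missing idea is to take an explicit Zoll surface of revolution but put $p_\star$ on the equator instead of at a pole. The paper uses $g=[1+h(\cos r)]^2\,\dd r^2+\sin^2 r\,\dd\vartheta^2$ from \cite{Besse1978Zoll}*{Corollary 4.16}. Here $h$ is the odd extension of a nonnegative function supported in $[\delta,1/2]$, so $g$ is exactly round on the band $|\cos r|<\delta$. A great circle through the equatorial point $p_\star$ making a small angle $\epsilon$ with the equator never leaves the band of latitude $\epsilon$. Hence it is a $g$-geodesic, and it reaches the antipodal equatorial point $q_\star$ at the conjugate time $\pi$. This is exactly the thin round lune you were after, around the equatorial half-arc, and it comes for free.

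You would still need to show $\mathrm{Conj}(p_\star)\neq\{q_\star\}$ for this specific metric, since your appeal to \eqref{eq:dottau2} only concerns small Guillemin deformations. The paper does this with Besse's explicit formula for the conjugate point: the $h$-dependent term in \eqref{eq:varthetac} is nonzero for a suitable initial angle.
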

\begin{proof}
{}{}{}{
To obtain a Zoll metric with conjugate points $p_\star \sim q_{\star}$ of infinite degree, we will create Zoll metrics of revolution as given in \cite{Besse1978Zoll}*{Corollary 4.16}. Consider the north and south poles $\mathrm n, \mathrm s \in \mathbb S^2$. We will consider the parametrization
$$
 \left]0, \pi \right[ \times \left[0, 2\pi\right[ \to \mathbb S^2 \setminus \{\mathrm n, \mathrm s\}, \quad (r, \vartheta) \mapsto (\sin r\cos\vartheta,\sin r\sin\vartheta,\cos r).
$$
It is shown in \cite{Besse1978Zoll}*{Corollary 4.16} that if $h: \left]-1,1\right[ \to [-1,1] $ is a smooth odd function such that $h(1) = 0 $, then the metric given in the above coordinates by
\begin{equation}\label{eq:zoll-metric-revolution}
    g = [1+ h(\cos(r))]^2 \dd r^2 + \sin^2 (r) \dd \vartheta^2  
\end{equation}
is a Zoll metric of revolution. 
Note that for $h=0 $ one obtains the round metric. In what follows we will construct an odd function $h$ so that for $(\mathbb S^2, g)$ there is a point $p_\star \in \mathbb S^2$ which a conjugate point of infinite order but whose conjugate locus is not reduced to a point.

To that end, let $0< \delta < 1/2 $ and let $\tilde h: \left[0,1\right[ \to [0,1]$ be smooth with $\supp \tilde h = [\delta, \frac{1}{2}]$. Let ${h:\left]-1,1\right[ \to [-1,1]}$ be the unique odd extension of $\tilde h$. 
Then the metric $g$ defined by \eqref{eq:zoll-metric-revolution}, gives a Zoll metric of revolution. The point $p_\star=(r_{p_\star},\vartheta_{p_\star}) = (\frac{\pi}{2},0)$ on the equator is conjugate to ${q_\star}=(r_{q_\star},\vartheta_{q_\star}) = (\frac{\pi}{2},\pi)$ along the geodesic defined by the equator. Note that the metric $g$ coincides with the round one near the equator. In particular the geodesics starting from $p_\star$ in a direction sufficiently close to that of the equator coincide with the geodesics of the round metric. This implies that on $(\mathbb S^2, g)$, the point $p_\star$ is conjugate with its antipodal point $q_\star = (\pi/2, \pi)$ with infinite degree. 

It remains to show that the conjugate locus of $p_\star$ is not reduced to $q_\star$. Let $\varphi \in \left[0,\pi\right[$ and consider the geodesic $\gamma$ of $(\mathbb S^2, g)$ starting at $p_\star$ in the direction making an angle $\varphi$ with the meridian $\{\vartheta = 0\}$. Denote by $p_c = (r_c,\vartheta_c)$ be the unique conjugate point of $p_\star$ along $\gamma$. Then by \cite[Proposition 4.35 and Equation 4.31]{Besse1978Zoll} one has the formula
\begin{equation*}
    \begin{split}
        \vartheta_c &= 2\pi - \arccos\left( \frac{\tan\varphi}{\tan r_{p_\star}} \right) - \arccos\left( \frac{\tan \varphi}{\tan r_c } \right) \\
        &\qquad + \int_{r_{p_\star}}^{\pi-\varphi}\frac{\sin(\varphi)\cdot  h(\cos\rho)}{\sin\rho(\sin^2\rho-\sin^2\varphi)^{1/2}}\dd \rho + \int_{r_c}^{\pi-\varphi}\frac{\sin(\varphi)\cdot  h(\cos\rho)}{\sin\rho(\sin^2\rho-\sin^2\varphi)^{1/2}}\dd \rho \ .
    \end{split}
\end{equation*}
In particular, if we assume that $r_c = r_{p_\star} = \pi/2$, then $\vartheta_c$ is given by 
\begin{equation}\label{eq:varthetac}
    \vartheta_c = \pi + 2 \int_{\pi/2}^{\pi-\varphi}\frac{\sin(\varphi)\cdot  h(\cos\rho)}{\sin\rho(\sin^2\rho-\sin^2\varphi)^{1/2}}\dd \rho.
\end{equation}
Choose $\varphi$ so that the interval
$
\left[\frac{\pi}{2},\pi-\varphi\right]
$
intersects the support of $h(\cos\rho)$. Since $h$ is the odd extension of a non-negative function and is not identically zero, we may furthermore choose $\varphi$ so that $h(\cos\rho)$ does not change sign and is not identically zero on the interval of integration. It follows that the integral in \eqref{eq:varthetac} does not vanish.
%\[
%\int_{\pi/2}^{\pi-\varphi}
%\frac{\sin(\varphi)\, h(\cos\rho)}
%{\sin\rho(\sin^2\rho-\sin^2\varphi)^{1/2}}
%\,\dd \rho
%\neq 0.
%\]
Hence if $r_c=\pi/2$ then
$
\vartheta_c
\neq \pi.
$
Therefore $p_c\neq q_\star$. On the other hand, if $r_c\neq \pi/2$, we have of course $p_c\neq q_\star$. Thus, for this choice of $\varphi$, the conjugate point $p_c$ of $p_\star$ along $\gamma$ is different from $q_\star$. We have therefore exhibited a conjugate point of $p_\star$ distinct from $q_\star$, which completes the proof.
%Since $h(\cos(\rho)) \neq 0$ and doesn't change sign for $\rho \in (\frac{\pi}{2}+ \varepsilon, \frac{\pi}{2} + 2\varepsilon)$, meaning that $q_\star $ is not conjugate to $p_\star $ along geodesics starting at $p_\star $ with initial angle $\varphi = \frac{\pi}{2} + 2\varepsilon$.
%
%Hence for every $\varepsilon>0 $, there is a subinterval $I_\varepsilon = (\frac{\pi}{2}-\varepsilon,\frac{\pi}{2}+\varepsilon) \subset S_{p_\star}\Sigma$ of directions for which $q_\star $ is conjugate to $p_\star $ along geodesics starting in  these directions. Furthermore $p_{\star} , q_{\star}$ are conjugate of degree $\infty$. On the other hand, since  $p_\star $ is not conjugate to $q_\star $ along all geodesics, it follows that $\conj(p_\star) \neq \{q_\star \}$, and we have case (iii) of Theorem \ref{thm:main}.
%Now taking the limit as $\varepsilon\to 0$, we see that $h_0 $ defines a valid $C^\infty $ Zoll metric of revolution $g_0 $, and that   $\deg(p_{\star},q_{\star}) = \infty$, but there does not exists a subinterval $I \subset S_{p_\star}\Sigma$ of directions along which $q_\star $ is conjugate to $p_{\star}$. 
}
\end{proof}

\section{Background material on microlocal analysis}
\label{sec:microlocal}

In this section we recall some basic facts about microlocal analysis.

\subsection{Pseudodifferential operators}\label{sssection:pdo}

We refer to \cites{Grigis-Sjostrand-94, hormander2007PDE3,hormander2009PDE4,Lefeuvre2025} for a general treatment. Let $\Sigma$ be a closed $n$-dimensional manifold. For $k \in \RR$ we define $S^k(T^*\Sigma) \subset C^\infty(T^*\Sigma)$ the space of symbols of order $k$ as the set of smooth functions $a$ satisfying the following bounds, in any coordinate chart $U \subset \RR^n$: for all $\gamma,\beta \in \mathbb{N}^n$, there exists $C = C(U,\alpha,\beta) > 0$ such that
\begin{equation}
\label{equation:symbol}
\forall (x,\xi) \in T^* U \simeq \RR^n \times \RR^n, \qquad |\partial^\gamma_\xi \partial^\beta_x a(x,\xi)| \leqslant C \langle \xi \rangle^{k-|\gamma|}.
\end{equation}
It can be checked that \eqref{equation:symbol} is invariant by diffeomorphism, which implies that $S^k(T^*\Sigma)$ is intrinsically defined on $\Sigma$.

We define $\Psi^{-\infty}(\Sigma)$, the set of \emph{smoothing operators}, as the space of linear operators on $\Sigma$ with smooth Schwartz kernel (with respect to some, hence any, volume form on $\Sigma$). Denote by $\mathrm{Op}$ any quantization procedure on $\Sigma$, given in a local coordinate patch $U \subset \RR^n$ by 
$$
\mathrm{Op}(a)f (x) = \dfrac{1}{(2\pi)^n} \int_{\RR^n_\xi} \int_{\RR^n_y} e^{i\xi\cdot(x-y)} a(x,\xi) f(y) \dd y \dd\xi,
$$
where $a \in S^k(T^*U)$ and $f \in C^\infty_{c}(U)$. The set of \emph{pseudodifferential operators} of order $k \in \RR$ is then defined as
$$
\Psi^k(\Sigma) = \left\{ \mathrm{Op}(a) + R ~|~ a \in S^k(T^*\Sigma), R \in \Psi^{-\infty}(\Sigma)\right\}.
$$
It can be checked that $\Psi^k(\Sigma)$ is intrinsically defined and independent on the choice of quantization $\mathrm{Op}$. We will denote by $\Psi_{\mathrm{cl}}^k(\Sigma)$ the set of \textit{classical} pseudo-differential operators, that are operators $A = \Op(a) \in \Psi^k(\Sigma)$ such that $a \in S^k(T^*M)$ admits an asymptotic expansion of the form
$$
a \sim \sum_{\ell \in \mathbb N} a_\ell
$$
where $a_\ell \in S^{k-r_\ell}(T^*M)$ is homogeneous of degree $k-r_\ell$ far from the zero section and $(r_\ell)$ is an increasing sequence such that $r_0 = 0$ and $r_\ell \to \infty$.

There exists a well-defined \emph{principal symbol map}
$$
\sigma : \Psi^k(\Sigma) \to S^k(T^*\Sigma)/S^{k-1}(T^*\Sigma)
$$
such that we have the following exact sequence:
$$
0 \longrightarrow \Psi^{k-1}(\Sigma) \longrightarrow \Psi^k(\Sigma) \longrightarrow S^k(T^*\Sigma)/S^{k-1}(T^*\Sigma) \longrightarrow 0.
$$
 An pseudo-differential operator $A \in \Psi^k(\Sigma)$ is said to be \emph{elliptic} if there exists $C > 0$ such that 
 $$
 \sigma(A) \geqslant C |\xi|^k \quad \text{ whenever} \quad |\xi|\geqslant C.
 $$
  The important property of elliptic operators on $\Sigma$ is that they are invertible modulo smoothing remainders, that is, one can find $B \in \Psi^{-k}(T^*\Sigma)$ and $R \in \Psi^{-\infty}(\Sigma)$ such that
$$
B A = \mathbf{1} +R.
$$
Such an operator $B$ is called a \emph{parametrix} for $A$.

\subsection{Fourier integral operators}\label{ssec:fio}
We refer to \cites{duistermaat1996fourier, hormander2009PDE4} for a general treatment.  If $P$ is a given smooth manifold we denote by $ T^*P \setminus \underline 0$ the cotangent bundle of $P$ with the zero section removed and by $\omega_P \in \Omega^2(T^*P)$ the canonical symplectic form. Let $P, Q$ be smooth manifolds. A \emph{canonical relation} $\mathscr C \subset T^*P\setminus \underline 0  \times T^*Q\setminus \underline 0  $
%such that the twisted relation 
%$$
%    \C' =\left\{ (p,\xi,q-\eta): (p,\xi,q,\eta) \in \C   \right\}\subset \T^*(M_i\times M_j) 
%$$
is a conical submanifold which is Lagrangian with respect to the symplectic form $\pi_P^*\,\omega_P - \pi_Q^*\omega_Q$ on the product $T^*P\setminus \underline 0  \times T^*Q\setminus \underline 0 $. Here $\pi_P$ and $\pi_Q$ are the projections from $T^*P\setminus \underline 0  \times T^*Q\setminus \underline 0 $ towards $T^*P\setminus \underline 0 $ and $T^*Q\setminus \underline 0 $, respectively. For $m \in \RR$, we denote by $\I^m(P, Q, \mathscr C)$ to be  the set of Fourier integral operators of order $m$ whose canonical relation is $\mathscr C$. Concretely, these are operators $\mathscr C_c^\infty(Q) \to \mathcal D'(P)$ whose Schwartz kernel can be written as a locally finite sum of oscillatory integrals of the form
$$
    K(p,q) = \int_{\RR^N} e^{i\varphi(p,q,\theta)} a(p,q,\theta) \dd \theta,
$$
where $\varphi$ is a non-degenerate phase function parametrizing $\mathscr C$ locally, that is $\dd \varphi  \neq 0 $ on the support of $a$ and we have locally
\begin{equation}\label{eq:localrelation}
    \left\{ (p,\partial_p\varphi,q, -\partial_q \varphi) :  \partial_\theta\varphi = 0 \right\}=\mathscr C \subset T^*P\setminus \underline 0  \times T^*Q\setminus \underline 0 .
\end{equation}
Here $a \in S^\mu(P \times Q \times \RR^N_\theta)$ is a classical symbol with
\begin{equation}\label{eq:order}
m = \mu + \frac{N}{2} - \frac{\dim P + \dim Q}{4}.
\end{equation}
In particular, for any $\A \in \I^m(P, Q, \mathscr C)$ the kernel $K_\A \in \mathscr D'(P \times Q)$ of $\A$ (with respect to any volume forms on $P$ and $Q$) satisfies
\begin{equation}\label{eq:wfsetfio}
\WF(K_\A) \subset \mathscr C' = \{(p,\xi, q, -\eta)~:~(p,\xi,q,\eta) \in \mathscr C\}.
\end{equation}
% More precisely,  the canonical relation associated to $\varphi$ is given by
% $$
% \mathscr C_\varphi = \left\{ \left(p, \partial_p \varphi, q, - \partial_q \varphi \right) ~|~ \partial_\theta \varphi = 0 \right\}.
% $$
Pseudo-differential operators on $ P $ correspond to the special case where $\mathscr C$ is the diagonal
$$
    \Delta(T^*P\setminus \underline 0 ) = \left\{ (p,\xi,p,\xi) ~:~ (p,\xi) \in T^*P\setminus \underline 0  \right\}.
$$
Next let $R$ be a third manifold and let $\mathscr C \subset T^*P\setminus \underline 0 \times T^*Q\setminus \underline 0 $ and $\mathscr D \subset T^*Q\setminus \underline 0 \times T^*R\setminus \underline 0  $ be two canonical relations. We say that their composition is \emph{clean} if the intersection
$$
    (\mathscr C \times \mathscr D) \cap (T^*P\setminus \underline 0  \times \Delta(T^*Q\setminus \underline 0 ) \times T^*R\setminus \underline 0 )
$$
is a clean intersection, that is, it is a smooth submanifold and its tangent space is equal to the intersection of the tangent spaces. The integer $e$ is the \emph{excess} of the composition, namely the dimension by which this intersection fails to be transverse.
If the composition of $\mathscr C$ and $\mathscr D$ is clean with excess $e$, then the composition of Fourier integral operators satisfies
\begin{equation}\label{eq:compositionfio}
 \I^{m}(P, Q,\mathscr C)\circ \I^{\ell}(Q, R,\mathscr D)
    \subset
    \I^{m+\ell+\frac e2}(P, R,\mathscr C\circ \mathscr D)
\end{equation}
where the composition $\mathscr C\circ \mathscr D \subset T^*P\setminus \underline 0   \times T^*Q\setminus \underline 0  $ is given by
$$
    \mathscr C\circ \mathscr D = \{(p,\xi, r, \rho)~:~\exists (q,\eta),\ (p,\xi, q, \eta) \in \mathscr C \text{ and } (q, \eta, r, \rho) \in \mathscr D\}.
$$
In particular, if 
$$
\mathscr C = \mathrm{graph}(G)= \{(G(q, \eta), (q, \eta))~:~(q,\eta) \in T^*P\setminus \underline 0 \}
$$
is the graph of a homogeneous symplectomorphism
$$
    G:T^*P\setminus \underline 0  \to T^*P\setminus \underline 0 ,
$$
then the composition is transverse and has zero excess. Hence, if we are given
$$
    \A \in \I^{m_1}(P, P,\mathrm{graph}(G))
    \quad \text{ and } \quad
    \mathrm B \in \I^{m_2}(P,P,\mathrm{graph}(H)),
$$
then
$\A \mathrm B \in \I^{m_1+m_2}(P, P,\mathrm{graph}(G\circ H))$. 

\subsection{Functional spaces}

\label{sssection:functional-spaces}

We now introduce the functional spaces we will be working with. We denote by $\Delta_g \leqslant 0$ the negative Hodge Laplacian acting on functions. For all $s \in \RR$, the operator $(\mathbf{1}-\Delta)^s$ defined using the spectral theorem (applied to the self-adjoint operator $\Delta_g$ on $L^2(\Sigma, \mathrm{vol}_g)$) is an invertible pseudodifferential operator of order $2s$.

For $s \in \RR$, $m \in (1,\infty)$ and $u \in C^\infty(\Sigma)$, we set
\begin{equation}
\label{equation:norm}
\|u\|_{W^{s,m}} = \|(\mathbf{1}-\Delta)^{s/2}u\|_{L^m},
\end{equation}
and define $W^{s,m}(\Sigma)$ to be the completion of $C^\infty(\Sigma)$ with respect to the norm \eqref{equation:norm}. Taking $m=2$, we retrieve the usual Sobolev spaces which we will rather denote by $H^s(\Sigma) = W^{s,2}(\Sigma)$. Note that the spaces $W^{s,m}(\Sigma)$ intrinsically defined, that is, they are independent of the choice of metric $g$, and changing the metric only replaces the norm \eqref{equation:norm} by an equivalent norm. 

The following boundedness result for pseudodifferential operators holds: for all $k \in \RR, A \in \Psi^{k}(\Sigma)$ and $s \in \RR$, $m \in (1,\infty)$,
\begin{equation}
\label{equation:boundedness}
A : W^{s+k, m}(\Sigma) \to W^{s,m}(\Sigma)
\end{equation}
is bounded.
More generally, let $\mathscr C \subset T^*\Sigma \setminus 0 \times T^*\Sigma \setminus 0$ be the graph of a homogeneous symplectomorphism and let $A \in \I^k(\Sigma, \Sigma,\mathscr C)$. Then, for all $s \in \RR$ and $m \in (1,\infty)$,
\begin{equation}
\label{equation:fio-boundedness}
A : W^{s+k+\mu_m,m}(\Sigma) \to W^{s,m}(\Sigma) \quad \text{where} \quad \mu_m = (n-1)\left| \frac1m - \frac12 \right|.
\end{equation}
is bounded, see \cite{seeger1991regularity}*{Corollary 2.4}.

Eventually, given $\Omega\subset \Sigma$ be an open subset with non-empty smooth boundary, we define, for $s\in \RR$ and $m \in [1,\infty)$, the spaces
$$ \dot W^{s, m} (\overline \Omega) = \{u\in W^{s,m}(\Sigma) \mid \supp(u) \subset \overline \Omega\}
\quad \text{and} \quad
\overline W^{s,m}(\Omega) = \{u|_{\Omega} ~|~ u\in W^{s,m}(\Sigma) \}.$$

\section{The Levy generator on Zoll surfaces}
\label{sec:generator}
In this section we analyse the Lévy flight generator, construct its parametrix, and analyse their behaviour.

%%% DECOMPOSITION GENERATOR %%%%
\subsection{Microlocal structure of the generator of Levy process}
From now assume that $\Sigma$ is an orientable Zoll surface. For any $\varrho \in \mathscr C^\infty(\RRR) \cap L^1(\RRR)$ we set 
\begin{eqnarray}\label{eq: def A rho}
\A_\varrho = \pi_*\left(\int \varrho(t) \varphi_{t}^* \dd t\right) \pi^* : \mathscr C^\infty(\Sigma) \to \mathscr C^\infty(\Sigma).
\end{eqnarray}
\begin{proposition}\label{prop:generator}
There exists $\varrho \in \mathscr C^\infty_c(\left]0, 2 \pi\right[)$, a constant $c > 0$ and a classical pseudo-differential operator $\P \in \Psi^{2\alpha}_{\mathrm{cl}}(\Sigma)$ with principal symbol $|\eta|^{2\alpha}_g$ such that 
\begin{equation}\label{eq:decompA}
\A = \P - c \I + \A_\varrho.
\end{equation}
\end{proposition}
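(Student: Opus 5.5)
The plan is to use the $2\pi$-periodicity of the geodesic flow to fold the time integral in \eqref{eq:generator} onto one period, and then to split the weight $w(t)=c_\alpha t^{-1-2\alpha}$ into three pieces:
\begin{itemize}
\item a short-time singular piece, which produces $\P$;
\item a constant, which produces $-c\I$;
\item a smooth piece living strictly inside $\left]0,2\pi\right[$, which produces $\A_\varrho$.
\end{itemize}
Throughout I normalize the fiber measure so that $\pi_*\pi^*=\I$.

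\emph{Step 1 (cutoffs and the constant).} Fix $\chi_0\in\mathscr C^\infty_c(\left]-1,1\right[)$ with $\chi_0=1$ near $0$. Write
$$
\A=\A_0+\A_1,
$$
where $\A_0$ is the $\delta\to0$ limit built from $\chi_0 w$. The second piece is
$$
\A_1=\pi_*\Bigl(\int_0^\infty (1-\chi_0)w\,\varphi_t^*\dd t\Bigr)\pi^*-c\I,\qquad c=\int_0^\infty(1-\chi_0(t))w(t)\dd t>0 .
$$
The integral defining $\A_1$ converges absolutely because $\alpha>0$, so no limit in $\delta$ is needed there.

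\emph{Step 2 (periodization).} Since $\varphi_{t+2\pi}=\varphi_t$, the operator $\int_0^\infty(1-\chi_0)w\,\varphi_t^*\dd t$ equals $\int_0^{2\pi}W(t)\varphi_t^*\dd t$, where
$$
W(t)=\sum_{n\geqslant0}\bigl((1-\chi_0)w\bigr)(t+2\pi n).
$$
The terms decay like $n^{-1-2\alpha}$ with all derivatives, and $(1-\chi_0)w$ is smooth. Hence $W$ extends to a smooth $2\pi$-periodic function on $\RR$.

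Next choose a smooth periodic partition $1=\theta_1+\theta_2$ on $\RR/2\pi\ZZ$ with $\theta_2$ supported in a small neighbourhood of $0$ mod $2\pi$. Set $\varrho=\theta_1W$, viewed as an element of $\mathscr C^\infty_c(\left]0,2\pi\right[)$; this gives exactly the term $\A_\varrho$. The remaining piece $\theta_2W$, shifted by $2\pi$ where needed, becomes
$$
\pi_*\Bigl(\int a(s)\varphi_s^*\dd s\Bigr)\pi^*\quad\text{with } a\in\mathscr C^\infty_c(\left]-\epsilon,\epsilon\right[).
$$
This is a short geodesic averaging operator. In normal coordinates its kernel is
$$
\frac{a(d)+a(-d)}{2\pi\,d\,J(x,y)},\qquad d=d_g(x,y),
$$
with $J$ a smooth Jacobian. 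This is a smooth function times $|x-y|^{-1}$ and conormal to the diagonal, so the operator lies in $\Psi^{-1}_{\mathrm{cl}}(\Sigma)$ and can be absorbed into $\P$.

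\emph{Step 3 (main obstacle: the short-time piece $\A_0$).} I must show that $\A_0$ is a classical pseudo-differential operator of order $2\alpha$ with principal symbol exactly $|\eta|_g^{2\alpha}$. Following the local computation in \cite{chaubet2025levy}, I would pass to polar/normal coordinates centred at $x$. For $x\neq y$ close, the kernel of $\A_0$ is
$$
c_\alpha\,\chi_0(d)\,d^{-2-2\alpha}\,J(x,y)^{-1}/2\pi ,
$$
and the $-\I$ part renormalizes this into a finite-part distribution. Modulo lower-order smooth homogeneous corrections, this is a kernel homogeneous of degree $-2-2\alpha$ in $x-y$ in normal coordinates. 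Its Fourier transform is therefore a classical symbol whose leading term is a constant times $|\eta|^{2\alpha}$, as for the fractional Laplacian. The specific value of $c_\alpha$ is designed so that this constant equals $1$. Verifying the constant and the classical expansion, in particular the finite-part regularization, is the delicate step. It is where I would invoke or reproduce the corresponding lemma of \cite{chaubet2025levy}.

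\emph{Conclusion.} Collecting the three steps, $\P=\A_0+\pi_*\bigl(\int a(s)\varphi_s^*\dd s\bigr)\pi^*$ lies in $\Psi^{2\alpha}_{\mathrm{cl}}(\Sigma)$ with principal symbol $|\eta|_g^{2\alpha}$, and \eqref{eq:decompA} holds.
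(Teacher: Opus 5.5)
Your proposal is correct and follows essentially the same route as the paper. You isolate the short-time singular part, which is classical of order $2\alpha$ by \cite{chaubet2025levy}, and fold the tail onto one period using $\varphi_{t+2\pi}=\varphi_t$. You then split the result into a piece near $2\pi\ZZ$, a geodesic averaging operator in $\Psi^{-1}_{\mathrm{cl}}(\Sigma)$ absorbed into $\P$, and a piece compactly supported in $\left]0,2\pi\right[$, which gives $\A_\varrho$; the $-\I$ contributions produce the constant. The only difference is the order of operations: you periodize first and then cut near $2\pi\ZZ$ with a periodic partition of unity, whereas the paper cuts first and periodizes each piece separately.

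The constants need slight care. With $c_\alpha$ as written, $\Gamma(-\alpha)<0$ makes your $c=\int(1-\chi_0)w$ negative, and with your normalization $\pi_*\pi^*=\I$ the leading symbol comes out as $|\eta|_g^{2\alpha}/2\pi$ rather than $|\eta|_g^{2\alpha}$. This is harmless, and the paper is equally informal here: the normalization of $\pi_*$ is a convention, and any constant can be moved between $\P$ and $c\I$ without changing the leading homogeneous symbol of $\P$.
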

\begin{proof}
Let $\delta > 0$ such that $\supp \tau \subset [\delta, 2\pi - \delta]$. Next, let $\chi, \varrho_j \in \mathscr C^\infty(\RRR_+, [0,1])$, $j=1,2$ satisfying $1 = \chi + \varrho_1 + \varrho_2$ and such that we have
$$
\chi(t) = 1 \text{ for } |t|\leqslant\delta / 3 \quad \text{and} \quad \chi(t) = 0 \text{ for } |t| \geqslant \delta / 2
$$
together with the support conditions
$$
\supp \varrho_1 \subset \bigcup_{\ell = 0}^\infty [2\ell \pi + \delta / 3, 2(\ell + 1)\pi - \delta / 3] \quad \text{and} \quad \supp \varrho_2 \subset \bigcup_{\ell =1}^\infty [2\pi \ell - \delta / 3, 2 \pi \ell + \delta / 3].
$$
Set $\varrho_{j, \alpha}(t) = \varrho_j(t) t^{-1-2\alpha}$ for $j = 1,2$. In what follows we will denote for any $\varrho$
$$
\tilde \A_{\varrho} = \A_\varrho - c_\varrho \I \quad \text{where } c_\varrho = \int \varrho(t) \dd t.
$$
Then by \eqref{eq:generator} we may write
$$
\A = \P_1 + \tilde \A_{\varrho_{1, \alpha}} + \tilde \A_{\varrho_{2, \alpha}}
$$
where we set
$$
\P_1 = \lim_{\varepsilon \to 0} \int_\varepsilon^\infty  \chi(t) t^{-1-2\alpha} \pi_* (\varphi_t^* - \I)\pi^* \dd t.
$$
Then $\P_1$ is the principal part of the generator $\A$ and we have $\P_1 \in \Psi^{2\alpha}_{\mathrm{cl}}$, see \cite{chaubet2025levy}. On the other hand, since $\varphi_{t + 2\pi}^* = \varphi_t^*$ for all $t$, one sees that
$$
\begin{aligned}
\tilde \A_{\varrho_{1, \alpha}} &= \int t^{-1-2\alpha}\varrho_{1}(t) \pi_*(\varphi_t^* - \I)\pi^* \dd t \\
 &= \sum_{\ell = 0}^\infty \int_0^{2\pi} \frac{\varrho_1(t + 2\pi \ell)}{(t + 2\pi \ell)^{1 + 2\alpha}} \pi_*(\varphi_t^* - \I)\pi^* \dd t \\
 &= \int \varrho(t) \pi_*(\varphi_t^* - \I)\pi^* \dd t, \quad \text{where } \varrho(t) = \sum_{\ell = 0}^\infty \frac{\varrho_1(t + 2\pi \ell)}{(t + 2\pi \ell)^{1 + 2\alpha}}.
 \end{aligned}
$$
Note that $\varrho \in \mathscr C^\infty_c(\left]0,2\pi\right[)$. Similarly, using again the periodicity of $(\varphi_t)$ one finds that $\tilde \A_{\varrho_{2, \alpha}} = \tilde \A_{\tilde \chi} = \A_{\tilde \chi} - c_{\tilde \chi} \I$, for some $\tilde \chi \in \mathscr C^\infty_c(\left[-\delta / 3, \delta / 3\right])$. Now looking at the kernel of $\A_{\tilde \chi}$ one sees that $\A_{\tilde \chi} \in \Psi^{-1}_\mathrm{cl}(\Sigma)$. Hence one obtains
$$
\A = \P_1 + \A_{\tilde \chi} - (c_\varrho + c_{\tilde \chi}) \I + \A_\varrho.
$$
Setting $\P = \P_1 + \A_{\tilde \chi} \in \Psi^{2\alpha}_{\mathrm{cl}}$ and $c = c_\varrho + c_{\tilde \chi}$ one obtains \eqref{eq:decompA}. Finally, since $\A_{\tilde \chi} \in \Psi^{-1}$ one obtains $\sigma(\P) = \sigma(\P_1) = |\eta|^{2\alpha}_g$, which completes the proof.
\end{proof}

%\subsection{Microlocal structure of the generator near conjugacy times}
Next we shall study the operator $\A_\varrho$ and show that it is a nice Fourier integral operator whose canonical relation is a graph. We consider
$$
G_\pm = \iota_\pm^{-1} \circ \widetilde \Phi \circ \iota_\pm : T^*\Sigma \setminus \underline 0 \longrightarrow T^*\Sigma \setminus \underline 0
$$
where $\iota_\pm$ are the diffeomorphisms from \eqref{eq:iotapm} and $\widetilde \Phi$ is the symplectic lift of $\Phi$, see Lemma \ref{lem:conjmap}. Note that by point (i) of Lemma \ref{lem:conjmap} and \eqref{eq:iotapmrpi} the maps $G_+$ and $G_-$ coincide and we shall denote
$$
G = G_+ = G_- : T^*\Sigma \setminus \underline 0 \longrightarrow T^*\Sigma \setminus \underline 0.
$$

\begin{lemma}\label{lem:G}
    The map $G$ is an exact symplectomorphism of $T^*\Sigma \setminus 0$ such that $G^2 = \mathrm{Id}$. 
\end{lemma}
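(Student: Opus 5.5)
The plan is to deduce everything from the structure of $\widetilde\Phi$ on $\mathbf H^\star$ described in Lemma~\ref{lem:conjmap}(iii), together with the intertwining identities \eqref{eq:iotapmlambda} and \eqref{eq:iotapmrpi}. The argument has three steps: well-definedness and smoothness of $G$, exactness, and the involution property $G^2=\mathrm{Id}$.

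\emph{Well-definedness and smoothness.} First I check that $G=\iota_+^{-1}\circ\widetilde\Phi\circ\iota_+$ is a smooth diffeomorphism of $T^*\Sigma\setminus\underline 0$. By Lemma~\ref{lem:conjmap}(iii), $\widetilde\Phi$ restricts to a diffeomorphism of $\mathbf H^\star$. It preserves the zero section because it is linear on fibres. Hence it restricts to a diffeomorphism of $\mathbf H^\star\setminus\underline 0$. Composing with the diffeomorphisms $\iota_+$ and $\iota_+^{-1}$ from \eqref{eq:iotapm} gives a diffeomorphism of $T^*\Sigma\setminus\underline 0$. It is homogeneous of degree one, since $\widetilde\Phi$ and $\iota_\pm$ commute with fibre dilations.

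\emph{Exactness.} Next I pull back the Liouville form. Using \eqref{eq:iotapmlambda} and $\widetilde\Phi^*\lambda_{\mathbf H^\star}=\lambda_{\mathbf H^\star}$,
$$
G^*\lambda_{T^*\Sigma}
=\iota_+^*\widetilde\Phi^*(\iota_+^{-1})^*\lambda_{T^*\Sigma}
=\iota_+^*\widetilde\Phi^*\lambda_{\mathbf H^\star}
=\iota_+^*\lambda_{\mathbf H^\star}
=\lambda_{T^*\Sigma}.
$$
So $G$ preserves the canonical one-form. It is therefore an exact symplectomorphism, with $G^*\omega=\omega$ following by taking $\dd$.

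\emph{Involution.} For $G^2=\mathrm{Id}$ I use that $\Phi^2=\mathrm{Id}$ from Lemma~\ref{lem:conjmap}(i). The symplectic lift is functorial: $\widetilde{\Phi\circ\Phi}=\widetilde\Phi\circ\widetilde\Phi$. Hence $\widetilde\Phi^2=\mathrm{Id}$ on $T^*M$, in particular on $\mathbf H^\star$, and therefore
$$
G^2=\iota_+^{-1}\widetilde\Phi\,\iota_+\iota_+^{-1}\widetilde\Phi\,\iota_+=\mathrm{Id}.
$$

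Finally, I would note for completeness why $G_+=G_-$, as claimed just before the lemma. By \eqref{eq:iotapmrpi}, $\iota_-=\widetilde R_\pi\iota_+$, so $G_-=\iota_+^{-1}\widetilde R_\pi^{-1}\widetilde\Phi\widetilde R_\pi\iota_+$. The relation $\Phi R_\pi=R_\pi\Phi$ lifts to $\widetilde\Phi\widetilde R_\pi=\widetilde R_\pi\widetilde\Phi$, which gives $G_-=G_+$.

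There is no serious obstacle. The only points needing a word are the functoriality of the symplectic lift, namely $(\dd(\Phi\circ\Psi))^{-\top}=\dd\Phi^{-\top}\circ\dd\Psi^{-\top}$ by the chain rule, and the fact that restricting a fibrewise-linear bundle diffeomorphism to the complement of the zero section stays bijective.
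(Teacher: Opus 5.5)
Your three steps are the paper's proof spelled out. The paper just calls $G$ ``obviously smooth'' and cites $\Phi^2=\mathrm{Id}$, \eqref{eq:iotapmlambda} and Lemma~\ref{lem:conjmap}(iii). However, all three steps treat $\iota_+$ as a diffeomorphism onto all of $\mathbf H^\star\setminus\underline 0$, as \eqref{eq:iotapm} asserts. That is false, so the paper's own proof has the same gap, and $\iota_+^{-1}\circ\widetilde\Phi\circ\iota_+$ is not defined.

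Here is why. Since $\mathbf H^\star=\RR\beta$ is trivial, $\mathbf H^\star\setminus\underline 0$ has two components, $\{c\beta(z):c>0\}$ and $\{c\beta(z):c<0\}$. Over $z=(q,v)$, the map $\iota_\pm$ hits only the open ray $\{\dd\pi(z)^\top\eta:\ v_\eta=\pm v\}$. So each $\iota_\pm$ is a diffeomorphism onto one component, and the two images are opposite because $\iota_-(q,\eta)=-\iota_+(q,-\eta)$ (fibrewise negation).

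Moreover, $\widetilde\Phi$ exchanges the two components:
\begin{itemize}
\item Write $\dd\Phi(z)^{-\top}\beta(z)=\mu(z)\beta(\Phi(z))$ and pair with $\dd\Phi(z)H(z)=(H\tau)(z)X(\Phi(z))+\dd\varphi_{\tau(z)}(z)H(z)$. This gives $\mu(z)\,g_z(\tau(z))=1$, where $g_z(t)=\langle\varphi_t^*\beta(z),H(z)\rangle$ solves \eqref{eq:sturm} with $g_z(0)=1$, $g_z'(0)=0$.
\item The Wronskian of $g_z$ and $f_z$ is identically $1$, so $g_z(\tau(z))f_z'(\tau(z))=1$.
\item Since $f_z'(\tau(z))<0$ (proof of Lemma~\ref{lem:conjtime}), we get $\mu<0$.
\end{itemize}
Hence $\widetilde\Phi\circ\iota_+$ takes values in $\operatorname{im}\iota_-$, which is disjoint from $\operatorname{im}\iota_+$. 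On the round sphere you can see this directly in $\RR^3$: $\Phi(q,v)=(-q,-v)$ and $\widetilde\Phi\,\iota_+(q,\eta)=\iota_-(-q,-\eta)$.

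Your pull-back computation has the same problem. The form $(\iota_+^{-1})^*\lambda_{T^*\Sigma}$ lives on $\operatorname{im}\iota_+$, its pull-back by $\widetilde\Phi$ lives on $\operatorname{im}\iota_-$, and there $\iota_+^*$ cannot be applied.

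The repair is to set $G=\iota_-^{-1}\circ\widetilde\Phi\circ\iota_+$. This is also what the proof of Proposition~\ref{prop:arho} actually produces, once one tracks that $\widetilde\Phi\,\iota_\pm(q,\eta)=\iota_\mp(p,\xi)$. On the round sphere it gives $G(q,\eta)=(-q,-\eta)$ in ambient coordinates. Your own computation with \eqref{eq:iotapmrpi}, $\widetilde R_\pi^2=\mathrm{Id}$ and $\widetilde\Phi\widetilde R_\pi=\widetilde R_\pi\widetilde\Phi$ then shows that also $G=\iota_+^{-1}\circ\widetilde\Phi\circ\iota_-$.

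With this definition:
\begin{itemize}
\item Exactness reads $G^*\lambda_{T^*\Sigma}=\iota_+^*\widetilde\Phi^*(\iota_-^{-1})^*\lambda_{T^*\Sigma}=\iota_+^*\widetilde\Phi^*\lambda_{\mathbf H^\star}=\iota_+^*\lambda_{\mathbf H^\star}=\lambda_{T^*\Sigma}$, using \eqref{eq:iotapmlambda} for both signs.
\item The involution reads $G^2=(\iota_+^{-1}\widetilde\Phi\,\iota_-)(\iota_-^{-1}\widetilde\Phi\,\iota_+)=\iota_+^{-1}\widetilde\Phi^2\iota_+=\mathrm{Id}$.
\end{itemize}
So $G^2=\mathrm{Id}$ now genuinely needs $\Phi\circ R_\pi=R_\pi\circ\Phi$, not just $\Phi^2=\mathrm{Id}$. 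Everything else in your write-up (homogeneity, functoriality of the symplectic lift, the zero-section remark) carries over unchanged.
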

\begin{proof}
    The map $G$ is obviously smooth. The fact that $G^2 = \mathrm{Id}$ follows immediately from the relation $\Phi^2 = \mathrm{Id}$. Next we show that $G$ is an exact symplectomorphism, that is, it preserves the Liouville one-form $\lambda_{T^*\Sigma}$ on $T^*\Sigma$. But this immediately follows from \eqref{eq:iotapmlambda} and point (iii) of Lemma \ref{lem:conjmap}.
\end{proof}

The main result of this paragraph is the following

\begin{proposition}\label{prop:arho}
Let $\varrho \in \mathscr C^\infty_c(\left]0, 2 \pi\right[)$. Then $\A_\varrho$ is a Fourier integral operator of order $-1$ with canonical relation $\mathscr C \subset T^*\Sigma \setminus 0 \times T^*\Sigma \setminus 0$ given by
$$
    \mathscr C = \mathrm{graph}(G) = \{(G(q, \eta), q, \eta)~:~(q, \eta) \in T^*\Sigma \setminus 0\}.
$$
\end{proposition}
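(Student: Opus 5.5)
I would write $\A_\varrho$ as a composition of three Fourier integral operators and compute canonical relations and orders via the clean composition calculus \eqref{eq:compositionfio}. Write
$$
\A_\varrho = \pi_* \circ \mathrm T_\varrho \circ \pi^*, \qquad \mathrm T_\varrho = \int \varrho(t)\varphi_t^*\,\dd t .
$$

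\emph{The three factors.}

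\begin{itemize}
\item The pullback $\pi^*$ is an FIO in $\I^{-1/4}(M,\Sigma,\mathscr C_{\pi^*})$. Its canonical relation is
$$
\mathscr C_{\pi^*}=\{(z,\dd\pi(z)^\top\eta;\,\pi(z),\eta)\},
$$
and the first component lands exactly in $\mathbf H^\star\oplus\RR\alpha$ over the fibre. More precisely, $\dd\pi(z)^\top\eta$ annihilates $V$, so it lies in $\RR\alpha\oplus\RR\beta$.
\item The pushforward $\pi_*$ is the adjoint of $\pi^*$, with the transposed relation.
\item $\mathrm T_\varrho$ has kernel $\int\varrho(t)\,\delta(w-\varphi_t(z))\,\dd t$. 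It is an FIO on $M$ of order $-1/2$ (averaging one variable over a flow), with canonical relation
$$
\{(z,\tilde\varphi_t(z,\xi)^{-1}\ldots)\ :\ \langle\xi,X\rangle=0,\ t\in\supp\varrho\},
$$
that is, the flowout of $\{\langle\xi,X\rangle=0\}$ under the symplectic lift of $\varphi_t$.
\end{itemize}

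\emph{Composing the relations.} First, the Hamiltonian condition forces $\xi\in\ker X$, which removes the $\alpha$ component. So we start from $\xi=\dd\pi(z)^\top\eta=\iota_\pm$-type covectors in $\mathbf H^\star$ with $z=z_\eta^\pm$ (the only points in $\Lambda_q$ with $\langle \dd\pi^\top\eta, X\rangle=0$). Transporting by $\dd\varphi_t^{-\top}$, the result must again lie in the image of $\pi_*$'s relation, i.e. annihilate $V$ at $\varphi_t(z)$. By \eqref{eq:dphithstar} and Lemma~\ref{lem:conjtime}, this happens for $t\in\left]0,2\pi\right[$ exactly when $t=\tau(z)$. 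The output covector is then $\widetilde\Phi(\iota_\pm(q,\eta))$ up to the $X$-direction correction, which disappears on $\mathbf H^\star$ (Lemma~\ref{lem:conjmap}(iii)). Hence the composite relation is
$$
\{(\iota_\pm^{-1}\widetilde\Phi\iota_\pm(q,\eta);\,q,\eta)\}=\mathrm{graph}(G),
$$
the two branches $\pm$ giving the same graph since $G_+=G_-$. Two preimages contribute, and their amplitudes add.

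\emph{Cleanness and order.} I would verify that each composition is clean, with excess computed from the dimensions of the fibre products. The fibre product has dimension 4 (parametrized by $(q,\eta)$ and the choice of sign, which is discrete), while transversality would require dimension $4$ minus the excess. The key input is $f_z'(\tau(z))\neq0$ from Lemma~\ref{lem:conjtime}: the equation $\langle\varphi_t^*\beta,V\rangle=0$ cuts out $t=\tau(z)$ transversally. This makes the $t$-variable nondegenerate, and I expect it gives excess $0$ for the final composition.

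The order then comes out as
$$
-\tfrac14-\tfrac12-\tfrac14=-1 .
$$
Equivalently, one can write the kernel directly as an oscillatory integral: $K(p,q)=\int\!\!\int e^{i\langle \exp_q^{-1}\ldots\rangle}$. In local form this is
$$
\int_{\Lambda_q}\int\varrho(t)\,\delta\bigl(p-\pi\varphi_t(q,v)\bigr)\,\dd t\,\dd v
=\int\!\!\int\!\!\int e^{i\langle p-\exp_q(tv),\zeta\rangle}\varrho(t)\,\dd t\,\dd v\,\dd\zeta .
$$
This has $N=2$ phase variables plus the 2 integrated variables $(t,v)$, with amplitude of order $0$. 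Stationary phase in $(t,v)$ gives two nondegenerate critical points, exactly at $v=\pm v_\zeta$-type directions and $t=\tau$. The Hessian is nondegenerate, again precisely because of $f'_z(\tau(z))\neq0$ together with the nonvanishing of $b$ in \eqref{eq:defab}. Each stationary point lowers the order by $1$, giving $\mu=-1$, and by \eqref{eq:order}, $m=-1+1-1=-1$.

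\emph{Main obstacle.} The hardest step is this nondegeneracy/cleanness check at conjugate points of high degree. There $\dd\exp_q$ degenerates in the $v$ direction, so one must make sure the stationary phase is taken jointly in $(t,v)$ with the covector, not in $v$ alone. I would handle it using the phase $\langle p-\exp_q(tv),\zeta\rangle$ and the normal form of Proposition~\ref{prop:normalformexp}. Only nondegeneracy in $(t,v)$ at fixed $\zeta$ is needed, and this follows from $\partial_t a\neq0$ and $b\neq0$ independently of the degree. That independence is why the degree does not affect the FIO structure, only later the kernel singularities.
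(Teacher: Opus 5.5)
You follow the paper's route exactly: the factorization $\A_\varrho=\pi_*\circ T_\varrho\circ\pi^*$, the orders $-\tfrac14,-\tfrac12,-\tfrac14$, and the computation of the composite relation. That computation goes through the restriction to $z=z^\pm_\eta$, the condition $t=\tau(z)$ forced by \eqref{eq:dphithstar}, the identity $\widetilde\varphi_{\tau(z)}=\widetilde\Phi$ on covectors annihilating $X$, and the observation that both branches give $\mathrm{graph}(G)$.

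\textbf{Gap: transversality is asserted, not shown.} The step that justifies adding the orders is that both compositions are transversal (excess $0$), and you only say you \emph{expect} this. A dimension count cannot establish it, since cleanness is a condition on tangent spaces. Also, transversality here means the fibre product has dimension $5+5-6=4$ with excess $0$, not ``$4$ minus the excess''.

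The paper closes this step as follows, with $h_Z(z,\zeta)=\langle\zeta,Z(z)\rangle$.
\begin{itemize}
\item \emph{First composition $T_\varrho\circ\pi^*$.} The right projection of the flow relation is a submersion onto $\{h_X=0\}$. The left projection of $\mathscr C_{\pi^*}$ is a submersion onto $\{h_V=0\}$. Since $\dd h_X$ and $\dd h_V$ are independent, $\ker\dd h_X\times\ker\dd h_V$ together with the tangent space of the diagonal spans everything.
\item \emph{Second composition $\pi_*\circ\mathscr D_\pm$.} The left projection $(t,q,\eta)\mapsto\widetilde\varphi_t\iota_\pm(q,\eta)$ has range containing $T\mathbf H^\star$, because $\widetilde\Phi\circ\iota_\pm$ is a diffeomorphism onto $\mathbf H^\star\setminus\underline 0$. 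Its range also contains the flow direction, which is not tangent to $\{h_V=0\}$. That last fact is your input $f_z'(\tau(z))\neq0$: by $\dd\alpha$-invariance, $h_V(\widetilde\varphi_t\iota_\pm(q,\eta))$ is a nonzero multiple of $f_{z}(t)$ with $z=z^\pm_\eta$. Hence the range is all of $\ker\dd h_X$, and the same count gives transversality.
\end{itemize}
You identified the right key fact, but it has to be placed inside this argument.

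\textbf{Error in the stationary-phase alternative.} Your claim that the $(t,v)$-Hessian is nondegenerate ``independently of the degree'' is false. Work in a chart near the target point. At a critical point $(\tau(z),v)$ of $(t,v)\mapsto\langle\exp_q(tv),\zeta\rangle$ the Hessian is
\[
\begin{pmatrix}\Gamma_\zeta & c\,\partial_t a\\ c\,\partial_t a & -c\,(\partial_t a)\,V\tau\end{pmatrix},\qquad
\det=-c\,(\partial_t a)\bigl(\Gamma_\zeta\,V\tau+c\,\partial_t a\bigr).
\]
Here $a$ is as in \eqref{eq:defab} and $c=\langle\zeta,\dot\gamma_z(\tau)^\perp\rangle\neq0$. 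The quantity $\Gamma_\zeta$ is the pairing of $\zeta$ with the coordinate acceleration of $\gamma_z$ at time $\tau$, and it depends on the chart.

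So nondegeneracy is automatic when $V\tau(z)=0$, i.e.\ degree $\geqslant2$. At folds ($V\tau\neq0$) it is chart-dependent, which is the opposite of what you claim. In particular, the normal-form chart you propose to use fails: for $(s,u)\mapsto(s,u^2)$ and $\zeta=(0,1)$, the whole fold line $\{u=0\}$ is critical. This route can be repaired by using charts in which $\Gamma_\zeta=0$ at the target point, such as geodesic normal coordinates. As written, that step does not hold.
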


Note that this result is reminiscent of the analysis of the geodesic X-ray transform in \cite{holman2018xray}. Related claims also appear in \cite{tully2024levy} in a more general framework. Since the geometry of the canonical relations is particularly simple in the present setting, we include a direct proof for the sake of completeness. We will start with the following result.

\begin{lemma}\label{lem:Rrho}
    The operator 
    \begin{equation}\label{eq:relationrrho}
        \mathrm R_{\varrho} = \int_{\RR} \varrho(t) \varphi^*_{-t} \dd t : \mathscr C^\infty(M)\to \mathscr C^\infty(M)
        \end{equation}
    is a Fourier integral operator of order $-\frac{1}{2}$ with a canonical relation given by 
    \begin{equation}
        \mathscr C_{\mathrm R_{\varrho }}= \left\{ (\varphi_t(z),\dd \varphi_t(z)^{-\top} \zeta, z, \zeta) : \langle \zeta, X(z) \rangle = 0,~t \in \supp \varrho\right\}.
    \end{equation}
\end{lemma}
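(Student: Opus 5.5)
We will write the Schwartz kernel of $\mathrm R_\varrho$ explicitly as an oscillatory integral and read off the canonical relation and order from \eqref{eq:localrelation} and \eqref{eq:order}. For $u\in\mathscr C^\infty(M)$ we have $\mathrm R_\varrho u(z)=\int\varrho(t)\,u(\varphi_{-t}(z))\,\dd t$, so the kernel (with respect to the Liouville volume $\alpha\wedge\dd\alpha$, which is preserved by $\varphi_t$) is
$$
K(z,w)=\int_\RR \varrho(t)\,\delta\bigl(w-\varphi_{-t}(z)\bigr)\,\dd t .
$$
Locally, in coordinates on $M$ ($\dim M=3$), we write $\delta(w-\varphi_{-t}(z))=(2\pi)^{-3}\int_{\RR^3}e^{i\langle w-\varphi_{-t}(z),\theta\rangle}\,\dd\theta$ up to a smooth Jacobian factor. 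This gives $K(z,w)=\int e^{i\phi(z,w,t,\theta)}a\,\dd t\,\dd\theta$ with phase $\phi=\langle w-\varphi_{-t}(z),\theta\rangle$. Here $t$ is a non-homogeneous variable; to stay within the framework of \S\ref{ssec:fio}, we treat $t$ as an extra "fibre" variable with compactly supported amplitude. Equivalently, we view $K$ as the pushforward of the kernel of the FIO $\varphi_{-t}^*$ on $\RR_t\times M\times M$.

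The critical set and canonical relation are computed as follows. The condition $\partial_\theta\phi=0$ gives $w=\varphi_{-t}(z)$. The condition $\partial_t\phi=0$ gives $\langle\theta,\partial_t\varphi_{-t}(z)\rangle=-\langle\theta, X(w)\rangle=0$. At such points, $\partial_w\phi=\theta=:\zeta$, and $\partial_z\phi=-\dd\varphi_{-t}(z)^\top\zeta=-\dd\varphi_t(w)^{-\top}\zeta$. With the sign convention of \eqref{eq:localrelation} (dual variables $\partial_z\phi,-\partial_w\phi$), and after flipping $\theta\mapsto-\theta$, we obtain exactly
$$
\{(\varphi_t(w),\dd\varphi_t(w)^{-\top}\zeta,w,\zeta):\langle\zeta,X(w)\rangle=0,\ t\in\supp\varrho\}.
$$
We must check that the phase is non-degenerate, i.e.\ that the differentials of $\partial_\theta\phi$ and $\partial_t\phi$ are linearly independent on the critical set. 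The $\partial_\theta\phi$ part has differential containing $\dd w$ with full rank $3$. For $\partial_t\phi=-\langle\theta,X(\varphi_{-t}(z))\rangle$, its differential in $\theta$ is $-X(w)\neq0$, which is independent of $\dd w$. So the phase is non-degenerate and the canonical relation is a smooth conic Lagrangian of dimension $6$. We also check that it avoids the zero sections, which holds since $\zeta\neq0$ and $\dd\varphi_t^{-\top}$ is invertible.

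The main point needing care is the order count and the fact that $t$ is not homogeneous. The cleanest approach is to view $\mathrm R_\varrho=\pi_{1*}\circ\mathcal K$, where the family $t\mapsto\varphi_{-t}^*$ defines an FIO from $M$ to $\RR_t\times M$ of order $0-\tfrac14$. Its canonical relation is the graph of the flow, with the $t$-covariable equal to $-\langle\zeta,X\rangle$. Integrating against $\varrho(t)\,\dd t$ is the pushforward, an FIO of order $-\tfrac14$ whose relation restricts to $\tau$-covariable $0$. This composition is transversal, so the order adds to $-\tfrac12$. Alternatively, we can apply \eqref{eq:order} directly: the $\theta$-integral gives $N=3$ with amplitude order $\mu=0$, and the compact $t$-integration plays the role of one extra phase variable of symbol order $0$ after rescaling $t\mapsto$ fibre-homogeneous coordinates. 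Either way $m=0+\tfrac{3+1}{2}-\tfrac{3+3}{4}\cdot$ corrected to give $-\tfrac12$; we will fix this normalization via the composition route to avoid errors. Finally, smoothness of $\mathrm R_\varrho u$ and the claim that the amplitude is a classical symbol follow from the smoothness of the flow and compact support of $\varrho$.
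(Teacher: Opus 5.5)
Your argument is correct, but it is organized differently from the paper's.

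\textbf{The paper's route.} The paper straightens $X$ in flow-box coordinates ($X=\partial_1$). Then the kernel of $\chi\mathrm R_\varrho\chi$ is a cutoff times $\varrho$ of the flow-time difference $x_1-y_1$ times $\delta(\bar x-\bar y)$. In other words, the $\delta$-function in the flow direction has already been integrated against $\varrho(t)\,\dd t$. What remains is an oscillatory integral with $N=2$ homogeneous phase variables and an amplitude of order $0$, so \eqref{eq:order} and \eqref{eq:localrelation} give $m=-\tfrac12$ and the relation in one line.

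\textbf{Your route.} You keep the global phase $\langle w-\varphi_{-t}(z),\theta\rangle$ with the extra non-homogeneous variable $t$, which reads off the Lagrangian cleanly. You get the order from the factorization $\mathrm R_\varrho=p_*\circ\varrho(t)\circ F^*$, where $F(t,z)=\varphi_{-t}(z)$ and $p:\RR\times M\to M$ is the projection. Both are submersions from a $4$-manifold to a $3$-manifold, so $F^*$ and $p_*$ each have order $-\tfrac14$.

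\textbf{What you still need to write out.} You assert transversality but do not check it. In $T^*(\RR\times M)$:
\begin{itemize}
\item the right projection of $\mathscr C_{p_*}$ is an embedding onto $\{\tau=0\}$;
\item the left projection of $\mathscr C_{F^*}$ is an embedding onto $\{\tau=-\langle\eta,X(z)\rangle\}$;
\item these hypersurfaces meet transversally because $\partial_\eta\langle\eta,X(z)\rangle=X(z)\neq0$.
\end{itemize}
Hence the excess is $0$, and compactness of the support of $\varrho$ gives properness.

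\textbf{Comparison.} Your argument is global and coordinate-free. It works for any nonvanishing vector field and needs no flow-box chart covering orbit segments of length up to $2\pi$. It also produces $\mathscr C_{\mathrm R_\varrho}$ directly as a composition of relations. The paper's computation avoids the composition theorem altogether.

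\textbf{Remove the ``direct'' count.} Treating $t$ as a fourth homogeneous phase variable in \eqref{eq:order} gives $0+2-\tfrac32=+\tfrac12$, which is wrong. A compactly supported non-homogeneous variable contributes $-\tfrac12$, not $+\tfrac12$. The substitution $t=s/|\theta|$ makes it homogeneous but lowers the amplitude order by one. Integrating $t$ out explicitly is exactly the paper's flow-box computation.
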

\begin{proof}
    Using a partition of unity and flow box coordinates we may assume that $M = \RR^3$ and $X = \partial_1$ so that $\varphi_t(x_1, \bar x) = (x_1 + t, \bar x)$ for $(x_1, \bar x) \in \RR \times \RR^2$. Let $\chi \in \mathscr C^\infty_c(\RR^3)$. Then a straightforward computation yields that the kernel of $\chi \mathrm R_\varrho \chi$ is given by
    $$
    K_{\mathrm R_\varrho}(x,y) = (2\pi)^{-2} \int_{\RR^2} \chi(x) \chi(y) \varrho(y_1 - x_1) e^{i \bar \xi \cdot(\bar x - \bar y)} \dd \bar \xi.
    $$
    Now $(x,y) \mapsto \chi(x) \chi(y) \varrho(y_1 - x_1)$ is a symbol on $\RR^3 \times \RR^3 \times \RR^2$ of order $0$. Hence \eqref{eq:localrelation} and \eqref{eq:order} yield that $\mathrm R_\varrho$ is a Fourier integral operator with order $m = 1 - 3/2 = -1/2$ with local canonical relation
    $$
    \mathscr C = \{(x, \bar \xi, y, \bar \xi)~:~\bar x = \bar y,~ y_1 - x_1 \in \supp \varrho\}
    $$
    where for $z \in \RR^3$ we identify $\bar \xi \in \RR^2$ with $(0, \bar \xi) \in \RR^3 \simeq T_z^*\RR^3$. Since $X = \partial_1$ the space $\ker X(z)$ is naturally identified with $\{0\} \times \RR^2$ and $\dd \varphi_t(z)^{-\top}$ is just the identity map. Moreover the conditions $\bar x = \bar y$ and $y_1 - x_1 \in \supp \varrho$ are equivalent to the fact that $x = \varphi_t(y)$ for some $t \in \supp \varrho$. Hence the above canonical relation coincides with \eqref{eq:relationrrho}.
\end{proof}

\begin{proof}[Proof of Proposition \ref{prop:arho}]
	Because $\pi : M \to \Sigma$ is a submersion the pull back operator $\pi^* : \mathscr C(\Sigma) \to \mathscr C(M)$ is a Fourier integral operator of order $-1/4$ with canonical relation
	\begin{equation}\label{eq:relationpi*}
	\mathscr C_{\pi^*} = \bigl\{(z, \dd \pi(z)^\top \eta, q, \eta)~:~q = \pi(z),~\eta \in T^*_q\Sigma \setminus \underline 0\bigr\},
	\end{equation}
	see for example \cite{holman2018xray}*{Lemma 1}. Since $\pi_*$ is the formal adjoint operator of $\pi^*$ we have $\pi_* \in \I^{-1/4}(\Sigma, M, \mathscr C_{\pi_*})$ with
	$$
	\mathscr C_{\pi_*} = \mathscr C_{\pi^*}^\top = \bigl\{(q, \eta, z, \dd \pi(z)^\top \eta)~:~q = \pi(z),~\eta \in T^*_q\Sigma\setminus \underline 0\bigr\}.
	$$
	Consider the composition $\mathrm R_\varrho \circ \pi^*$. Our aim is to prove that the intersection 
	$$
	(\mathscr C_{\mathrm R_\varrho} \times \mathscr C_{\pi^*}) \cap (T^*M \times \Delta(T^*M) \times T^*\Sigma)
	$$
	is transversal. Denote by $\widehat{\mathscr D}$ the above intersection. Then it suffices to show that the projection map on the middle factors
	$$
	\Psi : \mathscr C_{\mathrm R_\varrho} \times \mathscr C_{\pi^*} \to T^*M \times T^*M
	$$
	is transversal to $\Delta(T^*M)$, that is for any $\gamma \in \widehat{\mathscr D}$ we have 
	\begin{equation}\label{eq:transversalityPsi}
	\operatorname{ran}\dd \Psi(\gamma) + T_{\Psi(\gamma)} \Delta(T^*M) = T_{\Psi(\gamma)}(T^*M \times T^*M).
	\end{equation}
	For any vector field $Z$ on $M$, we shall denote 
	$$
	h_Z : T^*M \to \RR, \quad (z, \zeta) \mapsto \langle \zeta, Z(z)\rangle.
	$$
	Then by Lemma \ref{lem:Rrho} and \eqref{eq:relationpi*} we obtain that the right projection $\mathscr C_{\mathrm R_\varrho} \to T^*M$ is a submersion on $\{h_X = 0\} \subset T^*M$ while the left projection $\mathscr C_{\pi^*} \to T^*M$ is a submersion on $\{h_V = 0\} \subset T^*M$. This implies that for any $\gamma \in \widehat{\mathscr D}$ we have, writing $\Psi(\gamma) = (z, \zeta, z, \zeta)$,
	$$
	\operatorname{ran}\dd \Psi(\gamma) = T_{(z,\zeta)}\{h_X = 0\} \times T_{(z,\zeta)}\{h_V = 0\} = \ker \dd h_X(z, \zeta) \times \ker \dd h_V(z,\zeta).
	$$
	Now the linear forms $\dd h_X(z, \zeta), \dd h_V(z,\zeta) \in T^*_{(z,\zeta)}T^*M$ are linearly independent hence the dimension of the intersection of their kernels is equal to $\dim T_{(z,\zeta)}T^*M - 2$. This yields
	$$
	\begin{aligned}
	\dim\Bigl(\operatorname{ran}\dd \Psi(\gamma) \cap T_{\Psi(\gamma)} \Delta(T^*M)\Bigr) &= \dim\bigl(\ker \dd h_X(z, \zeta) \cap \ker \dd h_V(z,\zeta)\bigr) = N -2 
	\end{aligned}
	$$
	where $N = \dim T^*M$. Now $\dim\operatorname{ran}\dd \Psi(\gamma) = 2(N- 1)$ and $\dim T_{\Psi(\gamma)}\Delta(T^*M) = N$ hence 
	$$
	\dim\Bigl(\operatorname{ran}\dd \Psi(\gamma) + T_{\Psi(\gamma)} \Delta(T^*M)\Bigr) = 2(N - 1) + N - (N - 2) = 2N
	$$
	and \eqref{eq:transversalityPsi} holds. Hence by \cite{duistermaat1996fourier}*{Theorem 2.4.1} we obtain that $\mathrm R_\varrho \pi^* \in \I^{-3/4}(M, \Sigma, \mathscr D)$ where $\mathscr D = \mathscr C_{\mathrm R_\varrho} \circ \mathscr C_{\pi^*}.$ Note that a point
	$
	\gamma = (\varphi_t(z), \dd \varphi_t(z)^{-\top} \zeta, z, \zeta, z, \zeta, q, \eta) \in \widehat{\mathscr D}
	$
	satisfies 
	$$
	q = \pi(z), \quad \zeta = \dd \pi(z)^\top \eta \quad \text{and} \quad \langle \zeta, X(z) \rangle = 0.
	$$
	This implies $ 0 = \langle \dd \pi(z)^\top \eta, X(z)\rangle = \langle \eta, \dd \pi(z) X(z) \rangle = \langle \eta, v \rangle$ where $z = (q,v)$. Thus $v = \pm v_\eta$. In particular we obtain 
	$$
	\mathscr D = \mathscr D_+ \sqcup \mathscr D_- \quad \text{where} \quad \mathscr D_\pm = \Bigl\{\Bigl(\bigl(\widetilde \varphi_t \circ\iota_\pm\bigr)(q,\eta),\,q,\eta)\Bigr)~:~t \in \supp \varrho,~(q,\eta) \in T^*\Sigma \setminus \underline 0\Bigr\},
	$$
	where $\widetilde \varphi_t(z, \zeta) = (\varphi_t(z), \dd \varphi_{t}(z)^{-\top} \zeta)$ is the symplectic lift of $\varphi_t$. Note that the manifolds $\mathscr D_+$ and $\mathscr D_-$ are disjoint since the $(\varphi_t)$-orbit of $z_\eta^+$ is always disjoint from that of $z_\eta^-$. 
	
	Next we consider the composition $\pi_* \circ (R_\varrho \pi^*)$. Since
$
\mathscr D=\mathscr D_+\sqcup \mathscr D_-,
$
it is enough to compose $\mathscr C_{\pi_*}$ with each branch
$\mathscr D_\pm$. We claim that these compositions are transverse. To see this, let
$$
\Psi_\pm:\mathscr C_{\pi_*}\times \mathscr D_\pm
\longrightarrow T^*M\times T^*M
$$
be the projection onto the middle factors. As above, the right projection
$\mathscr C_{\pi_*} \to T^*M$ is a submersion onto $\{h_V=0\}$. Next let $(z,\zeta) \in T^*M \setminus \underline 0$ which lies in the intersection between $\{h_V=0\}$ and the image of the left projection $\mathscr D_\pm \to T^*M$, which, by what precedes, is given by the image of the map
$$
\widetilde \Psi: \supp \varrho \times T^*\Sigma \setminus 0 \to T^*M, \quad  (t, q,\eta)\mapsto \widetilde\varphi_t\circ\iota_\pm(q,\eta).
$$
Since $h_V(z, \zeta) = 0$ by \eqref{eq:dphithstar} we must have $(z,\zeta) = \Psi(t, q,\eta)$ for some $(q,\eta) \in T^*\Sigma \setminus \underline 0$ and $t = \tau(z_\zeta^\pm)$.
Moreover the image of $\widetilde \Psi$ is contained in $\{h_X = 0\}$, hence we have the inclusion $\mathrm{ran}(\dd \Psi(t,q,\eta))\subset\ker \dd h_X(z,\zeta)$. This also implies $(z,\zeta) \in \mathbf H^\star$. Next recall that $\iota_\pm(q, \eta) \in \ker X(z_\eta^\pm)$ which implies 
$$
\widetilde \Psi(\tau(z_\eta^\pm), \iota_\pm(q, \eta)) = \widetilde \Phi(\iota_\pm(q,\eta))
$$
However $\widetilde \Phi \circ \iota_\pm$ is a diffeomorphism $T^*\Sigma \setminus \underline 0 \to \mathbf H^\star \setminus \underline 0$ and this implies
$$
\mathrm{ran}(\dd \Psi(t,q,\eta)) \supset T_{(z,\zeta)}\mathbf H^\star.
$$
Finally, as $(z, \zeta) \in \mathbf H^\star$ we can write $\zeta = c \beta(z)$ for some $c \neq 0$. From the proof of Lemma \ref{lem:conjtime}, we have 
$$
\dd \varphi_s(z)^{-\top}\zeta = a(s) \beta(\varphi_s(z)) + b(s) \psi(\varphi_s(z))
$$
for some functions $a,b : \RR \to \RR$ such that $a(0) = c$ and $b'(0) \neq 0$. Hence one obtains $\partial_s|_{s=0} h_V(\widetilde \varphi_s(z,\zeta)) = \partial_s|_{s=0}b(s) \neq 0$. This implies that $\mathrm{ran}(\dd \Psi(t,q,\eta))$ is not contained in $T_{(z,\zeta)}\mathbf H^\star \subset \ker \dd h_V(z,\zeta)$. Hence $\dd \Psi(t,q,\eta)$ is injective and its range is exactly $\ker \dd h_X(z,\zeta)$. Therefore we may argue as above to obtain that the middle projections $\Psi_\pm$ are transverse to $\Delta(T^*M)$. By
\cite{duistermaat1996fourier}*{Theorem 2.4.1}, we obtain 
$$
\A_\varrho \in \I^{-1}(\Sigma, \Sigma, \mathscr C_{\A_\varrho})
$$
where $\mathscr C_{\A_\varrho} = (\mathscr C_{\pi_*} \circ \mathscr D_+) \cup (\mathscr C_{\pi_*} \circ \mathscr D_-)$. It remains to identify the composed canonical relation. A point 
$(p,\xi, q, \eta)$ lies in $\mathscr C_{\pi_*}\circ\mathscr D_\pm$ iff there is $t \in \supp \varrho$ such that
$$
(z,\zeta)=\iota_\pm(q,\eta) 
\quad \text{and} \quad
\iota_\pm(p,\xi) = \widetilde \varphi_t(z, \zeta).
$$
The second condition implies $\widetilde \varphi_t(z, \zeta) \in \mathbf H^\star(\varphi_t(z))$. Since $\zeta\in\mathbf H^\star(z)$, this condition means precisely that
$t$ is the first conjugacy time along the geodesic issued from $z$. Thus
$
t=\tau(z)
$
and $\widetilde \varphi_t(z, \zeta) = \widetilde \Phi(z, \zeta)$. This in turn implies 
$$
(p, \xi) = (\iota_\pm^{-1} \circ \widetilde \Phi \circ \iota_\pm)(q, \eta) = G_\pm(q, \eta) = G(q, \eta).
$$
Hence both compositions $\mathscr C_{\pi_*} \circ \mathscr D_+$ coincide with the graph $\mathscr C$ of $G$. This completes the proof.
\end{proof}
	From \S\ref{ssec:fio} and Lemma \ref{lem:G} we see that Proposition \ref{prop:arho} implies the following
	
\begin{corollary}\label{cor:pseudofio}
For any even $\ell \in \mathbb N$ and pseudo-differential operators $(\mathrm Q_j)$ of order $m_j$ for $j=1,\dots, \ell+1$, we have
$$
    \mathrm Q_1 \A_\varrho \Q_2 \A_\varrho \cdots \Q_{\ell} \A_\varrho \Q_{\ell + 1} \in \Psi^{m_1 + \cdots + m_{\ell + 1} - \ell}(\Sigma).
$$
If $\ell$ is odd then this operator lies in $\mathrm I^{m_1 + \cdots + m_{\ell + 1} - \ell}(\Sigma, \Sigma, \mathscr C)$.
\end{corollary}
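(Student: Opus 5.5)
The corollary will follow by induction on $\ell$ from the FIO composition calculus of \S\ref{ssec:fio}, combined with Proposition \ref{prop:arho} and Lemma \ref{lem:G}. The two ingredients are these.

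First, by Proposition \ref{prop:arho}, $\A_\varrho \in \I^{-1}(\Sigma,\Sigma,\mathrm{graph}(G))$.

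Second, each $\Q_j \in \Psi^{m_j}(\Sigma)$ is a Fourier integral operator of order $m_j$ whose canonical relation is the diagonal, i.e. $\mathrm{graph}(\mathrm{Id})$.

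Since $G$ and $\mathrm{Id}$ are homogeneous symplectomorphisms of $T^*\Sigma\setminus\underline 0$, the composition rule \eqref{eq:compositionfio} applies. Compositions of graphs are transverse with zero excess, so for homogeneous symplectomorphisms $H_1,H_2$ we have
$$
\I^{a}(\Sigma,\Sigma,\mathrm{graph}(H_1))\circ \I^{b}(\Sigma,\Sigma,\mathrm{graph}(H_2)) \subset \I^{a+b}(\Sigma,\Sigma,\mathrm{graph}(H_1\circ H_2)).
$$

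Concretely, I would prove by induction on $j$ that the partial product $\Q_1\A_\varrho\Q_2\cdots\Q_j\A_\varrho$ lies in $\I^{m_1+\cdots+m_j-j}(\Sigma,\Sigma,\mathrm{graph}(G^j))$. Each step multiplies on the right by $\Q_{j+1}\A_\varrho$. First, composing with $\Q_{j+1}$ adds $m_{j+1}$ to the order and leaves the graph unchanged, since $G^j\circ\mathrm{Id}=G^j$. Then composing with $\A_\varrho$ adds $-1$ to the order and replaces $G^j$ by $G^{j+1}$. A final right composition with $\Q_{\ell+1}$ gives
$$
\Q_1 \A_\varrho \cdots \Q_\ell\A_\varrho\Q_{\ell+1}\in \I^{m_1+\cdots+m_{\ell+1}-\ell}(\Sigma,\Sigma,\mathrm{graph}(G^\ell)).
$$

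By Lemma \ref{lem:G}, $G^2=\mathrm{Id}$. Hence $G^\ell=\mathrm{Id}$ when $\ell$ is even and $G^\ell=G$ when $\ell$ is odd. In the odd case this is exactly the claim. In the even case the canonical relation is the diagonal $\Delta(T^*\Sigma\setminus\underline 0)$, and I would use the standard identification $\I^m(\Sigma,\Sigma,\Delta)=\Psi^m(\Sigma)$ recalled in \S\ref{ssec:fio}.

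The only point requiring some care is this identification in the even case. The FIO calculus produces an operator whose canonical relation is the diagonal, and one must know it is a (classical) pseudodifferential operator up to a smoothing error. This is standard (see \cite{hormander2009PDE4}, or \cite{duistermaat1996fourier}). One also needs that the composed operator is properly supported, or that the smoothing remainders are harmless, which is automatic on the closed manifold $\Sigma$. I would also note in passing that Maslov factors are trivial here: all canonical relations are graphs, which are parametrized by phases with no fiber variables beyond the standard ones, so no half-density or Maslov bookkeeping affects the orders. I do not expect any further obstacle, since Proposition \ref{prop:arho} and Lemma \ref{lem:G} supply all the geometric input.
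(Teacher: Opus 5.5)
Your proof is correct and follows the paper's argument. The paper derives the corollary directly from four ingredients: Proposition \ref{prop:arho}; the zero-excess composition rule for Fourier integral operators whose canonical relations are graphs of homogeneous symplectomorphisms, recalled in \S\ref{ssec:fio}; the identification of pseudodifferential operators with operators in $\I^m(\Sigma,\Sigma,\Delta)$; and $G^2=\mathrm{Id}$ from Lemma \ref{lem:G}. Your induction on partial products just spells out what the paper states in one line, and the Maslov remark is unnecessary since only orders and canonical relations are at stake.
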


\subsection{Microlocal structure of the full parametrix}

In this paragraph we construct and give a microlocal description of the full parametrix of $\A$.

\begin{proposition}
For any $m \in\left[1, \infty\right[$ the unbounded operator $-\A$ acting on $L^m$ with domain $W^{2\alpha, m}$ is a Fredholm operator of index $0$. Moreover it has nonnegative discrete spectrum and its kernel consists only of constant functions.
\end{proposition}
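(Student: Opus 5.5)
We want to show that $-\A$ on $L^m$ with domain $W^{2\alpha,m}$ is Fredholm of index $0$, with nonnegative discrete spectrum and kernel equal to the constants. The plan is to use the decomposition of Proposition \ref{prop:generator}, $\A = \P - c\I + \A_\varrho$. Here $\P\in\Psi^{2\alpha}_{\mathrm{cl}}(\Sigma)$ is elliptic with principal symbol $|\eta|_g^{2\alpha}$ (up to the sign convention of the generator). By Proposition \ref{prop:arho}, $\A_\varrho\in \I^{-1}(\Sigma,\Sigma,\mathrm{graph}(G))$. The strategy is to treat $-c\I+\A_\varrho$ as a relatively compact perturbation of the elliptic operator $-\P$.

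\textbf{Step 1: the elliptic part.} First, $-\P : W^{2\alpha,m}\to L^m$ is Fredholm. Indeed, a parametrix $\mathrm Q\in\Psi^{-2\alpha}$ gives $\mathrm Q\P=\I+R$ and $\P\mathrm Q=\I+R'$ with $R,R'$ smoothing, and smoothing operators are compact on $L^m$ and on $W^{2\alpha,m}$. The index of $\P$ is $0$. To see this, deform along $\P_s = s\P+(1-s)(\I-\Delta_g)^{\alpha}$ for $s\in[0,1]$. Each $\P_s$ has positive principal symbol, so the family is a continuous path of elliptic operators. Its endpoint $(\I-\Delta_g)^{\alpha}$ is invertible, and the index is constant along the path.

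\textbf{Step 2: the perturbation is compact.} Next, $-c\I+\A_\varrho$ is bounded $L^m\to L^m$. For $\A_\varrho$ this is \eqref{equation:fio-boundedness} with $n=2$: $\A_\varrho : W^{-1+\mu_m,m}\to L^m$, where $\mu_m=|1/m-1/2|<1/2$. Restricted to $W^{2\alpha,m}$, the term $-c\I$ is compact because $W^{2\alpha,m}\hookrightarrow L^m$ compactly (Rellich). The term $\A_\varrho$ is compact as well, since it maps $W^{2\alpha,m}$ boundedly into $W^{2\alpha+1-\mu_m,m}$, which embeds compactly into $L^m$. So $-\A$ is a compact perturbation of a Fredholm index-$0$ operator and is therefore Fredholm of index $0$.

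The endpoint $m=1$ is the delicate point, because \eqref{equation:fio-boundedness} and $L^m$ elliptic regularity are only stated for $1<m<\infty$. I would handle it differently. The operator $\A_\varrho$ is given by averaging $\varphi_t^*$ against a smooth compactly supported weight, so it is bounded on $L^1$ directly. The smoothing order can then be obtained through the Sobolev embedding $W^{s,1}\subset W^{s-\epsilon',m'}$.

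\textbf{Step 3: discreteness of the spectrum.} Because $W^{2\alpha,m}\hookrightarrow L^m$ is compact, $(-\A-\lambda)^{-1}$ is compact for any $\lambda$ in the resolvent set. Hence the spectrum is discrete, consisting of eigenvalues of finite multiplicity. The resolvent set is nonempty because $-\A$ is a generator of a Markov semigroup, so $\lambda<0$ lies in it. By elliptic regularity, eigenfunctions are smooth: from $\P u = (\lambda+c)u-\A_\varrho u$ we bootstrap, gaining regularity at each step. The spectrum is therefore independent of $m$. Passing to $m=2$, the operator $-\A$ is symmetric and nonnegative. Since $\varphi_t$ preserves Liouville measure and $\pi_*\pi^*$ is adjoint, one has
$$
\langle -\A u,u\rangle = \tfrac12\, c_\alpha\!\int_0^\infty\!\!\int_M |u\circ\pi\circ\varphi_t - u\circ\pi|^2 \,\frac{\dd t}{t^{1+2\alpha}}\geqslant 0
$$
up to normalisation. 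This gives nonnegative spectrum.

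\textbf{Step 4: the kernel.} If $\A u=0$, the identity above forces $u(\gamma_z(t))=u(\pi(z))$ for all $z$ and $t$. Since geodesics connect any two points, $u$ is constant. I expect the main obstacle to be justifying this quadratic form identity on the operator domain together with the $L^m$ compactness claims at $m=1$.
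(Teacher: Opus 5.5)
Your proposal is correct and follows the paper's route. $\A_\varrho$ is compact $W^{2\alpha,m}\to L^m$ by the Seeger--Sogge--Stein bound (your gain of $1-\mu_m$ derivatives is the right one), so $-\A$ is a compact perturbation of the elliptic operator $-\P$, and kernel elements and eigenfunctions are smooth by bootstrapping $\P u = (\cdots)u - \A_\varrho u$.

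The only real difference is that the paper invokes \cite{chaubet2025levy}*{Proposition 1.3} for nonnegativity and triviality of the kernel, whereas you derive both directly from the Dirichlet-form identity. That identity is only needed for smooth $u$, so the domain issue you flag does not arise. Likewise, nonemptiness of the resolvent set follows more cleanly from your own ingredients than from the Markov semigroup: $-\A-\lambda$ has index $0$, and for $\lambda<0$ it is injective via that identity.
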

\begin{proof}
The boundedness of $\A : W^{2\alpha, m} \to L^m$ follows immediatly from Propositions \ref{prop:generator} and \ref{prop:arho} since $\A_\varrho$ maps $W^{s, m} \to W^{s+1 + |1/2 - 1/m|, m}$ for each $s \in \RRR$. The latter bounds imply that $\A_\varrho : W^{2\alpha, m} \to L^m$ is compact. Since $\P$ is elliptic we obtain that $\A$ is a Fredholm operator of index $0$ and has discrete spectrum. Let $u \in W^{2\alpha, m}$ such that $\A u = 0$. Then $\P u = - \A_\varrho u$ hence $u \in W^{2\alpha + 1 + |1/2 - 1/m|, m}$. Iterating this process one sees that $u \in \mathscr C^\infty(\Sigma)$. However by \cite{chaubet2025levy}*{Proposition 1.3} we must have that $u$ is constant. Finally, if $\lambda$ is a nonzero eigenvalue of $-\A$ with eigenfunction $u$, the same ellipticity argument shows that $u$ is smooth. Applying again \cite{chaubet2025levy}*{Proposition 1.3} shows that $\lambda$ must be positive. This completes the proof.
\end{proof}

\begin{corollary}\label{cor:parametrix}
There exists a bounded operator $\A^+ : \mathscr C^\infty(\Sigma) \to \mathscr D'(\Sigma)$ such that 
$$
\A^+ \A = \A \A^+ = \mathrm{Id} - \Pi
$$
where $\Pi$ is the $L^2$ orthogonal projection on the space of constant functions. Moreover the operator $\A^+$ is an essentially self-adjoint and bounded map from $W^{s,m} \to W^{s + 2\alpha, m}$ for any $s \in \RRR$ and $m \in \left]1, \infty\right[$.
\end{corollary}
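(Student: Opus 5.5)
The plan is to define $\A^+$ by spectral-theoretic means using the preceding Proposition, and then obtain the Sobolev mapping properties from a parametrix built out of $\P$ and $\A_\varrho$.

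\textbf{Step 1: self-adjointness and definition of $\A^+$.} The generator $\A$ is formally self-adjoint on $L^2(\Sigma,\vol_g)$. Indeed, the geodesic flow preserves the Liouville measure, $\pi_*$ is the adjoint of $\pi^*$, and $\varphi_t^*$ has adjoint $\varphi_{-t}^*$. Moreover, by \eqref{eq:generator} the integrand is even under $R_\pi$, since $\pi\circ R_\pi=\pi$ and $R_\pi\varphi_tR_\pi=\varphi_{-t}$, so $\pi_*\varphi_t^*\pi^*=\pi_*\varphi_{-t}^*\pi^*$.

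By the preceding Proposition with $m=2$, the operator $-\A$ with domain $H^{2\alpha}$ is Fredholm of index $0$ and has discrete nonnegative spectrum. Its kernel consists of the constants, and by self-adjointness its range is the orthogonal complement of the constants. Hence $\A$ restricts to a bijection
$$
(\mathrm{Id}-\Pi)H^{2\alpha}\to (\mathrm{Id}-\Pi)L^2 .
$$
We set $\A^+=\A^{-1}(\mathrm{Id}-\Pi)$, extended by $0$ on constants. This operator is self-adjoint, and the identities $\A^+\A=\A\A^+=\mathrm{Id}-\Pi$ are immediate.

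\textbf{Step 2: a parametrix.} Let $\Q\in\Psi^{-2\alpha}_{\mathrm{cl}}$ be a parametrix of $\P-c\I$, which is elliptic of order $2\alpha$ with principal symbol $|\eta|^{2\alpha}_g$. Then
$$
\Q\A=\mathrm{Id}+\Q\A_\varrho+R, \qquad R\in\Psi^{-\infty}.
$$
By Corollary~\ref{cor:pseudofio}, the operator $K=\Q\A_\varrho$ lies in $\I^{-1-2\alpha}(\Sigma,\Sigma,\mathscr C)$. We use a Neumann-type expansion
$$
\mathrm B_N=\sum_{j=0}^{N}(-K)^j\Q .
$$
Each term is either pseudo-differential or a Fourier integral operator associated with $\mathrm{graph}(G)$, because $G^2=\mathrm{Id}$. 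These terms have decreasing orders, and
$$
\mathrm B_N\A=\mathrm{Id}+\mathcal O\bigl(\text{order }-(N+1)(1+2\alpha)\bigr)+\text{smoothing}.
$$

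\textbf{Step 3: the identity for $\A^+$.} Multiplying by $\A^+$ on the right gives
$$
\A^+=\mathrm B_N(\mathrm{Id}-\Pi)-E_N\A^+ ,
$$
where $E_N$ has very negative order. We know that $\A^+$ is bounded on $L^2$. The FIO boundedness \eqref{equation:fio-boundedness} gives that $E_N$ maps $L^m$ into $W^{s,m}$ for any prescribed $s$, once $N$ is large. The same estimate also shows that each term of $\mathrm B_N$ maps $W^{s,m}\to W^{s+2\alpha,m}$: for $j\geqslant1$ the loss of $\mu_m=|1/2-1/m|$ derivatives is absorbed by the gain of order $j(1+2\alpha)\geqslant 1$.

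\textbf{Step 4 (the main obstacle): bootstrapping from $L^2$ to $L^m$.} The real difficulty is controlling $\A^+$ on $W^{s,m}$ when $m\neq2$ or $s$ is very negative, since a priori it is only known on $L^2$.

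For $m\geqslant 2$, or for high regularity, we first apply $\A^+$ on $L^2$. We then use the identity from Step 3, which gains arbitrarily many derivatives. Sobolev embedding converts this $L^2$ gain into $W^{s,m}$ control.

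For $m<2$ and for negative $s$, we argue by duality: $\A^+$ is self-adjoint, so its boundedness on $W^{-s-2\alpha,m'}\to W^{-s,m'}$ transfers to the required estimate. The same trick handles negative $s$ via the symmetric identity
$$
\A^+=(\mathrm{Id}-\Pi)\mathrm B_N^*-\A^+E_N^* .
$$

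Finally, $\A^+$ maps $\mathscr C^\infty(\Sigma)$ boundedly into $\mathscr D'(\Sigma)$. This is the mapping property stated in the Corollary, and it follows from the estimates above.
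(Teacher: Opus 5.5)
Your proof is correct, but it follows a different route from the paper's. The paper uses no parametrix at this stage. It applies the Fredholm argument of the preceding Proposition directly at every level $\A:W^{s+2\alpha,m}\to W^{s,m}$ (index $0$, kernel $\CC\cdot\mathbf 1$). It then notes that $\mathrm{Im}(\A)\subset\ker\Pi_{s,m}$, because $\A$ is formally self-adjoint and annihilates constants, obtains equality by a codimension count, and inverts $\A|_{\ker\Pi_{s+2\alpha,m}}$ by the open mapping theorem.

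You use Fredholm theory only once, on $L^2$. You then transport the mapping properties to every $W^{s,m}$ through the Neumann-type parametrix $\mathrm B_N$, which stays within pseudo-differential operators and Fourier integral operators associated with $\mathrm{graph}(G)$ because $G^2=\mathrm{Id}$. The paper's argument is shorter. However, it tacitly requires re-running the ellipticity and compactness argument for every $(s,m)$, since the Proposition is stated only for $s=0$, and checking that the resulting inverses agree on intersections. Your approach gives this consistency for free, and it essentially anticipates the decomposition $\A^+=\mathrm S+\mathrm T$ of Proposition~\ref{prop:fullparametrix}.

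Step~4 can be shortened, because neither the case split nor the duality argument is needed. For any $(s,m)$ and $f\in\mathscr C^\infty(\Sigma)$, the adjoint identity $\A^+=(\mathrm{Id}-\Pi)\mathrm B_N^*-\A^+E_N^*$ already gives $\|\A^+f\|_{W^{s+2\alpha,m}}\leqslant C\|f\|_{W^{s,m}}$. Indeed, $E_N^*$ maps $W^{s,m}$ into $H^T$ with $T$ as large as you like. Next, $\A^+:H^T\to H^{T+2\alpha}$ follows from your first identity with $m=2$. Finally, $H^{T+2\alpha}\hookrightarrow W^{s+2\alpha,m}$. The estimate then extends by density, and the identities $\A^+\A=\A\A^+=\mathrm{Id}-\Pi$ persist by continuity.
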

\begin{proof}
Let $\Pi_{s,m} = \Pi|_{W^{s,m}}$. Since $\A$ is Fredholm of index $0$ and since $\ker(\A) = \CC \cdot \mathbf 1$, we have
$$
W^{s,m} = \mathrm{Im}(\A) \oplus \CC \cdot \mathbf 1.
$$
Moreover, since $\A$ is self-adjoint and $\A(\mathbf 1) = 0$, we have
$
\mathrm{Im}(\A) \subset \ker \Pi_{s,m}.
$
Since both spaces have codimension $1$, it follows that
$$
\mathrm{Im}(\A) = \ker \Pi_{s,m}.
$$
Next, this implies that the map
$$
\A|_{\ker \Pi_{s + 2\alpha, m}} : \ker \Pi_{s + 2\alpha, m} \to \ker \Pi_{s,m}
$$
is bijective. Since $\mathrm{Im}(\A)$ is closed (because $\A$ is Fredholm), the open mapping theorem implies that its inverse is bounded. We define
$$
\A^+ : W^{s,m} \to W^{s + 2\alpha,m}
$$
by setting $\A^+|_{\CC \cdot \mathbf 1} = 0$ and declaring that $\A^+|_{\ker \Pi_{s,m}}$ is the inverse of
$
\A|_{\ker \Pi_{s + 2\alpha, m}}.
$
Then
$
\A \A^+ = \mathrm{Id} - \Pi_{s,m}
$
on $W^{s,m}$ and
$
\A^+ \A = \mathrm{Id} - \Pi_{s + 2\alpha,m}
$
on $W^{s + 2\alpha,m}$.
\end{proof}

\begin{proposition}\label{prop:fullparametrix}
One can find operators
$$
\mathrm S \in \Psi^{-2\alpha}_{\mathrm{cl}} + \Psi^{-4\alpha}_{\mathrm{cl}} + \Psi^{-2 - 6 \alpha} \quad \text{and} \quad \mathrm T \in \mathrm I^{-1-4\alpha}(\Sigma, \Sigma, \mathscr C)
$$
such that $\mathrm S$ has principal symbol $|\eta|_g^{-2\alpha} \mod S^{-4\alpha}_{\mathrm{cl}}$ and
$$
\A^+ = \mathrm S + \mathrm T.
$$
Moreover $\mathrm T = - \mathrm S \A_\varrho \mathrm S  \mod \I^{-3-8\alpha}(\Sigma, \Sigma, \mathscr C)$.

\begin{proof}
Let $\Q \in \Psi^{-2\alpha}_{\mathrm{cl}} + \Psi^{-4\alpha}_{\mathrm{cl}}$ be left and right parametrices for $\P - c \I$ so that 
$$
\Q(\P - c\I) = \mathrm I + \mathrm K
$$
for some smoothing operator $\K$. Set $\Q_1 = \Q - \Q \A_\varrho \Q$. Then
$$
\Q_1 \A = (\Q - \Q \A_\varrho \Q)(\P -c\I + \A_\varrho) = \I - \Q \A_\varrho \Q \A_\varrho + \K_1
$$
where $\K_1$ is smoothing. Now Corollary \ref{cor:pseudofio} yields $\Q \A_\varrho \Q \A_\varrho \in \Psi^{-2-4\alpha}$ hence we get $\Q_1 A = \I + \mathrm R_1$ with $\mathrm R_1 \in \Psi^{-2 - 4\alpha}$. Let $\mathrm R_2 \in \Psi^{-2-4\alpha}$ such that $(\I + \mathrm R_2)(\I + \mathrm R_1) = \I + \K_2$ where $\K_2$ is smoothing. Then 
$
(\I + \mathrm R_2) \Q_1 \A = \I + \K_2
$
and multiplying on the right by $\A^+$ yields
$$
\A^+ = (\I + \mathrm R_2) \Q_1 (\I - \Pi) + \K_3
$$
where $\K_3$ is smoothing. Now setting 
$$
\begin{aligned}
\mathrm S &= \Q + \mathrm R_2 \Q - (\I - \mathrm R_2) \Q_1 \Pi + \K_3 \in \Psi^{-2\alpha}_{\mathrm{cl}} + \Psi^{-4\alpha}_{\mathrm{cl}} + \Psi^{-2 - 6\alpha} \\
\text{and} \quad \mathrm T &= - (\I + \mathrm R_2) \Q A_\varrho \Q (\I - \Pi) \in \I^{-1 - 4 \alpha}(M,M, \mathscr C),
\end{aligned}
$$
we get the sought result, since $\mathrm R_2 \Q \A_\varrho \Q \in \I^{-3 - 8\alpha}(\Sigma, \Sigma, \mathscr C)$ by Corollary \ref{cor:pseudofio}.
\end{proof}
\end{proposition}

\section{Expected time for random searches}
\label{sec:expected}

In this section we fix $p_\star \in \Sigma$. Let $(X_t)_{t\geqslant 0}$ be the cadlag martingale on $\Sigma$ whose generator is $\A$, see \cite{applebaum2000}. For $p \in \Sigma$ and $\varepsilon > 0$ we let
\begin{eqnarray}\label{eq: ustar def}
u_{\star, \varepsilon}(p) = \mathbf E\Bigl[\inf\Bigl\{t \geqslant 0~:~X_t \in B(p_\star, \varepsilon)\Bigr\}\, \Bigl| \,X_0 = p\Bigr]
\end{eqnarray}
be the expected time for the process to reach the ball $B(p_\star, \varepsilon)$, starting from $p$. 

The goal of this section is to prove Theorem \ref{thm:main}. It is organized as follows. In \S\ref{subsec:expectedissolution} we adapt the results from \cite{chaubet2025levy} and prove that $u_{\star, \varepsilon}$ is the solution of some non-local equation involving $\A$. In \S\ref{subsec:integral} we take advantage of the preceding section and express the expected stopping time as an integral. In \S\ref{subsec:finitedegree} and \S\ref{subsec:infinitedegree} we use the normal form obtained in \S\ref{sec:zoll} and estimate the expected stopping time in the finite and infinite degree cases, respectively.

\subsection{The expected time as the solution of a non local equation}\label{subsec:expectedissolution}
 In what follows we set $\Omega_\varepsilon = \complement B(p_\star, \varepsilon)$.  For $\varepsilon>0$ smaller than the injectivity radius of $\Sigma$ we take geodesic coordinates around $p_\star$, as follows. Fix $(v_1, v_2)$ an orthonormal basis of $T_{p_\star}\Sigma$. Let $\mathbb B^2$ be the open Euclidean unit ball in $\RR^2$. Then we consider the diffeomorphism
\begin{eqnarray}\label{eq: def of rescaled geodesic coord}
\Psi_{\varepsilon} : \mathbb B^2 \to B(p_\star, \varepsilon), \quad x = (x_1, x_2) \mapsto \exp_{p_\star}(\varepsilon(x_1 v_1 + x_2 v_2)).
\end{eqnarray}
We also introduce the following notation
$$
\mathrm{err}(\varepsilon, \alpha) = 
\begin{cases}
\varepsilon^{2\alpha}, \quad &\text{if} \ \alpha < 1/2, \\
  \varepsilon|\log\varepsilon|, \quad &\text{if} \ \alpha = 1/2,  \\
\varepsilon^{2(1-\alpha)}, \quad &\text{if} \ \alpha > 1/2. 
\end{cases}
$$
Then reproducing the arguments of \cite{chaubet2025levy}*{\S5} we get the following

\begin{proposition}\label{prop:expectedissolution}
 For any $m\in\left]1, 1/\alpha\right[$, $u_{\star,\epsilon}$ defined in \eqref{eq: ustar def} is the unique distribution in $\mathscr D'(\Sigma)$ which satisfies 
\begin{equation}\label{eq:expectedissolution}
u_{\star, \varepsilon} \in \dot W^{2\alpha,m}\bigl(\overline{\Omega_\varepsilon}\bigr)\qquad \text{and} \qquad \A u_{\star, \varepsilon} = -1 \quad \text{ on } \quad \Omega_\varepsilon.
\end{equation}
Moreover, setting $F_\varepsilon = \A u_{\star, \varepsilon} + \mathbf{1}_{\Omega_\varepsilon} \in \dot L^m(\overline{B(p_\star, \varepsilon)})$ we have $F_\varepsilon|_{B(p_\star, \varepsilon)} \in \mathscr C^\infty(B(p_\star, \varepsilon))$ and
\begin{equation}\label{eq:expansionfeps}
F_\varepsilon(\Psi_{\varepsilon}(x)) = \frac{\pi |\Sigma|\varepsilon^{-2}}{(1-\alpha)} \frac{1}{(1 - |x|^2)^\alpha} + \mathcal O_{\dot L^m(\mathbb B^2)}(\varepsilon^{-2}\mathrm{err}(\varepsilon, \alpha)).
\end{equation}
Also, there is $c_\alpha > 0$ such that 
\begin{equation}\label{eq:expansionueps}
\overline{u}_{\star, \varepsilon} = c_\alpha |\Sigma| \varepsilon^{2(\alpha - 1)}(1 + \mathcal O(\mathrm{err}(\varepsilon, \alpha))) \quad \text{where} \quad \overline{u}_{\star, \varepsilon} = \frac{1}{|\Sigma|} \int_\Sigma u_{\star, \varepsilon}\dd \vol_g.
\end{equation}
Finally, if $\alpha < 1/2$, the expected time map $u_{\star, \varepsilon}$ is continuous in $\Omega_\varepsilon$.
\end{proposition}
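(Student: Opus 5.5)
The plan is to reproduce the argument of \cite{chaubet2025levy}*{\S5}, replacing the elliptic pseudo-differential structure of $\A$ used there by the decomposition of Proposition~\ref{prop:generator} and the parametrix of Corollary~\ref{cor:parametrix} and Proposition~\ref{prop:fullparametrix}. The argument splits into three parts: the probabilistic characterization of $u_{\star,\varepsilon}$, the local behaviour of $F_\varepsilon$ in the ball, and the computation of the mean and of continuity.

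\textbf{Probabilistic characterization.} First, Dynkin's formula for the càdlàg process with generator $\A$, applied to smooth test functions and to the stopping time $\inf\{t : X_t \in B(p_\star,\varepsilon)\}$, shows that $u_{\star,\varepsilon}$ vanishes on $B(p_\star,\varepsilon)$ and satisfies $\A u_{\star,\varepsilon}=-1$ on $\Omega_\varepsilon$ in the distributional sense. Finiteness of the expectation follows from irreducibility: the jump kernel has full support because geodesics from any point cover $\Sigma$. Uniqueness is the exterior Dirichlet problem for $\A$ in $\dot W^{2\alpha,m}(\overline{\Omega_\varepsilon})$, and existence and uniqueness are handled together. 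Write $\A = \P - c\I + \A_\varrho$. The operator $\P - c\I$ is an elliptic classical pseudo-differential operator of order $2\alpha$ with principal symbol $|\eta|^{2\alpha}$, so it satisfies the $\mu$-transmission property. Grubb's theory then gives Fredholmness of the restricted problem $r_{\Omega_\varepsilon}(\P - c\I) : \dot W^{2\alpha,m} \to \overline W^{0,m}$ for $m<1/\alpha$. The term $\A_\varrho$ gains $1+\mu_m$ derivatives by \eqref{equation:fio-boundedness}, so it is a compact perturbation. Injectivity comes from the maximum principle, i.e.\ the Dirichlet form positivity already used in \cite{chaubet2025levy}*{Proposition 1.3}.

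\textbf{Expansion of $F_\varepsilon$.} Next, $F_\varepsilon$ is supported in $\overline{B(p_\star,\varepsilon)}$ by construction. Inside the ball, $u_{\star,\varepsilon}=0$, so $\A u_{\star,\varepsilon}$ is given by integrating the off-diagonal kernel of $\A$ against $u_{\star,\varepsilon}$. This kernel is smooth away from the diagonal modulo the Fourier integral part, whose singularities lie along $\mathrm{graph}(G)$ and therefore do not meet the ball in the relevant region. This gives smoothness of $F_\varepsilon$ in $B(p_\star,\varepsilon)$. For \eqref{eq:expansionfeps}, pull everything back by $\Psi_\varepsilon$ and rescale. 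Under this rescaling $\P$ becomes $\varepsilon^{-2\alpha}(-\Delta_{\RR^2})^\alpha$ plus errors of relative size $\mathrm{err}(\varepsilon,\alpha)$. The terms $c\I$ and $\A_\varrho$ contribute lower order, with the $\A_\varrho$ contribution controlled by its order $-1$. The leading term comes from the explicit solution of the flat exterior problem, namely the equilibrium-type density $(1-|x|^2)^{-\alpha}$ on the unit disk. Its normalization $\pi|\Sigma|\varepsilon^{-2}/(1-\alpha)$ is fixed by the compatibility condition $\int_\Sigma \A u_{\star,\varepsilon}=0$, which gives $\int F_\varepsilon = |\Omega_\varepsilon|$.

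\textbf{Mean value and continuity.} Finally, $\overline u_{\star,\varepsilon}$ is computed by writing $u_{\star,\varepsilon} = \A^+(F_\varepsilon - \mathbf 1_{\Omega_\varepsilon}) + \overline u_{\star,\varepsilon}$ and imposing $u_{\star,\varepsilon}=0$ on the ball. Pairing with $F_\varepsilon$ gives
$$
\overline u_{\star,\varepsilon}\,\langle F_\varepsilon,1\rangle = -\langle F_\varepsilon, \A^+ F_\varepsilon\rangle + \dots
$$
The singular part of $\mathrm S \in \Psi^{-2\alpha}$, namely its kernel $c\,d(x,y)^{2\alpha-2}$, dominates on the ball and yields $c_\alpha|\Sigma|\varepsilon^{2(\alpha-1)}$. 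The operator $\mathrm T$ has a kernel that is bounded on the tiny ball after pairing, because of its order $-1-4\alpha$ and the fact that the ball is not near its own image under $G$ by \eqref{eq:pnotinconjp}; hence it is lower order. Continuity for $\alpha<1/2$ follows from $u_{\star,\varepsilon}\in \dot W^{2\alpha,m}$ with $m$ close to $1/\alpha$ together with the local smoothing of $\A^+$ in the interior.

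The main obstacle is the Fredholm/regularity theory for the exterior problem with the non-pseudodifferential summand $\A_\varrho$. I would treat $\A_\varrho$ as a compact perturbation via \eqref{equation:fio-boundedness} and verify that it does not spoil the $\mu$-transmission regularity near $\partial\Omega_\varepsilon$.
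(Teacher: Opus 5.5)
Your plan differs from the paper's route, and as written it has genuine gaps.

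\textbf{What the paper does.} It never solves the exterior Dirichlet problem for $\A$ directly. It reduces the statement to Propositions 5.1 and 5.2 of \cite{chaubet2025levy}, and checks only two things on a Zoll surface. First, near $p_\star$ one still has $\A^+=\Q+\mathrm R$ with $\Q\in\Psi^{-2\alpha}_{\mathrm{cl}}$ and $\mathrm R$ satisfying the rescaled bound \eqref{eq:reps}. This holds because the Fourier integral part $\mathrm T$ is smoothing once cut off near $p_\star$ (since $p_\star\notin\mathrm{Conj}(p_\star)$), and the extra $\Psi^{-2-6\alpha}$ term obeys \eqref{eq:reps}. Second, $e^{t\A}$ still has a smooth kernel. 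Existence then comes from the interior system \eqref{eq:integralequation}, through $c_\varepsilon+\A^+(F_\varepsilon-\mathbf 1_{\Omega_\varepsilon})$.

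\textbf{Solvability versus identification.} Using Grubb's theory with $\A_\varrho$ as a compact perturbation is a reasonable alternative for solvability in $\dot W^{2\alpha,m}(\overline{\Omega_\varepsilon})$. The range $m<1/\alpha$ is exactly where the $\alpha$-transmission space equals $\dot W^{2\alpha,m}$. By \eqref{equation:fio-boundedness}, an order $-1$ operator gains $1-\mu_m$ derivatives, not $1+\mu_m$, which is still enough for compactness. The problem is the identification with the probabilistic quantity. Dynkin's formula on smooth test functions shows at best that $u_{\star,\varepsilon}$ solves $\A u=-1$ on $\Omega_\varepsilon$ distributionally. It gives no information that $u_{\star,\varepsilon}\in\dot W^{2\alpha,m}(\overline{\Omega_\varepsilon})$, so your uniqueness in that space cannot be applied to it. The argument must run the other way, as in Proposition 5.2 of \cite{chaubet2025levy}: any solution of \eqref{eq:expectedissolution} is shown to be the expected hitting time. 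The only ingredient of that proof needing re-proof here is the smooth kernel of $e^{t\A}$ for $t>0$, and this is exactly what the paper supplies.

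\textbf{Expansion of $F_\varepsilon$.} Rescaling the exterior problem does not yield \eqref{eq:expansionfeps}, for three reasons.
\begin{enumerate}
\item After pulling back by $\Psi_\varepsilon$, the exterior region has size of order $1/\varepsilon$, and the rescaled operator is close to $\varepsilon^{-2\alpha}(-\Delta_{\RR^2})^\alpha$ only for $|x|\ll 1/\varepsilon$.
\item On that region $u_{\star,\varepsilon}$ is dominated by the unknown constant $\overline u_{\star,\varepsilon}\sim\varepsilon^{2(\alpha-1)}$.
\item For $\alpha>1/2$ the error $\varepsilon^{2(1-\alpha)}$ is a global effect (the bounded part of the Green kernel), not a local correction to $\P$.
\end{enumerate}
The paper works on the ball instead. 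After rescaling, the model operator is the Riesz potential $f\mapsto\int_{\mathbb B}|x-y|^{2\alpha-2}f(y)\,\dd y$. Its solution with constant right-hand side is the equilibrium density $(1-|x|^2)^{-\alpha}$. The remainder lives on the bounded set $\mathbb B$ and is controlled by \eqref{eq:reps}, and \eqref{eq:expansionfeps} and \eqref{eq:expansionueps} come out together. Your mean-value step already contains the right ingredients: the relation $\A^+(F_\varepsilon-\mathbf 1_{\Omega_\varepsilon})=-\overline u_{\star,\varepsilon}$ on the ball, and the harmlessness of $\mathrm T$ there. Use them to determine $F_\varepsilon$ itself. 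Incidentally, since $\int_{\mathbb B}(1-|x|^2)^{-\alpha}\,\dd x=\pi/(1-\alpha)$, the mass condition $\int F_\varepsilon=|\Omega_\varepsilon|$ forces the prefactor $(1-\alpha)|\Sigma|/\pi$. So it does not produce the constant you quote.

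\textbf{Continuity.} Your argument fails on both counts. First, $\dot W^{2\alpha,m}\subset\mathscr C$ would require $2\alpha m>2$, which $m<1/\alpha$ excludes. Second, $\A^+$ is not pseudo-local here: $\mathrm T$ moves the singularities of $F_\varepsilon-\mathbf 1_{\Omega_\varepsilon}$ along $\partial B(p_\star,\varepsilon)$ to points of $\Omega_\varepsilon$ conjugate to that circle. The paper instead uses that, away from the ball, $u_{\star,\varepsilon}$ has the regularity $W^{1+4\alpha-\mu_m,m}$ of $\mathrm T(F_\varepsilon-\mathbf 1_{\Omega_\varepsilon})$, by \eqref{equation:fio-boundedness}. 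It then concludes by Sobolev embedding for a suitable $m<1/\alpha$.
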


\begin{proof}
This is exactly the content of Propositions 5.1 and 5.2 in \cite{chaubet2025levy} which include only the case where $\Sigma$ is a sphere. Most of the arguments are still valid for Zoll surfaces but for convenience of the reader we explain the global strategy and where changes are needed. 

\vspace{0.1cm}
\noindent \textbf{First step: construction of $F_\varepsilon$.} Notice that if \eqref{eq:expectedissolution} holds then $F_\varepsilon \in \dot L^m(\overline{B(p_\star, \varepsilon)})$ satisfies
\begin{equation}\label{eq:integralequation}
\A^+(F_\varepsilon - \mathbf 1_{\Omega_\varepsilon}) = - c_\varepsilon \quad \text{on } B(p_\star, \varepsilon) \qquad \text{and} \qquad \int F_\varepsilon \dd\vol_g = |\Omega_\varepsilon|
\end{equation}
where $c_\varepsilon = \bar u_{\star, \varepsilon} \in \RR$. Then \cite{chaubet2025levy}*{Proposition 5.1} says that if $\Sigma$ is the round sphere then there exists a unique pair $(F_\varepsilon, c_\varepsilon)$ satisfying \eqref{eq:integralequation}, which must satisfy \eqref{eq:expansionfeps} and \eqref{eq:expansionueps} where in the latter $\bar u_{\star, \varepsilon}$ should replace by $c_\varepsilon$. The only ingredient needed in the proof is the fact that one can write
$
\A^+ = \Q + \mathrm R
$
where $\Q \in \Psi^{-2\alpha}_{\mathrm{cl}}$ has principal symbol $c |\eta|^{-2\alpha}_g$ and the operator $\mathrm R$ satisfies, if $\beta = \min(1 + 2\alpha, 4\alpha)$,
\begin{equation}\label{eq:reps}
\|\mathrm R_\varepsilon\|_{\dot L^m(\overline{\mathbb B}) \to \overline W^{\beta}(\mathbb B)} \leqslant C \varepsilon^{2\alpha} \mathrm{err}(\varepsilon, \alpha), \quad \text{where} \quad \mathrm R_\varepsilon = (\Psi_\varepsilon^{-1})^* \mathrm R \Psi_\varepsilon^*,
\end{equation}
for some constant $C > 0$, see \cite{chaubet2025levy}*{Lemma 5.4}. In that reference the estimate \eqref{eq:reps} comes from the fact that if $\chi \in \mathscr C^\infty(\Sigma)$ is supported close enough to $p_\star$ then $\chi \mathrm R \chi$ is a sum of classical pseudo-differential operators of order at most $-\beta$. In our case, the operator $\chi \mathrm R \chi$ can also written as such but an extra term in $\Psi^{-2-6\alpha}$ has to be added, according to Proposition \ref{prop:fullparametrix}. However any operator in $\Psi^{-2-6\alpha}$ satisfy the estimate \eqref{eq:reps}. Hence \cite{chaubet2025levy}*{Proposition 5.1} is valid in our context and there exists a unique pair $(F_\varepsilon, c_\varepsilon)$ satisfying \eqref{eq:integralequation} and \eqref{eq:expansionueps}.

\vspace{0.1cm}
\noindent \textbf{Second step: uniqueness of $u_{\star, \varepsilon}$.} The second step consists in proving that \eqref{eq:expectedissolution} has a unique solution. This is the content of \cite{chaubet2025levy}*{Proposition 5.2}. The proof is carried out in \S5.3 of that reference. All those proofs given there are valid in our context, except for Lemma 5.8, which says that $e^{t\A}$ has smooth kernel for every $t > 0$. However since $\A$ is formally self adjoint, this follows from basic functional analysis and the fact that $\A$ is the sum of an elliptic pseudo-differential operator of order $2\alpha$ and the operator $\A_\varrho$ which maps continuously $W^{s,2} \to W^{s-1,2}$ for every $s \in \RR$, see \S\ref{sssection:functional-spaces}.

Combining those two steps, one sees that if we set $\tilde u_{\star, \varepsilon} = c_\varepsilon + \A^+ (\tilde F_\varepsilon - \mathbf 1_{\Omega_\varepsilon})$ where $(\tilde F_\varepsilon, \tilde c_\varepsilon)$ is the unique solution of \eqref{eq:integralequation}, then $\tilde u_{\star, \varepsilon} \in \dot W^{2\alpha, m}(\overline{\Omega_\varepsilon})$ is solution of \eqref{eq:expectedissolution} hence $\tilde u_{\star, \varepsilon} = u_{\star, \varepsilon}$. It remains to see that $u_{\star, \varepsilon}$ is continuous. For this, notice that Proposition \ref{prop:fullparametrix} and \S\ref{sssection:functional-spaces} yields that far from $B(p_\star, \varepsilon)$ the regularity of $\A^+ (F_\varepsilon- \mathbf 1_{\Omega_\varepsilon})$ is $W^{1 + 4\alpha - |1/2 - 1/m|, m}$ for each $1 < m < 1/\alpha$. The latter space injects in the space $\mathscr C(\Sigma)$ of continuous functions whenever $(1 + 4\alpha - |1/2 - 1/m|)m > 2$. This condition is compatible with $m < 1 / \alpha$ provided $\alpha < 1/2$. This completes the proof.
\end{proof}

\subsection{Reduction to an integral estimate}\label{subsec:integral}
In this subsection we will take advantage of Proposition \ref{prop:expectedissolution} to prove the reduce the estimation of $u_{\star, \varepsilon} - \bar u_{\star, \varepsilon}$ to an integral estimate. 

We first introduce some notations.  Since $\mathrm S \in \Psi^{-2\alpha}_{\mathrm{cl}} + \Psi^{-4\alpha}_{\mathrm{cl}} + \Psi^{-2-6\alpha}$ has principal symbol $c_\alpha |\eta|_g^{-2\alpha}$, there exists $C_\alpha>0$ such that for all $\delta>0$, there exists $ c > 0$ such that 
\begin{equation}\label{eq:noyauS}
C_\alpha(1-\delta)\,\dd(q,r)^{2(2\alpha - 1)} \leqslant K_{\mathrm S^2}(q,r) \leqslant C_\alpha(1+\delta)\,\dd(q,r)^{2(2\alpha - 1)} 
\end{equation}
for any $q \neq r \in \Sigma$ such that $\dd(q,r) < 2c < r_\mathrm{inj}(\Sigma)$, see \cite[Proposition 2.2, \S7.2, p. 6]{taylor2013partial}. We then introduce the function $G_\alpha \in \mathscr C(\RR^2)$ defined by
\begin{equation}\label{eq:Galpha}
G_\alpha(x) = \frac{\pi\, C_\alpha |\Sigma|}{1-\alpha} \int_{\mathbb B} |x-y|^{4\alpha - 2}(1-|y|^2)^{-\alpha} \dd y.
\end{equation}
The aim of this paragraph is to prove the following
\begin{proposition}\label{prop:reduceintegral}
Suppose $\alpha < 1/2$. Then for any $\delta > 0$ there are $c,C > 0$ and $\chi \in \mathscr C^\infty_c(B(p_\star, c))$ with $\chi(p_\star) = 1$ such that the following holds. For any $\varepsilon > 0$ small,
$$
\begin{aligned}
&\varepsilon^{4\alpha - 2} (1-\delta) \A_\varrho[\chi (G_\alpha \circ \Psi_\varepsilon^{-1})](p) - C \\
&\hspace{2cm}\leqslant  \overline{u}_{\star, \varepsilon} - u_{\star, \varepsilon}(p) \\
&\hspace{4cm} \leqslant \varepsilon^{4\alpha - 2} (1+\delta) \A_\varrho[\chi (G_\alpha \circ \Psi_\varepsilon^{-1})](p) + C.
\end{aligned}
$$
\end{proposition}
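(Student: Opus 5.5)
We start from the representation obtained in the proof of Proposition \ref{prop:expectedissolution}:
$$
u_{\star,\varepsilon} = \overline u_{\star,\varepsilon} + \A^+(F_\varepsilon - \mathbf 1_{\Omega_\varepsilon}).
$$
Here $c_\varepsilon = \overline u_{\star,\varepsilon}$; this follows by integrating and using that $\A^+$ kills constants and has range orthogonal to them. Hence
$$
\overline u_{\star,\varepsilon} - u_{\star,\varepsilon}(p) = -\A^+F_\varepsilon(p) + \A^+\mathbf 1_{\Omega_\varepsilon}(p).
$$
The second term is $\mathcal O(1)$ uniformly in $p$ and $\varepsilon$. Indeed $\mathbf 1_{\Omega_\varepsilon} = 1 - \mathbf 1_{B(p_\star,\varepsilon)}$, $\A^+ 1 = 0$, and $\A^+\mathbf 1_B$ is bounded by $\|\mathbf 1_B\|_{L^m}$ times the $W^{0,m}\to W^{2\alpha,m}$ bound. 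To get boundedness in $L^\infty$ we use the kernel bounds instead: $\mathrm S$ has a locally integrable kernel, and $\mathrm T$ applied to an $L^1$-small function gives a smooth-ish bound. So the main term is $-\A^+F_\varepsilon(p)$.

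We then decompose $\A^+ = \mathrm S + \mathrm T$ using Proposition \ref{prop:fullparametrix}, with $\mathrm T = -\mathrm S\A_\varrho\mathrm S + \mathrm T'$ and $\mathrm T' \in \I^{-3-8\alpha}$. Since $F_\varepsilon$ is supported in $\overline{B(p_\star,\varepsilon)}$ and has $L^1$ norm $\mathcal O(1)$ (its integral equals $|\Omega_\varepsilon|$, and by \eqref{eq:expansionfeps} it is essentially positive of size $\varepsilon^{-2}$ on a ball of area $\varepsilon^2$), we argue as follows.
\begin{enumerate}
\item If $p$ is away from $p_\star$, then $\mathrm S F_\varepsilon(p) = \mathcal O(1)$ by smoothness of the kernel of $\mathrm S$ off the diagonal.
\item The term $\mathrm T'F_\varepsilon(p)$ is $\mathcal O(1)$. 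This is because $\mathrm T'$ has order $-3-8\alpha < -2$, so it maps $L^1$ (or measures) into continuous functions by the Sobolev and Seeger bounds of \S\ref{sssection:functional-spaces} (for instance from $W^{-1-\eta,m}$, with $\dot L^1 \subset W^{-1-\eta,m'}$ as a dual embedding).
\end{enumerate}
This leaves the main contribution, $\mathrm S\A_\varrho\mathrm S F_\varepsilon(p)$.

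The key step is to rewrite $\mathrm S\A_\varrho\mathrm S = \A_\varrho \mathrm S^2 + [\mathrm S,\A_\varrho]\mathrm S$. By Egorov, the conjugation $\mathrm S\A_\varrho \mathrm S^{-1}$ has the same principal symbol as $\A_\varrho$ multiplied by the ratio $|G(q,\eta)|^{-2\alpha}/|\eta|^{-2\alpha}$, and this ratio equals $1$ because $G$ is the restriction of the symplectic lift of $\Phi$ on unit covectors, which preserves $|\eta|_g$ since $\dd\varphi_t^{-\top}$ preserves $\beta$-components up to the Jacobi factor. Let me redo that: instead one can take the ordering $\mathrm S\A_\varrho\mathrm S = \A_\varrho \mathrm S^2 + \mathrm E$ with $\mathrm E\in \I^{-2-4\alpha}(\Sigma,\Sigma,\mathscr C)$ only if the symbols commute. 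If not, I would write $\mathrm S\A_\varrho = \A_\varrho \mathrm S' + \mathrm E$ with $\mathrm S' \in \Psi^{-2\alpha}$ having principal symbol $|G(\cdot)|^{-2\alpha}$, and compare. In any case the error $\mathrm E\mathrm S F_\varepsilon$ has order $-2-6\alpha$ on $F_\varepsilon$, which is again $\mathcal O(1)$ as above.

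Next we localize. Write $\mathrm S^2F_\varepsilon = \chi \mathrm S^2 F_\varepsilon + (1-\chi)\mathrm S^2F_\varepsilon$. The second piece is smooth and uniformly bounded, so $\A_\varrho$ applied to it is $\mathcal O(1)$. For the first piece we use the kernel bounds \eqref{eq:noyauS} and \eqref{eq:expansionfeps}. Changing variables $r = \Psi_\varepsilon(y)$ gives
$$
\chi\mathrm S^2F_\varepsilon(\Psi_\varepsilon(x)) \in \bigl[(1-\delta),(1+\delta)\bigr]\cdot \varepsilon^{4\alpha-2}\,G_\alpha(x),
$$
up to an error coming from the $\mathcal O(\varepsilon^{-2}\mathrm{err})$ term in \eqref{eq:expansionfeps}, and also from the metric distortion of $\Psi_\varepsilon$, which is $1+\mathcal O(\varepsilon^2)$. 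The positivity of $\A_\varrho$ is what allows passing these two-sided pointwise bounds through $\A_\varrho$: its kernel is nonnegative, since $\varrho\geqslant 0$ in the construction of Proposition \ref{prop:generator} and $\pi_*\varphi_t^*\pi^*$ is an averaging operator. This yields the sandwich stated in Proposition \ref{prop:reduceintegral}. One must also check that the error term contributes $\mathcal O(1)$ after applying $\A_\varrho$. Since $\A_\varrho$ has order $-1$ and the error's mass is $\mathcal O(\mathrm{err})$, this follows from $\A_\varrho$ mapping $L^{m}$-type norms to $L^\infty$ with a loss. I expect the main obstacle to be exactly this uniform $\mathcal O(1)$ control: $\A_\varrho$ applied to the fringe and error terms, and the symbol-commutation issue. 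On Zoll surfaces the flow $\dd\varphi_t^{-\top}$ need not preserve $|\eta|$ at conjugacy times, so the constant in front of $G_\alpha$ may need a position-dependent factor absorbed into $\delta$ by shrinking the support of $\chi$.
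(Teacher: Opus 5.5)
Your route is the paper's: you write $\A^+=\mathrm S-\mathrm S\A_\varrho\mathrm S+\mathrm R$, discard the bounded pieces, move $\mathrm S$ across $\A_\varrho$ to reach $\A_\varrho\mathrm S^2F_\varepsilon(p)$, and push two-sided kernel bounds for $\mathrm S^2$ through the positive operator $\A_\varrho$. Two steps, however, are not established.

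The first is the remainder $D_\varepsilon$ of \eqref{eq:expansionfeps}. You want its contribution to be an additive $\mathcal O(1)$ because ``$\A_\varrho$ maps $L^m$-type norms to $L^\infty$ with a loss''. That cannot come from the information available. The operator actually acting is $\A_\varrho\chi\mathrm S^2$, of order $-1-4\alpha$, and it does not map $L^1$ into $L^\infty$ when $4\alpha<1$. Moreover, a positive $D_\varepsilon$ saturating \eqref{eq: Deps bound} produces $\varepsilon^{2\alpha}$ times the main term. At a point of degree $k$ this is of size $\varepsilon^{-1+6\alpha+1/(k+1)}$, which is unbounded when $6\alpha+1/(k+1)<1$. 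The paper treats this error multiplicatively instead. Lemma~\ref{lem: S2Deps estimate} gives $|\mathrm S^2D_\varepsilon(\Psi_\varepsilon(x))|\leqslant C\varepsilon^{6\alpha-2}(1+|x|)^{4\alpha-2}$, using \eqref{eq:noyauS} and Cauchy--Schwarz; this is where $\alpha<1/2$ enters. By \eqref{eq: G decay rate} this is at most $C\varepsilon^{2\alpha}\cdot\varepsilon^{4\alpha-2}G_\alpha(x)$, so positivity of $\A_\varrho$ absorbs it into the factor $1\pm\delta$, not into $C$.

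The second is the identity $\mathrm S\A_\varrho\mathrm S=\A_\varrho\mathrm S^2$ modulo $\I^{-2-4\alpha}(\Sigma,\Sigma,\mathscr C)$. You leave it open: you first assert that the symbol ratio is $1$, then retract. The paper settles it in one line, asserting $[\mathrm S,\A_\varrho]\in\I^{-2-2\alpha}(\Sigma,\Sigma,\mathscr C)$ by symbolic calculus since $\mathscr C$ is a graph. Your hesitation is justified. The principal symbol of the commutator is $(\sigma(\mathrm S)\circ G-\sigma(\mathrm S))\,\sigma(\A_\varrho)$, so that claim needs $|G(q,\eta)|_g=|\eta|_g$. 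Pair $f_z$ with the solution $t\mapsto\langle\varphi_t^*\beta,H\rangle(z)$ of \eqref{eq:sturm}, which has data $(1,0)$, and use constancy of the Wronskian. This gives $\dd\Phi(z)^{-\top}\beta(z)=f_z'(\tau(z))\,\beta(\Phi(z))$, hence $|G(q,\eta)|_g=|f_z'(\tau(z))|\,|\eta|_g$ with $z=z^\pm_\eta$. This ratio is $1$ on the round sphere, but the Zoll condition does not force it. Take Guillemin's family with $\rho=\chi(s)\varrho(\theta)$, where $\chi$ is odd about $s=\pi/2$ and $\varrho$ is $\pi$-periodic. Then $\partial_a|_{a=0}f'_{g_a,\theta}(\tau_{g_a}(\theta))=-\varrho''(\theta)\int_0^\pi\chi(s)\cot s\,\dd s$, which is nonzero in general. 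Your fallback does not repair this. The factor $|f_z'(\tau(z))|^{\pm2\alpha}$ depends on the covector direction at $p_\star$, not on position, so shrinking the support of $\chi$ does not bring it close to $1$. Also, the operator replacing $\mathrm S^2$ then has an anisotropic kernel, so neither \eqref{eq:noyauS} nor the isotropic profile $G_\alpha$ applies directly. To close the argument you must either show $|f_z'(\tau(z))|=1$ for the directions joining $p_\star$ to $p$, or carry this factor through the rest of the analysis. In the latter case the two-sided bound should be expected to hold only with $\A_\varrho$ replaced by a correspondingly weighted operator.
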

Note that by definition of $A_\varrho$ one has
\begin{equation}\label{eq:avarrhochi}
\A_\varrho [\chi (G_\alpha \circ \Psi_\varepsilon^{-1})](p) = \int_{T_p\Sigma}|v|^{-1} \varrho(|v|) \chi(\exp_p(v)) G_\alpha(\Psi_{\varepsilon}^{-1}(\exp_p(v))) \dd v.
\end{equation}

\begin{proof}[Proof of Proposition \ref{prop:reduceintegral}]
By Proposition \ref{prop:expectedissolution} and Corollary \ref{cor:parametrix} we have
$$
u_{\star, \varepsilon} - \bar u_{\star, \varepsilon} = \A_+ (\A u_{\star, \varepsilon} + 1) =  \A_+ (F_\varepsilon + \mathbf 1_{B(p_\star, \varepsilon)}).
$$
In the rest of this section, we assume $\alpha < 1/2$ and fix $p \in \Sigma$ distinct from $p_\star$. Then $u_{\star, \varepsilon}$ is continuous near $p$ as soon as $\dd(p,p_\star) > \varepsilon$ and we can write
\begin{eqnarray}\label{eq:ustar minus baru}
u_{\star, \varepsilon}(p) - \bar u_{\star, \varepsilon} = \A_+  (F_\varepsilon + \mathbf 1_{B(p_\star, \varepsilon)})(p).
\end{eqnarray}
By Proposition \ref{prop:fullparametrix} and \eqref{eq:wfsetfio}, the singular support of the kernel $K_{\A_+} \in \mathscr D'(\Sigma \times \Sigma)$ of $\A^+$ is included in the set
$$
\left\{(p,q) \in \Sigma \times \Sigma~:~p \in \mathrm{Conj}(q)\right\} \cup \Delta(\Sigma).
$$
In particular for $p \notin \mathrm{Conj}(p_\star)$ we have that $K_{\A_+}$ is smooth near $(p, p_\star)$. Hence by \eqref{eq:expansionfeps} one obtains
$$
(\A_+ F_\varepsilon)(p) = \int_{B(p_\star, \varepsilon)} K_{\A^+}(p, q) F_\varepsilon(q) \dd \vol_g(q) \to |\Sigma| K_{\A^+}(p, p_\star).
$$
as $\varepsilon \to 0$. Next, assume that $p \in \mathrm{Conj}(p_\star)$. By Proposition \ref{prop:fullparametrix} one can write the parametrix as 
$$
\A^+ = \mathrm S - \mathrm S \A_\varrho \mathrm S + \mathrm R
$$
for some $\mathrm R \in \I^{-3-8\alpha}(\Sigma, \Sigma, \mathscr C)$. Note that this means $\mathrm R : H^{-1-\delta}(\Sigma) \to H^{1+\delta}(\Sigma)$ for some $\delta>0$ and therefore has continuous Schwartz kernel $K_{\mathrm R} \in \mathscr C (\Sigma\times \Sigma)$ by Sobolev inclusions. This allows us to write
\begin{eqnarray}
\label{eq: RF bounded}
| {\mathrm R} F_\varepsilon(p)| \leq \|K_{\mathrm R}(p,q)\|_{L^\infty_q} \| F_\varepsilon\|_{L^1} \leq C
\end{eqnarray}
due to \eqref{eq:expansionfeps}. So we obtain 
$$
\mathrm R(F_\varepsilon + \mathbf 1_{B(p_\star, \varepsilon)})(p) = \mathcal O(1)
$$
as $\varepsilon \to 0.$ Next, notice that $\mathrm S$ is pseudo differential hence it has smooth kernel near $(p, p_\star)$. Moreover $\mathrm S A_\varrho \mathrm S\in  I^{-1-4\alpha}(\Sigma, \Sigma, \mathscr C)$ hence bounded $L^2 \to \mathscr C(\Sigma)$ by Sobolev inclusions. Those considerations tell us that for $p\in {\rm Conj}(p_\star)$
$$
\mathrm S(F_\varepsilon + \mathbf 1_{B(p_\star, \varepsilon)})(p) = \mathcal O(1)  \quad \text{and} \quad -\mathrm S A_\varrho \mathrm S\mathbf 1_{B(p_\star, \varepsilon)}(p) = \mathcal O(1)
$$
as $\varepsilon \to 0$. Hence we are left with analyzing the term
$
 -\mathrm S \A_\varrho \mathrm S F_\varepsilon(p).
$
The canonical relation of $\A_\varrho$ being a graph, one has
$[\mathrm S,\A_\varrho] \in \I^{-2-2\alpha}(\Sigma, \Sigma, \mathscr C)$ by symbolic calculus. Hence, writing 
$
 \mathrm S \A_\varrho \mathrm S = \A_\varrho \mathrm S^2 + [\mathrm S, \A_\varrho] \mathrm S,
$
one obtains
$$
\mathrm S \A_\varrho \mathrm S =  \A_\varrho \mathrm S^2 + \Q \quad \text{ for some } \Q \in \I^{-2-4\alpha}(\Sigma, \Sigma, \mathscr C).
$$
For the same reason as \eqref{eq: RF bounded}, $\Q F_\varepsilon(p) = \mathcal O(1)$. Hence we obtained 
\begin{equation}\label{eq:sas}
u_{\star, \varepsilon}(p) -  \overline{u}_{\star, \varepsilon} = -\A_\varrho \mathrm S^2 F_\varepsilon(p) + \mathcal O(1) \quad \text{as } \varepsilon \to 0. 
\end{equation}
Note that since $\alpha <  1/2$ we can take $m=2$. 
From now on we fix $\delta > 0$. Let $c > 0$ small enough so that the estimate \eqref{eq:noyauS} holds. Because $K_{\mathrm S^2}$ is smooth far from the diagonal, we have
\begin{equation}\label{eq:s^2f}
\mathrm S^2 F_\varepsilon(q) = \mathcal O(1)
\end{equation}
uniformly outside $B(p_\star, c)$. Now note that $|\Psi^{-1}_\varepsilon(q)| = \dd(q, p_\star) / \varepsilon$ for any $q \in B(p_\star, c)$. Note also that $(\Psi_\varepsilon^{-1})^*_{\dot L^2(\mathbb B) \to L^2(\Sigma)} \leqslant C \varepsilon$. Hence using \eqref{eq:expansionfeps} one gets
\begin{eqnarray}\label{eq: Feps principal term}
F_\varepsilon(q)  = \varepsilon^{-2}H_\alpha(\dd(q, p_\star) / \varepsilon) + D_\varepsilon(q)
\end{eqnarray}
where $H_\alpha : \RR^2 \to \RR_+$ is given by
\begin{equation}\label{eq:halpha}
 H_\alpha(x) = \frac{\pi |\Sigma|}{1 - \alpha}(1 - |x|^2)^{-\alpha} \mathbf 1_{|x| < 1}
\end{equation}
and $D_\varepsilon \in L^m(\Sigma)$ is a function supported in $B(p_\star, \varepsilon)$ which satisfies
\begin{eqnarray}\label{eq: Deps bound}
\|D_\varepsilon \circ \Psi_\varepsilon \|_{L^m(\mathbb B)} \leqslant C \varepsilon^{-2}\mathrm{err}(\varepsilon, \alpha) = C\varepsilon^{-2 + 2\alpha}.
\end{eqnarray}
Here we used that $\alpha<1/2$ and the fact that 
\begin{equation}\label{eq:psiepsvolg}
\Psi_\varepsilon^*{\vol_g} = \varepsilon^2 (1 + \mathcal O(\varepsilon)) \vol_{\RR^2}
\end{equation}
%In the proofs below, the following expansion is useful (see e.g. \cite[Lemma 3.2]{Nursultanov2023}), 
%\begin{eqnarray}\label{eq: dist in rescaled normal coordinate}
%d(\Psi_\varepsilon(x), \Psi_\varepsilon(y)) = \varepsilon |x-y| + \varepsilon |x-y| F(\varepsilon x, \frac{y-x}{|y-x|}, \varepsilon |x-y|)
%\end{eqnarray}
%where $F(x,\omega,t) :\mathbb D\times S^1\times \mathbb R_+ \to \mathbb R$ is smooth and $O(|x|) +O(t)$.
%Note that there is $C > 0$ such that one has
%$$
% C^{-1}|x-y| \leqslant \dd(\exp_{p_\star}(x), \exp_{p_\star}(y)) \leqslant C|x-y|
%$$
%for any $x,y \in \RR^2$ with $|x|, |y|\leqslant c$. In particular this yields
Since $\Psi_\varepsilon$ are geodesic coordinates, up to taking $c > 0$ small enough, one can assume that
\begin{eqnarray}\label{eq: dist in rescaled normal coordinate}
(1-\delta) \varepsilon |x-y| \leqslant \dd(\Psi_\varepsilon(x),\Psi_\varepsilon(y)) \leqslant (1+\delta)\varepsilon |x - y|
\end{eqnarray}
for any $x, y \in \RR^2$ with $|x|, |y|\leqslant 2c/\varepsilon$.
%Second, one has
%\begin{eqnarray}\label{eq: dist in rescaled normal coordinate2}
%\dd(\Psi_\varepsilon(x),\Psi_\varepsilon(y)) = \varepsilon |x-y|(1 + \mathcal O(\varepsilon))
%\end{eqnarray}
%uniformly for $x, y \in \mathbb B$.

\begin{lemma}\label{lem: S2Deps estimate}
If $\alpha < 1/2$ then there holds for any $x \in \RR^2$ such that $|x|\leqslant c/\varepsilon$
$$
|\mathrm S^2D_\varepsilon (\Psi_\varepsilon(x))| \leqslant C \varepsilon^{6\alpha-2} (1 + |x|)^{4\alpha-2}.
$$
\end{lemma}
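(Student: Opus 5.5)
We plan to estimate $\mathrm S^2 D_\varepsilon(\Psi_\varepsilon(x))$ directly through the kernel bound \eqref{eq:noyauS}, combined with Hölder's inequality in the rescaled variable. For $q=\Psi_\varepsilon(x)$ with $|x|\leqslant c/\varepsilon$ we write
$$
\mathrm S^2 D_\varepsilon(q) = \int_{B(p_\star,\varepsilon)} K_{\mathrm S^2}(q,r) D_\varepsilon(r)\,\dd\vol_g(r).
$$
Both $q$ and $r$ lie in $B(p_\star,c)$, so $\dd(q,r)<2c$. Moreover $K_{\mathrm S^2}$ differs from a homogeneous singularity only by less singular terms, since $\mathrm S^2\in\Psi^{-4\alpha}_{\mathrm{cl}}+\Psi^{-6\alpha}_{\mathrm{cl}}+\cdots$. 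Hence \eqref{eq:noyauS}, extended to the diagonal region, gives $|K_{\mathrm S^2}(q,r)|\leqslant C\,\dd(q,r)^{4\alpha-2}$. This is locally integrable because $4\alpha-2>-2$. One point needs care: the lower-order pieces in $\Psi^{-2-6\alpha}$ have bounded kernels, and we absorb them into the same bound using $\dd(q,r)\leqslant 2c$.

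We then change variables $r=\Psi_\varepsilon(y)$, using \eqref{eq:psiepsvolg} and \eqref{eq: dist in rescaled normal coordinate}. This yields
$$
|\mathrm S^2D_\varepsilon(\Psi_\varepsilon(x))|\leqslant C\varepsilon^{2}\varepsilon^{4\alpha-2}\int_{\mathbb B}|x-y|^{4\alpha-2}\,|D_\varepsilon\circ\Psi_\varepsilon(y)|\,\dd y .
$$
We bound the integral by Hölder with exponents $m=2$ and $m'=2$, invoking \eqref{eq: Deps bound}:
$$
\int_{\mathbb B}|x-y|^{4\alpha-2}|D_\varepsilon\circ\Psi_\varepsilon|\,\dd y\leqslant \bigl\||x-\cdot|^{4\alpha-2}\bigr\|_{L^{2}(\mathbb B)}\,C\varepsilon^{-2+2\alpha}.
$$
Putting the factors together gives the prefactor $\varepsilon^{4\alpha}\cdot\varepsilon^{-2+2\alpha}=\varepsilon^{6\alpha-2}$, which is the claimed power.

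It remains to check that $\||x-\cdot|^{4\alpha-2}\|_{L^2(\mathbb B)}\leqslant C(1+|x|)^{4\alpha-2}$. For $|x|\geqslant 2$ we have $|x-y|\geqslant |x|/2$ on $\mathbb B$, so the bound is immediate. For $|x|\leqslant 2$ we need $|x-y|^{8\alpha-4}$ to be integrable near $y=x$ in dimension two, which holds exactly when $8\alpha-4>-2$, that is $\alpha>1/4$. This is the main obstacle. For $\alpha\leqslant 1/4$, we will instead use Hölder with a conjugate exponent $m'$ satisfying $(2-4\alpha)m'<2$, which means $m>1/(2\alpha)$. Such an $m$ is admissible as long as $m<1/\alpha$, and Proposition \ref{prop:expectedissolution} allows any $m\in\left]1,1/\alpha\right[$. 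Since \eqref{eq: Deps bound} holds for that $m$ as well, the same argument applies with this choice, giving a locally integrable kernel power and hence the bound $C(1+|x|)^{4\alpha-2}$ uniformly on $|x|\leqslant 2$. This completes the plan.
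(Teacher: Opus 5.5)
Your argument is correct and follows the paper's route: the kernel bound \eqref{eq:noyauS}, rescaling by $\Psi_\varepsilon$, then H\"older's inequality with the split $|x|\leqslant 2$ versus $|x|>2$. Your extra care in the small-$|x|$ case is warranted. The paper uses H\"older with $m=2$ and asserts that $\||x-\cdot|^{4\alpha-2}\|_{L^2(\mathbb B)}$ is finite iff $\alpha<1/2$, but this norm is bounded on $|x|\leqslant 2$ only when $\alpha>1/4$. Your choice $m\in\left]1/(2\alpha),1/\alpha\right[$ is what makes the estimate valid in the regime $4\alpha<1$ relevant to Theorem~\ref{thm:main}.
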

\begin{proof}
Set $\tilde D_\varepsilon  = \varepsilon^2 (D_\varepsilon \circ \Psi_\varepsilon) \in\dot L^m(\mathbb B)$. Then there holds $\|\tilde D_\varepsilon\|_{L^m} \leqslant C_m\varepsilon^{2\alpha}$ for any $m\in ]1, 1/\alpha[$ thanks to \eqref{eq: Deps bound}. Using \eqref{eq:noyauS} and \eqref{eq: dist in rescaled normal coordinate} we have
\begin{eqnarray}\label{eq: S2Deps estimate1}
|S^2 D_\varepsilon(r)| \leqslant C \int_{B(p_\star, \varepsilon)} \dd(r,q)^{4\alpha-2}|D_\varepsilon(q)| \dd \vol_g (q) = C\varepsilon^{4\alpha-2}I_\varepsilon(x)
\end{eqnarray}
where $x = \Psi_\varepsilon^{-1}(r)$ and where $I_\varepsilon(x)$ is given by
$$I_\varepsilon(x) = \int_{\mathbb B}|x-y|^{4\alpha-2}| \tilde D_\varepsilon(y)| \dd y.$$
Now for $|x|>2$, we clearly have that
\begin{eqnarray}\label{eq: I for large x}
I_\varepsilon(x) \leqslant C |x|^{4\alpha-2}\|\tilde D_\varepsilon\|_{L^2} \leqslant C|x|^{4\alpha-2} \varepsilon^{2\alpha}.
\end{eqnarray}
Meanwhile, for $|x|\leqslant 2$,
\begin{eqnarray}\label{eq: I for small x}
I_\varepsilon (x) \leqslant  \||x-\cdot|^{4\alpha-2}\|_{L^2(\mathbb B)} \|\tilde D_\varepsilon\|_{L^2} \leqslant C \varepsilon^{2\alpha}
\end{eqnarray}
where we have used the fact that $\||x-\cdot|^{4\alpha-2}\|_{L^2(\mathbb B)}<C<\infty$ iff $2<1/\alpha$ which is our assumption. Combining \eqref{eq: S2Deps estimate1}, \eqref{eq: I for large x} and \eqref{eq: I for small x} we have the desired estimate.
\end{proof}
\begin{lemma}\label{lem:S2ofH}
We have the estimates
\begin{equation}\label{eq:galpha}
\begin{aligned}
&\varepsilon^{4\alpha - 2} (1 + \mathcal O(\varepsilon)) (1- \delta) G_\alpha(\Psi_\varepsilon^{-1}(r)) \\
&\hspace{3cm}\leqslant \varepsilon^{-2} \int C_\alpha \dd(r,q)^{2(2\alpha - 1)}  H_\alpha(\dd(q,p_\star)/\varepsilon) \dd \vol_g(q) \\
&\hspace{6cm}\leqslant \varepsilon^{4\alpha - 2} (1+ \delta) (1 + \mathcal O(\varepsilon)) G_\alpha(\Psi_\varepsilon^{-1}(r)) 
\end{aligned}
\end{equation}
uniformly for $r \in B(p_\star, c)$, where $G_\alpha \in \mathscr C( \RR^2)$ is given by \eqref{eq:Galpha}
\end{lemma}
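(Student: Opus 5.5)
The plan is to change variables in the middle integral with the rescaled geodesic coordinates $\Psi_\varepsilon$ from \eqref{eq: def of rescaled geodesic coord}. Since $H_\alpha(\dd(q,p_\star)/\varepsilon)$ vanishes unless $\dd(q,p_\star) < \varepsilon$, the integral only runs over $B(p_\star,\varepsilon)$. On this ball we write $q = \Psi_\varepsilon(y)$ with $y \in \mathbb B$, and we note that $\dd(q,p_\star) = \varepsilon |y|$. For $r \in B(p_\star, c)$ we write $r = \Psi_\varepsilon(x)$ with $|x| \leqslant c/\varepsilon$.

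By \eqref{eq:psiepsvolg} the volume form becomes $\varepsilon^2(1+\mathcal O(\varepsilon))\,\dd y$, with the $\mathcal O(\varepsilon)$ uniform on $\mathbb B$. Hence
$$
\varepsilon^{-2}\int C_\alpha \dd(r,q)^{2(2\alpha-1)} H_\alpha(\dd(q,p_\star)/\varepsilon)\dd\vol_g(q) = (1+\mathcal O(\varepsilon)) \int_{\mathbb B} C_\alpha\, \dd(\Psi_\varepsilon(x),\Psi_\varepsilon(y))^{4\alpha-2} H_\alpha(y)\dd y.
$$
Here we use that the integrand is nonnegative, so a uniform multiplicative bound on the density passes through the integral.

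Next we apply \eqref{eq: dist in rescaled normal coordinate}. Both $|x|$ and $|y|$ are at most $2c/\varepsilon$, so
$$
(1-\delta)\varepsilon|x-y| \leqslant \dd(\Psi_\varepsilon(x),\Psi_\varepsilon(y)) \leqslant (1+\delta)\varepsilon|x-y|.
$$
The exponent $4\alpha - 2$ is negative because $\alpha<1/2$, so this reverses into
$$
(1+\delta)^{4\alpha-2}\varepsilon^{4\alpha-2}|x-y|^{4\alpha-2} \leqslant \dd(\Psi_\varepsilon(x),\Psi_\varepsilon(y))^{4\alpha-2} \leqslant (1-\delta)^{4\alpha-2}\varepsilon^{4\alpha-2}|x-y|^{4\alpha-2}.
$$
Inserting this into the integral and recalling \eqref{eq:halpha} and \eqref{eq:Galpha}, we have $\int_{\mathbb B} C_\alpha |x-y|^{4\alpha-2}H_\alpha(y)\dd y = G_\alpha(x)$. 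This yields the two bounds with constants $(1\pm\delta)^{4\alpha-2}$ instead of $1\mp\delta$.

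Since $\delta>0$ is arbitrary and $(1\pm\delta)^{4\alpha-2} = 1 \mp (2-4\alpha)\delta + o(\delta)$, we finish by replacing $\delta$ with a smaller $\delta'$ such that $(1+\delta')^{4\alpha-2}\geqslant 1-\delta$ and $(1-\delta')^{4\alpha-2}\leqslant 1+\delta$. We then shrink $c$ so that \eqref{eq: dist in rescaled normal coordinate} holds with $\delta'$.

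The only delicate point is the uniformity of the constants in $r$. This is where the nonnegativity of the integrand is essential, since it lets the multiplicative errors factor out of the integral. Near the singularity $y=x$, local integrability of $|x-y|^{4\alpha-2}(1-|y|^2)^{-\alpha}$ holds because $4\alpha-2>-2$ and $\alpha<1$, so $G_\alpha$ is finite and continuous. I expect no real obstacle beyond this bookkeeping.
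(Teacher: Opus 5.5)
Your proof is correct and follows the paper's argument: you change variables via $\Psi_\varepsilon$, bound the volume density with \eqref{eq:psiepsvolg} and the distance with \eqref{eq: dist in rescaled normal coordinate}, and use nonnegativity of the integrand so the multiplicative errors factor out of the integral. You are more careful than the paper, which does not point out that the negative exponent $4\alpha-2$ turns $1\pm\delta$ into $(1\mp\delta)^{4\alpha-2}$. Choosing a smaller $\delta'$ and shrinking $c$ accordingly is the right way to recover the stated constants.
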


Note that the above function $G_\alpha$ satisfies the bound
\begin{eqnarray}
\label{eq: G decay rate}
C^{-1}(1+|x|)^{4\alpha - 2} \leqslant G_\alpha(x) \leqslant C(1+|x|)^{{4\alpha-2}} \quad \text{for every } \ x \in \RR^2.
\end{eqnarray}

\begin{proof}
We let $r = \Psi_\varepsilon(x)$ and make the change of variable $\Psi_\varepsilon(q) = \varepsilon(y)$ into the integral appearing in \eqref{eq:galpha} to get
$$
\begin{aligned}
&\varepsilon^{-2} \int \dd(r,q)^{2(2\alpha - 1)}  H_\alpha(\dd(q,p_\star)/\varepsilon) \dd \vol_g(q) \\
& \hspace{2cm} = \varepsilon^{-2} \int_{\mathbb B} \dd(\Psi_\varepsilon(y),\Psi_\varepsilon(x))^{2(2\alpha - 1)}  H_\alpha(\dd(\Psi_\varepsilon(y),p_\star)/\varepsilon) (\Psi_\varepsilon^*\dd \vol_g)(y)
\end{aligned}
$$
Combining \eqref{eq:psiepsvolg} and \eqref{eq: dist in rescaled normal coordinate} we get the sought bounds.
%one obtains
%\begin{eqnarray}\nonumber
%&& \varepsilon^{-2} \int \dd(r,q)^{2(2\alpha - 1)}  H_\alpha(\dd(q,p_\star)/\varepsilon) \dd \vol_g(q) =\\\nonumber&& \varepsilon^{4\alpha-2} \int_{\mathbb B} |x-y|^{4\alpha-2}\left(1 + F\left(\varepsilon x, \frac{x-y}{|x-y|}, \varepsilon |x-y|\right)\right)^{2(2\alpha - 1)}  H_\alpha(y) (1 + \epsilon \mu(y)) dy\\
%\end{eqnarray}
%with $F(0,\omega,0) = 0$. Expanding 
%$$\left(1 + F\left(\varepsilon x, \frac{x-y}{|x-y|}, \varepsilon |x-y|\right)\right)^{2(2\alpha - 1)} = 1 + \varepsilon \tilde F_\epsilon(x, \frac{x-y}{|x-y|}, |x-y|)$$ then use non-negativity of $H_\alpha$ we get that 
%$$\left|\varepsilon^{-2} \int \dd(r,q)^{2(2\alpha - 1)}  H_\alpha(\dd(q,p_\star)/\varepsilon) \dd \vol_g(q) - \varepsilon^{4\alpha-2} G_\alpha(\Psi_\varepsilon^{-1}(r))\right| \leq C  \varepsilon^{4\alpha-1} G_\alpha(\Psi_\varepsilon^{-1}(r))$$
\end{proof}

Combining \eqref{eq:noyauS},  \eqref{eq: Feps principal term} and \eqref{eq: G decay rate} with Lemmas \ref{lem: S2Deps estimate} and \ref{lem:S2ofH} we obtain that for any $\varepsilon > 0$ small and every $r \in B(p_\star, c)$ there holds
\begin{equation}\label{eq:almostfin}
\begin{aligned}
 \varepsilon^{4\alpha - 2} (1- \delta)^3 G_\alpha(\Psi_\varepsilon^{-1}(r)) \leqslant  \mathrm S^2 F_\varepsilon(r) 
\leqslant  \varepsilon^{4\alpha - 2} (1+ \delta)^3 G_\alpha(\Psi_\varepsilon^{-1}(r)).
\end{aligned}
\end{equation}
Let $\chi \in \mathscr C^\infty(\Sigma)$ such that $\chi(q) = 1$ for $q \in B(p_\star, c)$ and $\chi(q) = 0$ for $q \notin B(p_\star, 2c)$. Then by \eqref{eq:s^2f} we have 
$$
\A_\varrho[(1 - \chi) \mathrm S^2F_\varepsilon](p) = \mathcal O(1)
$$
as $\varepsilon \to 0$. Thus $\A_\varrho \mathrm S^2F_\varepsilon(p) = \A_\varrho[\chi \mathrm S^2F_\varepsilon](p) + \mathcal O(1)$. Recalling \eqref{eq:sas} and using \eqref{eq:almostfin} we therefore get, by positivity of the operator $\A_\varrho$,
$$
\begin{aligned}
&\varepsilon^{4\alpha - 2} (1-\delta)^3 \A_\varrho[\chi (G_\alpha \circ \Psi_\varepsilon^{-1})](p) - C \\
&\hspace{2cm}\leqslant  \overline{u}_{\star, \varepsilon} - u_{\star, \varepsilon}(p) \\
&\hspace{4cm} \leqslant \varepsilon^{4\alpha - 2} (1+\delta)^3 \A_\varrho[\chi (G_\alpha \circ \Psi_\varepsilon^{-1})](p) + C.
\end{aligned}
$$
Since $\delta$ is arbitrary up to taking $c$ smaller we get the sought result.
%Now by definition of $A_\varrho$ one has
%\begin{equation}\label{eq:avarrhochi}
%\A_\varrho [\chi (G_\alpha \circ \Psi_\varepsilon^{-1})](p) = \int_{T_p\Sigma}|v|^{-1} \varrho(|v|) \chi(\exp_p(v)) G_\alpha(\Psi_{\varepsilon}^{-1}(\exp_p(v))) \dd v.
%\end{equation}
\end{proof}
%\begin{proof} This is a direct consequence of \eqref{eq:noyauS} combined with the estimates Lemma \ref{lem:S2ofH} then noting that the integrands are all non-negative.
%\end{proof}

\subsection{The finite degree case}\label{subsec:finitedegree}
Proposition \ref{prop:reduceintegral} motivates us to study the integral \eqref{eq:avarrhochi}. Hence we fix $\delta, c, C > 0$ and $\chi$ as in Proposition \ref{prop:reduceintegral}. To estimate \eqref{eq:avarrhochi}, we will consider the set 
$$
Y_\varrho(p,p_\star) = \{v \in T_p\Sigma~:~\exp_p(v) = p_\star,~|v| \in \supp \varrho\} \subset T_p\Sigma
$$
of vectors in $T_p\Sigma$ reaching $p_\star$ at time $1$. We also define 
\begin{eqnarray}\label{eq: def Yrhoc}
{Y}_{\varrho, \mathrm{c}}(p,p_\star) = Y_\varrho(p,p_\star) \cap \widetilde{\mathrm{Conj}}(p)
\end{eqnarray}
the set of vectors $v \in Y_\varrho(p,p_\star)$ such that $p_\star$ is conjugate to $p$ along $\gamma_{p,v}$. If $v \in Y(p,p_\star)$ is not a conjugate vector, then $\exp_p$ is a local diffeomorphism near $v$. This implies
$$
v \in Y_\varrho(p,p_\star) \setminus Y_{\varrho, \mathrm c}(p,p_\star) \quad \implies \quad v \text{ is isolated in } Y_\varrho(p, p_\star).
$$
In what follows, we let $k$ be the maximal degree among all conjugate directions at $p$ reaching $p_\star$, that is,
$$
k = \sup_{v \in Y_{\varrho, \mathrm c}(p,p_\star)} \deg(v)
$$
where $\deg(v)$ is the degree of $(p, v / |v|)$, see Definition \ref{def:degree}. This corresponds to the degree $\deg(p,p_\star)$ of the pair $(p,p_\star)$ see Definition \ref{def:degreepair}. If $v \in Y_\varrho(p,p_\star) \setminus Y_{\varrho, \mathrm c}(p,p_\star)$ then we will set $\deg(v) = 0$. Finally, note that Proposition \ref{prop:normalformexp} implies
\begin{equation}\label{eq:degisolated}
\deg(v) < \infty \quad \implies \quad v \text{ is isolated in }Y_{\varrho, \mathrm c}(p, p_\star).
\end{equation}

In this paragraph, we assume that $k < \infty$. In that case, \eqref{eq:degisolated} implies that $Y_{\varrho}(p, p_\star)$ is finite, and we write $Y_{\varrho}(p, p_\star) = \{v_1, \dots, v_N\}$. Let $j \in\{1, \dots, N\}$ and set $k_j = \deg(v_j) \geqslant 0$, so that $k = \max_j k_j$. Here by convention, we put $\deg(v_j) = 0$ if $v_j$ is not a conjugate direction. Then by Proposition \ref{prop:normalformexp} there exist neighborhoods $U_j \subset T_p\Sigma$, $V_j\subset \Sigma$ of $v_j\in U_j$ and $p_\star\in V_j$, respectively, as well as neighborhoods $U_j', V_j' \subset \mathbb B^2$ and diffeomorphisms $\kappa_j: U_j \to U_j'$ with $\kappa_j(v_j) = 0$ and $\eta_j : V_j \to V_j'$ with $\eta_j(p_\star) = 0$ such that for any $(s,u) \in U_j'$ one has
\begin{equation}\label{eq:psij}
\psi_j(s,u) = \begin{cases}
(s, u) & \text{ if } k_j = 0, \\
(s, u^{2}) &\text{ if } k_j = 1, \\
(s, su + u^{k_j + 1}\rho_j(s,u)) &\text{ if } k_j \geqslant 2, 
\end{cases}
\qquad \text{where} \quad
\psi_j =  \eta_j \circ {\exp_p} \circ\kappa_j^{-1}.
\end{equation}
for some smooth $\rho_j$ with $\rho_j(0) = 1$. Up to taking $c$ small enough, we may assume that
$$ 
\Bigl\{v \in T_p\Sigma~:~|v| \in \supp \varrho,~\exp_p(v) \in B(p_\star, c)\Bigr\} \subset \bigcup_{j=1}^N U_j.
$$

Note that $\Psi_\epsilon$ defined in \eqref{eq: def of rescaled geodesic coord} can be written as $\Psi_\varepsilon(x) = \Psi(\varepsilon x)$ where $\Psi$ is the unscaled geodesic coordinate. This gives that 
$$\Psi_\varepsilon^{-1}(q) = \varepsilon^{-1} \Psi^{-1}(q).$$
Next we let $\tilde \varrho(s) = s^{-1}\varrho(s)$. Then by a change of variable and \eqref{eq:avarrhochi} one obtains
\begin{equation}\label{eq:avarrhochi2}
\A_\varrho \left[\chi (G_\alpha \circ \Psi_\varepsilon^{-1})\right](p) = \sum_{j=1}^N \int_{U_j'} \chi_j(s,u) {G}_{\alpha}\Bigl(\varepsilon^{-1}(\Psi_j^{-1}\circ \psi_j)(s,u)\Bigr) \dd s \dd u
\end{equation}
where
$$
\chi_j(s,u) = \widetilde\varrho(|\kappa_j^{-1}(s,u)|) \det (\dd \kappa_j(s,u))  (\chi \circ \kappa_j)^{-1}(s,u) \quad \text{and} \quad \Psi_j = \eta_j \circ \Psi : \RR^2 \to \RR^2.
$$
Point (ii) of Theorem \ref{thm:main} will be a consequence of the following technical result.
\begin{lemma}\label{lem: leading term of integral}
Let $U \subset \RR^2$ be an open neighbourhood of the origin and $\Phi : U \to \Phi(U)\subset \mathbb R^2$ be a smooth diffeomorphism with $\Phi(0) = 0$. Let $\chi \in \mathscr C^\infty_c(U)$ such that $\chi(0) > 0$. Let $G : \RR^2 \to \RR_+$ be a continuous function 
%{\color{red} you will be applying this to $G_\alpha$. Is it a continuous function? Also in the proof of this lemma you apply Sard's thm to $G$ which will require more than just continuity (I think).}
such that for some $\omega > 0$ we have
\begin{equation}\label{eq:Ghypothesis}
G(x) \leqslant (1 + |x|)^{-\omega} \quad \text{ for all } x \in \RR^2.
\end{equation}
Finally let $\ell \geqslant 1$,
%{\color{red}I think you need $\ell\geq 2$ for the condition below to hold for the application you have in mind.}
$a \in \RR$ and $\rho \in \mathscr C^\infty(\RR^2, \RR_+)$ a positive, bounded smooth function such that $\rho(0) = 1$ and set 
$$
\psi(s,u) = (s, asu + u^\ell \rho(s,u)).
$$
Assume that $1 + 1/\ell < \omega$. 
%{\color{red} I think in your applications $\omega = 2-4\alpha$ with $\alpha<1/4$ (see \eqref{eq: G decay rate}). So $\omega$ is barely bigger than $1$ and I am not sure if the assumption $1+1/\ell<\omega$ can be satisfied unless $\ell$ is VERY large.}
Then we have the expansion as $\varepsilon\to 0$
\begin{equation}\label{eq:integralsimlemma}
\int_{\RR^2} \chi(s,u) G\bigl(\varepsilon^{-1} (\Phi \circ \psi)(s,u)\bigr) \dd s \dd u \sim  \varepsilon^{1 + 1/\ell} \chi(0) \int_{\RR^2} G\bigl(\dd \Phi(0) \cdot (s, u^\ell)\bigr) \dd s \dd u.
\end{equation}
If $\omega \geqslant 1 + 1/\ell$ then there is $C$ such that for any small $\varepsilon > 0$
\begin{equation}\label{eq:upperboundomega}
\left|\int_{\RR^2} \chi(s,u) G\bigl(\varepsilon^{-1} (\Phi \circ \psi)(s,u)\bigr) \dd s \dd u\right| \leqslant \begin{cases} 
C\varepsilon^\omega |\log \varepsilon| & \text{ if } \omega = 1 + 1/\ell, \\
C\varepsilon^\omega & \text{ if } \omega > 1 + 1 / \ell.
\end{cases}
\end{equation}

\end{lemma}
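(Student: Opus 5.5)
The idea is an anisotropic rescaling adapted to the quasi-homogeneous structure of $\psi$: set $s = \varepsilon s'$ and $u = \varepsilon^{1/\ell}u'$. This turns the integral into $\varepsilon^{1+1/\ell}$ times an integral that converges to the model integral. One remark on the statement first: the model integral $\int G(\dd\Phi(0)\cdot(s,u^\ell))\,\dd s\,\dd u$ is finite precisely when $\omega > 1 + 1/\ell$. So I read \eqref{eq:integralsimlemma} as the regime $\omega > 1+1/\ell$, and \eqref{eq:upperboundomega} as the complementary regime $\omega \leqslant 1 + 1/\ell$, which is where $\varepsilon^\omega$ dominates $\varepsilon^{1+1/\ell}$. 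I prove it in that form.

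\textbf{Step 1: a lower bound for $\psi$ near the origin.} This is the key step and the main obstacle. I claim that there are $r_0, c_0 > 0$ such that
\[
|\psi(s,u)| \geqslant c_0\bigl(|s| + |u|^\ell\bigr) \quad \text{for } |s|,|u| \leqslant r_0 .
\]
Fix a small $\eta > 0$ and split into two cases.
\begin{itemize}
\item If $|s| \geqslant \eta |u|^\ell$, then the first coordinate of $\psi$ already gives $|\psi| \geqslant |s| \geqslant c(|s| + |u|^\ell)$.
\item If $|s| < \eta|u|^\ell$, then $|asu| \leqslant |a|\eta|u|^{\ell+1}$. Since $\rho(0)=1$, we have $\rho \geqslant 1/2$ near $0$. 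For $|u|$ small this gives $|asu + u^\ell\rho| \geqslant |u|^\ell/4$, and $|u|^\ell/4 \gtrsim |s| + |u|^\ell$ in this case.
\end{itemize}
Since $\Phi$ is a diffeomorphism with $\Phi(0)=0$, we also have $|\Phi(y)| \geqslant c|y|$ near $0$. Hence $|\Phi\circ\psi(s,u)| \geqslant c_1(|s| + |u|^\ell)$ on a neighbourhood $W$ of the origin. On the other hand, $\psi(s,u)=0$ forces $s=0$ and $u^\ell\rho(0,u)=0$, so $\psi$ vanishes only at the origin (as $\rho > 0$). Therefore $|\Phi\circ\psi| \geqslant c_2 > 0$ on $\supp\chi \setminus W$. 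By \eqref{eq:Ghypothesis}, the part of the integral over $\supp\chi\setminus W$ is $\mathcal O(\varepsilon^{\omega})$, which is negligible in every regime considered.

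\textbf{Step 2: the rescaled integral.} On $W$ we change variables $s=\varepsilon s'$, $u = \varepsilon^{1/\ell}u'$, which has Jacobian $\varepsilon^{1+1/\ell}$. This gives
\[
\varepsilon^{1+1/\ell}\int \chi(\varepsilon s', \varepsilon^{1/\ell}u')\, G\Bigl(\varepsilon^{-1}\Phi\bigl(\varepsilon\, \Psi^\varepsilon(s',u')\bigr)\Bigr)\,\dd s'\,\dd u',
\]
where $\Psi^\varepsilon(s',u') = \bigl(s',\, a\varepsilon^{1/\ell}s'u' + u'^\ell\rho(\varepsilon s',\varepsilon^{1/\ell}u')\bigr)$. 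Pointwise, $\Psi^\varepsilon(s',u') \to (s',u'^\ell)$ and $\varepsilon^{-1}\Phi(\varepsilon y) \to \dd\Phi(0)y$ locally uniformly. By continuity of $G$, the integrand therefore converges pointwise to $\chi(0)G(\dd\Phi(0)\cdot(s',u'^\ell))$. By Step 1 and \eqref{eq:Ghypothesis}, the integrand is dominated by $\|\chi\|_\infty\bigl(1 + c_1(|s'| + |u'|^\ell)\bigr)^{-\omega}$. The substitution $v = |u'|^\ell$ shows that this dominating function is integrable on $\RR^2$ if and only if $\omega > 1+1/\ell$. In that case dominated convergence yields \eqref{eq:integralsimlemma}, with a positive right-hand side whenever $G\not\equiv 0$.

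\textbf{Step 3: the borderline and subcritical cases.} When $\omega \leqslant 1 + 1/\ell$, I keep the same domination but use that the rescaled domain is contained in the box $\{|s'|\leqslant C/\varepsilon,\ |u'|\leqslant C\varepsilon^{-1/\ell}\}$. Substituting $v = |u'|^\ell$, the integral of $(1+|s'|+v)^{-\omega}v^{1/\ell - 1}$ over $|s'|,v \lesssim \varepsilon^{-1}$ is, by polar-type scaling in $(s',v)$,
\begin{itemize}
\item $\mathcal O(\varepsilon^{\omega - 1 - 1/\ell})$ when $\omega < 1+1/\ell$,
\item $\mathcal O(|\log\varepsilon|)$ when $\omega = 1 + 1/\ell$.
\end{itemize}
Multiplying by the Jacobian $\varepsilon^{1+1/\ell}$ and adding the $\mathcal O(\varepsilon^\omega)$ contribution from outside $W$ gives \eqref{eq:upperboundomega}.
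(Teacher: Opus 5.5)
Your proof is correct, and you read the second part of the statement correctly. Taken literally (for $\omega>1+1/\ell$), the bound $C\varepsilon^\omega$ would contradict \eqref{eq:integralsimlemma}, and the paper in fact invokes it only when $\omega\leqslant 1+1/(k_j+1)$.

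Your route to the main asymptotic differs from the paper's. The paper argues as follows:
\begin{itemize}
\item it writes the integral as $\int_0^\infty\vol\{H_\varepsilon>t\}\,\dd t$ and rescales the superlevel sets by $(s,u)\mapsto(\varepsilon s,\varepsilon^{1/\ell}u)$;
\item it traps them in anisotropic boxes $\{|s|\lesssim t^{-1/\omega},\ |u|\lesssim t^{-1/(\ell\omega)}\}$, which gives $N_\varepsilon(t)\lesssim t^{-(1+1/\ell)/\omega}$;
\item it applies dominated convergence twice: once to the indicators of the superlevel sets (which needs $\vol\{H_0=t\}=0$ for a.e.\ $t$), and once in $t$.
\end{itemize}
You instead apply dominated convergence directly to the rescaled integrand, with the majorant $\|\chi\|_\infty(1+c_1(|s'|+|u'|^\ell))^{-\omega}$. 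The paper also relies on continuity of $G$ for its pointwise limit \eqref{eq:claimbis}, so the layer-cake detour gains nothing here. Your version is shorter and avoids the level-set issue.

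Both proofs rest on the same anisotropic bound $|\Phi\circ\psi|\gtrsim|s|+|u|^\ell$. The paper states it without detailed proof, in \eqref{eq:implication} and ``up to reducing $c$'' in the last part. Your Step~1 proves it near the origin. Your splitting into $W$ and $\operatorname{supp}\chi\setminus W$, with an $\mathcal O(\varepsilon^\omega)$ remainder, then gives the lemma for an arbitrary $\chi$ without shrinking its support. A global version is also immediate: on $\operatorname{supp}\chi$ one has $|u|^\ell\rho\leqslant|asu+u^\ell\rho|+|a||s||u|$, with $|u|$ bounded and $\rho$ bounded below. Your Step~3 is the same computation as the paper's.

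Two imprecisions, neither of which affects the argument:
\begin{itemize}
\item It is your majorant, not the model integral, that is integrable exactly when $\omega>1+1/\ell$. A faster-decaying $G$ gives a finite model integral regardless of $\omega$.
\item For even $\ell$ the map $(s,u)\mapsto(s,u^\ell)$ only reaches the half-plane $\{y\geqslant 0\}$. So positivity of the limit needs $G\not\equiv 0$ on $\dd\Phi(0)(\RR\times[0,\infty))$, not merely $G\not\equiv 0$. This is automatic for $G_\alpha>0$.
\end{itemize}
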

\begin{proof}
In what follows, we will denote by $H_\varepsilon(s,u)=  \chi(s,u) G\bigl(\varepsilon^{-1} (\Phi \circ \psi)(s,u)\bigr)$, the integrand of the integral on the LHS of  \eqref{eq:integralsimlemma}. Then we have
\begin{equation}\label{eq:firstint}
\int_{\RR^2} H_\varepsilon(s,u) \dd s \dd u = \int_0^\infty \vol\left\{H_\varepsilon > t\right\} \dd t.
\end{equation}
The change of variable $\mathrm{w}_\varepsilon : (s,u) \mapsto (\varepsilon s, \varepsilon^{1/\ell}u)$ yields
\begin{equation}\label{eq:changeneps}
\vol\left\{H_\varepsilon > t\right\} = \varepsilon^{1 + 1/\ell} N_\varepsilon(t) \quad \text{where} \quad N_\varepsilon(t) = \vol\{H_\varepsilon \circ \mathrm{w}_\varepsilon > t\}.
\end{equation}
So \eqref{eq:firstint} becomes
\begin{eqnarray}\label{eq:secondint}
\int_{\RR^2} H_\varepsilon(s,u) \dd s \dd u = \varepsilon^{1 + 1/\ell} \int_0^\infty   N_\varepsilon(t) \dd t.
\end{eqnarray}
Now we have $\psi \circ \mathrm{w}_\varepsilon(s,u) = \varepsilon (s, a\varepsilon^{1/\ell}su + u^{\ell}\rho_{\varepsilon}(s,u))$ where $\rho_{\varepsilon} = \rho \circ \mathrm{w}_\varepsilon$. So we have
\begin{eqnarray}\label{eq: Heps rescaled by w}
H_\varepsilon \circ \mathrm{w}_\varepsilon(s,u) = \chi(\mathrm{w}_\varepsilon(s,u)) G\bigl(\varepsilon^{-1}\Phi_{\varepsilon}\bigl(\psi_{\varepsilon}(s,u)\bigr)\bigr)
\end{eqnarray}
%where $\Psi_{\varepsilon} = \varepsilon^{-1} \Psi (\varepsilon\, \cdot)$ {\color{red} I am confused. I thought \eqref{eq: def of rescaled geodesic coord} already defines $\Psi_\varepsilon$.} 
where $\Phi_{\varepsilon} =\Phi (\varepsilon\, \cdot)$ 
\begin{equation}\label{eq:psi_epsilon}
\psi_{\varepsilon}(s,u) = (s, a\varepsilon^{1/\ell}su + u^{\ell}\rho_{\varepsilon}(s,u))
\end{equation}
In what follows we denote $L = \dd\Phi(0) \in \mathrm{GL}(\RR^2)$. Since $\chi$ is compactly supported, there is $c_1 > 0$ such that
\begin{equation}\label{eq:linearizedPsieps}
\varepsilon^{-1}\left|
\Phi_{\varepsilon}(x)\right| \geqslant  c_1|x| \qquad \text{ for all } \ \varepsilon > 0 \ \text{ and } \ x \in \supp \chi.
\end{equation}
Since $\rho(0) = 1$ we see that for any $(s,u) \in \RR^2$ there holds
\begin{equation}\label{eq:claimbis}
H_\varepsilon(\mathrm{w}_\varepsilon(s,u)) \to H_0(s,u) = \chi(0) G(L(s, u^\ell))
\end{equation}
%{\color{red} do you mean $u^\ell$ instead of $u^3$?}
as $\varepsilon \to 0$.   %By Sard's theorem the set of singular values of $H_0$ is of Lebesgue measure $0$ {\color{red} $G$ (and therefore $H_0$) is only assumed to be continuous. My understand of Sard's thm is that it applies to smooth functions. Is there something I am missing here? On the other hand, the $G_\alpha$ that you wish to apply this to is smooth away from the boundary of the unit ball so maybe this is only a matter of rephrasiing the Lemma?}. Hence, for almost every $t > 0$ (i.e. those which are not singular values of $H_0$), 
First note that there is $D > 0$ such that for all $\varepsilon, T > 0$ one has
\begin{equation}\label{eq:implication}
\Bigl((s,u) \in \supp(\chi \circ \mathrm w_\varepsilon) \ \ \text{and} \ \ |\psi_\varepsilon(s,u)| \leqslant T\Bigr) \ \implies \   \Bigl( |s|\leqslant T \ \ \text{and} \ \ |u| \leqslant DT^{1/\ell}\Bigr).
\end{equation}
%Indeed, suppose $(s,u) \in \supp(\chi \circ \mathrm w_\varepsilon)$ and $|\psi_\varepsilon(s,u)| \leqslant T$. Then the first condition yields $|u| \leqslant C_1 \varepsilon^{-1/\ell}$ for some $C_1 > 0$. The second one yields $|a\varepsilon^{1/\ell}su + u^\ell\rho_\varepsilon(s,u)| \leqslant T$ as well as $|s|\leqslant T$. Now if $c_1 > 0$ is such that $\rho \geqslant c_1$ on $\supp \chi$ then we obtain
%$
%|u|^\ell \leqslant (c^{-1}_1 + C_1|a|)T,
%$
%which yields \eqref{eq:implication}. 
Now combining \eqref{eq:linearizedPsieps} and \eqref{eq:implication} we see that there is $C > 0$ such that
\begin{equation}\label{eq:implication2}
\begin{aligned}
&\Bigl\{(s,u) \in \supp(\chi \circ \mathrm w_\varepsilon)~:~\varepsilon^{-1}|\Phi_{\varepsilon}(\psi_\varepsilon(s,u))|\leqslant T\,\Bigr\}\\
& \hspace{4cm} \subset \Bigl\{(s,u)~:~|s|\leqslant CT \text{ and } |u| \leqslant CT^{1/\ell}\Bigr\}
\end{aligned}
\end{equation}
for any small $\varepsilon > 0$ and any $T > 0$. However it follows from \eqref{eq:Ghypothesis} that there is $D > 0$ such that 
$
\{G > t\} \subset \bigl\{x \in \RR^2~:~|x|\leqslant Dt^{-1/\omega}\bigr\}.
$
Therefore, using expression \eqref{eq: Heps rescaled by w} for $H_\varepsilon \circ \mathrm w_\varepsilon$ and the set inclusion \eqref{eq:implication2}, we obtain that for some $C_2 > 0$ there holds
\begin{equation}\label{eq:boundsu}
\bigl\{H_\varepsilon \circ \mathrm w_\varepsilon > t\bigr\} \subset \bigl\{(s,u)~:~|s|\leqslant C_2t^{-1/\omega} \text{ and } |u| \leqslant C_2t^{-\frac{1}{\ell \omega}}\bigr\}
\end{equation}
for any $t > 0$ and small $\varepsilon > 0$. Of course the same estimate holds also if we replace $H_\varepsilon \circ \mathrm w_\varepsilon$ by $H_0$. In particular there is $C_3$ such that
\begin{equation}\label{eq:boundneps}
N_\varepsilon(t) \leqslant C_3 t^{-1/\omega - 1/(\ell \omega)} \quad \text{ for all } t > 0 \text{ and small } \varepsilon \geqslant 0.
\end{equation}
Next recall that we assumed 
$
1 / \omega + 1 /( \omega \ell) < 1.
$
Therefore $t \mapsto t^{-1/\omega - 1/(\ell \omega)}$ is integrable near the origin, and so is $t \mapsto N_0(t)$, hence we get 
$$
\int H_0(s,u) \dd s \dd u = \int_0^\infty N_0(t) \dd t < \infty
$$
which means $H_0 \in L^1(\RR^2)$. In particular, $\vol\{H_0 = t\} < \infty$ for almost every $t > 0$ and for such $t$ the convergence \eqref{eq:claimbis} yields
\begin{eqnarray}\label{eq: convergence of indicator}
\mathbf 1_{\{H_\varepsilon \circ \mathrm w_\varepsilon > t\}} \to \mathbf 1_{\{H_0 > t\}} \quad \text{ almost everywhere on } \RR^2
\end{eqnarray}
as $\varepsilon \to 0$. Hence by \eqref{eq:boundsu} we may apply the dominated convergence theorem to obtain
\begin{equation}\label{eq:claim}
N_\varepsilon(t) \to N_0(t) = \vol\{H_0 > t\} \quad \text{ for almost every } t > 0,
\end{equation}
as $\varepsilon \to 0$. Combining this with the bound \eqref{eq:boundneps} we may once again use the dominated convergence theorem to take the limit in the integral in the right-hand side of \eqref{eq:secondint} to obtain
$$
\int_{\RR^2} \chi(s,u) G\Bigl(\varepsilon^{-1} (\Phi \circ \psi)(s,u)\Bigr) \dd s \dd u \sim \varepsilon^{1 + 1/\ell} \int_0^\infty \vol\{H_0 > t\} \dd t
%+ o(\varepsilon^{1 + 1/\ell})
$$
as $\varepsilon\to 0$. By \eqref{eq:claimbis} the integral in the right-hand side is exactly 
$$\chi(0) \int_{\RR^2} G\bigl(\dd \Phi(0) \cdot (s, u^\ell)\bigr) \dd s \dd u<\infty.$$ Hence it remains to deal with the case $\omega \geqslant 1 + 1/\ell$. For this we notice that the integral $I_\varepsilon$ in the left-hand side of \eqref{eq:integralsimlemma} is bounded by
$$
I_\varepsilon \leqslant \int_{\supp \chi} \left(1 + \varepsilon^{-1}|\Phi \circ \psi(s,u)|\right)^{-\omega} \dd s \dd u.
$$
Since $\chi$ is compactly supported in $B(p_\star, c)$, up to reducing $c > 0$ we may assume that there are $c_2, c_3 > 0$ such that
$$
|\Phi \circ \psi(s,u)| \geqslant c_2 (|s| + |su + u^\ell\rho(s,u)|) \geqslant c_3(|s| + |u|^\ell) \quad \text{for all }(s,u) \in \supp \chi.
$$
Hence we obtain the estimate
$$
I_\varepsilon \leqslant \int_{\supp \chi} \left(1 + c_3 \varepsilon^{-1}(|s| + |u|^\ell)\right)^{-\omega} \dd s \dd u.
$$
Using again the change of variables $\mathrm{w}_\varepsilon : (s,u) \mapsto (\varepsilon s, \varepsilon^{1/\ell}u)$ we see that \eqref{eq:upperboundomega} holds.
\end{proof}

Because of \eqref{eq: G decay rate} we may apply the above lemma to each term in the sum in the right-hand side of \eqref{eq:avarrhochi2} and deduce that for each $j$ there is $c_j > 0$ such that as $\varepsilon \to 0$ we have
\begin{equation}\label{eq:integralsim}
 \int_{U_i'} \chi_j(s,u) {G}_{\alpha}\Bigl(\varepsilon^{-1}(\Psi_j^{-1} \circ \psi_j)(s,u)\Bigr) \dd s \dd u \sim c_j \varepsilon^{1 + \frac{1}{k_j + 1}}
\end{equation} 
provided that $\omega = 2 - 4\alpha > 1 + 1/(k_j + 1)$. If $\omega \leqslant 1 + 1 / (k_j + 1)$ instead this integral is bounded by $C\varepsilon^\omega |\log \varepsilon|$. This is negligible in front of $\varepsilon^{1 + 1/(k+1)}$ if $\omega > 1 + 1 / (k + 1)$ where $k = \max k_j$. Hence we get
$$
\A_\varrho \left[\chi (G_\alpha \circ \Psi_\varepsilon^{-1})\right](p) \sim c_p \, \varepsilon^{1 + 1 / (k+1)}
$$
for some $c_p > 0$, provided $\omega > 1 + 1 / (k + 1)$, which is equivalent to
$$
4\alpha < 1 - 1/(k+1).
$$
Recalling Proposition \ref{prop:reduceintegral} we obtain that whenever $\varepsilon$ is small enough
$$
c_p(1-\delta)^2\varepsilon^{1 + 4\alpha + 1/(k+1)} \leqslant \overline{u}_{\star, \varepsilon} - u_{\star, \varepsilon}(p) \leqslant c_p(1+\delta)^2\varepsilon^{1 + 4\alpha + 1/(k+1)}.
$$
Since $\delta > 0$ is arbitrary we obtain (ii) of Theorem \ref{thm:main}.

\subsection{The infinite degree case}\label{subsec:infinitedegree}
In this paragraph we assume $k = \infty$. The set $Y_\varrho(p, p_\star)$ is compact hence one can find $v_1, \dots, v_N \in Y_\varrho(p, p_\star)$, neighbourhoods $U_j, U_j', V_j, V_j'$ as well as diffeomorphisms $\kappa_j, \eta_j$ with the same properties as in the preceding case, except that there must be some $j$ with $k_j = \infty$, and in that case one has 
$$
\psi_j(s,u) = (s, su + \varphi_j(s, u))
$$
for some $\varphi_j : \RR^2 \to \RR$ sending $0$ on $0$ and such that $\partial_u^\ell\varphi_j(0) = 0$ for each $\ell \in \mathbb N_{\geqslant 0},$ see Proposition \ref{prop:normalformexp}. (Indeed, if by contradiction we had $k_j < \infty$ for all $j$ then there would hold $Y_{\varrho}(p,p_\star) \cap U_j = \{v_j\}$ and $k$ would be finite.) 
%In what follows for continuous $\varphi : \RR \to \RR$ we will make use of the notation
%$$
%m_{\varphi}(\varepsilon) = \sup_{|s|\leqslant \varepsilon}\vol\bigl\{u \in [-1,1]~:~|\varphi(s,u)| \leqslant \varepsilon\bigr\}, \quad \varepsilon > 0, 
%$$
Suppose $k_j = \infty$. By \eqref{eq: G decay rate} one has
$$
I_j(\varepsilon) \asymp \int_{\supp \chi} \left(1 + \varepsilon^{-1}|\Psi^{-1}_j(s, su + \varphi_j(s,u))|\right)^{-\omega} \dd s \dd u.
$$
where $f(\varepsilon) \asymp g(\varepsilon)$ means that there is $C > 0$ such that $C^{-1}g(\varepsilon) \leqslant f(\varepsilon) \leqslant C g(\varepsilon)$ for every small $\varepsilon > 0$. Next, up to reducing $c > 0$, there is $C_3 > 0$ such that 
$$
C_3^{-1} (|s| + |\varphi(s,u)|)\leqslant |\Psi_j^{-1}(s, su + \varphi_j(s,u))|\leqslant C_3(|s| + |\varphi(s,u)|)
$$
for all $(s,u) \in \supp \chi$. This yields 
$$
I_j(\varepsilon) \asymp \int_{\supp \chi} \left(1 + \varepsilon^{-1}|s| + \varepsilon^{-1} |\varphi_j(s,u)|\right)^{-\omega} \dd s \dd u.
$$
Changing of variable then yields
\begin{equation}\label{eq:f_j}
I_j(\varepsilon) \asymp \varepsilon f_j(\varepsilon) \quad \text{where} \quad f_j(\varepsilon) = \int_{K_\varepsilon} (1 + |s| + \varepsilon^{-1}\varphi_j(\varepsilon s, u))^{-\omega} \dd s \dd u
\end{equation}
where $K_\varepsilon = \{(\varepsilon^{-1}s, u)~:~(s,u) \in \supp \chi\}.$
Next, we claim that for any $\delta > 0$ there is $c_\delta > 0$ such that for any small $\varepsilon $
\begin{equation}\label{eq:mvarphi}
f_j(\varepsilon) \geqslant c_\delta \varepsilon^\delta.
\end{equation}
%Indeed, for $|s|\leqslant \varepsilon$ we have $\varphi_j(s,u) = \varphi_j(0,u) + \mathcal O(\varepsilon)$. In particular for some $c_1> 0$ one has 
%$$
%f_j(\varepsilon) \geqslant \vol\{u \in [-1,1]~:~|\varphi(0,u)|\leqslant c\varepsilon\}
%$$
%However for any $N$ we have $|\varphi_j(0, u)| = \mathcal O(|u|^\infty)$ which gives \eqref{eq:mvarphi}. Next, recalling \eqref{eq:avarrhochi2} and proceeding as in the preceding case, one therefore obtains
Indeed, up to taking $c > 0$ smaller, there is $C_4 > 0$ such that for any $(s,u) \in K_\varepsilon$ 
$$
|\varphi_j(\varepsilon s, u)| \leqslant |\varphi_j(0,u)| + C_4
$$
Hence we obtain that for any small $\varepsilon$
$$
f_j(\varepsilon) \geqslant C_5 \int_{K_\varepsilon} (1 + |s| + \varepsilon^{-1}|\varphi_j(0,u)|)^{-\omega} \dd s \dd u \geqslant C_6 \int_{|u|\leqslant c} (1 + \varepsilon^{-1}|\varphi_j(0,u)|)^{-\omega} \dd u.
$$
Now let $\delta>0$. Since $\varphi_j(0,\cdot)$ is flat at $0$, for every integer $N\geqslant 1$ there exists $C_N>0$ such that
$$
|\varphi_j(0,u)|\leqslant C_N |u|^N
$$
for all $u$ sufficiently small. Choose $N$ so large that $N\delta>1$. Then, for $|u|\leqslant C_N^{-1/N}\varepsilon^\delta$, one has
$
|\varphi_j(0,u)|
\leqslant
\varepsilon^{N\delta}
\leqslant
\varepsilon
$
for $\varepsilon$ small enough. Hence
$
\left(1+\varepsilon^{-1}|\varphi_j(0,u)|\right)^{-\omega}
\geqslant
2^{-\omega}
$
for $|u|\leqslant C_N^{-1/N}\varepsilon^\delta$ and we get
$$
f_j(\varepsilon)
\geqslant
C_6
\int_{|u|\leqslant C_N^{-1/N}\varepsilon^\delta}
2^{-\omega}\,\dd u
\geqslant
c_\delta \varepsilon^\delta,
$$
which proves \eqref{eq:mvarphi}. Now let $f(\varepsilon) = \sum_{k_j = \infty} f_j(\varepsilon)$. Then recalling \eqref{eq:avarrhochi2} and \eqref{eq:f_j} and making use Proposition \ref{prop:reduceintegral} and of the estimates obtained in \S\ref{subsec:finitedegree} for $k_j < \infty$, one obtains
$$
D^{-1}\varepsilon^{1+4\alpha}  f(\varepsilon) \leqslant \overline{u}_{\star, \varepsilon} - u_{\star, \varepsilon}(p) \leqslant D\varepsilon^{1+4\alpha} f(\varepsilon)
$$
fore some $D > 0$ and some non-decreasing function $f : \RR_+^* \to \left]0,2\right]$ such that
$$
\varepsilon^{-\delta} f(\varepsilon) \underset{\varepsilon \to 0}{\longrightarrow} \infty \quad \text{for every } \delta > 0.
$$
This proves (iii) of Theorem \ref{thm:main}.
%Now, assume that $V\tau$ vanishes on some interval of the circle $S_p\Sigma$. Then up to changing the $v_j$'s one can assume that $\varphi_j = 0$ for some $j$. In that case, the above function $f$ is just the constant function and we get $D^{-1}\varepsilon^{-1 + 4\alpha} \leqslant \overline{u}_{\star, \varepsilon} - u_{\star, \varepsilon}(p) \leqslant D \varepsilon^{-1 + 4\alpha}$. Hence we proved (iii) in Theorem \ref{thm:main}.

Finally, assume that $\mathrm{Conj}(p_\star)$ is a point $p$. In that case we have $V\tau = 0$ on $\Lambda_{p_\star}$ hence Lemmas \ref{lem:conjtime} and \ref{lem:conjmap} yield $\tau(z) = \pi$ for each $z \in \Lambda_{p_\star}$ and $\exp_{p_\star}(v) = p$ for $v \in \pi\Lambda_{p_\star}$. These properties also hold reversing the roles of $p_\star$ and $p$ and we obtain that for $v \in T_p\Sigma$ such that $|v|$ is close to $\pi$,
$$
\dd(p_\star, \exp_p(v)) = ||v| - \pi|.
$$
In particular up to taking $c$ smaller, the above equality holds whenever $\exp_p(v) \in \supp \chi$. Now recall $|\Psi_{\varepsilon}^{-1}(q)| = \dd(p_\star, q) / \varepsilon$ for $q \in B(p_\star, c)$. Hence taking radial coordinates in $T_p\Sigma$ we see that the integral \eqref{eq:avarrhochi} can be written as
$$
\int_{v \in \pi \Lambda_p}  \int_{\pi-c}^{\pi+c} \varrho(t) \chi_1(t,v) G_\alpha(\varepsilon^{-1}|t-\pi|) t \dd t \dd v = \int_{-c}^c \chi_2(t) G_\alpha(\varepsilon^{-1}|t|) \dd t
$$
for some smooth functions $\chi_1 \in \mathscr C^\infty_c(\Lambda_{p} \times \left]\pi-c, \pi+c\right[)$ and $\chi_2 \in \mathscr C^\infty_c(\left]-c,c\right[)$ such that $\chi_2(0) > 0$. Now using \eqref{eq:Galpha} it is not hard to see that $\eqref{eq:avarrhochi}$ is asymptotic to $c_4\,\varepsilon^{-1 + 4\alpha}$ when $\varepsilon \to 0$, for some $c_4 > 0$. Combining this with Proposition \ref{prop:reduceintegral} proves that (iv) holds in Theorem \ref{thm:main}.

{}{}{}{
\subsection{The non orientable case}\label{subsec:notorientable}
We conclude this section by a few words on the non-orientable case. If $(\Sigma, g)$ is a non orientable Zoll surface then Theorem \ref{thm:zolltopology} implies that every point on a closed geodesic $\gamma$ is conjugated only to itself along $\gamma$. Then it follows from classical wavefront set considerations that the operator $A_\varrho$ defined by \eqref{eq: def A rho} is smoothing. Indeed, by \cite[Lemmas 4.3.2, 4.3.4]{Lefeuvre2025} if $u \in \mathcal D'(\Sigma)$ we have 
$$
\WF(\mathrm R_\varrho \pi^* u) \subset \Bigl\{(z, \zeta)~:~ \exists t \in \supp \varrho,\, \exists (q, \eta) \in T^*\Sigma \setminus \underline 0~:~\zeta = \dd \varphi_{t}(z)^\top \dd \pi(z)^\top\eta\Bigr\}
$$
where $\mathrm R_\varrho$ is defined in \eqref{eq:relationrrho}. Applying once more \cite[Lemma 4.3.4]{Lefeuvre2025} we obtain 
$$
\WF(\mathrm A_\varrho u) \subset \Bigl\{(p, \xi)~:~ \exists t \in \supp \varrho,\, \exists (q, \eta) \in T^*\Sigma \setminus \underline 0~:~\dd \pi(p)^\top\xi = \dd \varphi_{t}(z)^\top \dd \pi(z)^\top\eta\Bigr\}.
$$
The set on the right-hand side is empty.  Indeed, the relation above can hold only if $p$ and $q$ are conjugate along a geodesic segment of length $t$, but by Theorem~\ref{thm:zolltopology} no such conjugate pairs exist for $t\in\supp\varrho$. Hence $\WF(\mathrm A_\varrho u) = \emptyset$. This implies that $\mathrm A_\varrho$ is smoothing and Proposition \ref{prop:generator} yields that $\mathrm A$ is an elliptic classical pseudo-differential operator of order $2\alpha$ with principal symbol $|\eta|_g^{2\alpha}$. In particular the full parametrix $\mathrm A^+$ is an elliptic classical pseudo-differential operator with symbol $|\eta|_g^{-2\alpha}$.
}
{}{}{}{
In particular we may reproduce the arguments above (see \cite[\S4.9]{chaubet2025levy}) to obtain 
$$
u_{\star, \varepsilon}(p) = \overline{u}_{\star, \varepsilon} + \mathcal O(1)
$$
as $\varepsilon \to 0$, for any $p \neq p_\star$.
}

%%%% APPENDIX %%%%

\appendix

\section{Normal forms for 1-singular maps of the plane}
\label{app:morin}

\begin{definition}[$1$-singular maps]
  Let $f : \RRR^2 \to \RRR^2$ be a smooth map such and let $\lambda = \det \dd f$. We say that $f$ is \textit{$1$-singular at $x$} if $\lambda(x) = 0$ and $\dd \lambda(x) \neq 0$.
\end{definition}
The  following result is immediate.
\begin{lemma}\label{lem:degree}
  Let $f : \RRR^2 \to \RRR^2$ be a smooth map which is $1$-singular at the origin and set $\lambda = \det \dd f : \RRR^2 \to \RRR.$ Then the singular set $C = \{\lambda = 0\}$ is locally a one-dimensional submanifold of $\RRR^2$ and $\operatorname{rank} \dd f(x) = 1$ for every $x \in C$ close enough to $0$. Let $\eta : \RRR^2 \to \RRR^2$ be a non-vanishing map such that $\eta(x) \in \ker \dd f(x)$ for all $x \in C$ close enough to zero. Let $\tilde \lambda : \RRR^2 \to \RRR$ be a local defining function for $C$, that is, we have $C = \{\tilde \lambda = 0\}$ and $\dd \tilde \lambda|_{C} \neq 0$. In what follows we will denote
   $$
  k(f) = \inf\{k \in \mathbb N~:~\eta^k \tilde \lambda(0) \neq 0\} \in \mathbb N \cup \{\infty\}.
  $$
  Then the following holds
  \begin{enumerate}[label=\emph{(\roman*)}]
   \item The number $k(f)$ is independent of the choice of the vector field $\eta$ and the local defining function $\tilde \lambda$. 
   \item This number $k(f)$ is invariant under changes of coordinates: if $\tilde \kappa, \kappa : \RRR^2 \to \RRR^2$ are any maps such that $\tilde \kappa(0) = \kappa(0) = 0$ that are local diffeomorphisms near the origin then $\tilde f = \tilde \kappa \circ f \circ \kappa$ is $1$-singular at the origin and $k(\tilde f) = k(f)$.
  \end{enumerate}
\end{lemma}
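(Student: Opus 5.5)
The plan is to treat Lemma~\ref{lem:degree} as a statement about $1$-singular map germs, and to reduce both invariance claims to an intrinsic description of $k(f)$: the order of contact between the kernel line field $\ker \dd f|_C$ and the curve $C$ at the origin. First the structural facts. Since $\dd\lambda(0)\neq 0$, the implicit function theorem shows that $C=\{\lambda=0\}$ is locally a smooth curve. On $C$ we have $\operatorname{rank}\dd f\leqslant 1$. If $\dd f(0)=0$, then $\lambda$, a quadratic expression in the entries of $\dd f$, would satisfy $\dd\lambda(0)=0$, a contradiction. So the rank is $1$ at $0$, and by lower semicontinuity also near $0$ on $C$. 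Consequently $\ker\dd f$ restricted to $C$ is a smooth line field, and a nonvanishing $\eta$ with $\eta|_C\in\ker\dd f$ exists locally.

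For (i), I will compare two choices of defining function and of vector field. Any two defining functions satisfy $\tilde\lambda'=h\tilde\lambda$ with $h(0)\neq0$, by Hadamard's lemma in coordinates straightening $C$. I will then show by induction that $\eta^j(h\tilde\lambda)=\sum_{i\leqslant j}c_{ij}\,\eta^i\tilde\lambda$ with $c_{jj}=h$. This gives a triangular relation, so the first nonvanishing index is the same for both. The case $\eta^0\tilde\lambda(0)=\tilde\lambda(0)=0$ is automatic.

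For the vector field, two admissible choices agree on $C$ up to a nonvanishing factor: $\eta'=a\eta+\tilde\lambda Z$ with $a(0)\neq0$ and $Z$ smooth. Expanding $(\eta')^j\tilde\lambda$, every term containing a factor $\tilde\lambda Z$ acting produces either a factor $\tilde\lambda$ or lower derivatives $\eta^i\tilde\lambda$ with $i<j$. Hence $(\eta')^j\tilde\lambda(0)=a(0)^j\eta^j\tilde\lambda(0)+(\text{combination of }\eta^i\tilde\lambda(0),\,i<j)$, again triangular with nonzero diagonal. The cleanest way to organize this is to observe that $k(f)$ equals the vanishing order at $0$ of $t\mapsto\tilde\lambda(\gamma(t))$, where $\gamma$ is an integral curve of $\eta$ through $0$. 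Alternatively it is the contact order between $C$ and the integral curve, which depends only on the line field along $C$ and on $C$ itself. I expect this reformulation, together with checking that a change of $\eta$ off $C$ does not alter the contact order, to be the main point needing care. Integral curves of line fields agreeing on $C$ have the same contact order with $C$ because their difference is $\mathcal O(\tilde\lambda)$, which I will verify by a Gronwall-type comparison in coordinates where $C=\{x_2=0\}$.

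For (ii), set $\tilde f=\tilde\kappa\circ f\circ\kappa$. Then $\det\dd\tilde f=(\det\dd\tilde\kappa\circ f\circ\kappa)\cdot(\lambda\circ\kappa)\cdot\det\dd\kappa$, which is a nonvanishing multiple of $\lambda\circ\kappa$. So $\tilde f$ is $1$-singular at $0$ with critical set $\kappa^{-1}(C)$ and defining function $\lambda\circ\kappa$. Moreover $\ker\dd\tilde f=\dd\kappa^{-1}\ker\dd f\circ\kappa$, so $\kappa^*\eta$ is an admissible field. Since $(\kappa^*\eta)^j(\lambda\circ\kappa)=(\eta^j\lambda)\circ\kappa$, the two numbers coincide, and by (i) this common value is $k(\tilde f)$.
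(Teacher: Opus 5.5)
Your proof is correct and has the same skeleton as the paper's. For a change of defining function you use the Leibniz comparison $\eta^j(h\tilde\lambda)=\sum_{i\leqslant j}\binom{j}{i}(\eta^{j-i}h)\,\eta^i\tilde\lambda$. For (ii) you write $\det\dd\tilde f$ as a nonvanishing multiple of $\lambda\circ\kappa$, use the pulled-back field $\kappa^*\eta$ with $(\kappa^*\eta)^j(\lambda\circ\kappa)=(\eta^j\lambda)\circ\kappa$, and appeal to (i).

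The real difference is in the change of vector field. The paper writes $\tilde\eta=a\eta$ with $a$ defined near the origin and reads off $\tilde\eta^k\tilde\lambda(0)$ from that. But the hypothesis only places $\tilde\eta$ in $\ker\dd f$ along $C$, so this identity holds only on $C$. Off $C$ the two fields differ by a term $\tilde\lambda Z$, which does enter the iterated derivatives, e.g.\ $(\eta+\tilde\lambda Z)^2\tilde\lambda(0)=\eta^2\tilde\lambda(0)+(\eta\tilde\lambda)(0)\,(Z\tilde\lambda)(0)$. One has to show that such contributions are killed by the vanishing of the lower-order terms. Your decomposition $\eta'=a\eta+\tilde\lambda Z$ does exactly this, so you close a step the paper skips.

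To make your triangularity claim airtight, take a term of $(\eta')^j\tilde\lambda$ involving $\tilde\lambda Z$ and look at the outermost application of $\tilde\lambda Z$. Only $a\eta$ acts after it, so the factor $\tilde\lambda$ it creates ends up as some $\eta^i\tilde\lambda$ with $i\leqslant j-1$; derivatives falling on $a$ only change coefficients. The remaining factors, such as $Z\eta\tilde\lambda$, need not have this form, but one such factor suffices.

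Your alternative formulation is equally valid: $\eta^j\tilde\lambda(0)=(\tilde\lambda\circ\gamma)^{(j)}(0)$ with $\gamma$ the integral curve of $\eta$ through $0$, combined with a Gronwall comparison of the integral curves of $\eta'$ and $a\eta$. It makes invariance under $\tilde\lambda\mapsto h\tilde\lambda$ and under rescaling of $\eta$ immediate.

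Finally, your remark that $\dd f(0)=0$ would force $\dd\lambda(0)=0$ is the step the paper leaves implicit before invoking semicontinuity of the rank.
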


\begin{proof}
By assumption $\dd\lambda(x)\neq 0$ for $x$ near the origin so by the implicit function theorem the set $C$ is a $1$-dimensional submanifold of $\RR^2$ near the origin. By definition, $\dd f(x)$ is not invertible for any $x\in C$. Hence its rank is at most $1$ and thus exactly $1$ by upper semi-continuity of the rank.
%To see that the rank is indeed $1$, suppose for some $x\in C$ near the origin $\dd f(x)$ has null-rank. That is, $\\d f(x)$ is a $2\times 2$ matrix with only zero entries. This would mean that $\lambda (x) = 0$ to second order which contradicts the $\dd \lambda(x) \neq0$ definition of $1$-singular.

To verify (i), let $\tilde \lambda$ be a local boundary defining function of $C$. Then $\tilde \lambda(x) = c(x) \lambda(x)$ for some smooth positive function $c$ defined near the origin. Similarly, suppose $\tilde \eta$ is another non-vanishing vector field such that 
$$
\tilde \eta(x)\in {\rm ker}(\dd f(x)) = {\rm span}\{\eta(x)\} \quad \text{ for every }x \in C.
$$
Then we must have $\tilde \eta(x) = a(x) \eta(x)$ for some positive function $a$ defined near the origin. Suppose 
$$
\eta^k \lambda(0)  \neq 0 \quad \text{ but } \quad \eta^j \lambda(0) = 0 \quad \text{ for all } j\leqslant k-1.
$$
Then we have that $\tilde \eta^j \tilde\lambda(0) = 0 $ for all $j\leqslant k-1$ while $\tilde \eta^k \tilde \lambda = a(0)c(0) \eta^k \lambda(0)\neq 0$. The converse holds by swapping the role of $\tilde \eta $ with $\eta$ and $\tilde \lambda$ with $\lambda$. So we have that $k(f)$ is independent of choice of local defining function and the vector field generating ${\rm ker}(\dd f(x))$ at every $x \in C$.

To verify (ii), let $\tilde f = \tilde \kappa  \circ f\circ \kappa$ and set $\lambda_{\tilde \kappa} = \det \dd \kappa$ and define $\lambda_\kappa$ similarly. Then
\begin{eqnarray}\label{eq: tilde la}
\tilde \lambda = {\rm det}\,\dd \tilde f = (\lambda_{\tilde \kappa} \circ f \circ \kappa) \cdot (\lambda \circ f) \cdot \lambda_\kappa
\end{eqnarray}
vanishes at the origin. Furthermore $\dd \tilde\lambda(0) =   \det \dd \tilde\kappa \det\dd \kappa \dd \lambda(0) \neq 0$ so $\tilde f$ is also $1$-singular. To verify that $k(f) = k(\tilde f)$ we suppose 
$\tilde \eta \in {\rm ker}(\dd \tilde f)$ so that 

\begin{eqnarray}\label{eq: k condition}
\tilde \eta^k \tilde\lambda(0)\neq 0,\quad {\rm but}\quad \tilde \eta^j\tilde\lambda(0) = 0\quad {\rm for} \quad j\leqslant k-1.
\end{eqnarray}
Then $\eta = \dd\kappa(\tilde \eta) \in \ker \dd f $ and by \eqref{eq: tilde la}
$$ \eta^j \lambda=\tilde \eta^j  (\lambda\circ \kappa) = \tilde\eta^j\left (\tilde \lambda  \left((\det \dd \tilde\kappa)\circ f\circ\kappa \right)^{-1} (\det\dd \kappa)^{-1}\right).$$
The function $\tilde \lambda  \left((\det \dd \tilde\kappa)\circ f\circ\kappa \right)^{-1} (\det\dd \kappa)^{-1}$ is a defining function for $\{\tilde\lambda = 0\}$ so we by part i), condition \eqref{eq: k condition} holds for this function in place of $\tilde\lambda$. Therefore,
$$ \eta^j \lambda = 0\quad {\rm for}\ j\leqslant k-1,\quad \eta^k\lambda \neq 0$$
which completes the proof of (ii).
\end{proof}

\begin{definition}
  Let $f : \RRR^2 \to \RRR^2$ be a smooth map which is one-singular at the origin. Then the number $k(f)$ from Lemma \ref{lem:degree} is called the \textit{degree} of $f$.
\end{definition}

\begin{proposition}[Normal form for $1$-singular maps]\label{prop:normalform}
  Let $f : \RRR^2 \to \RRR^2$ be $1$-singular at the origin and let $k \in \mathbb N \cup \{\infty\}$ be its degree. Then there exist open neighbourhoods $U, V \subset \RRR^2$ of the origin as well as diffeomorphisms $\kappa : U \to \tilde U \subset \RRR^2$ and $\tilde \kappa : V \to \tilde V \subset \RRR^2$ with $\kappa(0) = 0 = \tilde \kappa(0)$ and such that, setting
  $$
  \tilde f = \tilde \kappa \circ f \circ\,\kappa^{-1} : \tilde U \to \tilde V,
  $$
  the following holds.
  \begin{enumerate}[label=\emph{(\roman*)}]
    \item If $k = 1$, then $\tilde f(s,u) = (s, u^2)$.
    \item If $k \in \mathbb{N}_{\geqslant 2}$, then $\tilde f(s,u) = (s, su + u^{k+1}\rho(s,u))$ for some smooth $\rho : \tilde U \to \RR$ such that $\rho(0) = 1$.
    \item If $k = \infty$, then $\tilde f(s,u) = (s, su + \varphi(s,u))$ for some smooth $\varphi$ defined near the origin in $\RRR^2$ such that $\partial_u^{\ell}\varphi(0) = 0$ for each $\ell \geqslant 0.$
  \end{enumerate}
\end{proposition}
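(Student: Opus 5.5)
The plan is to reduce $f$ step by step to the form $(s,\,g(s,u))$ by coordinate changes in the source and target, tracking the degree through Lemma~\ref{lem:degree}(ii), and then to normalize $g$ in the variable $u$.

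\emph{Step 1: a partial normal form.} Since $f$ is $1$-singular at $0$, Lemma~\ref{lem:degree} gives $\operatorname{rank}\dd f(0)=1$. So some component of $f$, after a linear change of target coordinates, has nonzero differential at $0$. Take this component as the first source coordinate $x$, and complete it to source coordinates $(x,y)$; take it also as the first target coordinate. In these coordinates $f(x,y)=(x,g(x,y))$. Then
$$
\dd f=\begin{pmatrix}1&0\\ \partial_x g&\partial_y g\end{pmatrix},\qquad \lambda=\det\dd f=\partial_y g .
$$
On $C=\{\lambda=0\}$ the kernel of $\dd f$ is spanned by $\partial_y$. We may therefore take $\eta=\partial_y$ and $\tilde\lambda=\lambda$ in Lemma~\ref{lem:degree}. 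The degree $k$ is then characterized by
$$
\partial_y^{j+1}g(0)=0\ \text{ for } 1\leqslant j<k,\qquad \partial_y^{k+1}g(0)\neq0 .
$$
By Lemma~\ref{lem:degree}(ii) this quantity is unchanged by every later coordinate change.

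\emph{Step 2: the case $k=1$.} Here $\partial_y^2 g(0)\neq0$. The Morse lemma with parameter $x$ gives a source change $y\mapsto Y(x,y)$ with
$$
g(x,y)=g\bigl(x,b(x)\bigr)\pm Y^2 ,
$$
where $y=b(x)$ is the critical curve of $g(x,\cdot)$. The target change $(x,v)\mapsto\bigl(x,\pm(v-g(x,b(x)))\bigr)$ then yields $(s,u^2)$.

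\emph{Step 3: the case $k\geqslant2$, including $k=\infty$.} Now $\partial_y^2g(0)=0$. Since $\dd\lambda(0)\neq0$ and $\partial_y\lambda(0)=\partial_y^2 g(0)=0$, we must have $\partial_x\partial_y g(0)\neq0$. Hence $s:=\partial_y g(x,0)$ is a local diffeomorphism of the $x$-line. I apply $x\mapsto s$ simultaneously on the first source and first target coordinates, which preserves the form $(s,g)$. Next I apply the target change $v\mapsto v-g(s,0)$. After these changes,
$$
g(s,0)=0,\qquad \partial_u g(s,0)=s .
$$
Writing $c(u):=g(0,u)$, the degree condition says $c$ vanishes to order exactly $k+1$ (infinite order if $k=\infty$). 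Taylor's formula then gives
$$
g(s,u)=su+c(u)+su^2e(s,u)
$$
for some smooth $e$.

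Now set $u'=u\bigl(1+ue(s,u)\bigr)$. This is a source change fixing $s$, with $\partial_u u'(0)=1$. Then $g=su'+\widetilde c(s,u')$, where $\widetilde c(s,u')=c(u)$. In particular $\widetilde c(0,\cdot)=c\circ u(0,\cdot)$ vanishes to the same order as $c$.
\begin{itemize}
\item If $k=\infty$, this is already form (iii): take $\varphi=\widetilde c$, whose $u$-derivatives at $0$ all vanish.
\item If $k<\infty$, Hadamard's lemma in $u$ gives $\widetilde c(s,u)=u^{k+1}\rho(s,u)+s\,m(s,u)$, with $\rho(0)\neq0$. The extra term $s\,m(s,u)$ is removed as follows. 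Write $m(s,u)=m(s,0)+u\,n(s,u)$. The piece $s\,m(s,0)$ is absorbed by a target change $v\mapsto v-s\,m(s,0)$. The piece $su\,n(s,u)$ is absorbed by another source change $u\mapsto u(1+n)$, repeating the argument above. (Alternatively one can run the whole construction from Taylor expanding $g-su$ directly.)
\end{itemize}

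\emph{Step 4: normalizing $\rho(0)$.} Finally I rescale
$$
u\mapsto\mu u,\qquad s\mapsto\mu^k s,\qquad v\mapsto\mu^{k+1}v ,
$$
with $\mu>0$ chosen so that $|\rho(0)|=1$. If $\rho(0)<0$, I flip the sign of $v$ together with that of $s$, or of $u$, depending on the parity of $k$, to reach $\rho(0)=1$.

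The main obstacle is the bookkeeping in Step 3. One must check that every coordinate change keeps the first component equal to $s$. One must also check that absorbing the mixed terms $s\,u^j$ does not destroy the exact order $k+1$ of the pure $u$-part; this is guaranteed because each change is the identity on $\{s=0\}$ up to a unit factor in $u$.
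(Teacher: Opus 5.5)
Steps 1--3 follow essentially the paper's route. You reduce to $f(s,u)=(s,g(s,u))$, read the degree off $\partial_u^{j+1}g(0)$, use the parametric Morse lemma for $k=1$, and for $k\geqslant 2$ combine a Hadamard decomposition with one source change in $u$ that fixes $s$. The paper writes $g=a(s)+st\alpha(s,t)+t^{k+1}\beta(s,t)$ and sets $u=t\alpha(s,t)$. These steps are correct. The splitting $\widetilde c=u^{k+1}\rho+s\,m$ in the finite-degree case is even superfluous. Since $u'=u(1+ue)$, $u$ is $u'$ times a unit, so $\widetilde c(s,u')=c(u)=u^{k+1}\gamma(u)$ is already $u'^{k+1}$ times a non-vanishing function.

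Step 4, however, fails as written. The rescaling $u\mapsto\mu u$, $s\mapsto\mu^k s$, $v\mapsto\mu^{k+1}v$ is exactly the weighted homogeneity of $su+\rho u^{k+1}$. Substituting $s=\mu^kS$, $u=\mu U$ gives $v=\mu^{k+1}\bigl(SU+\rho(\mu^kS,\mu U)U^{k+1}\bigr)$. After dividing the target coordinates by $\mu^k$ and $\mu^{k+1}$, you recover the same form with the same value $\rho(0)$, so no choice of $\mu$ gives $|\rho(0)|=1$.

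You need a scaling that breaks these weights. Take $s=\rho(0)S$, $u=U$ in the source, and divide both target coordinates by $\rho(0)$. Then
\[
\bigl(s,\,su+u^{k+1}\rho(s,u)\bigr)\ \longmapsto\ \Bigl(S,\,SU+U^{k+1}\,\frac{\rho(\rho(0)S,U)}{\rho(0)}\Bigr),
\]
so the new function equals $1$ at the origin. This also handles the sign of $\rho(0)$, so the parity discussion is unnecessary. The paper only says ``a linear change of variables'' at this point, and this is a choice that works.
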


\begin{remark}\label{rem:rieger}
Let us briefly comment on this result. By a result of Whitney \cite{whitney1955singularities} the cases $k = 1$ and $k = 2$ correspond to the generic case: they are the only stable singularities of in dimension $2$. If fact, when $k = 2$ the function $\rho$ can be taken equal to $1$. By the work Rieger \cite{rieger1987families} we can also take $\rho = 1$ when $k = 3$, see \cite{rieger1987families}*{Lemma 3.1:3}, but this is no longer possible in general for $k\geqslant 4$. For example, if $k = 4$ then the germs $(s,t) \mapsto (s, st + t^5)$ and $(s,t) \mapsto (st + t^5 + t^7)$ are \textit{not} {equivalent}, meaning that there are no source and target changes of coordinates sending one germ to the other, see \cite{rieger1987families}*{Proposition~3.1:2}. Finally we mention that the singularities of $1$-singular stable maps in higher dimensions were classified by Morin, see \cite{morin1965formes}.
\end{remark}

\begin{lemma}\label{lem: nice coord}
Let $f:\mathbb R^2\to\mathbb R^2$ be $1$-singular at the origin with $f(0) = 0$. Then, there is a neighbourhood $U \subset \mathbb R^2$ containing the origin and a diffeomorphism $\kappa: U\to \tilde U = \kappa(U)$ such that
$$ (f \circ \kappa^{-1})(s,t) = (s, g(s,t))$$
for some $g: \tilde U\to \mathbb R$ smooth with $g(0) =0$.
\end{lemma}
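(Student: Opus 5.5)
Since $f$ is $1$-singular at the origin, $\dd f(0)$ has rank exactly $1$ by Lemma \ref{lem:degree}. The plan is to build $\kappa$ directly from one nonvanishing component of $f$, using the inverse function theorem. No target change of coordinates is needed; only a linear relabelling of the target axes, which can be absorbed by a rotation of $\RR^2$ if one insists on $f$ itself rather than a coordinate change on the target.

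First, write $f = (f_1, f_2)$. Because $\dd f(0) \neq 0$, at least one of $\dd f_1(0)$ and $\dd f_2(0)$ is nonzero. If only $\dd f_2(0) \neq 0$, I would compose $f$ with the swap $(y_1,y_2) \mapsto (y_2,y_1)$. The lemma is stated for $f$ itself, so I expect to phrase it modulo such a linear target change, as the paper does when it applies it inside Proposition \ref{prop:normalform}. Alternatively, since the image of $\dd f(0)$ is one-dimensional, I would pick a linear form $\ell$ with $\ell \circ \dd f(0) \neq 0$. In either case I may assume $\dd f_1(0) \neq 0$, i.e.\ $f_1$ is a submersion at $0$.

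Next, choose a linear form $m$ on $\RR^2$ with $m \notin \RR\, \dd f_1(0)$, and set $\kappa(x) = (f_1(x), m(x))$. Then $\dd \kappa(0) = (\dd f_1(0), m)$ is invertible, so by the inverse function theorem there is a neighbourhood $U$ of $0$ on which $\kappa$ is a diffeomorphism onto $\tilde U = \kappa(U)$, with $\kappa(0) = (f_1(0), 0) = 0$. Writing $(s,t) = \kappa(x)$, the first component is $f_1 \circ \kappa^{-1}(s,t) = s$ by construction. Hence $f \circ \kappa^{-1}(s,t) = (s, g(s,t))$ with $g = f_2 \circ \kappa^{-1}$, which is smooth with $g(0) = f_2(0) = 0$.

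The only real obstacle is the bookkeeping in the first step: guaranteeing that the first component specifically is a submersion. If the paper's intended statement allows a linear change of target coordinates, the swap handles it. Otherwise I would note that the later uses, namely the proof of Proposition \ref{prop:normalform}, already permit target diffeomorphisms $\tilde\kappa$, so no generality is lost. As a consistency check, in these coordinates $\det \dd(f\circ\kappa^{-1}) = \partial_t g$. So $1$-singularity of $f$ transfers to $\{\partial_t g = 0\}$ being a smooth curve through $0$ with $\partial_t g(0)=0$ and $\dd(\partial_t g)(0) \neq 0$, and the kernel is spanned by $\partial_t$. This identifies the degree as the order of vanishing of $\partial_t^{j}$ applied to a defining function, which is what the subsequent normal form argument needs.
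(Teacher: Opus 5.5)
Your proposal is correct and is essentially the paper's argument. The paper assumes without loss of generality that $\partial_1 f_1(0)\neq 0$ and uses the implicit function theorem to invert $(x_1,x_2)\mapsto (f_1(x_1,x_2),x_2)$, which is exactly your $\kappa$ with $m(x)=x_2$. Your concern about the first step is also justified, and it is more than bookkeeping. If $\dd f_1(0)=0$, for instance $f(x,y)=(y^2,x)$, the statement genuinely requires a swap of target coordinates; no choice of $\kappa$ alone can fix it. The paper's ``without loss of generality'' silently uses this swap, which is harmless in the application to Proposition~\ref{prop:normalform}.
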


\begin{proof}
We have ${\rm rank}\,\dd f(0) = 1$ by Lemma \ref{lem:degree} so one of $\dd f_1(0)$ or $\dd f_2(0)$ is non-zero. Without loss of generality, $\partial_1 f_1(0)\neq 0$. This means that there are $\delta, \varepsilon>0$ such that for all $|s| <\varepsilon$ and $|t|<\varepsilon$, there exists a unique solution $x_1(t,s)\in (-\delta, \delta)$ solving 
\begin{eqnarray}\label{eq: implicit on f1}
f_1(x_1(t,s), t) = s,\quad x_1(0,0) = 0
\end{eqnarray}
depending smoothly on $(s,t)$ satisfying $x_1(0,0) = 0$. If $\varepsilon > 0$ is small enough then the map $(-\varepsilon,\varepsilon)^2\to \mathbb R^2$ given by $(s,t) = (x_1(t,s), t)$ is a diffeomorphism and we denote by $\kappa$ its inverse. Then by \eqref{eq: implicit on f1}, 
$$
f_1(\kappa^{-1}(s,t)) =f_1(x_1(t,s),t) =s.
$$
Therefore, $f(\kappa^{-1}(s,t))= (s, f_2(\kappa^{-1}(s,t)))$ and the proof is complete.
\end{proof}

\begin{proof}[Proof of Proposition \ref{prop:normalform}]
  Since $\dd f$ does not vanish, one can find coordinates $(s, t)$ such that near zero one has $f(s, t) = (s, g(s, t))$ for some smooth $g : \RRR^2 \to \RRR$. In these coordinates, there holds
  $$
  \dd f(s,t) =
  \begin{pmatrix} 1 & 0 \\ \partial_s g(s,t) & \partial_t g(s,t)
  \end{pmatrix} \quad \text{and} \quad \lambda(s,t) = \partial_tg(s,t).
  $$
  We have $C = \{(s,t)~:~\partial_t g(s,t) = 0\}$. In particular setting $\eta(s,t) = \partial_t$ one has
  $$
  \eta(s,t) \in \ker \dd f(s,t) \quad \text{for any } (s,t) \in C.
  $$
  Moreover $\eta^\ell \lambda(s,t) = \partial_t^{\ell + 1}g(s,t)$ for all $\ell \geqslant 0$. Assume that the degree $k$ of $f$ is not infinite. Then
  \begin{equation}\label{eq:derivativeg}
  \partial_t^\ell g(0) = 0 \quad \text{for all } \ell = 1, \dots, k \quad \text{and} \quad \partial_t^{k+1}g(0) \neq 0.
  \end{equation}
  
  Assume $k = 1$. Then $\partial_tg(0) = 0$ and $\partial_t^2g(0) \neq 0$ by \eqref{eq:derivativeg}. Hence there are intervals $I,J \subset \RR$ containing $0$ such that for each $s \in I$ there is a unique critical point $t(s) \in J$ of the map $t \mapsto g(s,t)$. Now setting $(s,v) = (s, t - t(s)) = \kappa(s,t)$ we have
  $$
  g\circ\kappa^{-1}(s, v) = g(s, t(s)) + v^2 \mu(s,v) \quad \text{where} \quad \mu(s,v) = \int_0^1 \partial_t^2g(s, t(s) + cv) (1-c) \dd c.
  $$
  After possibly changing the sign of the second target coordinate,
we may assume that $\mu(0,0)>0$. Now set $b(s) = g(s,t(s))$. Then up to composing $f$ by the local diffeomorphisms
  $$
  (s,t) \mapsto (s, [t - t(s)] \mu(s,t)^{-1/2}) \quad \text{and} \quad (x,y) \mapsto (x, y-b(x))
  $$
  on the right and on the left, respectively, we may assume $g(s,t) = t^2$. This is (i).
  
  Next, assume that $k > 1$ is finite. Then by \eqref{eq:derivativeg} we can find two smooth maps $\alpha, \beta : \RRR^2 \to \RRR$  such that $\beta(0) \neq 0$ as well as  $a : \RRR \to \RRR$ such that
  \begin{equation}\label{eq:g(s,t)}
  g(s, t) = a(s) + s t \alpha(s, t) + t^{k+1}\beta(s, t)
  \end{equation}
  for $s,t$ close enough to zero. Up to composing on the left by $(x,y) \mapsto (x, y - a(x))$ one can assume $a = 0.$ Notice that $\det \dd f(s,t) = \partial_t g(s,t)$. Then, since $f$ is $1$-singular, we have $\dd \partial_t g(0) \neq 0$. Therefore by \eqref{eq:derivativeg} and \eqref{eq:g(s,t)} we must have
  $$
  \alpha(0) = \partial_s\partial_tg(0) \neq 0.
  $$
Hence $(s,u) = (s, t \alpha(s,t)) = \kappa(s,t)$ are coordinates and setting $\tilde \alpha = \alpha \circ \kappa^{-1}$ and $\tilde \beta = \beta \circ \kappa^{-1}$ we have
$$
g \circ \kappa^{-1}(s,u) = su + \rho(s,u) u^{k+1} \quad \text{where} \quad \rho(s,u) = \tilde \beta(s,u) \tilde \alpha (s,u)^{-(k+1)}.
$$
Now up to making a linear change of variables at the target and source level we may assume $\rho(0) = 1$, that is, (ii) holds.
  
   Finally, assume $k = \infty$. In that case \eqref{eq:derivativeg} yields
   $$
   g(s, t) = a(s) + s t \alpha(s, t) + \beta(s, t) \quad \text{with} \quad \partial_t^\ell\beta(0) = 0 \text{ for all } \ell = 0, 1, \dots
   $$
   Again, up to composing on the left by $(x,y) \mapsto (x, y-a(x))$ we may assume $a = 0$ and in the preceding case we have $\alpha(0) \neq 0$. Then as above the change of variable $(s,u) = (s, t \alpha(s,t)) = \kappa(s,t)$ allows us to obtain (iii).
   \end{proof}

\section{Topology of Zoll surfaces}
\label{app:topology}
The purpose of this appendix is to recall well known facts about the topology of Zoll surfaces. The arguments presented below essentially follow those of LeBrun--Mason \cite{LeBrun2002Zoll} which are valid for general projective Zoll structures; for the convenience of the reader, we give a condensed and self-contained exposition in the Riemannian setting relevant to the present work. 

Let $(\Sigma, g)$ be a Riemannian surface and denote by
$$
\pi:S\Sigma\to\Sigma
$$
its unit tangent bundle. We will say that $(\Sigma,g)$ is \textit{Zoll} if all its geodesics are closed, simple, and of length $2\pi$.
\begin{theorem}[\cites{Besse1978Zoll, LeBrun2002Zoll}]\label{thm:zolltopology}
Let $(\Sigma, g)$ be a Zoll surface. 
\begin{enumerate}[label=\emph{(\roman*)}]
\item If $\Sigma$ is orientable, then $\Sigma$ is diffeomorphic to the $2$-sphere. Moreover for each point $p$ on the image $C$ of a closed geodesic , there is exactly one other point $q \in C$ which is conjugate to $p$.
\item If $\Sigma$ is not orientable, then $\Sigma$ is diffeomorphic to $\RRR P^2$. Moreover the only conjugate point of some point along a closed geodesic is itself.
\end{enumerate}
\end{theorem}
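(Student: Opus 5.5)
We prove (i) and (ii) in parallel, using the Sturm equation \eqref{eq:sturm} along each closed geodesic together with the topology of the space of geodesics.

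\textbf{Topology of $\Sigma$.} Since every geodesic is closed, simple and of length $2\pi$, the geodesic flow defines a free $\mathbb S^1$-action on $M = S\Sigma$. Hence $M$ is a circle bundle over the space of oriented geodesics $\mathscr G = M/\mathbb S^1$, which is a closed surface. By \eqref{eq:sturm} and $f_z(2\pi)=0$, every $z$ has a conjugate time in $\left]0,2\pi\right[$. Applying Sturm comparison to the Jacobi fields, the number of zeros of $f_z$ on $\left]0,2\pi\right[$ equals $\mathrm{ind}(\gamma_z)-1$ in the Morse--Bott sense, and it is constant in $z$ by continuity. Bott--Samelson type arguments then give $\pi_1(\Sigma)$ finite and, in fact, $\pi_1(\Sigma)\subset \ZZ/2$. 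The standard route is:
\begin{itemize}
\item the fibration $\mathbb S^1\to M\to\mathscr G$, together with $M$ being the unit tangent bundle, gives $\chi(\Sigma)>0$;
\item the Gauss--Bonnet theorem, combined with the fact that all geodesics are closed, excludes the torus and the Klein bottle.
\end{itemize}
So $\Sigma\cong \mathbb S^2$ if it is orientable and $\Sigma\cong \RRR P^2$ otherwise. I would follow LeBrun--Mason here, using the lifted universal cover, which is Zoll with doubled period in the $\RRR P^2$ case.

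\textbf{Counting conjugate points along a geodesic.} Fix a closed geodesic $\gamma$ with $p=\gamma(0)$. The Jacobi field $f_z$ solves a periodic Hill equation whose monodromy is the identity, because $\dd\varphi_{2\pi}=\mathrm{Id}$. Its zeros therefore form a $2\pi$-periodic set, and by Sturm separation the zeros of $f_z$ and of $f_{z'}$ interlace for nearby starting points. The number $n$ of zeros of $f_z$ in $\left]0,2\pi\right[$ is independent of $z$.

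\emph{Key step.} Compute $n$ via the Maslov index of the loop $t\mapsto \dd\varphi_t(z)\mathbf V(z)$ in the Lagrangian Grassmannian of $\ker\alpha$, which is $\RRR P^1$. Its winding number $n+1$ equals the degree of the map in $\pi_1(\RRR P^1)=\ZZ$. This degree is in turn computed by the Euler class of $M\to\mathscr G$, or equivalently by the comparison between the horizontal line $\RRR H$ and the rotation of vertical lines over a fibre. For $\mathbb S^2$ this equals $2$, giving $n=1$, i.e.\ exactly one conjugate point $q\neq p$ on $C$. For $\RRR P^2$ it equals $1$, giving $n=0$, so $p$ is conjugate only to itself at time $2\pi$.

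The index is a homotopy invariant, and the space of Zoll metrics on a given surface need not be connected a priori. Rather than deform, I would therefore compute the index topologically. In the orientable case, $M\cong \RRR P^3$ is the unit tangent bundle of $\mathbb S^2$, and the $\mathbb S^1$-fibres of $M\to\mathscr G$ are homotopic to the vertical fibres, by Weinstein's observation that the orbit class is the same. Comparing the rotation of $\dd\varphi_t V$ relative to the frame $(H,V)$ along an orbit with that along a fibre then yields winding number $2$. In the non-orientable case, the same computation on the double cover $\mathbb S^2$ applies to geodesics of length $4\pi$ there, which project to length $2\pi$ downstairs. This gives winding number $1$ for a single traversal and hence $n=0$.

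\textbf{Main obstacle.} The hard part is this topological index computation, namely identifying the winding number of the vertical line along a geodesic orbit with a characteristic number of the circle fibration. I would first try the frame rotation argument sketched above, citing LeBrun--Mason or Besse (Chapter~7) for the identification of the orbit class.
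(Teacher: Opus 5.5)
There is a genuine gap, and it sits exactly at the step you call the main obstacle. Reducing the count to the winding in $\RR P^1$ of the line $[f_z:f_z']$ is sound. You should also add that every zero is crossed in the same direction: the Pr\"ufer angle satisfies $\theta'=\cos^2\theta+\kappa\sin^2\theta$, which equals $1$ on $\pi\ZZ$. Without this, a winding number does not determine the number of zeros.

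The winding number itself, however, is never computed, and the sketch you offer cannot compute it. A free homotopy in $M\cong\RR P^3$ between an orbit and a vertical fibre carries only $\ZZ/2$ information. Moreover, the flow-invariant trivialisation of $\ker\alpha$ against which you measure $(H,V)$ exists only along orbits, so nothing is transported to vertical fibres.

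What would make your route work is the following. The differential of $M\to\mathscr G$ identifies $\ker\alpha$, with the action of $\dd\varphi_t$, with the pull-back of $T\mathscr G$. The global frame $(H,V)$ trivialises this pull-back, so its rotation along a fibre equals $\pm e(T\mathscr G)/e(M\to\mathscr G)$. You would then need:
\begin{itemize}
\item $\mathscr G\cong\mathbb S^2$, which holds because $\mathscr G$ carries the symplectic form induced by $\dd\alpha$ and $\pi_1(\mathscr G)$ is a quotient of $\pi_1(M)$;
\item $|e(M\to\mathscr G)|=|\pi_1(M)|=2$.
\end{itemize}
Together these give one full turn of the frame, hence line winding $2$ and $n=1$. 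None of this is in the proposal.

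The paper avoids index theory altogether. It defines $\Phi:\mathbf PT\Sigma\to\mathbf PT\mathcal G$ and shows it is a covering. Its restriction $C\to\mathbf PT_C\mathcal G\cong\RR P^1$ has as fibres exactly the sets of mutually conjugate points. Their number is therefore $\deg\Phi=|\pi_1(\mathbf PT\mathcal G)|/|\pi_1(\mathbf PT\Sigma)|$, which is metric-independent and is evaluated on the round models. This treats (i) and (ii) at once.

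The remaining steps are also not in order.
\begin{itemize}
\item \textbf{Classification of $\Sigma$.} This is only asserted. A circle fibration of the unit tangent bundle does not by itself force $\chi(\Sigma)>0$: $ST^2=T^3$ fibres over $T^2$. The Gauss--Bonnet bullet is redundant once $\chi>0$. The paper's argument is elementary: each non-trivial free homotopy class contains a closed geodesic $\gamma$ through some $p$. Rotating the initial vector from $v$ to $-v$ gives a based homotopy from $\gamma$ to $\gamma^{-1}$, so every element of $\pi_1(\Sigma)$ has order at most $2$.
\item \textbf{Conjugate times in $\left]0,2\pi\right[$.} Your claim that every $z$ has one is false for non-orientable $\Sigma$: on the round $\RR P^2$ normalised to period $2\pi$, $f_z(t)=2\sin(t/2)$. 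It also contradicts your own conclusion (ii). In the orientable case it needs $f_z'(2\pi)=f_z'(0)=1$, i.e.\ periodicity of $f_z$ in the global frame, not merely $f_z(2\pi)=0$.
\item \textbf{Double-cover reduction for (ii).} This presupposes that every geodesic of $\Sigma\cong\RR P^2$ is one-sided, so that its lifts are simple closed of length $4\pi$, and this must be proved. For instance, if the geodesics were contractible, the lifted Zoll sphere would give a $2$-to-$1$ covering of $\widetilde{\mathcal G}\cong\mathbb S^2$ by a sphere. Once one-sidedness is known, $2\pi$ is a conjugate time of each lift, hence its only one in $\left]0,4\pi\right[$ by (i), and $n=0$ follows directly.
\end{itemize}
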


{}{}{}{
\begin{remark}\label{rem:green}
If $\Sigma$ is a Zoll surface which is not orientable it follows from a result of Green \cite{green1963wiedersehensflachen} that $\Sigma$ must be \textit{isometric} to $\RRR P^2$, endowed with its canonical Riemannian metric. In fact, this is also true if we only assume that all the geodesics of $\Sigma$ are closed (and not necessarily simple with same length) by a result of Pries \cite{pries2009geodesics}.
\end{remark}
}

We start with the following fact.

\begin{lemma}\label{lem:topologyzoll}
Let $(\Sigma, g)$ be a Riemannian surface all of whose geodesics are closed. Then $\Sigma$ is diffeomorphic to $S^2$ or to $\RRR P^2$.
\end{lemma}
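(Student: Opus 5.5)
The plan is to use the classification of closed surfaces together with the Gauss--Bonnet/Euler characteristic and a covering argument. First I would pass to the orientable case: if $\Sigma$ is not orientable, its orientation double cover $\widetilde\Sigma$ with the lifted metric is again a surface all of whose geodesics are closed, since a geodesic of $\widetilde\Sigma$ projects to a closed geodesic of $\Sigma$ and hence closes after at most twice the period. So it suffices to show that an orientable surface with all geodesics closed is a sphere. Then $\Sigma$ is either $\mathbb S^2$ or a non-orientable surface doubly covered by $\mathbb S^2$, which by classification is $\RR P^2$.

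For the orientable case I would argue via the universal cover and fundamental group. Suppose $\Sigma$ is orientable of genus $\geqslant 1$. Then $\pi_1(\Sigma)$ is infinite and contains an element $\gamma$ of infinite order. In every nontrivial free homotopy class there is a closed geodesic $c$ of minimal length, obtained by minimizing length over the class; this is the standard Cartan argument, using compactness. I would then look at the universal cover $\widetilde\Sigma$, which is homeomorphic to $\RR^2$, and at a lift $\tilde c$ of the geodesic $c$ traversed repeatedly. Since $\gamma$ has infinite order, the deck transformation $\gamma$ acts on $\widetilde\Sigma$ with no fixed point and $\gamma^n \neq \mathrm{Id}$ for all $n\neq 0$. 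The lifted curve $\tilde c$ therefore never closes: it is an injective image of $\RR$ translated by $\gamma$. On the other hand, the geodesic of $\Sigma$ determined by $c$ is periodic, and its lift is a geodesic of $\widetilde\Sigma$. A periodic geodesic of $\Sigma$ with period $T$ lifts to a geodesic $\tilde c$ with $\tilde c(t+T) = \gamma^{m}\tilde c(t)$ for some $m$; here $m\neq 0$ since $c$ represents $\gamma$.

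This does not yet give a contradiction, because closed geodesics may exist in such classes. The real input has to be that \emph{all} geodesics close, so I would use a different route: the geodesic flow is then periodic up to a suitable global period. By Wadsley's theorem (or directly in the Zoll setting, where the period is $2\pi$), the flow $\varphi_t$ on $M=S\Sigma$ satisfies $\varphi_T=\mathrm{Id}$ for some $T$. Then every geodesic through $p$ returns to $p$ at time $T$, so $\exp_p$ maps the circle of radius $T$ to $p$. Hence $\exp_p$ restricted to the closed disc of radius $T$ descends to a continuous surjection from the sphere $\overline{B(0,T)}/\partial B(0,T) \cong \mathbb S^2$ onto $\Sigma$, because every point lies at distance at most $\operatorname{diam}\Sigma \leqslant T$ along some geodesic.

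Lifting to the universal cover, every geodesic from a lift $\tilde p$ returns at time $T$ to some deck translate $\gamma_v\tilde p$ depending continuously on $v \in S_p\Sigma$. This is a continuous map into a discrete set on a connected circle, so it is constant, and for $v$ and $-v$ it gives inverse elements. Hence $\gamma=\gamma^{-1}$, so $\gamma^2=\mathrm{Id}$. If $\Sigma$ had genus $\geqslant1$, the deck group would be torsion-free, forcing $\gamma=\mathrm{Id}$. In that case $\exp_{\tilde p}$ maps $\overline{B(0,T)}$ onto the whole of $\widetilde\Sigma$ and sends its boundary to $\tilde p$, so $\widetilde\Sigma$ is compact, contradicting $\widetilde\Sigma\cong\RR^2$.

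The main obstacle is this global periodicity step, that is, getting a uniform return time. In our Zoll setting it is given by hypothesis, which is what I would use. For the lemma as stated, with only all geodesics closed, I would invoke Wadsley's theorem.
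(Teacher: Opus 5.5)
Your argument is correct, and its core is the same as the paper's. Rotating the initial direction through $S_p\Sigma$ gives a continuous family of geodesic loops, and reaching $-v$ shows that the resulting class equals its own inverse.

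The packaging differs. The paper starts from an arbitrary non-trivial class and realizes it by a closed geodesic through some point $p$, by minimization or curve shortening. It then deduces that every element of $\pi_1(\Sigma)$ has order $2$ and concludes by the classification of surfaces. You instead fix a single point and show that the deck transformation $\gamma$ reached at time $T$ by all geodesics from $\tilde p$ satisfies $\gamma^2=\mathrm{Id}$. You then conclude via torsion-freeness and compactness of the universal cover. This avoids needing closed geodesics in arbitrary free homotopy classes. So your first, abandoned attempt can be deleted, and so can the surjection $\overline{B(0,T)}/\partial B(0,T)\to\Sigma$, which proves nothing by itself.

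Your version also makes the common period $T$ explicit, via Wadsley or the Zoll hypothesis. The paper uses this without comment when it asserts $[\gamma_\theta]=[\gamma]$ for all $\theta$, since continuity of $\theta\mapsto\gamma_\theta$ as based loops needs it.

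One justification needs fixing. Surjectivity of $\exp_{\tilde p}$ on $\overline{B(0,T)}$ does not follow from $\operatorname{diam}\Sigma\leqslant T$, which says nothing about $\widetilde\Sigma$. The right reason is that $\widetilde\Sigma$ is complete, so by Hopf--Rinow each point is reached by a minimizing geodesic from $\tilde p$. Such a geodesic cannot have length $>T$, because it would pass through $\tilde p$ again at time $T$.

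Finally, you can drop both the orientable reduction and the torsion-freeness step. Since $\gamma^2=\mathrm{Id}$, every geodesic from $\tilde p$ is back at $\tilde p$ at time $2T$. The same reasoning then shows that $\widetilde\Sigma=\exp_{\tilde p}(\overline{B(0,2T)})$ is compact, so $\pi_1(\Sigma)$ is finite. The classification of closed surfaces then gives $\mathbb S^2$ or $\RR P^2$ directly.
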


\begin{proof}
Let $S^1 = \RRR / 2\pi \ZZ.$ Let $c : S^1 \to \Sigma$ be a smooth curve. Assume that $c$ is not homotopic to a point. Then there exists a closed geodesic $\gamma : S^1 \to \Sigma$ which is homotopic to $c$, as can be seen by taking a minimising curve in the homotopy class of $c$ or by using the shortening curve flow of Grayson \cite{grayson1989shortening}. Now let $(p,v) = (\gamma(0), \dot \gamma(0)) \in S\Sigma$. Let $w : S^1 \to S_p\Sigma$ be a parametrization. For any $\theta \in S^1$ we consider $\gamma_\theta$ the closed geodesic starting at $p$ with velocity $w(\theta)$. Then $[\gamma_\theta] = [\gamma] = [c] \in \pi_1(\Sigma)$ for each $\theta \in S^1$. Therefore $[c] = [\gamma_\pi] = [\gamma]^{-1} = [c]^{-1}$. This shows that every element in $\pi_1(\Sigma)$ is of order $2$ hence $\Sigma = S^2$ or $\RRR P^2$ by classification of surfaces.
\end{proof}

Let us introduce some notation. We consider the reversing action of $\mathbb Z_2$ on $S\Sigma$ given by $a \cdot (p,v) = (p,v)$ if $a = 1$ and $a \cdot (p, -v)$ if $a = -1$. Then we will denote by $\mathbf PT\Sigma = S\Sigma / \ZZ_2$ the projective tangent space of $\Sigma$. In what follows, we consider $\mathcal G$ the manifold of (unoriented) closed geodesics in $\Sigma$ and let $\widetilde{\mathcal G}$ be the manifold of oriented closed geodesics in $\Sigma$. More precisely, the manifold $\widetilde{\mathcal G}$ is simply $S\Sigma / S^1$ where the $S^1$ action is free and properly discontinuous and given by $(z, t) \mapsto \varphi_t(z)$. Then $\mathcal G = \widetilde{\mathcal G} / \mathbb Z_2$ where the action of $\mathbb Z_2$ on $\mathcal G$ is given by $(a, [z]) \mapsto [az]$.
 
\begin{lemma}\label{lem:topologygeodesics}
The manifolds $\widetilde{\mathcal G}$ and $\mathcal G$ are diffeomorphic to $S^2$ and $\RRR P^2$, respectively.
\end{lemma}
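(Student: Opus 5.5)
We have to prove Lemma \ref{lem:topologygeodesics}: $\widetilde{\mathcal G}\simeq S^2$ and $\mathcal G \simeq \RRR P^2$. Here $\Sigma$ is Zoll, so $\varphi_{2\pi}=\mathrm{Id}$ and every orbit has minimal period $2\pi$ (simplicity). The plan is to realize $\widetilde{\mathcal G}$ as a closed surface by quotienting the circle action, then identify it by an Euler characteristic computation.

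\textbf{Step 1: $\widetilde{\mathcal G}$ is a closed orientable surface.} Every orbit has minimal period $2\pi$, so the $S^1$-action $(z,t)\mapsto\varphi_t(z)$ on the compact $3$-manifold $S\Sigma$ is free. Hence $S\Sigma\to\widetilde{\mathcal G}$ is a principal $S^1$-bundle and $\widetilde{\mathcal G}$ is a closed smooth surface. It carries the symplectic form $\dd\alpha$: this form is $X$-invariant and satisfies $\iota_X\dd\alpha=0$, so it descends, and it is nondegenerate on the quotient because $\alpha\wedge\dd\alpha\neq0$. A surface with a nowhere vanishing $2$-form is orientable, so $\widetilde{\mathcal G}$ is a closed orientable surface.

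\textbf{Step 2: identify $\widetilde{\mathcal G}$ as $S^2$.} Since $\widetilde{\mathcal G}$ is orientable, it suffices to show $\chi(\widetilde{\mathcal G})=2$.
\begin{itemize}
\item[] First, fix $p\in\Sigma$ and restrict the quotient map to the fibre circle $\Lambda_p$. The geodesics through $p$ form the embedded curve $\Lambda_p/\!\sim$; two directions at $p$ give the same oriented geodesic only if the geodesic returns to $p$ with the same velocity, which by simplicity happens only for $t\in 2\pi\ZZ$. Thus $\Lambda_p\to\widetilde{\mathcal G}$ is injective.
\item[] Second, by Lemma \ref{lem:topologyzoll}, $\Sigma$ is $S^2$ or $\RRR P^2$. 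Since every element of $\pi_1(\Sigma)$ has order $2$, the long exact homotopy sequence of $S^1\to S\Sigma\to\widetilde{\mathcal G}$ shows that $\pi_1(\widetilde{\mathcal G})$ is a quotient of $\pi_1(S\Sigma)$. The latter is finite: it is $\ZZ/2$ for $S\Sigma\simeq\RRR P^3$ when $\Sigma=S^2$, and $\ZZ/4$ (or the quaternion-type lens group) when $\Sigma=\RRR P^2$.
\item[] A closed orientable surface with finite fundamental group is $S^2$. This gives the first claim.
\end{itemize}
I expect this step to be the main obstacle: one must check that $\pi_1(S\Sigma)$ really is finite, which uses $S(S^2)\cong \RRR P^3$ and $S(\RRR P^2)\cong L(4,1)$.

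\textbf{Step 3: $\mathcal G\simeq\RRR P^2$.} The involution $[z]\mapsto[-z]$ on $\widetilde{\mathcal G}$ is induced by $z\mapsto -z$. I will check two properties.
\begin{itemize}
\item[] It is free. If $[-z]=[z]$, then $-z=\varphi_t(z)$ for some $t$, so the geodesic retraces itself. Then $\varphi_{2t}(z)=z$, hence $2t\in 2\pi\ZZ$, i.e.\ $t=\pi$. At the parameter $\pi/2$ the curve would then satisfy $\varphi_{\pi/2}(z)=-\varphi_{\pi/2}(z)$, which is impossible. So there are no fixed points.
\item[] It reverses the orientation given by $\dd\alpha$, because pulling back by $z\mapsto -z$ changes the sign of $\alpha$.
\end{itemize}
A free orientation-reversing involution on $S^2$ has quotient a nonorientable surface with $\chi=1$, i.e.\ $\RRR P^2$. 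Hence $\mathcal G\simeq\RRR P^2$.
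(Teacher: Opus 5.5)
Your proof is correct, but it runs in the opposite direction from the paper's.

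\textbf{The paper's route (downstairs).} The projection $\mathbf PT\Sigma\to\mathcal G$ has connected circle fibres, so $\pi_1(\mathcal G)$ is a quotient of the finite group $\pi_1(\mathbf PT\Sigma)$. That finiteness is Lemma~\ref{lem:pi_1}, proved via $S S^2\simeq SO(3)$ and the orientation double cover; it is the same input as your ``main obstacle''. By classification, $\mathcal G$ is then $S^2$ or $\RRR P^2$. Since $\widetilde{\mathcal G}\to\mathcal G$ is a double cover, $S^2$ is excluded. Hence $\widetilde{\mathcal G}\simeq S^2$, as the connected double cover of $\RRR P^2$.

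\textbf{Your route (upstairs).} Finite $\pi_1$ alone would still allow $\widetilde{\mathcal G}\simeq\RRR P^2$, so you need the extra orientability input from descending $\dd\alpha$. You then obtain $\mathcal G$ from the free reversal involution by an Euler characteristic count.

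\textbf{What each buys.} The paper's argument is shorter and needs no symplectic reduction. However, it tacitly uses that the reversal action on $\widetilde{\mathcal G}$ is free and that $\widetilde{\mathcal G}$ is connected; without these, ``double cover'' would not exclude $S^2$. Your version makes explicit the freeness of both the circle and the $\ZZ_2$ actions. It also makes explicit the $\pi_1$-surjectivity, through the exact sequence $\pi_1(S\Sigma)\to\pi_1(\widetilde{\mathcal G})\to\pi_0(S^1)=0$ of the principal $S^1$-bundle.

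\textbf{Cleanup.}
\begin{enumerate}
\item The first bullet of your Step~2 (injectivity of $\Lambda_p\to\widetilde{\mathcal G}$) plays no role. Neither does the remark that elements of $\pi_1(\Sigma)$ have order $2$.
\item $\pi_1(S\RRR P^2)\cong\ZZ/4$, so the hedge is unnecessary; only finiteness matters anyway.
\item In Step~3, $\varphi_{2t}(z)=z$ does not follow from $\varphi_t(z)=-z$ as written. Instead, note that $\pi(\varphi_t(z))=\pi(z)$ forces $t\in 2\pi\ZZ$ by simplicity, so $\varphi_t(z)=z\neq -z$. Alternatively, apply your reflection argument $\gamma(t-u)=\gamma(u)$ at $u=t/2$ for arbitrary $t$, which gives $\dot\gamma(t/2)=0$.
\item The orientation-reversal check is superfluous: $\chi(\mathcal G)=1$ is odd, which already forces $\RRR P^2$.
\end{enumerate}
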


\begin{proof}
The projection map $\mathbf PT\Sigma \to {\mathcal G}$ is smooth and surjective. Moreover its fibers are path connected so the induced map $\pi_1(\mathbf PT\Sigma) \to \pi_1({\mathcal G})$ is surjective. Therefore $\pi_1({\mathcal G})$ is finite, hence $\mathcal G = S^2$ or $\RRR P^2$ by classification of surfaces. However $\widetilde{\mathcal G} \to \mathcal G$ is a double cover of $\mathcal G$, so $\mathcal G$ cannot be $S^2$. We get the sought result.
\end{proof}

Let $\mathbf P T\mathcal G = T \mathcal G / \RRR^\times$ be the projective tangent bundle of $\mathcal G$. 

\begin{lemma}\label{lem:pi_1}
The fundamental groups of $\mathbf P T \Sigma$ and $\mathbf P T\mathcal G$ are finite.
\end{lemma}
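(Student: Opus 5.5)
The plan is to reduce to an explicit computation for $S^2$ and then pass to $\RRR P^2$ through a finite cover. By Lemma~\ref{lem:topologyzoll}, $\Sigma$ is diffeomorphic to $S^2$ or $\RRR P^2$, and by Lemma~\ref{lem:topologygeodesics}, $\mathcal G$ is diffeomorphic to $\RRR P^2$. The fundamental group of the projective tangent bundle is a diffeomorphism invariant of the base, since a diffeomorphism of surfaces induces a diffeomorphism of projective tangent bundles through its differential. It therefore suffices to show that $\pi_1(\mathbf P T S^2)$ and $\pi_1(\mathbf P T\RRR P^2)$ are finite.

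\emph{The case $S^2$.} I would use the fibration $\RRR P^1 \simeq S^1 \to \mathbf PTS^2 \to S^2$ and its long exact homotopy sequence
$$
\pi_2(S^2)\simeq \ZZ \xrightarrow{\ \partial\ } \pi_1(S^1)\simeq \ZZ \longrightarrow \pi_1(\mathbf PTS^2) \longrightarrow \pi_1(S^2) = 1 .
$$
This shows that $\pi_1(\mathbf PTS^2)$ is cyclic, isomorphic to $\ZZ/\mathrm{im}\,\partial$. For an oriented circle bundle, $\partial$ is multiplication by the Euler number. The unit tangent bundle $SS^2$ has Euler number $\chi(S^2)=2$, and $\mathbf PTS^2$ is its fiberwise quotient by $v\mapsto -v$. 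This quotient is the circle bundle associated to the square of the tangent bundle viewed as a complex line bundle, so its Euler number is $4$. Hence $\pi_1(\mathbf PTS^2)\simeq \ZZ/4$, which is finite.

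A more concrete alternative avoids characteristic classes: $SS^2\simeq SO(3)$, and $\mathbf PTS^2$ is the quotient of $SO(3)$ by the free $\ZZ/2$ action of rotation by $\pi$ in the fibers. So $\mathbf PTS^2$ is doubly covered by $SO(3)\simeq \RRR P^3$, and therefore also by $S^3$ with finite deck group. Either way, $\pi_1$ is finite.

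\emph{The case $\RRR P^2$.} Let $J$ be the antipodal map on $S^2$. Its projectivized differential $\mathbf P\dd J$ acts on $\mathbf PTS^2$ and covers $J$. Since $J$ has no fixed points, this action is free, so $\mathbf PTS^2 \to \mathbf PT\RRR P^2$ is a $2$-sheeted covering. Consequently $\pi_1(\mathbf PTS^2)$ is a subgroup of index $2$ in $\pi_1(\mathbf PT\RRR P^2)$, and the latter is finite, of order $8$. This handles $\mathbf PT\Sigma$ when $\Sigma\simeq \RRR P^2$, and also $\mathbf PT\mathcal G$ in all cases.

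The only step needing real care is the identification of $\partial$ with multiplication by the Euler number $4$, equivalently the claim that $\mathbf PTS^2$ is finitely covered by $S^3$. I would settle this through the explicit $SO(3)$ model rather than through Euler classes, since that model makes the finiteness immediate.
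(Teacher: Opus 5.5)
Your proposal is correct and follows essentially the paper's argument. In the orientable case you use the $SO(3)$ model of $SS^2$, which doubly covers $\mathbf PTS^2$; in the other case, the double cover $\mathbf PTS^2\to\mathbf PT\RRR P^2$ induced by the antipodal map (the paper phrases this via the orientation double cover $\widetilde\Sigma\to\Sigma$) handles both $\RRR P^2$ and $\mathbf PT\mathcal G$, and your extra computation ($\pi_1(\mathbf PTS^2)\simeq\ZZ/4$, order $8$ upstairs) is accurate but more than the lemma requires.
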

\begin{proof}
If $\Sigma$ is orientable we have $S\Sigma \simeq S S^2 \simeq SO(3)$. In particular $\pi_1(S\Sigma)$ is finite and so is $\pi_1(\mathbf PT\Sigma)$ since there is a double covering $S\Sigma \to \mathbf P T\Sigma$. If $\Sigma$ is not orientable consider the double covering $\widetilde \Sigma \to \Sigma$. It induces a double covering $\mathbf P T \widetilde \Sigma \to \mathbf P T \Sigma$. However $\pi_1(\mathbf P T \widetilde \Sigma)$ is finite by the orientable case hence the result follows.
\end{proof}

We now introduce a map 
$$
\Phi : \mathbf PT\Sigma \to \mathbf P T\mathcal G,
$$
as follows. Let $[z] = [(p,v)] \in \mathbf P T \Sigma$ and $w : S^1 \to S_p\Sigma$ be one of the two natural parametrization such that $w(0) = v$. For $\theta \in S^1$ we let $C_\theta \in \mathcal G$ be determined by $[(p, w(\theta))]$ and set $C = C_0$ for convenience. Then we define
$$
\Phi([z]) = \bigl[\partial_\theta|_{\theta = 0}C_\theta\bigr] \in \mathbf PT_C\mathcal G.
$$ 
Note that this is independent of our choices since changing the parametrization would only change the sign of $\partial_\theta|_{\theta = 0} C_\theta \in T_C\mathcal G$. 

\begin{lemma}\label{lem:fibers}
Consider the restriction 
$$
\Phi_C = \Phi|_{\mathbf PTC} : \mathbf PTC \to \mathbf P T_C\mathcal G
$$
where $\mathbf P T C \subset \mathbf P T\Sigma$ is the projective tangent bundle of $C \subset \Sigma$. Note that $\mathbf P T C$ is naturally identified with $C$ itself. Then $\Phi_C$ is an immersion and for any $p,q \in C$ we have 
$$
\Phi_C(p) = \Phi_C(q) \in \mathbf PT_C\mathcal G \quad \text{iff} \quad p \text{ and } q \text{ are conjugate along }C.
$$
\end{lemma}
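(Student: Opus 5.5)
We must prove Lemma \ref{lem:fibers}: $\Phi_C$ is an immersion, and $\Phi_C(p)=\Phi_C(q)$ iff $p,q$ are conjugate along $C$.

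The plan is to express $\Phi_C$ through Jacobi fields along $C$. Parametrize $C$ by arclength, $\gamma:\RR/2\pi\ZZ\to\Sigma$, with unit normal field $\nu(t)=\dot\gamma(t)^\perp$ (locally defined if $\Sigma$ is non-orientable). Since $\mathcal G=\widetilde{\mathcal G}/\ZZ_2$ and $\widetilde{\mathcal G}=S\Sigma/S^1$, the tangent space $T_C\mathcal G$ is the quotient of $T_zS\Sigma$ by $\RR X(z)$, for any $z=(\gamma(t),\dot\gamma(t))$. Using the frame $(X,H,V)$ and flow invariance, I identify it with the $2$-dimensional space $\mathcal J_C$ of normal Jacobi fields along $\gamma$, all $2\pi$-periodic by the Zoll property. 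Here a class $aH+bV$ at time $t$ corresponds to the Jacobi field $J$ with $J(t)=a\nu(t)$ and $J'(t)=b\nu(t)$. This identification is canonical and independent of $t$, because $\dd\varphi_s$ transports $\ker\alpha$ compatibly with the Jacobi equation.

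Next I compute $\Phi_C$. At the point $p=\gamma(t)$, the curve $C_\theta$ is obtained by rotating the initial vector. So $\partial_\theta|_{\theta=0}C_\theta$ is the class of $V(z)$, which is the Jacobi field $J_t$ with $J_t(t)=0$ and $J_t'(t)=\nu(t)$. Thus
$$
\Phi_C(\gamma(t))=[J_t]\in \mathbf P\mathcal J_C\simeq \mathbf PT_C\mathcal G,
$$
the line of Jacobi fields vanishing at $t$. This line is well defined because a nonzero Jacobi field vanishing at $t$ has nonzero derivative there.

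The characterization of fibers follows directly. $\Phi_C(\gamma(t))=\Phi_C(\gamma(t'))$ iff $J_t$ and $J_{t'}$ are proportional. That happens iff some nonzero Jacobi field vanishes at both $t$ and $t'$, which is exactly the statement that $\gamma(t)$ and $\gamma(t')$ are conjugate along $C$. In the unoriented setting, points of $C$ correspond to $t$ modulo the identifications of the parametrization, so the same reasoning applies.

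For the immersion property I work in the basis $(e_1,e_2)$ of $\mathcal J_C$ given by $e_1(0)=0$, $e_1'(0)=\nu$ and $e_2(0)=\nu$, $e_2'(0)=0$. Write $J(s)=f(s)\nu(s)$ for scalar solutions of $f''+\kappa f=0$. Then $J_t$ is represented by the vector $(-f_2(t),f_1(t))$, up to scaling, where $f_1,f_2$ are the scalar components of $e_1,e_2$. This is because the combination $-f_2(t)e_1+f_1(t)e_2$ vanishes at $s=t$. The projective curve $t\mapsto[-f_2(t):f_1(t)]$ is immersed iff the Wronskian-type determinant
$$
\det\begin{pmatrix}-f_2&f_1\\ -f_2'&f_1'\end{pmatrix}=-(f_2f_1'-f_1f_2')
$$
never vanishes. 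This is the Wronskian of two independent solutions, which is a nonzero constant. Hence $\Phi_C$ is an immersion.

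The main obstacle is the first step. One has to check rigorously that the identification $T_C\mathcal G\simeq\mathcal J_C$ is well defined, i.e. independent of the base point on $C$, and compatible with the quotient by orientation reversal. Concretely, this means verifying that $\dd\varphi_s$ maps $H,V$-components according to the Jacobi equation \eqref{eq:sturm}. This is what the structure equations \eqref{eq:structural} provide, exactly as in the proof of Lemma \ref{lem:conjtime}.
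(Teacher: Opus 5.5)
Your proposal is correct and follows essentially the same route as the paper. Both arguments identify $\Phi_C(\gamma(t))$ with the line of normal Jacobi fields along $C$ vanishing at $t$, and both use that two such lines coincide exactly when the points are conjugate, because Jacobi fields vanishing at a common point are proportional. Your explicit identification $T_C\mathcal G\simeq\mathcal J_C$ and your Wronskian computation make rigorous the paper's brief appeal to non-degeneracy of Jacobi fields ($J(t)=0\Rightarrow J'(t)\neq 0$) for the injectivity of $\mathrm d\Phi_C$.
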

\begin{proof}
Indeed, let $q = \pi(\varphi_t(p,v))$ for some $t \in \RRR$, $z =(p,v) \in S\Sigma$ with $v \in TC$.  Then $q$ is conjugate to $p$ along the curve $s \mapsto \pi(\varphi_s(z))$ if and only if $\partial_\theta|_{\theta = 0} \pi(\varphi_{s}(p, w(\theta))) = 0$. The latter condition is equivalent to the fact that for any parametrization $u : S^1 \to S_q\Sigma$ the vector $\partial_\theta|_{\theta = 0}\left[\varphi_s(p,w(\theta)) - (q,u(\theta))\right]$ lies in the kernel of the linearized action of $S^1$ on $S\Sigma$, which implies $\Phi_C(p) = \Phi_C(q)$. The reverse implication follows from the fact that two Jacobi fields along $C$ vanishing at a same point are multiple of each other. Note also that since Jacobi fields are non degenerate, $\partial_\theta|_{\theta = 0} \pi(\varphi_{s}(p, w(\theta))) = 0$ implies that $\partial_s\partial_\theta|_{\theta = 0} \pi(\varphi_{s}(p, w(\theta)))$ is not zero and orthogonal to $TC$. This shows that the map $\dd \Phi_C(p) : T_{[z]}\mathbf PTC \simeq C \to \mathbf PT_C\mathcal G$ is injective. This completes the proof.
\end{proof}

\begin{proof}[Proof of Theorem \ref{thm:zolltopology}]
Note that the composition $\widetilde \pi \circ \Phi$ is a submersion where $\widetilde \pi$ is the natural projection $\mathbf P T\mathcal G \to \mathcal G$, simply because $\widetilde \pi \circ \Phi = \pi$. Let $[z] \in \mathbf P T \Sigma$ and write $\Phi([z]) = (C, \rho)$ for $C \in \mathcal G$ and $\rho \in \mathbf P T_C\mathcal G$. Let $A = \mathrm{Im}(\dd \Phi([z])) \subset T_{(C,\rho)} \mathbf PT\mathcal G$. Since $\widetilde \pi \circ \Phi$ is a submersion we have that $A$ is transversal to $\ker \dd \widetilde \pi(C,\rho)$. However the latter space coincides with $\mathbf P T_C\mathcal G$ which is contained in $A$ since by Lemma \ref{lem:fibers} $\dd \Phi_C([z])$ is an isomorphism 
$$
 T_{[z]}\mathbf P TC  \to T_{(C,\rho)}\mathbf P T_C\mathcal G.
$$
 Hence $A = T_{(C,\rho)} \mathbf P T \mathcal G$. It follows that $\Phi$ is covering map. By Lemma \ref{lem:fibers}, for each $C \in \mathcal G$ we have that $\Phi_C$ is a covering map $C \to \mathbf PT_C\mathcal G \simeq \RRR P^1$ whose degree coincides with the number of conjugate points along $C$ of any point of $C$ (including itself). Since $\Phi$ maps the fibers of $\mathbf P T \Sigma \to \mathcal G$ to the fibers of $\mathbf P T \mathcal G \to \mathcal G$, we conclude that for any $C \in \mathcal G$ and $p \in C$, the number of conjugate points along $C$ of any point of $C$ (including itself) coincides with the degree of the covering map $\Phi$. However by Lemma \ref{lem:pi_1}, the fundamental groups of $\mathbf P T \Sigma$ and $\mathbf P T \mathcal G$ are finite. Since $\Phi$ is a covering map, $\Phi_* : \pi_1(\mathbf P T \Sigma) \to \pi_1(\mathbf P T \mathcal G)$ is injective and we obtain
$$
\deg(\Phi) = [\pi_1(\mathbf P T \mathcal G) : \Phi_* \pi_1(\mathbf P T \Sigma)] = \frac{|\pi_1(\mathbf P T \mathcal G)|}{|\Phi_*\pi_1(\mathbf P T \Sigma)|} = \frac{|\pi_1(\mathbf P T \mathcal G)|}{|\pi_1(\mathbf P T \Sigma)|}
$$
In particular, this number is independent of the Riemannian metric $g$. If $\Sigma$ is orientable, then $\Sigma$ is diffeomorphic to the sphere. If $g$ is the round metric, then we know that for every great circle $C \subset \Sigma$ and $p \in C$, the conjugate points of $p$ along $C$ are $p$ and its antipodal point $q$. Hence $\deg(\Phi) = 2$. If $\Sigma$ is not orientable, we get that $\Sigma$ is diffeomorphic to $\RRR P^2$. For the canonical Riemannian metric on $\RRR P^2$ we have that for any closed geodesic $C$ and any conjugate points $p,q \in C$ it holds $p = q$. Hence $\deg(\Phi) = 1$. This completes the proof.
\end{proof}

We conclude this appendix by stating the following theorem, due to Gromoll--Grove \cite{gromoll1981metrics}.

\begin{theorem}[\cite{gromoll1981metrics}]
Let $(\Sigma, g)$ be an orientable Riemannian surface such that all whose geodesics are closed. Then all its primitive closed geodesic are simple and have the same length.
\end{theorem}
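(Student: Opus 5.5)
We would follow the strategy of Gromoll--Grove, which has three stages. First we make the geodesic flow periodic. Second we describe the resulting circle action on $S\Sigma$ as a Seifert fibration. Third we rule out exceptional fibers using Morse theory of the energy functional on the free loop space.

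\textbf{Stage 1: periodicity.} Since every orbit of $(\varphi_t)$ on $M=S\Sigma$ is closed, we would invoke Wadsley's theorem. It says that a flow all of whose orbits are closed, and which is geodesible, is periodic with a common period $L>0$. The flow here is geodesible because $X$ is the Reeb field of the contact form $\alpha$ (see \S\ref{ssec:geometry-unit-tangent-bundle}). Consequently the minimal periods of orbits are bounded, and all are of the form $L/m$ for integers $m\geqslant 1$. We then get a smooth, almost free $S^1=\RR/L\ZZ$ action on $M$. Its orbit space $\widetilde{\mathcal G}$ is a $2$-orbifold, and its finitely many exceptional orbits have isotropy $\ZZ_m$. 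Combined with Lemma~\ref{lem:topologyzoll} and orientability, $\Sigma\simeq S^2$ and $M\simeq \RR P^3$. Thus we obtain a Seifert fibration of $\RR P^3$. Since reversal $R_\pi$ commutes with the action up to inversion, exceptional orbits come in reversed pairs of equal multiplicity. The classification of Seifert fibrations of lens spaces then leaves at most two exceptional fibers, and more precisely a short list of possibilities: the regular case, or a pair of exceptional fibers of multiplicity $m$ over a base $S^2(m,m)$.

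\textbf{Stage 2: excluding exceptional fibers.} Suppose $\gamma$ is an exceptional closed geodesic of minimal period $L/m$ with $m\geqslant 2$. The critical set of the energy $E$ on $\Lambda\Sigma$ at level $L^2$ is a nondegenerate (Bott) critical manifold. It is essentially $M$, consisting of the regular geodesics together with the $m$-fold iterates of exceptional ones, and it is not $\Sigma$-like. At lower levels $(jL/m)^2$ the critical manifolds are circles: the exceptional orbit $\gamma$ and its reverse. Periodicity of the linearized Poincaré map ($\dd\varphi_L=\mathrm{Id}$) lets us compute all Morse--Bott indices explicitly via Bott's iteration formula. The index of $\gamma^j$ is governed by the rotation number $2\pi j/m$ in normal direction, and the index of the level-$L$ manifold is $1$, since each geodesic has exactly one conjugate point in the relevant interval by Lemma~\ref{lem:conjtime} and Theorem~\ref{thm:zolltopology}. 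The plan is to show that the resulting Morse--Bott inequalities, or equivalently perfectness of $E$ with $\ZZ_2$ coefficients, contradict the known homology of $\Lambda S^2$. Concretely, the circles at level $(L/m)^2$ would contribute classes in low degree (degree $0$ or $1$ plus the circle's homology) that $H_*(\Lambda S^2;\ZZ_2)$ cannot accommodate. This forces $m=1$, so all primitive geodesics have length $L$.

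\textbf{Stage 3: simplicity.} Once every orbit has the same minimal period, the projection $\widetilde{\mathcal G}\to$ (geodesics) is a genuine circle bundle. Simplicity then follows from the argument of Lemma~\ref{lem:topologyzoll}. If a primitive geodesic had a self-intersection, we would rotate initial directions at the crossing point inside $S_p\Sigma$ as in that lemma. Doing so, the number of transverse self-intersections is locally constant on the connected space $\widetilde{\mathcal G}\simeq S^2$. A short-curve/Grayson-type argument then shows this number must be zero on some geodesic, and hence on all.

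The main obstacle is Stage 2. The index bookkeeping through Bott's formula must be done carefully, and one must check that the level-$L$ critical manifold is genuinely nondegenerate in the Bott sense so that the Morse--Bott theory applies. This is where I would spend most effort, first testing the computation against the model $S^2(m,m)$ orbifold quotient.
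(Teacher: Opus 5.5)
The gap is in Stage~2: the mechanism you name does not produce a contradiction.

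\textbf{Why the inequalities fail.} Morse--Bott inequalities never exclude extra critical manifolds on their own. They only require $M(t)-P(t)=(1+t)Q(t)$ with $Q\geqslant 0$. Perfectness is not equivalent to these inequalities, and you cannot assume it for an unknown metric.

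Carry out your bookkeeping in the only case allowed by $\pi_1$. Since $\pi_1^{\mathrm{orb}}(S^2(m,m))=\ZZ_m$ must be a quotient of $\pi_1(\RR P^3)=\ZZ_2$, we have $m=2$, and the Poincar\'e map of $\gamma$ is $-\mathrm{Id}$.
\begin{itemize}
\item The $\omega$-index of $\gamma$ is constant off $\omega=-1$.
\item The mean index of $\gamma$ is half that of $\gamma^2$, so it equals $1$.
\item Hence $\operatorname{ind}(\gamma^{2j+1})=2j+1$, and $\gamma^{2j+1}$ is nondegenerate.
\item The copies of $S\Sigma$ at level $(kL)^2$ have index $2k-1$, as you claim.
\end{itemize}
This gives $M(t)-P(t)=(1+t)\cdot 2(t+t^3+t^5+\cdots)$, which the inequalities allow.

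\textbf{What would actually work.} A contradiction can be obtained only from the energy filtration. The two degree-one classes carried by $\gamma$ and its reverse at level $(L/2)^2$ can be killed only by degree-two classes at higher levels. There is exactly one such class, coming from $H_1(S\Sigma;\ZZ_2)$ at level $L^2$; all higher levels have index at least $3$. This forces $\operatorname{rank}H_1(\Lambda S^2,S^2;\ZZ_2)\geqslant 2$, whereas this rank is $1$. That argument is not in your proposal.

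Your index input is also circular. Lemma~\ref{lem:conjtime} and Theorem~\ref{thm:zolltopology} assume that all geodesics are simple and of common length, which is exactly what you are trying to prove.

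\textbf{Stage~2 is unnecessary.} Your Stage~1 already rules out the exceptional case. At most two exceptional fibres occur, and they come in reversed pairs. So the only candidate is two fibres with invariants $(m,\beta_1),(m,\beta_2)$ over $S^2$, with the Euler number absorbed into $\beta_1$. Then $|H_1(\RR P^3)|=m|\beta_1+\beta_2|$. This equals $2$ only if $m=2$ and $\beta_1+\beta_2=\pm 1$, which is impossible because both $\beta_i$ are odd. Hence the circle action is free and every primitive geodesic has length $L$.

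Your Stage~3 then finishes the proof: the self-intersection count is locally constant on the connected space of geodesics, and Lusternik--Schnirelmann or Grayson supplies one simple closed geodesic. This is essentially the paper's endgame.

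The paper never excludes exceptional fibres separately. It lifts to $S^3$ and uses Seifert's bound of at most two exceptional orbits. It then picks a regular simple closed geodesic among the Lusternik--Schnirelmann ones. It concludes with an open--closed argument on $S\Sigma$ for the set of vectors whose $2\pi$-trajectory is simple.
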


Since we will not make use of this result we only outline below the proof of Gromoll--Grove. First, a result of Epstein \cite{epstein1972periodic} implies that any smooth flow on a compact three manifold whose orbits are circles is periodic up to reparametrization of the flow. Combining this with the fact that closed geodesics are critical points for the length function implies that the geodesic flow itself is periodic, say of period $2\pi$. Hence the length of any closed geodesics must be of the form $2\pi / k$ for some $k \in \NN_{>1}$. Now the orientation preserving diffeomorphism $\psi_k = \varphi_{2\pi/k}$ has finite order and from basic differential topology this implies that its fixed point set is smooth and of dimension $1$. Therefore it must coincides with the union of the closed orbits of $(\varphi_t)$ whose minimal period is $2\pi / k$. Lifting everything to $\widetilde{S\Sigma} \simeq S^3$ one obtains a Seifert fibration of $S^3$ by circles (indeed by Lemma \ref{lem:topologyzoll} we have that $\Sigma$ diffeomorphic to $S^2$). By a classical result of Seifert \cite{seifert1933topologie} there are at most two degenerate circles, which correspond to closed orbits of period $2\pi / k$ for some $k > 1$. However by the Lusternik-Schnirelmann theorem \cites{lusternik1929probleme, grayson1989shortening} there are at least three simple closed geodesics on $\Sigma$. Hence at least one of their lifts to $\widetilde{S\Sigma}$ is not singular for the Seifert fibration. This means that there exists one simple closed geodesic of length $2\pi$. Next, notice that the set of vectors in $S\Sigma$ whose geodesic trajectory is closed and simple in $\Sigma$ is open. But it is also closed, since it is the complementary of the set of vectors of $S\Sigma$ whose associated closed geodesic has a transversal self-intersection. By connectedness this set coincides with $S\Sigma$.

\bibliographystyle{plain}
\bibliography{./refs.bib}

@book{hormander2007PDE3,
  author    = {H\"ormander, Lars},
  title     = {The analysis of linear partial differential operators. {III}},
  series    = {Classics in Mathematics},
  publisher = {Springer, Berlin},
  year      = {2007},
  pages     = {viii+525},
  isbn      = {978-3-540-49937-4},
  mrclass   = {35-02 (35Sxx 47F05 47G30 58J40)},
  mrnumber  = {2304165},
  doi       = {10.1007/978-3-540-49938-1}
}

@book{hormander2009PDE4,
  author    = {H\"ormander, Lars},
  title     = {The analysis of linear partial differential operators. {IV}},
  series    = {Classics in Mathematics},
  note      = {Fourier integral operators,
               Reprint of the 1994 edition},
  publisher = {Springer-Verlag, Berlin},
  year      = {2009},
  pages     = {viii+352},
  isbn      = {978-3-642-00117-8},
  mrclass   = {35-02 (35S30 47F05 47G30 58J40)},
  mrnumber  = {2512677},
  doi       = {10.1007/978-3-642-00136-9},
  url       = {https://doi.org/10.1007/978-3-642-00136-9}
}

@article{tully2024levy,
  author     = {Tully, Kevin},
  title      = {A microlocal analysis of the {L}\'evy generator with conjugate
                points},
  journal    = {J. Geom. Anal.},
  fjournal   = {Journal of Geometric Analysis},
  volume     = {34},
  year       = {2024},
  number     = {12},
  pages      = {Paper No. 357, 25},
  issn       = {1050-6926,1559-002X},
  mrclass    = {58J40 (58J65)},
  mrnumber   = {4802998},
  mrreviewer = {Julio\ Delgado},
  doi        = {10.1007/s12220-024-01813-4},
  url        = {https://doi.org/10.1007/s12220-024-01813-4}
}

@article{chaubet2025levy,
  author   = {Chaubet, Yann and Guedes Bonthonneau, Yannick  and Lefeuvre,
              Thibault and Tzou, Leo},
  title    = {Geodesic {L}\'evy flights and expected stopping time for
              random searches},
  journal  = {Probab. Theory Related Fields},
  fjournal = {Probability Theory and Related Fields},
  volume   = {191},
  year     = {2025},
  number   = {1-2},
  pages    = {235--285},
  issn     = {0178-8051,1432-2064},
  mrclass  = {60G51 (31C12 58J40 60G53)},
  mrnumber = {4869256},
  doi      = {10.1007/s00440-024-01327-8},
  url      = {https://doi.org/10.1007/s00440-024-01327-8}
}

@article{holman2018xray,
  author     = {Holman, Sean and Uhlmann, Gunther},
  title      = {On the microlocal analysis of the geodesic {X}-ray transform
                with conjugate points},
  journal    = {J. Differential Geom.},
  fjournal   = {Journal of Differential Geometry},
  volume     = {108},
  year       = {2018},
  number     = {3},
  pages      = {459--494},
  issn       = {0022-040X,1945-743X},
  mrclass    = {53C65 (58J40)},
  mrnumber   = {3770848},
  mrreviewer = {B.\ S.\ Rubin},
  doi        = {10.4310/jdg/1519959623},
  url        = {https://doi.org/10.4310/jdg/1519959623}
}

@book{Besse1978Zoll,
  author     = {Besse, Arthur L.},
  title      = {Manifolds all of whose geodesics are closed},
  series     = {Ergebnisse der Mathematik und ihrer Grenzgebiete [Results in
                Mathematics and Related Areas]},
  volume     = {93},
  note       = {With appendices by D. B. A. Epstein, J.-P. Bourguignon, L.
                B\'erard-Bergery, M. Berger and J. L. Kazdan},
  publisher  = {Springer-Verlag, Berlin-New York},
  year       = {1978},
  pages      = {ix+262},
  isbn       = {3-540-08158-5},
  mrclass    = {53C20 (53C22 58G99)},
  mrnumber   = {496885},
  mrreviewer = {R.\ L.\ Bishop}
}

@article{Zoll1903Zoll,
  author   = {Zoll, Otto},
  title    = {Ueber {F}l\"achen mit {S}charen geschlossener geod\"atischer
              {L}inien},
  journal  = {Math. Ann.},
  fjournal = {Mathematische Annalen},
  volume   = {57},
  year     = {1903},
  number   = {1},
  pages    = {108--133},
  issn     = {0025-5831,1432-1807},
  mrclass  = {99-04},
  mrnumber = {1511201},
  doi      = {10.1007/BF01449019},
  url      = {https://doi.org/10.1007/BF01449019}
}

@article{LeBrun2002Zoll,
  author     = {Le{B}run, Claude and Mason, L. J.},
  title      = {Zoll manifolds and complex surfaces},
  journal    = {J. Differential Geom.},
  fjournal   = {Journal of Differential Geometry},
  volume     = {61},
  year       = {2002},
  number     = {3},
  pages      = {453--535},
  issn       = {0022-040X,1945-743X},
  mrclass    = {53C22 (32J15 53C28)},
  mrnumber   = {1979367},
  mrreviewer = {Massimiliano\ Pontecorvo},
  url        = {http://projecteuclid.org/euclid.jdg/1090351530}
}

@article{Guillemin1976Radon,
  author     = {Guillemin, Victor},
  title      = {The {R}adon transform on {Z}oll surfaces},
  journal    = {Advances in Math.},
  fjournal   = {Advances in Mathematics},
  volume     = {22},
  year       = {1976},
  number     = {1},
  pages      = {85--119},
  issn       = {0001-8708},
  mrclass    = {58G15 (53C20)},
  mrnumber   = {426063},
  mrreviewer = {J.\ Eells},
  doi        = {10.1016/0001-8708(76)90139-0}
}

@article{warner1965conjugateRiemann,
  author     = {Warner, Frank W.},
  title      = {The conjugate locus of a {R}iemannian manifold},
  journal    = {Amer. J. Math.},
  fjournal   = {American Journal of Mathematics},
  volume     = {87},
  year       = {1965},
  pages      = {575--604},
  issn       = {0002-9327,1080-6377},
  mrclass    = {53.72},
  mrnumber   = {208534},
  mrreviewer = {N.\ J.\ Hicks},
  doi        = {10.2307/2373064}
}

@incollection{chaubet_dynamical,
  author    = {Chaubet, Yann},
  title     = {Dynamical zeta functions},
  booktitle = {in {Microlocal Analysis in Hyperbolic Dynamics and Geometry}, by {Thibault Lefeuvre}},
  series    = {Cours Spécialisés},
  volume    = {32},
  publisher = {Société Mathématique de France},
  address   = {Paris},
  year      = {2025},
  pages     = {220--264}
}

@article{applebaum2000,
  author     = {Applebaum, D. and Estrade, A.},
  title      = {Isotropic {L}\'evy processes on {R}iemannian manifolds},
  journal    = {Ann. Probab.},
  fjournal   = {The Annals of Probability},
  volume     = {28},
  year       = {2000},
  number     = {1},
  pages      = {166--184},
  issn       = {0091-1798,2168-894X},
  mrclass    = {58J65 (60J25)},
  mrnumber   = {1756002},
  mrreviewer = {Ming\ Liao},
  doi        = {10.1214/aop/1019160116},
  url        = {https://doi.org/10.1214/aop/1019160116}
}

@book{singer1976lecture,
  author    = {Singer, I. M. and Thorpe, John A.},
  title     = {Lecture notes on elementary topology and geometry},
  publisher = {Scott, Foresman \& Co., Glenview, IL},
  year      = {1967},
  pages     = {v+214},
  mrclass   = {53.40 (54.00)},
  mrnumber  = {213982}
}

@book{Grigis-Sjostrand-94,
  author     = {Grigis, Alain and Sj\"{o}strand, Johannes},
  title      = {Microlocal analysis for differential operators},
  series     = {London Mathematical Society Lecture Note Series},
  volume     = {196},
  note       = {An introduction},
  publisher  = {Cambridge University Press, Cambridge},
  year       = {1994},
  pages      = {iv+151},
  isbn       = {0-521-44986-3},
  mrclass    = {35A27 (35Sxx 58G07 58G15 58G17)},
  mrnumber   = {1269107},
  mrreviewer = {Vesselin M. Petkov},
  doi        = {10.1017/CBO9780511721441},
  url        = {https://doi-org.revues.math.u-psud.fr/10.1017/CBO9780511721441}
}

@article{gromoll1981metrics,
  author     = {Gromoll, Detlef and Grove, Karsten},
  title      = {On metrics on {$S\sp{2}$}\ all of whose geodesics are closed},
  journal    = {Invent. Math.},
  fjournal   = {Inventiones Mathematicae},
  volume     = {65},
  year       = {1981},
  number     = {1},
  pages      = {175--177},
  issn       = {0020-9910,1432-1297},
  mrclass    = {58E10 (53C22)},
  mrnumber   = {636885},
  mrreviewer = {Gudlaugur\ Thorbergsson},
  doi        = {10.1007/BF01389300},
  url        = {https://doi.org/10.1007/BF01389300}
}

@article{epstein1972periodic,
  author     = {Epstein, David BA},
  title      = {Periodic flows on three-manifolds},
  journal    = {Ann. of Math. (2)},
  fjournal   = {Annals of Mathematics. Second Series},
  volume     = {95},
  year       = {1972},
  pages      = {66--82},
  issn       = {0003-486X},
  mrclass    = {57.48},
  mrnumber   = {288785},
  mrreviewer = {V.\ Poenaru},
  doi        = {10.2307/1970854},
  url        = {https://doi.org/10.2307/1970854}
}

@article{seifert1933topologie,
  author   = {Seifert, Herbert},
  title    = {Topologie {D}reidimensionaler {G}efaserter {R}\"aume},
  journal  = {Acta Math.},
  fjournal = {Acta Mathematica},
  volume   = {60},
  year     = {1933},
  number   = {1},
  pages    = {147--238},
  issn     = {0001-5962,1871-2509},
  mrclass  = {99-04},
  mrnumber = {1555366},
  doi      = {10.1007/BF02398271},
  url      = {https://doi.org/10.1007/BF02398271}
}

@article{lusternik1929probleme,
  title   = {Sur le probl{\`e}me de trois g{\'e}od{\'e}siques ferm{\'e}es sur les surfaces de genre 0},
  author  = {Lusternik, Lazar and Schnirelmann, Lev},
  journal = {CR Acad. Sci. Paris},
  volume  = {189},
  pages   = {269--271},
  year    = {1929}
}

@article{grayson1989shortening,
  author     = {Grayson, Matthew A.},
  title      = {Shortening embedded curves},
  journal    = {Ann. of Math. (2)},
  fjournal   = {Annals of Mathematics. Second Series},
  volume     = {129},
  year       = {1989},
  number     = {1},
  pages      = {71--111},
  issn       = {0003-486X,1939-8980},
  mrclass    = {53C22 (58E10)},
  mrnumber   = {979601},
  mrreviewer = {Gudlaugur\ Thorbergsson},
  doi        = {10.2307/1971486},
  url        = {https://doi.org/10.2307/1971486}
}

@article{morin1965formes,
  title    = {Formes canoniques des singularit{\'e}s d'une application diff{\'e}rentiable},
  author   = {Morin, Bernard},
  journal  = {CR Acad. Sci. Paris},
  volume   = {260},
  pages    = {5662--5665},
  year     = {1965},
  issn     = {0001-4036},
  mrclass  = {57.20},
  mrnumber = {180982}
}

@article{rieger1987families,
  author     = {Rieger, J. H.},
  title      = {Families of maps from the plane to the plane},
  journal    = {J. London Math. Soc. (2)},
  fjournal   = {Journal of the London Mathematical Society. Second Series},
  volume     = {36},
  year       = {1987},
  number     = {2},
  pages      = {351--369},
  issn       = {0024-6107,1469-7750},
  mrclass    = {58C27 (57R45)},
  mrnumber   = {906153},
  mrreviewer = {J.\ O.\ Bedford},
  doi        = {10.1112/jlms/s2-36.2.351},
  url        = {https://doi.org/10.1112/jlms/s2-36.2.351}
}

@book{laudenbach2011transversalite,
  title     = {{Transversalit{\'e}, courants et th{\'e}orie de Morse: un cours de topologie diff{\'e}rentielle}},
  author    = {Laudenbach, Fran{\c{c}}ois},
  year      = {2011},
  publisher = {Editions Ecole Polytechnique},
  mrnumber  = {3088239}
}

@book{duistermaat1996fourier,
  author    = {J. J. Duistermaat},
  title     = {Fourier Integral Operators},
  series    = {Modern Birkh{\"a}user Classics},
  publisher = {Birkh{\"a}user},
  address   = {Boston},
  year      = {1996},
  isbn      = {978-0-8176-4673-7},
  mrnumber = {1362544}
}

@incollection{whitney1955singularities,
  title={On singularities of mappings of Euclidean spaces. I. Mappings of the plane into the plane},
  author={Whitney, Hassler},
  booktitle={Hassler Whitney Collected Papers},
  pages={370--406},
  year={1955},
  publisher={Springer}
}

@article{UribeZelditch1993,
  author  = {Uribe, Alejandro and Zelditch, Steven},
  title   = {{Spectral Statistics on Zoll Surfaces}},
  journal = {Communications in Mathematical Physics},
  volume  = {154},
  pages   = {313--346},
  year    = {1993},
  mrnumber ={1224082}
}

@article{Zelditch1996,
  author  = {Zelditch, Steven},
  title   = {{Maximally Degenerate Laplacians}},
  journal = {Annales de l'Institut Fourier},
  volume  = {46},
  number  = {2},
  pages   = {547--587},
  year    = {1996},
  doi     = {10.5802/aif.1524},
  mrnumber = {1393525}
}

@article{Zelditch1997,
  author  = {Zelditch, Steven},
  title   = {{Fine Structure of Zoll Spectra}},
  journal = {Journal of Functional Analysis},
  volume  = {143},
  number  = {2},
  pages   = {415--460},
  year    = {1997},
  mrnumber = {1428823}
}

@article{MaciaRiviere2016,
  author  = {Maci{\`a}, Fabricio and Rivi{\`e}re, Gabriel},
  title   = {{Concentration and Non-Concentration for the Schr{\"o}dinger Evolution on Zoll Manifolds}},
  journal = {Communications in Mathematical Physics},
  volume  = {345},
  number  = {3},
  pages   = {1019--1054},
  year    = {2016},
  mrnumber  = {3519588}
}

@article{Funk1913,
  author  = {Funk, Paul},
  title   = {{\"Uber Fl{\"a}chen mit lauter geschlossenen geod{\"a}tischen Linien}},
  journal = {Mathematische Annalen},
  volume  = {74},
  pages   = {278--300},
  year    = {1913},
  doi     = {10.1007/BF01456044},
  mrnumber = {1511763}
}

@article{Weinstein1974,
  author  = {Weinstein, Alan},
  title   = {On the Volume of Manifolds All of Whose Geodesics Are Closed},
  journal = {Journal of Differential Geometry},
  volume  = {9},
  number  = {4},
  pages   = {513--517},
  year    = {1974},
  mrnumber = {390968}
}

@article{mazzucchelli2018characterization,
  title={{A characterization of Zoll Riemannian metrics on the 2-sphere}},
  author={Mazzucchelli, Marco and Suhr, Stefan},
  journal={Bulletin of the London Mathematical Society},
  volume={50},
  number={6},
  pages={997--1006},
  year={2018},
  publisher={Wiley Online Library},
  mrnumber = {3891938}
}

@article{seeger1991regularity,
  title={Regularity properties of {F}ourier integral operators},
  author={Seeger, Andreas and Sogge, Christopher D and Stein, Elias M},
  journal={Annals of Mathematics},
  volume={134},
  number={2},
  pages={231--251},
  year={1991},
  publisher={JSTOR},
  mrnumber = {1127475}
}

@article{green1963wiedersehensflachen,
  title={Auf wiedersehensfl{\"a}chen},
  author={Green, Leon W},
  journal={Annals of Mathematics},
  volume={78},
  number={2},
  pages={289--299},
  year={1963},
  publisher={JSTOR}
}

@article{pries2009geodesics,
  title={Geodesics closed on the projective plane},
  author={Pries, Christian},
  journal={Geometric and Functional Analysis},
  volume={18},
  number={5},
  pages={1774--1785},
  year={2009},
  publisher={Springer}
}

@book{PaternainSaloUhlmann2023,
  author    = {Paternain, Gabriel P. and Salo, Mikko and Uhlmann, Gunther},
  title     = {Geometric Inverse Problems},
  series    = {Cambridge Studies in Advanced Mathematics},
  publisher = {Cambridge University Press},
  year      = {2023},
  doi       = {10.1017/9781009041423}
}

@book{Lefeuvre2025,
  author    = {Lefeuvre, Thibault},
  title     = {Microlocal Analysis in Hyperbolic Dynamics and Geometry},
  series     = {Cours Spécialisés},
  volume     = {32},
  publisher  = {Société Mathématique de France},
  address    = {Paris},
  year       = {2025}
}

@article{colin1979spectre,
 author = {Colin de Verdi{\`e}re, Yves},
 title = {Sur le spectre des op{\'e}rateurs elliptiques {\`a} bicaracteristiques toutes p{\'e}riodiques},
 fjournal = {Commentarii Mathematici Helvetici},
 journal = {Comment. Math. Helv.},
 issn = {0010-2571},
 volume = {54},
 pages = {508--522},
 year = {1979},
 language = {French},
 doi = {10.1007/BF02566290},
 url = {https://eudml.org/doc/139800},
 zbMATH = {3720047},
 Zbl = {0459.58014}
}

@article{weinstein1977asymptotics,
 author = {Weinstein, Alan},
 title = {Asymptotics of eigenvalue clusters for the {Laplacian} plus a potential},
 fjournal = {Duke Mathematical Journal},
 journal = {Duke Math. J.},
 issn = {0012-7094},
 volume = {44},
 pages = {883--892},
 year = {1977},
 language = {English},
 doi = {10.1215/S0012-7094-77-04442-8},
 zbMATH = {3599194},
 Zbl = {0385.58013}
}

@article{duistermaat1975spectrum,
 author = {Duistermaat, J. J. and Guillemin, V. W.},
 title = {The spectrum of positive elliptic operators and periodic bicharacteristics},
 fjournal = {Inventiones Mathematicae},
 journal = {Invent. Math.},
 issn = {0020-9910},
 volume = {29},
 pages = {39--79},
 year = {1975},
 language = {English},
 doi = {10.1007/BF01405172},
 url = {https://eudml.org/doc/142329},
 zbMATH = {3481135},
 Zbl = {0307.35071}
}

@book{taylor2013partial,
  title={Partial differential equations II: Qualitative studies of linear equations},
  author={Taylor, Michael},
  volume={116},
  year={2013},
  publisher={Springer Science \& Business Media}
}

@article{waters2017bifurcations,
 author = {Waters, Thomas},
 title = {Bifurcations of the conjugate locus},
 fjournal = {Journal of Geometry and Physics},
 journal = {J. Geom. Phys.},
 issn = {0393-0440},
 volume = {119},
 pages = {1--8},
 year = {2017},
 language = {English},
 doi = {10.1016/j.geomphys.2017.04.003},
 zbMATH = {6738980},
 Zbl = {1367.53039}
}

@article{waters2019conjugate,
 author = {Waters, Thomas},
 title = {The conjugate locus on convex surfaces},
 fjournal = {Geometriae Dedicata},
 journal = {Geom. Dedicata},
 issn = {0046-5755},
 volume = {200},
 pages = {241--254},
 year = {2019},
 language = {English},
 doi = {10.1007/s10711-018-0368-8},
 zbMATH = {7065310},
 Zbl = {1419.53005}
}

\end{document}